\theoremstyle{plain}  \newtheorem{theorem}{Theorem}
                      \newtheorem*{theorem1A*}{Theorem 1A} 
                      \newtheorem*{theorem1B*}{Theorem 1B}
                      \newtheorem*{theorem*}{Theorem}
                      \newtheorem{lemma}{Lemma}[section]
                      \newtheorem{cor}[lemma]{Corollary}
\theoremstyle{definition}
        \newtheorem{definition}[lemma]{Definition}
\theoremstyle{remark}
        \newtheorem*{remark}{Remark}
        \newtheorem*{remarks}{Remarks}
        \newtheorem*{convention}{Convention}
        \newtheorem*{guide}{Guide to notation}
\theoremstyle{remark}
        \newtheorem*{notation}{Notation}
\theoremstyle{remark}
\numberwithin{equation}{section}
\newcommand{\dist}{\operatorname{dist}}
\newcommand{\Hdist}{\operatorname{Hdist}}
\newcommand{\const}{\operatorname{const}}
\newcommand{\diam}  {\operatorname{diam}}
\newcommand{\clos}  {\operatorname{clos}}
\newcommand{\inte}  {\operatorname{int}}
\newcommand{\id} {\operatorname{id}}
\newcommand{\R}{\mathbb{R}}      % reelle Zahlen
\newcommand{\C}{\mathbb{C}}      % komplexe Zahlen
\newcommand{\N}{\mathbb{N}}      % natürliche Zahlen
\newcommand{\Z}{\mathbb{Z}}      % ganze Zahlen
\newcommand{\CDach}{\widehat{\mathbb{C}}}% Riemannsche Zahlenkugel
\newcommand{\D}{\mathbb{D}}      % Einheitskreis
\providecommand{\defn}[1]{\emph{#1}}
\providecommand{\abs}[1]{\lvert#1\rvert}
\providecommand{\Abs}[1]{\left|#1\right|}
\providecommand{\norm}[1]{\lVert#1\rVert}
\newcounter{mylistnum}
\newcounter{mycount}
\newenvironment{mylist}
{\begin{list}{(\arabic{mycount})}
    {
      \setlength{\leftmargin}{0.0in}
      \setlength{\itemindent}{0.1in}
      \setlength{\itemsep}{\medskipamount}
      \usecounter{mycount}
    }
}
{\end{list}}
\newcommand{\SC} {\mathcal{S}} 
\newcommand{\BC} {\mathcal{B}}
\newcommand{\PC} {\mathcal{P}}
\newcommand{\RC} {\mathcal{R}}
\newcommand{\SB} {\mathbb{S}}
\newcommand{\B} {\mathbb{B}}
\newcommand{\X} {\mathbf{X}}
\newcommand{\Inter}{\operatorname{Interior}}
\newcommand{\core}{\operatorname{core}}
\newcommand{\BOX}{$\square\!\square$}
\begin{document}
 
\title{Snowballs are Quasiballs}

\author{Daniel Meyer}

\address{P.O. Box 68\\
Gustaf H\"{a}llstr\"{o}min katu 2b\\
FI-00014 University of Helsinki\\
Finland
%Department of Mathematics\\
%2074 East Hall\\
%530 Church Street\\  
%Ann Arbor, MI 48109-1043
}

\email{dmeyermail@gmail.com}

\thanks{This research was partially supported by an NSF postdoctoral
  fellowship and by NSF grant DMS-0244421.}

\keywords{Quasiconformal maps, quasiconformal uniformization, snowball} 

\subjclass[2000]{Primary: 30C65}

\date{August 16, 2007}

\begin{abstract}
  We introduce \defn{snowballs}, which are compact sets in $\R^3$
  homeomorphic to the unit ball. They are $3$-dimensional analogs
  of domains in the plane bounded by snowflake curves. For each
  snowball $\BC$ a quasiconformal map $f\colon \R^3\to \R^3$ is
  constructed that
  maps $\BC$ to the unit ball. 
\end{abstract}

\maketitle

%\textpages
 
% ==Introduction 
% \input{intro}
% intro.tex
%
% Introduction, bla bla why we are doing this
%
\section{Introduction}

\subsection{Quasiconformal and quasisymmetric Maps}

\label{sec:quas-quas-maps}
The Riemann mapping theorem asserts that conformal
maps in the plane are ubiquitous. However, 
in higher dimensions all conformal maps are M\"{o}bius transformations
(by a theorem of Liouville). The most fruitful generalization of
conformality is the following.
A homeomorphism $f\colon \R^n\to\R^n$ is called \emph{quasiconformal} \index{quasiconformality} if there is a constant $K<\infty$ such that for all $x\in\R^n$,
\begin{equation}\label{eq:defqc}
      K(x):=\varlimsup_{\epsilon\to0}\frac{\displaystyle{\max_{|x-a|=\epsilon}}|f(x)-f(a)|}{\displaystyle{\min_{|x-b|=\epsilon}}|f(x)-f(b)|}\leq K.
    \end{equation}
%     One can define quasiconformality in more general metric spaces, but has to use another definition.
For conformal maps the above limit is $1$ everywhere. A conformal map ``maps infinitesimal balls to infinitesimal balls'', while a quasiconformal map $f$ ``maps infinitesimal balls to infinitesimal ellipsoids of uniformly bounded eccentricity''.
Alternatively, at almost every point there is an infinitesimal
ellipsoid that is mapped to an infinitesimal ball by $f$ (the inverse
$f^{-1}$ is quasiconformal as well). Thus $f$ assigns an
\emph{ellipsoid-field} to the domain. Quasiconformal maps are much
better understood in the plane than in higher dimensions. 
The reason is that by the \emph{measurable
  Riemann mapping theorem} for every given ellipse-field in the plane
(with uniformly bounded eccentricity), we can find a quasiconformal map
$f$ realizing this ellipse-field. No such theorems exist in higher
dimensions. The classical reference on quasiconformal maps in $\R^n$
is \cite{vaisala_n_dim}. 

\medskip
A closely related notion is the following. A homeomorphism $f\colon X\to Y$ of metric spaces is called \emph{quasisymmetric} \index{quasisymmetry} if there is a homeomorphism $\eta\colon[0,\infty)\to[0,\infty)$ such that
        \begin{equation*}
          \frac{|x-a|}{|x-b|}\leq t \Rightarrow \frac{|f(x)-f(a)|}{|f(x)-f(b)|}\leq\eta(t),
        \end{equation*}
        for all $x,a$, and $b$, with $x\neq b$. 

Quasisymmetry is
a global notion, while quasiconformality is an infinite\/simal
one. Every quasisymmetry is quasiconformal (pick
$K=\eta(1)$). In fact in $\R^n, n\geq 2,$ the two notions
coincide. This is actually true for a large class of metric
spaces; see \cite{JuhPek}. The classical paper on
quasisymmetry is \cite{TV}. A recent exposition can be found
in \cite{JuhAn}.

\subsection{Quasicircles and Quasispheres}
\label{sec:quasicircles}
While quasiconformal maps share many properties with conformal ones,
they are not smooth in general. For example, one can map the snowflake
(or von Koch curve) to the unit circle by a quasiconformal map (of the
plane).  
In general, we call the image of the unit circle under a quasiconformal
map of the plane a \emph{quasicircle}\index{quasicircle}. Ahlfors's
\emph{$3$-point condition} \index{Ahlfors' $3$-point
  condition}\cite{MR27:4921} gives a complete geometric
characterization: a Jordan curve $\gamma$ in the plane is a
quasicircle if and only 
if  for each two points $a,b$ on $\gamma$ the (smaller) arc between
them has diameter comparable to $\abs{a-b}$. This condition is easily
checked for the snowflake. On the other hand, every quasicircle can be
obtained by an explicit snowflake-type construction (see
\cite{MR2003b:30022}). 
% \begin{figure}%[htbp]
%   \centering
%        \scalebox{.8}{\includegraphics{snowflake.eps}}
%        \caption{Mapping of the snowflake.}
%   \label{fig:snowflake}
% \end{figure}

Analogous questions in higher dimensions are much harder. At the moment
a classification of \emph{quasispheres}/\defn{quasiballs} (images of
the unit sphere/ball under a quasiconformal map of the whole space $\R^3$) seems
to be out of reach. In fact very few non-trivial examples of such maps
have been exhibited. Some such maps (in a slightly different setting)
can be found in \cite{freequasi}. 
First snowflake-type examples were constructed in
\cite{MR99j:30023} and \cite{MR2001c:49067}. These quasispheres do not
contain any rectifiable curves. That quasisymmetric embeddings of
certain surfaces exist seems to follow from ongoing work of Cannon,
Floyd, and Parry (\cite{CFPfinSub}), the main tool used being Cannon's
\emph{combinatorial Riemann mapping theorem} \cite{comRiem}. These surfaces are
defined abstractly, so no extension to an ambient space (like $\R^3$)
is possible. A different (though related) approach is to use circle
packings as in \cite{BonKlei}. The quasispheres considered there
are \emph{Ahlfors $2$-regular}, so in a sense are already
$2$-dimensional. Their result provides one step in the proof of
\emph{Cannon's conjecture}, which deals with uniformizing (mapping to
the unit sphere by a quasisymmetry) topological spheres appearing as
the boundary at infinity of Gromov hyperbolic groups.

\subsection{Results and Outline}
\label{sec:results}

Here we consider \emph{snowspheres} $\mathcal{S}$\index{snowsphere}\label{not:sbowsphere} which are topologically 2-dimensional analogs of the snow\-flake, homeomorphic to the unit sphere $\SB=\{x\in\R^3:\abs{x}=1\}$. They are boundaries of \emph{snowballs}\index{snowball} $\mathcal{B}$, which are homeomorphic to the unit ball $\mathbb{B}=\{x\in\R^3:\abs{x}\leq 1\}$. A complete definition is given in Section \ref{cha:snowb-snowsph}. We give a slightly imprecise description here, avoiding technicalities. 

\begin{figure}
  \centering
  \scalebox{0.7}{\includegraphics{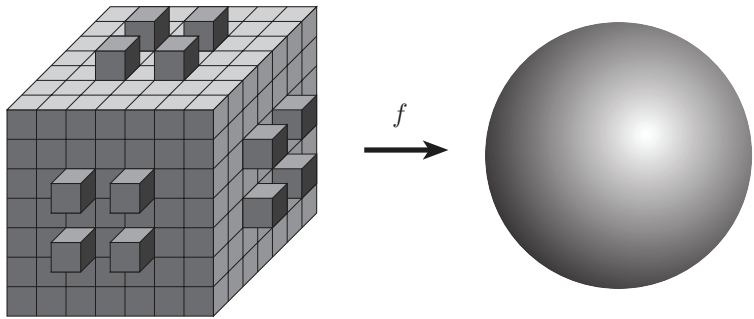}}
  \put(-146,74){$f$}
  \caption{Embedding of the snowball.}\label{fig:embSnowb}
\end{figure}

Start with the unit cube. Divide each face into $N\times N$ squares of
side-length $1/N$ (called $1/N$-squares). Put cubes of side-length
$1/N$ on some $1/N$-squares. We require that the small cubes are added
in a pattern that respects the symmetry group of the cube. This means
that on every side of the unit cube the pattern is the same, as well
as that on each side we can rotate and reflect without changing the
pattern. 
Figure \ref{fig:embSnowb} illustrates one example with
$N=7$. The boundary of the resulting domain is a polyhedral surface
built from $1/N$-squares, called the \emph{first approximation of the
  snowsphere}. Subdivide each $1/N$-square again, and put
cubes of side-length $1/N^2$ on them in the same pattern as
before. Thus we obtain a domain bounded by a polyhedral surface built
from $1/N^2$-squares (the \emph{second approximation of the
  snowsphere}). Iterating this process we get a
\defn{snowball} $\mathcal{B}$ as (the closure of) the limiting domain, 
with a \defn{snowsphere} $\SC$ as its boundary.  

\begin{remarks}
  One has to impose relatively mild conditions to ensure that the
  snowsphere $\SC$ is a topological sphere, i.e., does not have
  self-intersections. In every step a different pattern and a
  different 
  number $N_j$ may be used. We then have to assume that $\sup_j N_j<\infty$.  
\end{remarks}

The main theorem we prove is the following.
  \begin{theorem}
    \label{thm:1}
    For every snowball $\mathcal{B}$ there is a quasiconformal map
    \begin{equation*}
      f\colon\R^3\to\R^3
    \end{equation*}
    that maps $\mathcal{B}$ to the unit ball $\mathbb{B}$. 
  \end{theorem}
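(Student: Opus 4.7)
The natural strategy is to build $f$ as a limit of explicit quasiconformal maps produced by iterating a fixed ``replacement'' construction at successively finer scales, mimicking the classical proof in the plane that snowflake domains are quasidisks. First, since there is an explicit bilipschitz map between the closed unit ball $\B$ and the closed unit cube $Q=[0,1]^3$, it suffices to construct a quasiconformal $f\colon\R^3\to\R^3$ sending $Q$ to $\BC$. The cube is the ``zeroth approximation'' $\BC_0$, and the iterative construction of $\BC$ gives a sequence of polyhedral approximations $\BC_0,\BC_1,\BC_2,\dots$ converging (in Hausdorff distance) to $\BC$, where $\BC_{k+1}$ is obtained from $\BC_k$ by adding or removing a scaled copy of the chosen symmetric pattern on each $1/N^{\,k+1}$-square of $\partial\BC_k$.

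The key building block is a single ``bump map.'' Working in a neighborhood of one face of a cube, I would construct an explicit quasiconformal homeomorphism $\phi\colon\R^3\to\R^3$ with the following properties: $\phi$ is the identity outside a thin slab around the face; $\phi$ carries that face onto the ``bumped'' polyhedral surface obtained by applying one step of the snowball pattern; and $\phi$ respects the symmetry group of the cube, so that it restricts to the identity on the grid of edges of the $1/N$-subdivision. Because in dimension three there is no measurable Riemann mapping theorem, $\phi$ has to be built by hand; the natural approach is to triangulate both the slab and its image into finitely many tetrahedra in a combinatorially equivalent way and to define $\phi$ to be affine on each tetrahedron, adjusting the triangulation until all simplices have bounded shape. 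The resulting $\phi$ is piecewise affine, orientation preserving, a homeomorphism of $\R^3$, and quasiconformal with some constant $K_0=K_0(\phi)<\infty$ depending only on the pattern and $N$. This step is the main technical obstacle, as all control over the final distortion is encoded here.

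With $\phi$ in hand, I would define inductively a sequence of quasiconformal maps $f_k\colon\R^3\to\R^3$ sending $Q$ to $\BC_k$ as follows. Set $f_0=\id$. Given $f_k$, on each $1/N^{\,k}$-square $S$ of $\partial\BC_k$ there is a unique similarity $\sigma_S$ of $\R^3$ carrying a standard unit face of $Q$ to $S$ with the prescribed orientation; set
\begin{equation*}
  f_{k+1}=\bigl(\sigma_S\comp\phi\comp\sigma_S^{-1}\bigr)\comp f_k
\end{equation*}
in a neighborhood of $S$ (and $f_{k+1}=f_k$ elsewhere). The symmetry assumption on the pattern ensures that the local ``bumps'' fit together consistently along shared edges, so $f_{k+1}$ is a well-defined homeomorphism of $\R^3$. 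Since quasiconformality is a local and scale-invariant property, and at every point only finitely many nested copies of $\phi$ act nontrivially, one checks by the chain rule for the infinitesimal dilatation~\eqref{eq:defqc} that each $f_k$ is $K_0$-quasiconformal. The essential point is that the bumps introduced at stage $k+1$ live on disjoint cubes of side $1/N^{\,k+1}$, so their action is a disjoint union of scaled copies of $\phi$ composed with $f_k$ and the distortions do not stack up.

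It then remains to extract a limit and verify its properties. The maps $f_k$ agree outside a shrinking neighborhood of the snowsphere and are uniformly bounded, so a standard normal-family argument for quasiconformal mappings (e.g.\ \cite{vaisala_n_dim}) produces a subsequential limit $f\colon\R^3\to\R^3$ that is $K_0$-quasiconformal. By construction $f(Q)=\BC$, so composing with the bilipschitz map from $\B$ to $Q$ gives the theorem. The remaining care is to verify that the limit is injective and surjective, which follows because the $f_k$ converge uniformly and the final target $\BC$ is a topological ball (this uses the mild conditions on the pattern mentioned in the remarks); the assumption $\sup_jN_j<\infty$ guarantees that only finitely many distinct building blocks $\phi$ occur, so a single uniform dilatation bound $K_0$ works throughout.
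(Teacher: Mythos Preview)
Your proposal has a genuine gap in the dilatation estimate. You claim that ``at every point only finitely many nested copies of $\phi$ act nontrivially'' and conclude that each $f_k$ is $K_0$-quasiconformal. But for a point $x$ at distance roughly $N^{-j}$ from the snowsphere $\SC$, the bump maps at levels $1,\dots,j$ \emph{all} act nontrivially on $x$: their supports are nested slabs of shrinking thickness around the successive approximations, and $x$ lies in every one of them. The chain rule then yields only $K(f_k,x)\le K_0^{\,j}$, and since $j\to\infty$ as $x\to\SC$, the dilatation of the $f_k$ is not uniformly bounded; the limit map has finite pointwise dilatation off $\SC$ but not \emph{bounded} dilatation, hence is not quasiconformal. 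Your observation that the level-$(k{+}1)$ bumps live on disjoint cubes controls only the interaction among bumps at the \emph{same} level; it says nothing about the composition with $f_k$, which is where the stacking occurs. To salvage an iterative scheme one would need $\phi$ to be a \emph{similarity} in a full neighborhood of each $1/N$-square of the new surface, so that later generations act on a conformal background; arranging this compatibly across adjacent squares of the generator, which meet at right angles, runs directly into Liouville rigidity in $\R^3$ and is not addressed in your outline.

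The paper avoids this problem by taking a completely different route: no iterated bump map is used. First the boundary map $f\colon\SC\to\SB$ is constructed directly (Theorem~1A) by equipping each polyhedral approximation $\SC_j$ with a conformal structure, uniformizing it to $\SB$, and controlling the resulting nested tilings via Koebe distortion together with a combinatorial finiteness argument (only finitely many local configurations of tiles occur, so only finitely many constants arise). The extension to $\R^3$ (Theorem~1B) is then carried out by a Whitney-type decomposition of both $\BC$ and $\B$ into pieces whose diameters are comparable to their distances from the boundary: each Whitney piece is mapped by a \emph{single} quasisimilarity with uniform bi-Lipschitz constant, and the pieces are glued compatibly. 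Since no compositions of unbounded depth occur, the dilatation is bounded by construction.
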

Obviously then $f(\SC)=\mathbb{S}$. The proof is broken up into two parts.
\begin{theorem1A*}
  \label{thm:1A}
  Every snowsphere $\SC$ can be mapped to the unit sphere $\mathbb{S}$ by a quasisymmetry
  \begin{equation*}
    f\colon\SC\to \mathbb{S}.
  \end{equation*}
\end{theorem1A*}

This theorem will be proved in Section \ref{cha:unifsphere}. 
%The construction is relatively easy. 
We first equip the \mbox{$j$-th} approximation of the snowsphere with a
\emph{conformal structure} in a standard way. By the
\emph{uniformization theorem} it is conformally equivalent to the
sphere. The proof of the quasisymmetry of the map $f$ relies
essentially on two facts. The first is that the number of small
squares intersecting in a vertex is bounded by $6$ throughout the
whole construction. This means 
that if one looks at a square and adjacent squares, only finitely many
combinatorially different situations occur. The second ingredient is
that \emph{combinatorial equivalence implies conformal
  equivalence}. Thus in combinatorially equivalent sets the distortion
is comparable by \emph{Koebe's theorem}. Only finitely many constants
appear, one for each of the (finitely many) combinatorial situations
of suitable neighborhoods. This idea already appeared in
\cite{snowemb}. 

\medskip
The remainder of the paper concerns the extension of the map $f$ to
$f\colon \R^3\to \R^3$. 
The construction is explicit, though somewhat technical. 
In Section \ref{sec:simplices-extensions} some maps and extensions that
will be useful later on are provided. 
The snowball is decomposed in Section \ref{sec:decomposing-snowball}
in a \emph{Whitney-type} fashion, where the size of a
piece is comparable to its distance from the boundary (the
snowsphere). 
In Section \ref{sec:DecBall}
the pieces are mapped to the unit ball and reassembled
there. 
One has to make sure that $f$ agrees on intersecting pieces (is well
defined). 
The explicit construction of the map $f\colon \SC\to\mathbb{S}$ allows us to control distortion. 

\smallskip
In Section \ref{sec:proof-main-theorem} the remaining part of Theorem \ref{thm:1} is proved. 
\begin{theorem1B*}
  \label{thm:1B}
  The map $f$ from Theorem 1A can be extended to a quasiconformal map
  \begin{equation*}
    f\colon\R^3\to\R^3.
  \end{equation*}
\end{theorem1B*}

Thus one obtains a large class of quasispheres. The \emph{Xmas
  tree example} from \cite{snowemb} shows that there are quasispheres
(in $\R^3$) having Hausdorff dimension arbitrarily close to $3$. On
the other hand, one can construct quasispheres having Hausdorff
dimension $2$ that are not Ahlfors $2$-regular.

\subsection{Notation} $\CDach=\C\cup\{\infty\}$ is the Riemann sphere,
$\SB=\{x\in \R^3:\abs{x}=1\}$ the unit sphere, $\B=\{x\in
\R^3:\abs{x}\leq 1\}$ the (closed) unit ball, $\D=\{z\in \C :
\abs{z}<1\}$ the unit disk.   

The Euclidean norm in $\R^n$ is denoted by $\abs{x}$, the Euclidean
metric by $\abs{x-y}$. The sphere $\SB$ and the unit ball $\B$ are
equipped with the Euclidean metric inherited from $\R^3$, unless
otherwise noted. We identify $\CDach$ with $\SB$, meaning $\CDach$ is
equipped with the chordal metric.
Maximum norm and metric are denoted by $\norm{x}_\infty$ and $\norm{x-y}_\infty$.

For two non-negative expressions $f,g$ we write $f\asymp g$ if there
is a constant $C\geq 1$ such that $\frac{1}{C}g\leq f\leq C g$. We
will often refer to $C$ by $C(\asymp)$, for example we will write
$C(\asymp)=C(n,m)$ if $C$ depends on $n$ and $m$. 

Similarly we write $f\lesssim g$ or $g\gtrsim f$ for two non-negative
expressions $f,g$ if there is a constant $C>0$ such that $f\leq C
g$. The constant $C$ is referred to as $C(\lesssim)$ or $C(\gtrsim)$.

The interior of a set $S$ is denoted by $\inte S$, the closure by
$\clos S$, while
$U_{\epsilon}(S):=\{x:\dist(x,S)<\epsilon\}$ denotes the open
\emph{$\epsilon$-neighborhood} of a set $S$. 

Let
\begin{align}
  d_A(B) & :=\inf\{\epsilon : B\subset U_\epsilon(A)\} \label{eq:defdA}
  \\ \notag
  & = \sup \{\dist(b,A) : b\in B\}. 
\end{align}
The \emph{Hausdorff distance} \index{Hausdorff distance} between two sets $A,B$ is
\begin{equation*}
  \Hdist(A,B):=\max\{d_A(B),d_B(A)\}.
\end{equation*}

\begin{lemma}
  \label{lem:dist_triangle}
  Let $A,B,C$ be arbitrary sets; then
  \begin{align}
    \Hdist(A,B) & \leq \Hdist(A,C) + \Hdist(C,B),
    \\ \label{eq:triag_distdC}
    \dist(A,B) & \geq \dist(A,C) - d_C(B)
    \\ \label{eq:triag_dist}
    & \geq \dist(A,C) - \Hdist(C,B).
  \end{align}
\end{lemma}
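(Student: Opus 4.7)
The plan is to prove the three inequalities by reducing each to the standard triangle inequality $\abs{a-b}\leq\abs{a-c}+\abs{c-b}$ in $\R^3$ combined with a careful choice of a near-optimal intermediate point in $C$. All three claims are purely formal consequences of the definitions, so the work is essentially bookkeeping with $\epsilon$-approximations to suprema and infima.

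First I would prove the asymmetric one-sided estimate
\begin{equation*}
  d_A(B)\leq d_A(C)+d_C(B).
\end{equation*}
Fix $b\in B$ and $\epsilon>0$. By definition of $d_C(B)$, $\dist(b,C)\leq d_C(B)$, so I can choose $c\in C$ with $\abs{b-c}\leq d_C(B)+\epsilon$. By definition of $d_A(C)$, $\dist(c,A)\leq d_A(C)$, so there exists $a\in A$ with $\abs{c-a}\leq d_A(C)+\epsilon$. The triangle inequality gives $\dist(b,A)\leq\abs{b-a}\leq d_C(B)+d_A(C)+2\epsilon$. Taking $\sup$ over $b\in B$ and then $\epsilon\to 0$ yields the stated bound. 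Swapping the roles of $A$ and $B$ gives $d_B(A)\leq d_B(C)+d_C(A)$. Since both $d_A(C),d_C(A)\leq\Hdist(A,C)$ and $d_C(B),d_B(C)\leq\Hdist(C,B)$, I conclude that both $d_A(B)$ and $d_B(A)$ are bounded by $\Hdist(A,C)+\Hdist(C,B)$, which is the first inequality.

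For \eqref{eq:triag_distdC}, I would run essentially the same argument in the opposite direction. Fix $a\in A$, $b\in B$, and $\epsilon>0$; choose $c\in C$ with $\abs{b-c}\leq d_C(B)+\epsilon$ (as above). The reverse triangle inequality yields
\begin{equation*}
  \abs{a-b}\geq\abs{a-c}-\abs{b-c}\geq\dist(a,C)-d_C(B)-\epsilon\geq\dist(A,C)-d_C(B)-\epsilon.
\end{equation*}
Taking the infimum over $a\in A$ and $b\in B$ and sending $\epsilon\to 0$ gives \eqref{eq:triag_distdC}. Inequality \eqref{eq:triag_dist} then follows immediately from \eqref{eq:triag_distdC} together with the trivial estimate $d_C(B)\leq\max\{d_C(B),d_B(C)\}=\Hdist(C,B)$.

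There is no real obstacle here; the only mild subtlety is that $d_A(C)$ and $d_C(B)$ are defined as suprema/infima that may not be attained, which is precisely why I carry the auxiliary $\epsilon$ through the argument and eliminate it at the end.
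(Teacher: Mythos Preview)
Your proof is correct and follows essentially the same route as the paper's: both arguments reduce everything to the ordinary triangle inequality via an intermediate point in $C$. The paper is slightly slicker in that it avoids the auxiliary $\epsilon$ by working directly with infima (writing $\dist(A,C)\leq\inf_{a\in A}\abs{a-b}+\inf_{c\in C}\abs{b-c}$ for each fixed $b$), and it dismisses the first inequality as ``clear'' where you spell it out; but these are purely cosmetic differences.
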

\begin{proof}
  The first inequality is clear.

  To see the second inequality, let $b\in B$ be arbitrary; then
  \begin{align*}
    \dist(A,C) & = \inf_{\substack{a\in A\\ c\in C}}\abs{a-c}\leq
    \inf_{a\in A} \abs{a-b}+\inf_{c\in C} \abs{b-c}
    \\
    & =\inf_{a\in A} \abs{a-b}+\dist(b,C) \leq \inf_{a\in A}\abs{a-b} + d_C(B). 
  \end{align*}
  Taking the infimum with respect to $b\in B$ yields
  (\ref{eq:triag_distdC}). The last inequality follows from $d_A(B)
  \leq \Hdist(A,B)$.   
\end{proof}
We identify $\R^2$ with the $xy$-plane in $\R^3$; similarly when
writing ``$[0,1]^2\subset\R^3$'', we identify $[0,1]^2$ with
$[0,1]^2\times\{0\}$, etc. 

\subsection{Polyhedral Surfaces}
\label{sec:polyhedral-surfaces}

Snowspheres will be approximated by polyhedral surfaces. We recall
some well-known facts. Let $S\subset \R^3$ be a polyhedral
surface homeomorphic to the sphere $\SB$.
% The general statement whether two homeomorphic
% manifolds are PL-homeomorphic is known as the (manifold version of
% the) \emph{Hauptvermutung}. It is false in dimension $n\geq 4$, see
% \cite{theHauptvermutung}.
The following is Theorem 17.12 in \cite{MoiseTop23}.
\begin{theorem*}[PL-Sch\"onflies Theorem for $\R^3$]
  \label{thm:PLSchoenflies}
  There is a PL-(piecewise linear) homeomorphism $h\colon \R^3\to
  \R^3$ such that $h(\partial [0,1]^3)=S$.
\end{theorem*}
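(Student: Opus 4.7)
The plan is to deduce the statement from the classical Alexander--Moise theorem that every polyhedral $2$-sphere in $\R^3$ bounds a piecewise-linear $3$-ball on each side (the complement being taken in the one-point compactification $\R^3\cup\{\infty\}$), and then to construct $h$ by an Alexander coning extension of a chosen boundary identification.

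First I would fix a PL identification $\varphi_0\colon\partial[0,1]^3\to S$. Any two polyhedral $2$-spheres admit a common PL subdivision via simplicial approximation together with an elementary stellar-moves argument (the PL Hauptvermutung in dimension two), so such a $\varphi_0$ exists. Assuming the PL Schönflies claim for $S$, both closed complementary components of $S$ in $\R^3\cup\{\infty\}$ are PL $3$-balls, and the same holds for $\partial[0,1]^3$. To extend $\varphi_0$ across the bounded side I would pick an interior point $p$ of the PL ball $U$ bounded by $S$, together with the center $q=(1/2,1/2,1/2)$ of the cube, star both triangulations from these points, and extend $\varphi_0$ linearly on each cone simplex with $q\mapsto p$; this Alexander trick produces a PL homeomorphism $[0,1]^3\to U$ agreeing with $\varphi_0$ on the boundary. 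Applying the same procedure to the unbounded balls (with $\infty\mapsto\infty$) and gluing along $S$ would yield the global PL homeomorphism $h\colon\R^3\to\R^3$.

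The substance therefore lies in the claim that $S$ bounds a PL $3$-ball, which I would prove by induction on the number of $2$-simplices of $S$, following Alexander's slicing strategy. After a small PL perturbation $S$ may be assumed in general position with respect to a linear height function, so that generic level planes meet $S$ in disjoint polygonal Jordan curves. On a generic plane $\Pi$ I would select an innermost such curve $\gamma$, bounding a polygonal disk $D\subset\Pi$ whose interior is disjoint from $S$. Cutting $S$ along $\gamma$ and capping each of the two resulting bordered polyhedral surfaces with a parallel copy of $D$ pushed slightly off $\Pi$ produces two polyhedral $2$-spheres $S_1,S_2$ of strictly smaller combinatorial complexity. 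By induction each $S_i$ bounds a PL ball $B_i$, and a further coning construction re-glues $B_1$ to $B_2$ along the two copies of $D$ to form a PL $3$-ball bounded by $S$.

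The main obstacle lies in the inductive step: one must verify that the capping procedure reduces a well-defined complexity, that the resulting $S_i$ really are $2$-spheres rather than surfaces of higher genus (which is ruled out by a Mayer--Vietoris or Euler-characteristic computation), and that the final gluing of $B_1$ and $B_2$ can be carried out in the PL category. This is precisely where the PL hypothesis is indispensable, since the analogous statement for merely topological embedded $2$-spheres is false, as the Alexander horned sphere shows. Since the result is classical, I would at this point defer the detailed combinatorial bookkeeping to \cite{MoiseTop23}.
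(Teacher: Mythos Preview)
The paper does not prove this statement at all: it is simply quoted as Theorem~17.12 of Moise's book \cite{MoiseTop23}, with no argument given. Your proposal therefore goes well beyond what the paper does, sketching the classical Alexander slicing proof and the coning extension, and then (appropriately) deferring the combinatorial details to the same reference.

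Your outline is essentially the standard one and is correct in spirit. Two small points are worth flagging. First, for the existence of a PL identification $\varphi_0\colon\partial[0,1]^3\to S$ you do not need anything as heavy as the $2$-dimensional Hauptvermutung: any two PL triangulations of $S^2$ are PL-homeomorphic by an elementary argument, or one can simply build $\varphi_0$ directly once both sides are known to be PL $3$-balls (choose any PL homeomorphism between the balls and restrict). Second, the extension to the unbounded side via coning ``with $\infty\mapsto\infty$'' produces a PL homeomorphism of $S^3$; to descend to a PL homeomorphism of $\R^3$ one must check that $\infty$ can be taken as a common vertex so that the map is PL in a neighborhood of $\infty$ in the usual sense and removing it yields a PL map of $\R^3$. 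This is routine but should be mentioned. Since the paper treats the theorem as a black box, your sketch---even with these caveats---already exceeds what is required.
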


\begin{cor}\label{cor:PLSchoenbiLip}
  Let $S$ be a polyhedral surface homeomorphic to $\SB$. Then the
  closure of the bounded component of $\R^3\setminus S$ is
  bi-Lipschitz equivalent to the cube $[0,1]^3$. 
\end{cor}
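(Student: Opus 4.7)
The plan is to apply the PL-Schönflies theorem above to obtain a PL homeomorphism $h\colon \R^3 \to \R^3$ with $h(\partial [0,1]^3) = S$, and then to show that the restriction $h|_{[0,1]^3}$ is the desired bi-Lipschitz equivalence onto the closure of the bounded component of $\R^3\setminus S$.

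First I would identify the image. Since $h$ is a homeomorphism of $\R^3$, it sends the two components of $\R^3 \setminus \partial [0,1]^3$ to the two components of $\R^3 \setminus S$. The set $h([0,1]^3)$ is compact, hence bounded, so $h(\inte [0,1]^3)$ must be the bounded component of $\R^3\setminus S$ and $h([0,1]^3)$ equals its closure.

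For the forward Lipschitz bound, I would use that $h$, being PL on $\R^3$, is affine on each simplex of some locally finite triangulation of $\R^3$. Only finitely many such simplices meet the compact cube $[0,1]^3$; let $L$ be the maximum of the operator norms of the linear parts on these simplices. Because $[0,1]^3$ is convex, the segment between any two points $x,y \in [0,1]^3$ stays in the cube and is cut into finitely many pieces on which $h$ is affine, so integrating along the segment yields $|h(x) - h(y)| \leq L|x-y|$.

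The one step requiring genuine care is the inverse bound, since $h([0,1]^3)$ need not be convex and the segment argument fails directly on it. I would get around this by enclosing $h([0,1]^3)$ in a closed Euclidean ball $K$, which is convex and compact, and applying the same piecewise-affine estimate to the PL map $h^{-1}$ on $K$: only finitely many simplices of the triangulation underlying $h^{-1}$ meet $K$, so $h^{-1}|_K$ is Lipschitz, and restricting to $h([0,1]^3) \subset K$ yields the desired Lipschitz bound for $h^{-1}$. Combining the two estimates gives the bi-Lipschitz equivalence.
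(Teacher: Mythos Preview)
Your proof is correct, and it is essentially the argument the paper has in mind: the corollary is stated immediately after the PL-Sch\"onflies theorem without any proof, so the paper treats it as an immediate consequence. Your write-up makes explicit the standard reason --- a PL homeomorphism restricted to a compact set is bi-Lipschitz, because only finitely many simplices are involved and one can pass to a convex compact neighbourhood to handle the inverse --- which is exactly what the paper leaves implicit.
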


% ==Definition of snowball/snowsphere
%\input{snowdefs} 
%
% snowdefs.tex
%
% Chapter 2, snowballs and snowspheres get defined
%
\section{Snowballs and Snowspheres}
\label{cha:snowb-snowsph}
\subsection{Generators}\label{sec:gen}
We first introduce some terminology. By the \emph{pyramid above}\index{pyramid
  above/below/double} (denoted by $\PC^+$) the unit square
$[0,1]^2\subset\R^2\subset \R^3$ we mean the pyramid with base
$[0,1]^2$ and tip $(\frac{1}{2},\frac{1}{2}, \frac{1}{2})$ (which is
the center of the unit cube $[0,1]^3$). The \emph{pyramid below} the
unit square is the one with base $[0,1]^2$ and tip
$(\frac{1}{2},\frac{1}{2}, -\frac{1}{2})$. We denote by $\mathcal{P}$
the \emph{double pyramid}\index{double pyramid} \label{not:doublepyr}
of the unit square, which is the union of the two pyramids defined
above. The double pyramid $\mathcal{P}(Q)$ of any square $Q\subset
\R^3$ is defined as the image of the double pyramid $\mathcal{P}$
under a similarity (of $\R^3$) that maps the unit square to $Q$. If we
give $Q$ an orientation we also speak of its pyramids \emph{above} and
\emph{below}. 

Consider two distinct unit squares in the grid $\Z^3$. Their double
pyramids intersect at most in a (common) face, which means they have
disjoint interiors. 
 
\smallskip
An $N$-\emph{generator}\index{generator} \label{not:generator} (for an integer $N\geq 2$) is a polyhedral surface $G\subset \R^3$ built from squares of side-length $\frac{1}{N}=\delta$ ($\delta$-\emph{squares}). We require:

\begin{enumerate}[\upshape (i)]
\item \label{def:gen1}
  $G$ is homeomorphic to the unit square $[0,1]^2$.
  
\item \label{def:gen2}
  The boundary of $G$ (as a surface) consists of the four sides of the unit square:
  \[\partial G = \partial[0,1]^2.\]
  
\item \label{def:gen3}
  $G$ is contained in the double pyramid $\mathcal{P}$ and intersects
  its boundary only in the boundary (the four edges) of the unit
  square: 
  \[G\subset \mathcal{P} \mbox{ and } G\cap \partial \mathcal{P}
  =\partial [0,1]^2.\] 
    
\item \label{def:gen4}
  The angle between two adjacent $\delta$-squares is a multiple of
  $\frac{\pi}{2}$ (so it is $\frac{\pi}{2},\pi$, or
  $\frac{3\pi}{2}$). 
    
\item \label{def:gen5}
  The generator $G$ is \emph{symmetric}, meaning it is invariant under
  orientation preserving symmetries of the unit square $[0,1]^2$; more precisely under
  rotations by multiples of $\pi/2$ around the axis
  $\{(\frac{1}{2},\frac{1}{2},z)\}$, and reflections on the planes
  $\{x=\frac{1}{2}\},\{y=\frac{1}{2}\}, \{x=y\}$, and $\{y=1-x\}$. 
  % save counter of enumeration
  \setcounter{mylistnum}{\value{enumi}}

\end{enumerate}

\begin{definition}
  We say a surface that can be decomposed into squares having edges in
  a grid $\delta\Z^3$ \emph{lives}\index{lives in a grid}  in the grid
  $\delta\Z^3$. Similarly, we say a domain \emph{lives} in a grid
  $\delta\Z^3$ if this is true for its boundary. 
\end{definition}

So an $N$-generator lives in the grid $\frac{1}{N}\Z^3$. 
For a given $N$ there can be only finitely many such generators.

\begin{figure}
  \centering
  \includegraphics{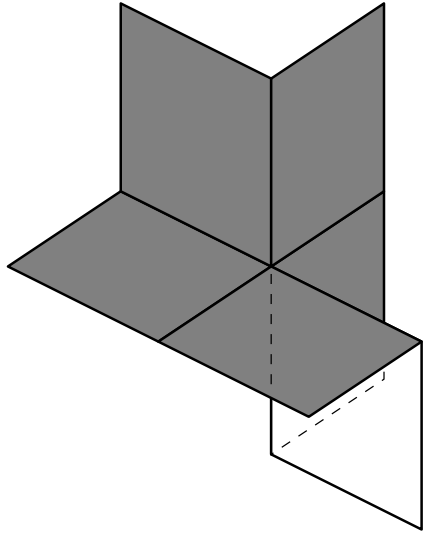}

  \caption{The forbidden configuration.}
  \label{fig:annoy}
\end{figure}
\smallskip
One last assumption about generators will be made, though it is not strictly
necessary. However, it will simplify the decomposition of the snowball
$\mathcal{B}$ in Section \ref{sec:decomposing-snowball} considerably.  
We do not allow the situation indicated in Figure \ref{fig:annoy} to
occur. To be more precise consider an interior \defn{vertex} of $G$,
meaning a point $v\in (G\cap \delta\Z^3)\setminus\partial [0,1]^2$. At
$v$ it is possible that $3,4,5$ or $6$ $\delta$-squares intersect. We
do not allow $6$ $\delta$-squares around $v$ which form successive
angles of $\pi,\frac{3}{2}\pi,\pi/2,\pi,\frac{3}{2}\pi,\pi/2$. 
%Put differently we do not allow the $\delta$-squares around $v$ to be
%spanned by succesive vectors of $\delta e_1,\delta e_2,- \delta e_3,
%-\delta e_1, \delta e_3, -\delta e_2, \delta e_1$.
All
other (allowed) possibilities (up to rotations/reflections) of how
$\delta$-squares may intersect in a vertex are indicated in Figure
\ref{fig:localRj}. % The final assumption is the following.

\begin{enumerate}[\upshape (i)]
  % reset counter
  \setcounter{enumi}{\value{mylistnum}}
\item The generator $G$ does not contain a \defn{forbidden configuration} as in Figure \ref{fig:annoy}. 
\end{enumerate}

In the next section we will define the \defn{approximations} $\SC_j$
of the snowsphere, which will be built successively from generators.

{\smallskip\noindent
{\itshape Remarks.}
%
%  \smallskip
  \begin{itemize}
  \item
    Condition \eqref{def:gen1} in the definition of a generator is
    clearly necessary for $\mathcal{S}_j$ to be homeomorphic to the
    sphere $\mathbb{S}$.  
  \item Condition \eqref{def:gen2} enables us to replace the $\delta_j$-squares by a scaled copy of a generator. 
  \item
    The third condition \eqref{def:gen3} guarantees that the
    approximations $\mathcal{S}_j$ (and ultimately the snowsphere
    $\SC$) are topological spheres. See the next subsection.
  \item
    The fourth condition \eqref{def:gen4} is equivalent to saying that a generator lives in the grid $\frac{1}{N}\Z^3$. It is most likely superfluous. However, we were not able to find a convincing argument for this. 
  \item
    The fifth condition \eqref{def:gen5} is necessary for our method
    to work. Avoiding it would be very desirable. Indeed, tackling the
    non-symmetric case might be the first step towards a general
    theory. 
  \item The last condition is imposed to avoid more technicalities
    when decomposing the snowball in Section
    \ref{sec:whitn-decomp-snowb}. See the Remark on page \pageref{rem:forbiddenconf}. 
  \end{itemize}
}

\subsection{Approximations of the Snowsphere}\label{sec:approx_snowspheres}
A \emph{snowball} $\mathcal{B}$ \index{B@$\mathcal{B}$} is a
three-dimensional analog of the domain bounded by the snowflake
curve. It is a compact set in $\R^3$ homeomorphic to the closed unit ball
$\mathbb{B}=\{x\in \R^3: \abs{x}\leq 1\}$. The corresponding
\emph{snowsphere} $\mathcal{S}:=\partial \mathcal{B}$ is homeomorphic
to the unit sphere $\mathbb{S}=\{x\in\R^3:\abs{x}=1\}=\partial
\mathbb{B}$. 
We will obtain $\SC$ as the Hausdorff limit of \defn{approximations}
$\SC_j$. 
To obtain $\SC_{j+1}$ from $\SC_j$ we 
replace small squares by scaled generators.

\smallskip
The \defn{$0$-th approximation of the snowsphere} $\mathcal{S}_0$ is
the surface of the unit cube, $\mathcal{S}_0 := \partial [0,1]^3$. Now
replace each of the six faces of $\mathcal{S}_0$ by a rotated copy of
an $N_1$-generator to get $\mathcal{S}_1$, the \defn{first
  approximation of the snowsphere}. It is a polyhedral surface built
from $\frac{1}{N_1}$-squares. We construct $\mathcal{S}_2$ by
replacing each $\frac{1}{N_1}$-square of $\mathcal{S}_1$ by a scaled
(by the factor $\frac{1}{N_1}$) and rotated copy of an
$N_2$-generator. Inductively the $j$-th \emph{approximations of the
  snowsphere} $\mathcal{S}_j$ \label{not:japprox} are constructed. Each
$\mathcal{S}_j$ is a polyhedral surface built from squares of
side-length 
\begin{equation}\label{eq:defdeltaj}\index{deltaj@$\delta_j$}
  \delta_j:=\frac{1}{N_1}\times\dots\times\frac{1}{N_j}.
\end{equation}
It will be convenient to
set $\delta_0:=1$ and $\delta_\infty:=0$.
%This should be viewed as the relevant scaling factor
Note that when constructing $\mathcal{S}_{j+1}$ from $\mathcal{S}_{j}$
each $\delta_j$-square is replaced by the \emph{same}
$N_{j+1}$-generator. We do however allow two $\delta_j$-squares $Q_1$
and $Q_2$ to be replaced by scaled copies of the $N_{j+1}$-generator
with \emph{different orientation}. So the generator can ``stick out''
on one square and ``point inwards'' on another. In each step
a different generator may be used. We do require that
\begin{equation}\label{eq:Nmax}\index{Nmax@$N_{\max}$}
  N_{\max} :=\max_j{} N_j <\infty.
\end{equation}
This implies that only finitely many different generators are used.
 The construction may be paraphrased as follows. Pick a finite set of
 generators. In each step pick a generator from this set to construct
 the next approximation.

All relevant constants will depend on $N_{\max}$ only. Such a constant
is called \emph{uniform}\index{uniform constant}. 

\begin{lemma}
  \label{lem:Sspheres}
  The approximations $\SC_j$ are topological spheres.
\end{lemma}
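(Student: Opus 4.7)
The plan is to proceed by induction on $j$. The base case $j=0$ is immediate since $\SC_0=\partial[0,1]^3$ is the boundary of a cube. For the inductive step, I will build an explicit homeomorphism $f_{j+1}\colon\SC_j\to\SC_{j+1}$ directly from the replacement procedure defining $\SC_{j+1}$.

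Recall that $\SC_{j+1}$ is obtained from $\SC_j$ by replacing each $\delta_j$-square $Q\subset\SC_j$ by a scaled, rotated and oriented copy $G_Q$ of the $N_{j+1}$-generator. The crucial set-theoretic observation is that for any two distinct $\delta_j$-squares $Q,Q'$ of $\SC_j$,
\begin{equation*}
  G_Q\cap G_{Q'}=\partial Q\cap \partial Q'.
\end{equation*}
To prove this, note that by condition~(iv) both squares lie in the grid $\delta_j\Z^3$; the scaled version of the double-pyramid observation from Section~\ref{sec:gen} then gives $\mathcal{P}(Q)\cap\mathcal{P}(Q')\subset\partial\mathcal{P}(Q)\cap\partial\mathcal{P}(Q')$. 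Condition~(iii) says $G_Q\subset\mathcal{P}(Q)$ and $G_Q\cap\partial\mathcal{P}(Q)=\partial Q$, and analogously for $Q'$. Combining, $G_Q\cap G_{Q'}\subset\partial\mathcal{P}(Q)\cap\partial\mathcal{P}(Q')$, and intersecting with $G_Q$ yields $G_Q\cap G_{Q'}\subset\partial Q\cap G_{Q'}$; by symmetry $G_Q\cap G_{Q'}\subset\partial Q\cap\partial Q'$. The reverse inclusion is immediate from condition~(ii), which gives $\partial G_Q=\partial Q$.

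Conditions~(i) and (ii) assert that each $G_Q$ is a topological disk with boundary $\partial Q$, so the Alexander trick produces a homeomorphism $h_Q\colon Q\to G_Q$ restricting to the identity on $\partial Q$. These piecewise maps agree on every shared edge or vertex, so they glue into a continuous map $f_{j+1}\colon\SC_j\to\SC_{j+1}$. The intersection identity shows $f_{j+1}$ is a bijection, and since $\SC_j$ is compact and $\R^3$ is Hausdorff, $f_{j+1}$ is a homeomorphism. Hence $\SC_{j+1}$ is a topological sphere, completing the induction.

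The only real obstacle is the intersection identity: without it, two generators $G_Q,G_{Q'}$ could meet in extra pieces outside $\partial Q\cap\partial Q'$, producing a self-intersection that would destroy the sphere property. The disjoint-interior property of double pyramids in $\delta_j\Z^3$, together with generator conditions~(iii) and~(iv), is exactly what rules this out.
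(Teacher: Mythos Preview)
Your proof is correct and follows essentially the same approach as the paper's: both build a homeomorphism $\SC_j\to\SC_{j+1}$ by gluing per-square homeomorphisms $Q\to G_Q$ that fix $\partial Q$, and both establish injectivity via the double-pyramid condition~(iii) to show that distinct generators can meet only along $\partial Q\cap\partial Q'$. Your writeup makes the intersection identity $G_Q\cap G_{Q'}=\partial Q\cap\partial Q'$ slightly more explicit than the paper's, but the argument is the same.
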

\begin{proof}
  Let $g_0\colon \SB\to \SC_0=\partial [0,1]^3$ be a
  homeomorphism. For every $N_{j+1}$-generator $G_{j+1}$ we can find a
  homeomorphism $[0,1]^2\to G_{j+1}$ which is constant on $\partial
  [0,1]^2$. Apply this homeomorphism to every $\delta_j$-square in
  $\SC_{j}$ to get a continuous and surjective map
  \begin{equation*}
    g_{j+1}\colon \SC_{j}\to \SC_{j+1},
  \end{equation*}
  which is constant on the $1$-skeleton of $\SC_{j}$ (edges of
  $\delta_j$-squares in $\SC_j$). 
  To see injectivity consider two distinct
  $\delta_{j}$-squares $Q,Q'\subset \SC_{j}$. 
  Then $G:=g_{j+1}(Q), G':=g_{j+1}(Q') \subset\SC_{j+1}$ are scaled
  (by $\delta_j$) copies of the $N_{j+1}$-generator. Note that they
  are contained in the double pyramids, $G\subset\PC(Q),
  G'\subset\PC(Q')$. By condition (\ref{def:gen3}) of generators
  \begin{align*}
    g_{j+1}(\inte Q) & =\inte G \subset \inte \PC(Q) \text{ and}
    \\
    g_{j+1}(\inte Q') & \subset \inte \PC(Q'). 
  \end{align*}
  Thus $g_{j+1}(\inte Q) \cap g_{j+1}(\inte Q')=\inte
  \PC(Q)\cap\inte \PC(Q')=\emptyset$. Note also that $\inte \PC(Q)$ does not
  intersect the $1$-skeleton of $\SC_j$. Thus $g_{j+1}$ is injective,
  hence a homeomorphism. This shows by induction that every
  approximation $\SC_j$ is a topological sphere. 
\end{proof}

The approximations $\SC_j$ are polyhedral surfaces. Thus
$\R^3\setminus\SC_j$ has two components by the PL-Sch\"onflies theorem.

\smallskip
Call the edges/vertices of a $\delta_j$-square in $\SC_j$
$\delta_j$\defn{-edges/vertices}. Then the approximations $\SC_j$ form
a \defn{cell complex} in a natural way. 
Namely the $\delta_j$-squares/edges/ver\-ti\-ces in $\SC_j$, are the $2$-,
$1$-, and $0$-cells. 

\subsection{Snowspheres}\label{sec:S_snowsphere}
Note that $\Hdist(\SC_j,\SC_{j+1})\leq \delta_j\leq 2^{-j}$.
Thus we can define 
the snowsphere $\mathcal{S}$ as the limit of the approximations 
$\mathcal{S}_j$ in the Hausdorff topology. It is possible to prove that
$\SC$ is a topological sphere as in Lemma \ref{lem:Sspheres}. However
we would have to make additional assumptions on the maps
$g_j$. Therefore we postpone the proof that $\SC$ is homeomorphic to
$\SB$ until Corollary \ref{cor:S_S2}.

We call the closure of the bounded components of $\R^3\setminus \SC$
the \defn{snowball} $\BC$. It will follow from Theorem 1B
that $\BC$ is homeomorphic to a closed ball. See also Corollary
\ref{cor:BintS}. 
 
%The domain bounded by
%$\mathcal{S}$ will be the snowball $\mathcal{B}$. 
When a snowsphere
$\mathcal{S}$ is given, ``$N_j$-generator'' will always refer to the
one used in the $j$-th step of the construction. 

It will often be convenient to consider only one ``face''
$\mathcal{T}$ of the snowsphere, \index{face of
  snowsphere/approximation} i.e., the part of it that was
constructed from one of the sides of the surface of the unit
cube. More precisely let $\mathcal{T}_0=[0,1]^2$ be the unit square,
$\mathcal{T}_1$ be the $N_1$-generator, $\mathcal{T}_2$ the surface
obtained by replacing each $\frac{1}{N_1}$-square by a scaled copy
of the $N_2$-generator, and so on. Then $\mathcal{T}:=\lim_j
\mathcal{T}_j$\label{not:face}\index{T@$\mathcal{T}, \mathcal{T}_j$}
in the Hausdorff topology.

\begin{figure}
  \centering
  \mbox{
  \subfigure[Generator with enclosing pyramid.]
      {
    \scalebox{1.}{\includegraphics{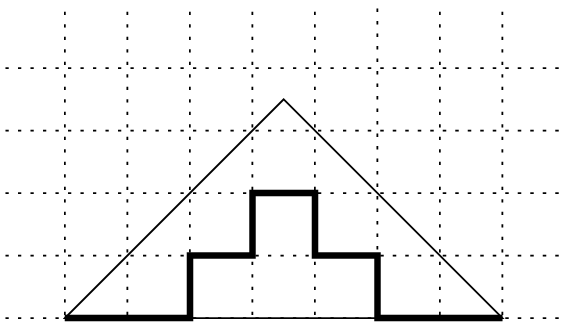}}
    \label{pic:gen_encl}
      }\quad
  \subfigure[Pyramids on each $\frac{1}{N_1}$-square.]
      {
    \scalebox{1.}{\epsfig{file=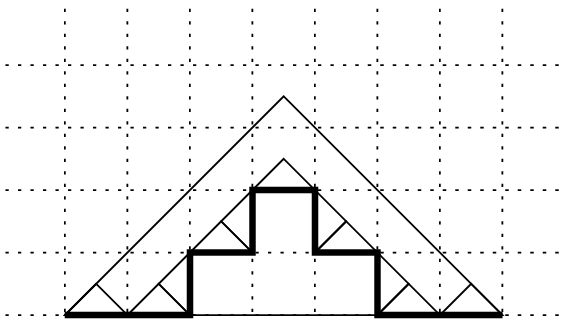}}
      \label{pic:gen_pyra}
      }
  }
  \caption{Generator and pyramids in the grid $\frac{1}{N_1}\Z^3$.}
  \label{pic:gen_e_p}
\end{figure}

Consider the $N_1$-generator ($=\mathcal{T}_1$) and its enclosing double pyramid $\mathcal{P}$. Figure \ref{pic:gen_encl} shows a 2-dimensional picture where we cut through the middle (along the plane $y=\frac{1}{2}$). Only the upper pyramid is depicted. For convenience the picture also indicates the grid $\frac{1}{N_1}\Z^3$ (or rather its 2-dimensional intersection $\frac{1}{N_1}\Z^2$). We note that 
\begin{itemize}
  \item the height of $\mathcal{T}_1$ is at most $\frac{1}{2} -\frac{3}{2}\frac{1}{N_1}$.
\end{itemize} 
 Here the precise meaning of ``height''\index{height} is the maximal distance of a point in the generator from the base square $[0,1]^2$. This is easily seen from Figure \ref{pic:gen_encl}. Indeed, the 
 next layer of $\frac{1}{N_1}$-cubes (having height
 $\frac{1}{2}-\frac{1}{2N_1}$) would intersect the boundary of the
 double pyramid (or lie outside). If $N_1$ is even the height is at
 most $\frac{1}{2}-\frac{2}{N_1}$. 

 The projection of any
 generator to the $xy$-plane is the square $[0,1]^2$. Thus we note the
 following consequence of the above:
 \begin{equation}
   \label{eq:HdistSj}
   \Hdist_\infty(\SC_j,\SC_{j+1})\leq\Hdist(\SC_j,\SC_{j+1})\leq
   \left( \frac{1}{2} -\frac{3}{2}\frac{1}{N_{j+1}}\right) \delta_j.
 \end{equation}
 Here ``$\Hdist_\infty$'' is the Hausdorff distance taken with respect
 to the maximum metric; see Subsection \ref{sec:whitn-decomp-snowb}.

Put pyramids on the $\frac{1}{N_1}$-squares of $\mathcal{T}_1$. These
stay inside the double pyramid $\mathcal{P}$; see Figure
\ref{pic:gen_pyra}. Consider the pyramids of \emph{interior
  $\frac{1}{N_1}$-squares}, \index{non-boundary squares}i.e., squares
that do not intersect the boundary of the unit square
$\partial[0,1]^2$. These have distance at least
$\frac{\sqrt{2}}{2}\frac{1}{N_1}$ from the surface of the enclosing
double pyramid $\mathcal{P}$.   

If we now replace each $\frac{1}{N_1}$-square by the $N_2$-generator
to get $\mathcal{T}_2$, we see that $\mathcal{T}_2$ stays inside the
$\frac{1}{N_1}$-pyramids depicted in Figure \ref{pic:gen_pyra}. 
Induction yields that all $\mathcal{T}_j$ and hence $\mathcal{T}$ are
contained in the double pyramid $\mathcal{P}$. Furthermore, if $Q_j$ is
an interior $\delta_j$-square of $\mathcal{T}_j$, then the double
pyramid of $Q_j$ has distance 
$\sqrt{2}\delta_j/2$ from the boundary $\partial \mathcal{P}$.
% By
% iteration we get that the interior $\delta_j$-squares of
% $\mathcal{T}_j$ have distance at least $\frac{\sqrt{2}}{2}\delta_j$
% from the boundary $\partial\mathcal{P}$. 
We conclude

\begin{itemize}
\item $\mathcal{T}$ is contained in the double pyramid $\mathcal{P}$
  and intersects its boundary only in the boundary of the unit square: 
  \[\mathcal{T}\subset \mathcal{P} \mbox{ and } \mathcal{T}\cap
  \partial \mathcal{P} =\partial [0,1]^2.\] 
  
\item The height of $\mathcal{T}$ is at most
  $\frac{1}{2}-\frac{1}{N_1}\leq\frac{1}{2}-\frac{1}{N_{\max}}$. ($*$) 

\end{itemize} 

Again by ``height'' we mean the maximal distance of a point in
$\mathcal{T}$ from the base square $[0,1]^2$. 
The projection of $\mathcal{T}$ to the $xy$-plane is still the square $[0,1]^2$.
Thus we conclude by ($*$) above
 that the Hausdorff distance between $\mathcal{S}_j$ and $\mathcal{S}$
 satisfies
\begin{equation}\label{eq:dSjS}
  \Hdist(\mathcal{S}_j,\mathcal{S})\leq \delta_j\left(\frac{1}{2}-\frac{1}{N_{\max}}\right).
\end{equation}

\medskip
Recall that the $j$-th approximation of the snowsphere $\mathcal{S}_j$
was built from $\delta_j$-squares. The part of the snowsphere which
was constructed by replacing one such $\delta_j$-square $Q\subset\SC_j$
(infinitely often) by generators is called a \emph{cylinder} of
\emph{order} $j$ (or
\emph{$j$-cylinder}). By the previous argument this cylinder is contained in
the double pyramid $\PC(Q)$ of $Q$, so we can define more precisely
\begin{equation*}
  X_j=X_j(Q):=\PC(Q)\cap\SC
\end{equation*}
to be the \defn{$j$-cylinder with base} $Q$.
The \emph{set of all}
$j$-cylinders is denoted by $\mathbf{X}_j$. It will be convenient to
let $\SC$ be the (only) $-1$-cylinder. Set $\delta_{-1}:=2$ so that
\begin{equation*}
  \diam X_j \leq \sqrt{2} \delta_j,
\end{equation*}
for every $j$-cylinder $X_j$.

For every point $x\in\mathcal{S}$ there is a (not necessarily unique)
sequence $(X_j)_{j\in\N}$, where $X_j$ is a $j$-cylinder such that 
\begin{equation}\label{eq:X0Xn}
  X_0\supset X_1\supset X_2\supset \dots \supset \bigcap_j X_j=\{x\}.
\end{equation}

If we use the same generator with the same orientation throughout the construction of $\mathcal{S}$, 
we get a \emph{self-similar snowsphere}. \index{snowsphere!self similar}In that case each cylinder is a (scaled and rotated) copy of $\mathcal{T}$.

Now consider a $\delta_j$-square $Q\subset \SC_j$, its double pyramid $\mathcal{P}(Q)$, and its cylinder $X_j=X_j(Q)$. Then $X_j$ is contained in $\mathcal{P}(Q)$ and intersects it only in the boundary of $Q$ by the same reasoning as above: 
\[X_j\cap\mathcal{P}(Q)=\partial Q.\] 

Now let $R\subset\SC_j$ be a second $\delta_j$-square. Their double pyramids
$\mathcal{P}(Q)$ and $\mathcal{P}(R)$ intersect only at the boundary:
$\mathcal{P}(Q)\cap\mathcal{P}(R)=\partial
\mathcal{P}(Q)\cap\partial\mathcal{P}(R)$ (they have disjoint
interior). It follows that the cylinders
$X_j(Q)\subset\mathcal{P}(Q)$ and
$X_j(R)\subset\mathcal{P}(R)$ intersect only in the intersection
of $Q$ and $R$: 
\begin{equation*}
  X_j(Q)\cap X_j(R)=Q\cap R.   
\end{equation*}
Thus two distinct non-disjoint $j$-cylinders can intersect in an
edge or a vertex (contained in $\delta_j\Z^3$). Hence the $j$-cylinders
form a cell complex in a natural way.  
\begin{lemma}
  \label{lem:comb_cyl_squares}
  The set of $\delta_j$-squares in the approximations $\SC_j$ is
  combinatorially equivalent to the set of $j$-cylinders. More precisely
  map each $\delta_j$-edge/vertex to itself and each $\delta_j$-square
  $Q\subset\SC_j$ to its cylinder 
  $X_j(Q)\in \X_j$,
  \begin{equation*}
    Q \mapsto X_j(Q).
  \end{equation*}
  This map is a cell complex isomorphism. 
\end{lemma}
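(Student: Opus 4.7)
The proof is essentially a book-keeping exercise, since almost all of the geometric work has already been carried out in the paragraph preceding the lemma. The strategy is to observe that the two cell complexes share the same $0$- and $1$-skeleton, and that the $2$-cell boundary relation is literally the same.

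First, I would verify that the $j$-cylinders do form a cell complex whose $0$- and $1$-skeleton coincides with that of $\SC_j$. The generator axiom \eqref{def:gen3} applied inductively (as was already used to conclude $X_j(Q)\cap\mathcal{P}(Q)=\partial Q$) gives $\partial X_j(Q)=\partial Q$, where the boundary is taken in $\SC$. The identity $X_j(Q)\cap X_j(R)=Q\cap R$, which was established just above the lemma, then says that any two distinct $j$-cylinders meet only along a shared $\delta_j$-edge or $\delta_j$-vertex in $\delta_j\Z^3$. Consequently the $j$-cylinders, together with the $\delta_j$-edges and $\delta_j$-vertices sitting on their boundaries, form a regular cell complex whose $1$-skeleton is exactly the $1$-skeleton of $\SC_j$.

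Next I would check that the assignment $Q\mapsto X_j(Q)$ is a bijection from the set of $2$-cells of $\SC_j$ to $\X_j$. Surjectivity is built into the definition of $\X_j$: every $j$-cylinder has a unique base square $Q\subset\SC_j$. Injectivity follows because if $X_j(Q)=X_j(R)$ for distinct $\delta_j$-squares $Q,R$, then by $X_j(Q)\cap X_j(R)=Q\cap R$ we would have $Q=R$, a contradiction. Extending this assignment by the identity on $0$- and $1$-cells gives a bijection of all cells of the two complexes.

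Finally, I would verify that incidences are preserved. A $\delta_j$-vertex $v$ or $\delta_j$-edge $e$ lies in the boundary (in $\SC_j$) of the $2$-cell $Q$ if and only if $v,e\subset\partial Q$. But $\partial X_j(Q)=\partial Q$, so this holds if and only if $v,e$ lie in the boundary (in $\SC$) of the cylinder $X_j(Q)$. Hence the bijection respects incidence in both directions, and is therefore a cell complex isomorphism. I do not expect any genuine obstacle here; the only subtlety is making sure that the identification of the $1$-skeletons is legitimate, which reduces precisely to the intersection property already established.
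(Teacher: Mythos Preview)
Your proposal is correct and is exactly the kind of argument the paper has in mind: in fact the paper gives no separate proof of this lemma at all, treating it as evident from the intersection identity $X_j(Q)\cap X_j(R)=Q\cap R$ established in the paragraph immediately preceding it. Your write-up simply makes the implicit bookkeeping explicit, using precisely those facts; the only cosmetic point is that in your injectivity step the conclusion ``$Q=R$'' should more properly read ``$X_j(Q)=Q\cap R$ is at most an edge, contradicting that $X_j(Q)$ is a topological disk,'' but this does not affect the validity of the argument.
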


%We conclude that the snowsphere $\mathcal{S}$ has no self-intersections.

\subsection{Combinatorial Distance on $\SC$}

As a subset of $\R^3$, the snowsphere $\mathcal{S}$ inherits the
Euclidean metric that we denote by $|x-y|$. Often it will be
convenient to describe distances in purely combinatorial terms. 
Given points $x,y\in \SC$ let
\begin{equation}
  \label{eq:defjxy}
  j(x,y):=\min\{j: \text{ there exist disjoint } j\text{-cylinders } X_j\ni x,
  Y_j\ni y\}.
\end{equation}
One may view $\SC$ as the Gromov-Hausdorff limit of
$j$-cylinders. 
The $j=j(x,y)$-th approximation $\SC_j$ is the first in which it is
possible to distinguish $x$ and $y$.

\begin{lemma}\label{lem:pseudcomp}
  For all $x,y\in\mathcal{S}$ we have
  \begin{equation}
    \abs{x-y} \asymp \delta_j, 
  \end{equation}
  where $j=j(x,y)$ and a constant
  $C(\asymp)=C(N_{\max})$.
\end{lemma}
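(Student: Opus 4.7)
The plan is to establish the inequalities $|x-y|\gtrsim\delta_j$ and $|x-y|\lesssim\delta_j$ separately, where throughout $j=j(x,y)$ and all implicit constants depend only on $N_{\max}$.

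For the upper bound I invoke the minimality of $j$. Pick any $(j-1)$-cylinders $X_{j-1}\ni x$ and $Y_{j-1}\ni y$ (the boundary case $j=0$ is handled by the convention that $\SC$ is the unique $(-1)$-cylinder and $\delta_{-1}=2$). If these were disjoint, one would have $j(x,y)\leq j-1$, contradicting the definition of $j$; hence $X_{j-1}\cap Y_{j-1}\neq\emptyset$. The diameter bound $\diam X_k\leq\sqrt{2}\,\delta_k$ established in Subsection~\ref{sec:S_snowsphere} together with the triangle inequality then gives
\[
|x-y|\leq 2\sqrt{2}\,\delta_{j-1}\leq 2\sqrt{2}\,N_{\max}\,\delta_j.
\]

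For the lower bound I use the existence of disjoint $j$-cylinders $X_j(Q)\ni x$ and $X_j(R)\ni y$ furnished by the definition of $j$. By the cell complex isomorphism of Lemma~\ref{lem:comb_cyl_squares}, $Q\cap R=\emptyset$, so $Q$ and $R$ are two disjoint $\delta_j$-squares living in the cubical grid $\delta_j\Z^3$. A short grid inspection shows that any two disjoint unit squares in $\Z^3$ are at $\ell^\infty$-distance at least $1$, so $\norm{Q-R}_\infty\geq\delta_j$. The height estimate $(*)$, applied to $X_j(Q)$ (whose defining tail sequence of generators still satisfies $N_i\leq N_{\max}$ after rescaling), yields the containment
\[
X_j(Q)\subset Q\times[-h,h],\qquad h=\bigl(\tfrac{1}{2}-\tfrac{1}{N_{\max}}\bigr)\delta_j<\tfrac{1}{2}\delta_j,
\]
where $[-h,h]$ is the interval in the direction normal to the plane of $Q$; the analogous containment holds for $X_j(R)$.

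The main technical step is then a short case analysis by the relative orientation of the axis-aligned squares $Q$ and $R$: (i) coplanar in a common plane, (ii) parallel on distinct planes (hence at normal separation $\geq\delta_j$), or (iii) lying on perpendicular planes. In each of the three cases, at least one coordinate axis exhibits an $\ell^\infty$-gap of at least $\delta_j-2h=2\delta_j/N_{\max}$ between the enveloping boxes $Q\times[-h,h]$ and $R\times[-h,h]$, the tight case being that of parallel planes one grid step apart. Consequently
\[
|x-y|\geq\norm{x-y}_\infty\geq \frac{2}{N_{\max}}\,\delta_j,
\]
and combined with the upper bound this gives $|x-y|\asymp\delta_j$ with $C(\asymp)=C(N_{\max})$. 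The only obstacle is the case analysis in the lower bound, which is bookkeeping rather than a genuine difficulty once the height estimate $(*)$ is in hand.
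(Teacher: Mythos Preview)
Your proof is correct and follows essentially the same approach as the paper: the upper bound via non-disjoint $(j-1)$-cylinders is identical, and for the lower bound the paper simply asserts that disjoint $j$-cylinders are closest when their bases are opposite faces of a $\delta_j$-cube, arriving at the same constant $2/N_{\max}$. Your case analysis on the relative orientation of the base squares is a more explicit justification of that assertion, but the underlying idea---bounding the cylinders by height-$h$ slabs over their bases and exploiting the integer grid structure---is the same.
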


\begin{proof}
  Let $x,y\in \SC$ be arbitrary, and
  let $j:=j(x,y)$.
  Consider $(j-1)$-cylinders $X_{j-1}\ni x$ and $Y_{j-1}\ni y$. Then $X_{j-1}\cap Y_{j-1}\neq \emptyset$, by the definition of $j(x,y)$. 

Therefore
  \begin{equation}
    |x-y|\leq\diam X_{j-1}+\diam Y_{j-1}=\sqrt{2}\delta_{j-1}+\sqrt{2}\delta_{j-1}\leq 2\sqrt{2}N_{\max}\delta_j.
  \end{equation}
  For the other inequality let $X_j\ni x$ and $Y_j\ni y$ be disjoint
  $j$-cylinders. Note that two disjoint $j$-cylinders are closest when
  their bases are opposite faces of a $\delta_j$-cube. Their distance
  then is at least
  \begin{equation*}
  \delta_j-2\delta_j\left(\frac{1}{2}-\frac{1}{N_{\max}}\right) =
  \frac{2\delta_j}{N_{\max}},    
  \end{equation*}
  which is 
  the distance of base squares $-$ twice the height of $j$-cylinders, by Subsection \ref{sec:S_snowsphere}. Hence
  \begin{equation}
    |x-y|\geq \dist(X_j,Y_j)\geq\frac{2\delta_j}{N_{\max}},
  \end{equation}
  which finishes the proof.
\end{proof}

The last lemma shows that $\delta(x,y):= \delta_{j(x,y)}$ is a
\defn{quasimetric}. However $\delta(x,y)$ will violate the triangle
inequality. 

\subsection{Example}\label{sec:example}
Our main example to illustrate our construction will be the self-similar snowball with generator as illustrated in Figure \ref{fig:5gen}. It is the unit square divided into $25$ $\frac{1}{5}$-squares where we put a $\frac{1}{5}$-cube onto the middle square.

\begin{figure}[b]
  \centering
  \scalebox{0.8}{\includegraphics{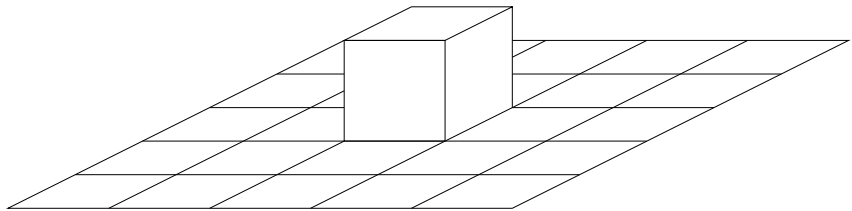}}
  \caption{Generator used for our main example.}
  \label{fig:5gen}
\end{figure}

\begin{notation}
  When referring to this particular example we will always use
  ``$\;\,\widehat{}\;\,\,$'', i.e., $\widehat{\mathcal{S}}$ denotes
  this snowsphere, \index{S
    hat@$\widehat{\SC}$}$\widehat{\mathcal{S}}_j$ its $j$-th
  approximation, and so on. Then $\widehat{\delta}_j=5^{-j}$.
\end{notation}

% ==Uniformizing the the snowsphere
%\input{unifsphere}

\section{Uniformizing the Snowsphere}
\label{cha:unifsphere}

\subsection{Introduction}
\label{sec:introunif}

In this section we map the snowsphere $\mathcal{S}$ to the unit sphere $\mathbb{S}$ by a quasisymmetry $f$, i.e., prove Theorem 1A. We call $f$ a \emph{uniformization} of the snowsphere $\mathcal{S}$. Recall from equation (\ref{eq:X0Xn}) 
that for every point $x\in\mathcal{S}$ there is a sequence $X_0\supset
X_1\supset X_2\supset \dots,\; X_j\in \X_j$, such that $\bigcap_j
X_j=\{x\}$. It will therefore be enough to map the $j$-cylinders
$X_j\subset\mathcal{S}$ to $j$-\emph{tiles} \index{tiles}
\label{not:prime} $X'_j\subset\mathbb{S}$, which will again satisfy
$X'_0\supset X'_1\supset X'_2\supset \dots$ . ``Cylinders'' live in
the snowsphere $\SC$ and ``tiles'' on the unit sphere
$\mathbb{S}$. Generally objects in $\SB$ will be denoted with a
``prime'' ($X',x'$, and so on), to distinguish them from objects in
the snowsphere $\SC$ and its approximations $\SC_j$. We will then
define 
\begin{equation}
  f(x)=x', \; \text{ where } \{x'\}=\bigcap_j X'_j.
\end{equation}
The decomposition of the unit sphere $\mathbb{S}$ into $j$-tiles $X'_j$ is done by using the uniformization of the $j$-th approximation of the snowsphere $\mathcal{S}_j$.

\smallskip
The proof that the map $f$ is a quasisymmetry relies on two
facts. First, at most $6$ $j$-cylinders (and thus $j$-tiles) can
intersect in a common vertex.
Second, two sets of $j$-tiles and $k$-tiles which ``have the same combinatorics'' are actually \emph{conformally equivalent}. The quasisymmetry is then essentially an easy consequence of the Koebe distortion theorem.

\subsection{Uniformizing the approximations $\SC_j$}
\label{sec:mapcyl}

Consider the $j$-th approximation $\mathcal{S}_j$ of the snowsphere $\mathcal{S}$. This is a polyhedral surface where each face is a $\delta_j$-square. We will view $\mathcal{S}_j$ as a \emph{Riemann surface}. To do this we need \emph{conformal coordinates} on $\mathcal{S}_j$, meaning that changes of coordinates are conformal maps. 

\subsubsection{Conformal Coordinates on the Approximations
  $\mathcal{S}_j$}
\label{sec:conf-coord-appr}
\index{conformal coordinates}
\begin{itemize}
\item For each $\delta_j$-square $Q$ the affine, orientation preserving map $\inte Q\to \inte [0,1]^2$ is a chart.
\item For two \emph{neighboring}\index{neighboring!squares} 
$\delta_j$-squares $P$, $Q$ (i.e., ones which share an edge), the map 
$$\inte (P\cup Q)\to \inte ([0,2]\times [0,1]),$$ which maps $P$
(affinely, orientation preserving) to $[0,1]^2$, $Q$ (affinely,
orientation preserving) to $[1,2]\times[0,1]$, and $P\cap Q$ to
$\{1\}\times [0,1]$, is a chart. Using (hopefully) intuitive notation
we sometimes write: \emph{$P\cup Q$ may be mapped conformally to
  \BOX}. So  
$P$ and $Q$ are \emph{conformal reflections}\index{conformal reflection} of each other in these coordinates.  
\item Consider a vertex $v$. Let $Q_1,\dots,Q_n$ be the $\delta_j$-squares containing $v$, labeled with positive orientation around $v$. Map the neighborhood $\inte (\bigcup Q_k)$ of $v$ by $z\mapsto z^{4/n}$. More precisely the chart is constructed as follows. Map $Q_1$ 
to the unit square $[0,1]^2$ as above with $v\mapsto 0$. The unit
square $[0,1]^2$ is subsequently mapped by the map $z\mapsto
z^{4/n}$. Map the second $\delta_j$-square $Q_2$ as before to
$[0,1]^2$ (again with $v\mapsto 0$), which is then mapped by $z\mapsto
e^{2\pi i/n}z^{4/n}$. Alternatively we could have mapped $Q_2$ to
$[-1,0]\times[0,1]$ and subsequently by the map $z\mapsto z^{4/n}$. So
the image of $Q_2$ is a conformal reflection of the image of $Q_1$,
along the shared side $[0,e^{2\pi i/n}]$. The third $\delta_j$-square
$Q_3$ is mapped to $[0,1]^2$, and then by $z\mapsto e^{4\pi
  i/n}z^{4/n}$ and so on. Again the image of $Q_3$ is a reflection of
the image of $Q_2$, analogously for the other
$\delta_j$-squares. Since each mapped $\delta_j$-square forms an angle
of $2\pi/n$ at $0$, the last matches up with the first, meaning they
are conformal reflections of each other.       
\end{itemize}
It is immediate that changes of coordinates are conformal. 
The charts are illustrated in Figure \ref{fig:confcharts}.
 
\begin{figure*}
  \centering
  \scalebox{.9}{\includegraphics{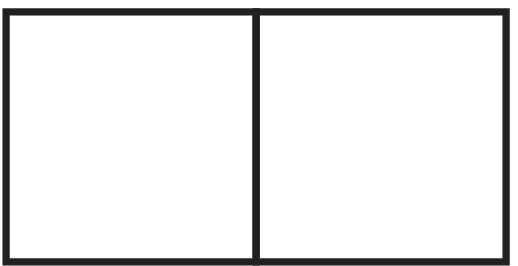}}
  \psfrag{a}[][][2]{$z\mapsto z^{4/3}$}
  \scalebox{.5}{\includegraphics{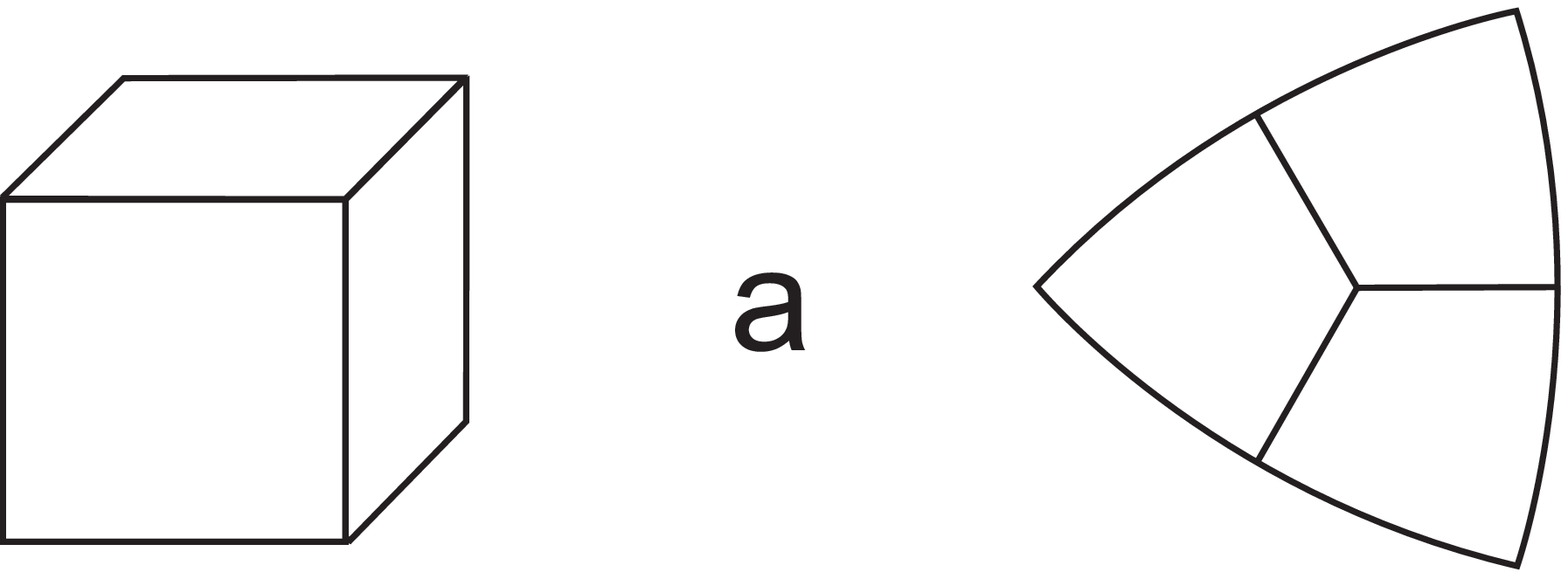}}
  \psfrag{b}[][][2]{$z\mapsto z^{4/5}$}
  \scalebox{.5}{\includegraphics{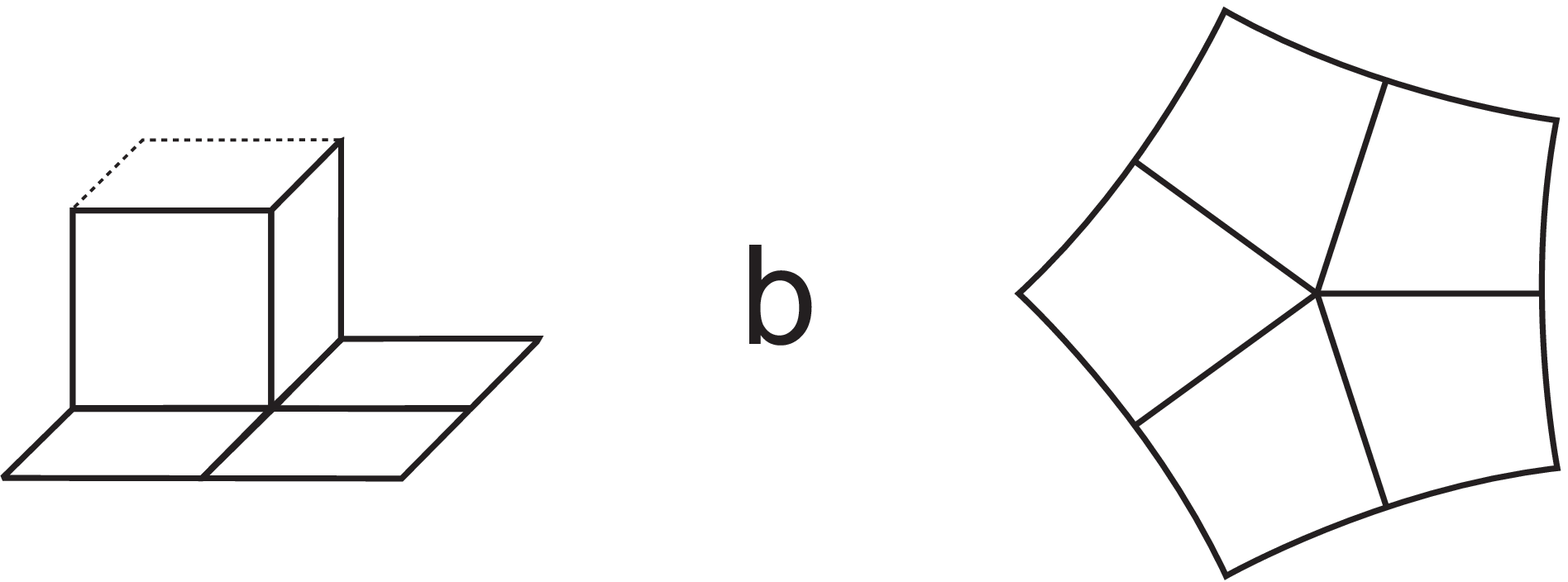}}
  \caption{Conformal coordinates on a polyhedral surface.}
  \label{fig:confcharts}
\end{figure*}

With these charts each approximation $\SC_j$ of the snowsphere is a compact, simply connected Riemann surface. 
Therefore $\SC_j$ is conformally equivalent to the sphere $\CDach$ by
the \emph{uniformization theorem}. 
Identify $\CDach$ with $\SB\subset\R^3$. 
It is not yet clear, however, what
the relation is between uniformizations of different approximations
$\SC_j$ and $\SC_k$. We therefore construct the uniformizations of
the $\SC_j$ inductively, where this will be apparent. 

Start with $\mathcal{S}_0$, which is the surface of the unit cube $\partial[0,1]^3$. Equip $\mathcal{S}_0$ with a conformal structure as above and map it conformally to the Riemann sphere $\CDach$ using the uniformization theorem. The images of the faces of $\SC_0$ decompose the sphere $\CDach$ into $0$-\emph{tiles}. Edges and vertices of those $0$-tiles are the images of edges and vertices of the faces of $\SC_0$.
By symmetry we can assume that the vertices of the $0$-tiles form a
cube, i.e., all $0$-tiles have the same size. 

Denote the set of all such $0$-tiles by $\X'_0$. Each tile $X'\in\X'_0$ is \emph{conformally a square}, meaning we can map it conformally to the unit square $[0,1]^2$, where vertices map to vertices.  
Consider two \emph{neighboring} \index{neighboring!tiles} tiles
$X',Y'\in\X'_0$ (i.e., which share an edge). By the definition of our
charts they are conformal reflections of each other. So we could start
with one tile and get all other tiles by repeated reflection along
the edges. Such a tiling is called a conformal tiling.  

\begin{definition}
  A \emph{conformal tiling} \index{conformal tiling} of a domain
  $D\subset\CDach$ is a decomposition into tiles $D=\bigcup T$, such that:  
  \begin{itemize}
  \item Each tile $T$ is a closed Jordan region, bounded by finitely
    many analytic arcs. 
    Every such arc is part of the boundary of exactly two tiles.
  \item Two distinct tiles $T$ and $\widetilde{T}$ have disjoint
    interior, $\inte T\cap \inte \widetilde{T}=\emptyset$.

  \item Call the endpoints of the analytic arcs (from the boundaries
    of the tiles) \defn{vertices}. The tiling forms a \defn{cell
      complex}, where the tiles/analytics arcs/vertices are the
    $2$-,$1$-, and $0$-cells. This means in particular that distinct
    tiles can only intersect in the union of several such analytic arcs
    and vertices.   
  \item Two tiles sharing an analytic boundary arc (\emph{neighbors}) are
    conformal reflections along this arc. 
  \end{itemize}

\end{definition}
Conformal tilings are of course preserved under conformal maps.

Now consider the $N_1$-generator $G_1$ as a Riemann surface using
charts as above. Note that $\inte G_1$ is simply connected, and has more
than two boundary points. Thus $\inte G_1$ is conformally equivalent to the unit disk $\D$ by the uniformization theorem.
%Why is $G_1$ not conformally equivalent to the plane $\C$? 
% The first approximation $\SC_1$ of the snowball consists of $6$ copies of $G_1$. One of the copies is mapped to a bounded domain by the uniformization of $\SC_1$. 
% It follows from Liouville's theorem that $G_1$ is conformally
% equivalent to the unit disk $\D$. 
Because of symmetry, we can map
$G_1$ conformally to the unit square $[0,1]^2$ (mapping vertices to
vertices as usual). Figure \ref{fig:unifgen} shows the uniformization
of the generator $\widehat{G}$ (see Figure \ref{fig:5gen}) of the
example $\widehat{\SC}$. 
The picture was obtained by dividing the generator along the diagonals
into $4$ pieces. One such piece (a $7$-gon) was mapped to a quarter of
the unit square by a Schwarz-Christoffel map, using
Toby Driscoll's Schwarz-Christoffel Toolbox 
({\ttfamily http://www.math.udel.edu/\verb+~+driscoll/ software/});
see
\cite{DriscollTrefethen}. Thus this picture (as well as following ones)
is conformally correct, up to numerical errors.

The images of the $\delta_1$-squares in $G_1$ again form a tiling of
the unit square $[0,1]^2$.  
Map a second copy of the uniformized generator to the square
$[1,2]\times [0,1]$ (map the two tiled squares to \BOX). The tilings
are symmetric with respect to the line $\{1\}\times[0,1]$ because of
the symmetry of the generator $G_1$. So we get a conformal tiling of
\BOX.

\begin{figure}[p]
  \begin{center}
    \scalebox{0.65}{\includegraphics{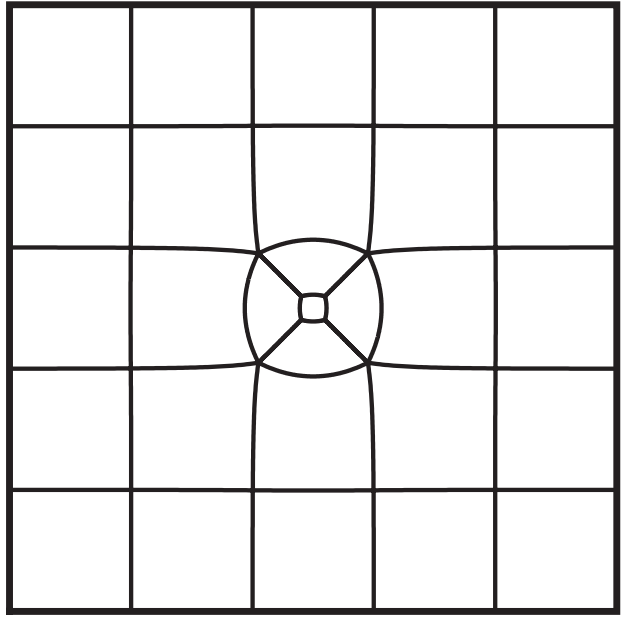}}
  \end{center}
  \caption{
    Uniformization of the generator
    $\widehat{G}$ of the snowsphere $\widehat{\mathcal{S}}$.}
  \label{fig:unifgen}  
\end{figure}
    
\begin{figure}%[p]
  \begin{center}
    \scalebox{0.65}{\includegraphics{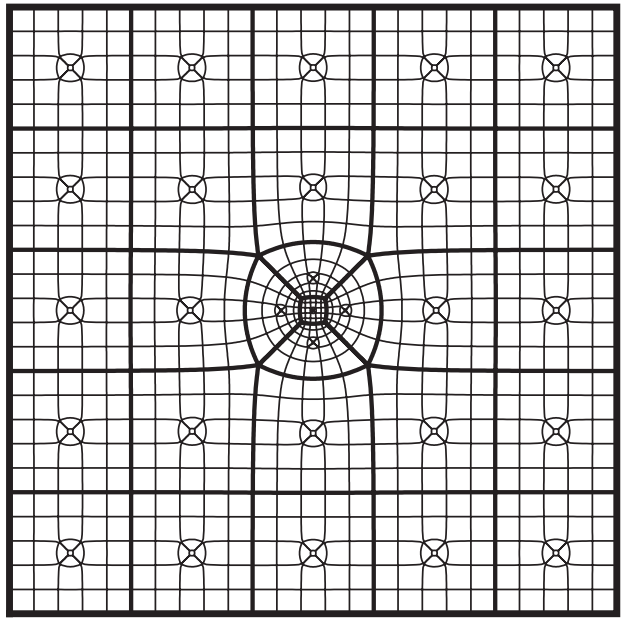}}
  \end{center}
  \caption{\label{fig:2tiles} $2$-tiles of $\widehat{\mathcal{S}}$.}
\end{figure}

\begin{convention}
  When we have a conformal map from a square to a tile $g\colon [0,1]^2\to X'$ we always assume that it maps vertices onto each other. The same normalization is used when mapping a tile to another $X'\to Y'$. 
\end{convention}
\medskip
The uniformized generator $G_1$ and each $0$-tile $X'_0$ are
conformally equivalent to a square. So we can map the uniformization
of $G_1$ (the unit square tiled by images of $\delta_1$-squares) to
$X'_0$. The images of the tiles of $[0,1]^2$ under this map are called
the \emph{$1$-tiles} $X'_1\subset\CDach$. We denote the set of all $1$-tiles by $\X'_1$. 
\subsubsection{Properties of the Tiling}
\begin{itemize}
\item Every $1$-tile is conformally a square, meaning we can map it to
  the unit square $[0,1]^2$ by a conformal map (mapping vertices to vertices).
\item Each $1$-tile is contained in exactly one $0$-tile.

\item Two neighboring $1$-tiles $X'_1,Y'_1$ (tiles which share an edge) may be mapped conformally to the rectangle \BOX. This is clear when $X'_1$ and $Y'_2$ are contained in the same $0$-tile $X'_0$. 

Assume they are contained in different $0$-tiles, $X'_1\subset X'_0\in\X'_0$ and $Y'_1\subset Y'_0\in \X'_0$. Then $X'_0\cup Y'_0$ can be mapped conformally to the rectangle  \BOX. In this chart the tiles in the left and right square are symmetric with respect to the line $\{1\}\times [0,1]$. So $X'_1$ and $Y'_1$ are conformal reflections of each other. 

\item The set $\X'_1$ forms a conformal tiling of the sphere $\CDach$. 

\item Each $\delta_1$-square $Q\in \SC_1$ is mapped to a $1$-tile. Squares which share a (vertex, edge) are mapped to $1$-tiles which share a (vertex, edge) under this map.
  
\item The tiling $\X'_1$ is a uniformization of the approximation $\SC_1$ of the snowsphere. By this we mean the following. Map a $\delta_1$-square $Q$ to its corresponding $1$-tile $X'$ by the Riemann map (normalized by mapping corresponding vertices onto each other). By reflection this extends to a neighboring $\delta_1$-square $\widetilde{Q}$, where it is the Riemann map to the neighboring $1$-tile $\widetilde{X}'$ (again with the ``right'' normalization at vertices). The map extends to all of $\SC_1$ by reflection and is well defined.
The extension is conformal (with respect to the conformal structure on $\SC_1$ as described above). 
\end{itemize}
The above procedure is now iterated. Let the $j$-th tiling of the
sphere $\CDach$ be given, and let the set of $j$-tiles be denoted by
$\X'_j$.\index{X bp@$\X'_j$} We map the uniformized
$N_{j+1}$-generator to each $j$-tile $X'_j\in\X'_j$ to get the
$(j+1)$-tiles $X'_{j+1}\in \X'_{j+1}$. 
Tiles are always compact.
\index{j -@$j$-!tile} All the
above statements hold (where $0$ is replaced by $j$ and $1$ by
$j+1$). Figure \ref{fig:2tiles} shows the $2$-tiles for the example
$\widehat{\SC}$. It will be convenient to call the whole sphere $\SB$
the (only) $-1$-tile. Let us record the properties of the tilings.

\begin{lemma}
  \label{lem:proptilings}
  The tiles satisfy the following:
  \begin{enumerate}
  \item Each $j$-tile is conformally a square, meaning we can map it
    conformally to the square (mapping vertices to vertices).
  \item The set of $j$-tiles forms a conformal tiling for every $j\geq
    0$.
  \item \label{item:lem_prop_tiles_Fj}
    The $j$-th tiling is a uniformization of the approximation
    $\SC_j$. This means there are conformal maps (with respect to the
    structure from Subsection \ref{sec:conf-coord-appr})
    \begin{equation*}
      F_j\colon \SC_j\to \CDach=\SB,
    \end{equation*}
    such that $F_j(Q)\in \X'_j$ for every $\delta_j$-square
    $Q\subset\SC_j$.
  \item The $(j+1)$-th tiling \defn{subdivides} the $j$-th tiling. This
    means that for each $(j+1)$-tile $X'_{j+1}$ there exists exactly one
    $j$-tile $X'_j\supset X'_{j+1}$.
  \item \label{item:lem_prop_tiles_comb}
    Call the images of $\delta_j$-edges/vertices under the map
    $F_j$ above $j$\defn{-edges/vertices}. View the $j$-th tiling as a
    cell complex ($j$-tiles/edges/vertices are the $2$-, $1$-, and
    $0$-cells). Then the $j$-th tiling, the
    approximation $\SC_j$, and the set of
    $j$-cylinders are combinatorially equivalent by Lemma
    \ref{lem:comb_cyl_squares}.
  \item \label{item:lem_prop_tiles_incl}
    Inclusions of
    tiles and cylinders are preserved. This means the following.
    Consider a $\delta_j$-square $Q_j\subset \SC_j$ and a
    $\delta_k$-square $Q_k\subset \SC_k$.
    Let $X_j=X_j(Q_j)\in \X_j$, $X_k=X_k(Q_k)\in \X_k$,
    and $X'_j=F_j(Q_j)\in \X'_j$, $X'_k=F_k(Q_k)\in \X'_k$ be the
    corresponding cylinders (in $\SC$) and tiles (in $\SB$). Then
    \begin{equation*}
      X_j \subset X_k \Leftrightarrow X'_j \subset X'_k. 
    \end{equation*}
  \end{enumerate}
\end{lemma}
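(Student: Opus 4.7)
The plan is to proceed by induction on $j$, with essentially all the geometric content already laid out in the construction preceding the lemma; the proof is a matter of verifying each property propagates under the inductive step. For the base case $j=0$, we equip $\SC_0=\partial[0,1]^3$ with the conformal structure of Subsection \ref{sec:conf-coord-appr}, apply the uniformization theorem to get a conformal map $F_0\colon \SC_0\to\CDach$, and observe that symmetry forces the six images to be congruent $0$-tiles meeting in a cube configuration. Properties (1)--(3) are immediate, (4)--(6) are vacuous at this stage (or concern the trivial $-1$-tile $\SB$), and the tiles are a conformal tiling because neighboring $0$-tiles are conformal reflections of each other by construction.

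For the inductive step, assume the lemma holds for level $j$. To build the $(j+1)$-tiling, first uniformize the $N_{j+1}$-generator $G_{j+1}$: since $\inte G_{j+1}$ is simply connected with more than two boundary points, it is conformally equivalent to $\D$, and the fourfold symmetry of $G_{j+1}$ (condition (v) of a generator) together with the vertex-to-vertex normalization yields a canonical conformal map from $G_{j+1}$ onto $[0,1]^2$. The images of the $\delta_{j+1}$-squares of $G_{j+1}$ form a symmetric conformal tiling of $[0,1]^2$. Now by induction each $j$-tile $X'_j$ is conformally a square, so there is a canonical conformal map $[0,1]^2\to X'_j$, and the images of the tiles of $[0,1]^2$ are defined to be the $(j+1)$-tiles inside $X'_j$. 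Property (1) at level $j+1$ is then immediate, and property (4) follows by construction.

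The crucial step is property (2): that $\X'_{j+1}$ is a global conformal tiling. Two neighboring $(j+1)$-tiles sharing an edge $e$ either (a) lie in a common $j$-tile, in which case they are conformal reflections because the uniformized generator's tiling has that property and is preserved by the chart, or (b) lie in distinct $j$-tiles $X'_j, Y'_j$. In case (b), $X'_j$ and $Y'_j$ are conformal reflections across their shared arc by the inductive hypothesis, so $X'_j\cup Y'_j$ maps conformally to a rectangle \BOX\ with the shared arc mapping to $\{1\}\times[0,1]$; the fourfold symmetry of the generator (condition (v)) guarantees that the tilings of the two halves are mirror images of each other across this line, hence the two $(j+1)$-tiles are conformal reflections. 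This is the step where the symmetry hypothesis is indispensable; without it, the tilings on the two sides of $e$ would generically fail to match.

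Property (3) follows by defining $F_{j+1}$ piecewise: on each $\delta_{j+1}$-square $Q$ contained in a $\delta_j$-square $Q_j$, take the composition of the vertex-normalized Riemann map $Q\to$ (corresponding subtile of $[0,1]^2$) with the chart $[0,1]^2\to X'_j=F_j(Q_j)$. Well-definedness along shared edges reduces to the reflection principle combined with the symmetry just used in (2); since adjacent pieces agree on boundary arcs (both being the canonical vertex-normalized conformal map), they glue to a globally defined homeomorphism which is conformal in the interior of each face and, by the reflection principle applied in the charts of Subsection \ref{sec:conf-coord-appr}, conformal across edges and at vertices. Property (5) is then automatic since $F_{j+1}$ is a cell-complex isomorphism between $\SC_{j+1}$ (equivalently, $\X_{j+1}$ by Lemma \ref{lem:comb_cyl_squares}) and $\X'_{j+1}$. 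Finally, property (6) follows from (4) and (5): if $X_j\subset X_k$ (so necessarily $j\geq k$), then iterating the subdivision relation from level $k$ down to level $j$ forces $X'_j\subset X'_k$, and conversely an inclusion of tiles forces the combinatorial inclusion of the corresponding cells, which by the isomorphism of (5) corresponds to inclusion of cylinders.
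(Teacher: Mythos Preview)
Your proposal is correct and follows essentially the same route as the paper. In fact the paper gives no separate proof of this lemma at all: it is stated as a ``record'' of properties already established in the preceding construction (the bullet points under ``Properties of the Tiling'' do the base step $j=0\to 1$ explicitly, and the paper then says ``The above procedure is now iterated\dots All the above statements hold (where $0$ is replaced by $j$ and $1$ by $j+1$)''). Your write-up is a faithful and more explicit version of that induction, including the same case split for property (2) (neighbors in the same $j$-tile versus in different $j$-tiles, the latter handled via the symmetry of the generator) and the same reflection argument for gluing the piecewise Riemann maps into a global $F_{j+1}$.
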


\smallskip
A \emph{neighbor} \index{neighboring!tiles} of a $j$-tile $X'_j$ is a
$j$-tile $Y'_j$ which shares an edge with $X'_j$. 
%To emphasize that an edge or a vertex belongs to a $j$-tile we sometimes speak of $j$-edges and $j$-vertices. \index{j -@$j$-!edge}\index{j -@$j$-!vertex}

\subsection{Construction of the Map $f\colon \SC\to \SB$}\label{sec:constr-map-f}
% In the last subsection we mapped each $\delta_j$-square $Q_j\subset \SC_j$ to a $j$-tile $X'_j\subset\SB$. Thus we get a map from the cylinder with base $Q_j$ to a $j$-tile, 
% \begin{equation}
%   \label{eq:FXtoX}
%   F\colon X_j:=X_j(Q_j)\mapsto X'_j, \; F\colon \X_j\to \X'_j. 
% \end{equation}
Recall that for any $x\in \SC$ there is a sequence 
\begin{equation}
  \label{eq:Xjxseq}
  X_0\supset X_1 \supset X_2\dots,\; X_j\in \X_j,\; \bigcap X_j=\{x\}. 
\end{equation}
Consider the tiles $X'_j:=F_j(X_j)$, where $F_j$ are the maps from
Lemma \ref{lem:proptilings} (\ref{item:lem_prop_tiles_Fj}). They
satisfy by Lemma \ref{lem:proptilings}
(\ref{item:lem_prop_tiles_incl}) $X'_0\supset X'_1 \supset X'_2\dots\;$. 

\begin{lemma}
  \label{diamX0}
  The tiles shrink to a point, 
  $$\diam X'_j\to 0,\; \text{ as } j\to \infty.$$
In fact $\diam X'_j\lesssim \lambda^j$, for a (uniform) constant $\lambda<1$ (and a uniform constant $C(\lesssim)$).
\end{lemma}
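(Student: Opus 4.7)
\medskip
The plan is to exhibit uniform constants $n\in\N$ and $\mu<1$ such that $\diam X'_{j+n}\leq \mu\,\diam X'_j$ for all $j$; iterating then yields $\diam X'_j\leq \mu^{\lfloor j/n\rfloor}\diam X'_0\lesssim \lambda^j$ with $\lambda=\mu^{1/n}<1$. The engine combines Koebe's distortion theorem with the bounded combinatorics of the tilings: at most $6$ tiles meet at any vertex, and (since $N_{\max}<\infty$) only finitely many generators occur, so any neighborhood of bounded combinatorial size belongs to only finitely many combinatorial equivalence classes.

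For each $j$-tile $X'_j$ define its \emph{flower} $F_j$ to be the union of $X'_j$ with all $j$-tiles sharing a vertex or edge with $X'_j$. By Lemma~\ref{lem:proptilings}\,(\ref{item:lem_prop_tiles_comb}) and bounded combinatorics, there are only finitely many combinatorial types of $F_j$. For each type fix a model conformal tiling $F\subset\CDach$ with central tile $X$, together with a combinatorics-preserving conformal homeomorphism $\phi\colon F\to F_j$ carrying $X$ onto $X'_j$. Because every vertex of $X'_j$ lies interior to $F_j$, the tile $X$ is compactly contained in $\inte F$; Koebe's distortion theorem, applied after a diameter normalization of $\phi$, shows that $\phi|_X$ is bi-Lipschitz with a constant depending only on the combinatorial type, hence bounded by a uniform $L=L(N_{\max})$.

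Each $(j+n)$-subtile $X'_{j+n}\subset X'_j$ pulls back through $\phi^{-1}$ to a sub-tile $Y\subset X$ obtained by iterating the uniformized generators $N_{j+1},\dots,N_{j+n}$ inside $X$, a purely conformal process. I claim there is a uniform $\beta(n)<1$, with $\beta(n)\to 0$ as $n\to\infty$, such that every such $Y$ satisfies $\diam Y\leq \beta(n)\,\diam X$. The argument is inductive: at each intermediate sub-level the new subdivision is given by one of finitely many uniformized generators, and re-applying Koebe's theorem to the \emph{local} flower of the current sub-tile inside its subdivided parent (which, by the same bounded-combinatorics argument, again admits only finitely many combinatorial types) yields a uniform per-level shrinking factor $\beta_*<1$; thus $\beta(n)\leq\beta_*^{\,n}\to 0$.

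Combining the bi-Lipschitz bound with the in-model decay,
\[
\diam X'_{j+n}\;\leq\;L\,\diam Y\;\leq\;L\,\beta(n)\,\diam X\;\leq\;L^{2}\beta(n)\,\diam X'_j,
\]
so choosing $n=n(N_{\max})$ so large that $\mu:=L^{2}\beta(n)<1$ gives the required uniform contraction. The main obstacle is the in-model decay claim: although finite combinatorics at any single level is immediate, the family of possible sub-tile shapes at depth $n$ grows with $n$, so a single application of Koebe cannot give a uniform $\beta_*<1$ for all levels at once. The fix is to reset the Koebe distortion at every level by entering a fresh local flower of the current sub-tile, and then to check that only finitely many combinatorial types of such local flowers occur at any depth—which is ultimately what the uniform bound $N_{\max}<\infty$ buys us.
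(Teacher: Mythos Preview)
Your architecture—finite combinatorial types of flowers plus Koebe distortion to move between model and sphere—is the right toolkit, and it is exactly how the paper handles the neighboring Lemmas~\ref{lem:diamXY} and~\ref{lem:distXY}. But the step you yourself flag as the ``main obstacle'' is a genuine gap, and your proposed fix does not close it. Koebe distortion is a \emph{comparability} principle, not a \emph{contraction} principle: it says the conformal map from a model flower is bi-Lipschitz on the central tile with some constant $L\ge 1$. When you ``reset at every level by entering a fresh local flower,'' each step yields only $\diam Y_{k+1}/\diam Y_k\le L^{2}\alpha$, where $\alpha$ is the maximal sub-tile-to-tile diameter ratio over the finitely many models; nothing forces $L^{2}\alpha<1$ (and you have not even argued that $\alpha<1$). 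Finiteness of combinatorial types pins down the \emph{conformal} type of each local flower, but the sub-tiles $Y_k$ sit inside the original model $F$ as infinitely many distinct \emph{metric} shapes, and comparing to a fresh model reintroduces the Koebe constant at every level.

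The paper supplies the missing contraction via Schwarz--Pick rather than Koebe. It tiles the hyperbolic plane by squares with vertex angle $2\pi/M$ ($M$ the lcm of all occurring vertex degrees), takes one such square $Q$ together with its neighborhood $U$ of adjacent squares, and shows—using the $z\mapsto z^{n}$ branching of Lemma~\ref{cor:ztozn} at vertices—that each conformal map $g_T\colon Q\to T\subsetneq Q$ onto a sub-tile extends analytically to a holomorphic self-map $g_T\colon U\to U$. Schwarz--Pick then gives a strict contraction $d_U(g_T(x),g_T(y))\le\lambda_T\,d_U(x,y)$ on the compact set $Q$ with $\lambda_T<1$; finiteness of the generators yields a uniform $\lambda<1$, and composing $j$ such maps produces the exponential decay. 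The ingredient you are missing is precisely this analytic extension across the tile boundary, which is what converts a distortion bound into an honest contraction.
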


We postpone the proof until the next subsection.
By the previous lemma we can now define $f\colon \SC\to \SB$ by
\begin{equation}
  \label{eq:deff}
  f(x)=x', \; \text{ where } \{x'\}=\bigcap_j X'_j.  
\end{equation}

\begin{lemma}
  \label{lem:fwelldef}
  The map $f$ is well defined.
\end{lemma}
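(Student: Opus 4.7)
The plan is to prove that $f(x)$ does not depend on the choice of nested sequence $(X_j)$ of cylinders shrinking to $x$. By Lemma \ref{diamX0}, for any such sequence the tiles $X'_j = F_j(X_j)$ form a nested sequence of compact sets in $\SB$ with $\diam X'_j \to 0$, so $\bigcap_j X'_j$ is automatically a single point; the only issue is ambiguity in the choice of $(X_j)$, which arises precisely when $x$ lies on the $1$-skeleton of some approximation $\SC_j$ (on a $\delta_j$-edge or $\delta_j$-vertex), so that $x$ is contained in two or more $j$-cylinders simultaneously.

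First I would formalize the ambiguity. Suppose $(X_j)_{j\in\N}$ and $(Y_j)_{j\in\N}$ are two sequences with $X_j, Y_j \in \mathbf{X}_j$, nested, with $\bigcap_j X_j = \bigcap_j Y_j = \{x\}$. Set $X'_j := F_j(X_j)$ and $Y'_j := F_j(Y_j)$, and let $\{x'\} = \bigcap_j X'_j$, $\{y'\} = \bigcap_j Y'_j$ be the two candidate values for $f(x)$. The goal is to show $x' = y'$.

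The key step is to invoke the combinatorial equivalence of Lemma \ref{lem:proptilings}(\ref{item:lem_prop_tiles_comb}). For each $j$, the point $x$ lies in both $X_j$ and $Y_j$, so $X_j \cap Y_j \ne \emptyset$. Under the combinatorial isomorphism sending each $j$-cylinder $X_j(Q)$ to the $j$-tile $F_j(Q)$, intersection patterns of cylinders are mirrored by intersection patterns of tiles (two cylinders meet in an edge/vertex iff the corresponding tiles do). Hence $X'_j \cap Y'_j \ne \emptyset$ as well, so we can pick a point $z'_j \in X'_j \cap Y'_j$ for every $j$.

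To finish, I would combine this with Lemma \ref{diamX0}: since $x', z'_j \in X'_j$ and $y', z'_j \in Y'_j$, the triangle inequality gives
\begin{equation*}
|x' - y'| \le |x' - z'_j| + |z'_j - y'| \le \diam X'_j + \diam Y'_j \lesssim \lambda^j \to 0,
\end{equation*}
so $x' = y'$. The main (mild) obstacle is simply a careful appeal to the combinatorial equivalence — once that is in hand, the diameter bound from Lemma \ref{diamX0} closes the argument immediately. I would conclude by noting that $f$ is therefore a well-defined map $\SC \to \SB$.
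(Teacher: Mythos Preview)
Your proof is correct and follows essentially the same approach as the paper: both arguments hinge on the combinatorial equivalence from Lemma~\ref{lem:proptilings}(\ref{item:lem_prop_tiles_comb}) to deduce $X'_j\cap Y'_j\neq\emptyset$ for every $j$. The only cosmetic difference is the finish---the paper observes that the sets $X'_j\cap Y'_j$ are \emph{nested} compact non-empty sets (using also part~(\ref{item:lem_prop_tiles_incl})), so their intersection is non-empty and lies in both $\{x'\}$ and $\{y'\}$, whereas you pick a point $z'_j\in X'_j\cap Y'_j$ and use the diameter bound $\diam X'_j+\diam Y'_j\to 0$ from Lemma~\ref{diamX0} directly; both routes yield $x'=y'$.
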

\begin{proof}
  Given $x\in \SC$ let the sequence $(X_j)_{j\in\N}$ be as in (\ref{eq:Xjxseq}). 
  Assume now that there is a second sequence $Y_0\supset
  Y_1\supset\dots$, $Y_j\in\X_j$, satisfying $\bigcap
  Y_j=\{x\}$. 
  Then 
  \begin{equation*}
    (X_0\cap Y_0) \supset (X_1\cap Y_1)\supset\dots,
  \end{equation*}
  where each $X_j \cap Y_j$ is compact and non-empty.
  Let $X'_j:=F_j(X_j)$, $Y'_j:=F_j(Y_j)$, and $\{y'\}:=\bigcap Y'_j$.  
  By Lemma \ref{lem:proptilings}
  (\ref{item:lem_prop_tiles_comb}) and
  (\ref{item:lem_prop_tiles_incl}) 
  \begin{equation*}
    (X'_0\cap Y'_0) \supset (X'_1\cap Y'_1)\supset\dots,
  \end{equation*}
  where each $X'_j\cap Y'_j$ is compact and non-empty. Thus
  \begin{align*}
    \emptyset & \neq \bigcap (X'_j\cap Y'_j) \subset \bigcap
    Y'_j=\{y'\}  \text{ and}
    \\
    \emptyset & \neq \bigcap (X'_j\cap Y'_j) \subset \bigcap
    X'_j=\{x'\}. 
  \end{align*}
  Thus $x'=y'$.
\end{proof}

\subsection{Combinatorial Equivalence and Finiteness}
\label{sec:combequi}
The ideas in this subsection should be considered the ``guts'' of the
proof of Theorem 1A. Let $v$ be a vertex of a $j$-tile; the
\emph{$j$-degree}\index{j -@$j$-!degree}\label{not:jdegree} of $v$ is
the number of $j$-tiles containing $v$: 
\begin{equation}\label{eq:combfinite}
  \deg_j(v):=\#\{X'\in \X'_j:v\in X'\}.
\end{equation}
Consider $j$-edges and $j$-tiles of $\SC_j$ containing $v$. Note that
each such $j$-edge is incident to two $j$-tiles, and each such
$j$-tile is incident to two $j$-edges. So the number of $j$-tiles
containing $v$ is equal to the number of $j$-edges containing $v$. In
the grid $\Z^3$ 
there are $6$ edges that intersect at each vertex.     
Thus the degree of vertices is uniformly bounded, namely
\begin{equation}
  \label{eq:main1}
  \deg_j(v)\leq 6,
\end{equation}
for all vertices $v$ and numbers $j$.

\smallskip
Now consider a set of $j$-tiles 
\begin{equation}
  \label{eq:settiles1}
  \X'=\{X'_1,\dots,X'_n\},\;\text{ where } X'_1,\dots,X'_n\in \X'_j. 
\end{equation}
As before view $\X'$ as a \emph{cell complex} $\Sigma(\X')$, where $j$-tiles, $j$-edges, and $j$-vertices in $\bigcup \X'$ are the $2$-, $1$-, and $0$-cells of the cell complex. 
A second set of $k$-tiles
\begin{equation}
  \label{eq:settiles2}
  \mathbf{Y}'=\{Y'_1,\dots,Y'_n\},\;\text{ where } Y'_1,\dots,Y'_n\in \mathbf{Y}'_k, 
\end{equation}
is said to be 
\emph{combinatorially equivalent} \index{combinatorially!equivalent}
to $\X'$, if they are equivalent when viewed as cell complexes. More
precisely, there is a cell complex isomorphism 
\begin{equation}\label{eq:combeq}
  \Phi\colon \Sigma(\X')\to \Sigma(\mathbf{Y}'),
\end{equation}
which is orientation preserving. The equivalence class of combinatorially equivalent sets of tiles is called the \emph{combinatorial type} \index{combinatorial type}of $\X'$. Otherwise $\X'$ and $\mathbf{Y}'$ are called \emph{combinatorially different}. \index{combinatorially!different}
 Combinatorial equivalence implies conformal equivalence. 
\begin{lemma}
  \label{lem:conbconfeq}
  Let $\X'$ and $\mathbf{Y}'$ as above be combinatorially equivalent. Then there is a conformal map 
  \begin{equation*}
    g=g_{\X',\mathbf{Y}'}\colon \inte \bigcup \X'\to \inte\bigcup\mathbf{Y}',
  \end{equation*}
  which maps $j$-(tiles, edges, vertices) to $k$-(tiles, edges, vertices). 
\end{lemma}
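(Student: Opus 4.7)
The plan is to construct $g$ tile by tile: for each tile $X'\in\X'$, let $Y':=\Phi(X')\in\mathbf{Y}'$, and define $g|_{X'}$ to be the unique conformal homeomorphism $g_{X'}\colon X'\to Y'$ whose boundary extension sends the four vertices of $X'$ to the four vertices of $Y'$ according to $\Phi$. Such a map exists and is unique because, by Lemma \ref{lem:proptilings}(1), both $X'$ and $Y'$ are conformally squares, and because $\Phi$ is an orientation-preserving cell isomorphism and hence fixes a consistent cyclic vertex correspondence; any Riemann map between two quadrilaterals respecting an orientation-preserving cyclic vertex order is uniquely determined. I would then piece these tile maps together to define $g$ on $\inte\bigcup\X'$.

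The central task is to show that $g$ is continuous across every shared edge of neighboring tiles; once continuity is established, holomorphicity at these interior edges will follow from the Schwarz reflection principle. Let $X'_1,X'_2\in\X'$ be neighbors sharing an edge $e$, with images $Y'_1:=\Phi(X'_1)$, $Y'_2:=\Phi(X'_2)$ sharing $\Phi(e)$. By the reflection property of conformal tilings, there is an anti-holomorphic reflection $\sigma_e\colon X'_2\to X'_1$ fixing $e$ pointwise, and similarly $\tau_{\Phi(e)}\colon Y'_1\to Y'_2$ fixing $\Phi(e)$ pointwise. The composition $\widetilde{g}:=\tau_{\Phi(e)}\circ g_{X'_1}\circ\sigma_e$ is then a conformal map $X'_2\to Y'_2$. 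If I can show $\widetilde{g}=g_{X'_2}$, then since both reflections are the identity on their respective edges, I get $g_{X'_1}|_e=\widetilde{g}|_e=g_{X'_2}|_e$, which is exactly the required continuity.

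To verify $\widetilde{g}=g_{X'_2}$, I would invoke the same Riemann-map uniqueness: it suffices to check that both maps agree on the four vertices of $X'_2$. For the two vertices of $X'_2$ lying on $e$, the reflections act as the identity and $g_{X'_1}$ already sends them via $\Phi$, so both maps send them via $\Phi$. For the remaining two vertices of $X'_2$ (those opposite to $e$), $\sigma_e$ sends them to the two vertices of $X'_1$ opposite to $e$, then $g_{X'_1}$ applies $\Phi$, and finally $\tau_{\Phi(e)}$ reflects these back to the two vertices of $Y'_2$ opposite to $\Phi(e)$; the main obstacle I foresee is the combinatorial bookkeeping that the composite outcome equals $\Phi$ of the starting vertex. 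This is where the orientation-preserving hypothesis on $\Phi$ is essential: it guarantees that the cyclic order of vertices around each tile is preserved, so the left/right arrangement of the pair $(X'_1,X'_2)$ across $e$ is carried by $\Phi$ to the same left/right arrangement of $(Y'_1,Y'_2)$ across $\Phi(e)$, matching the reflection bijection with the vertex bijection induced by $\Phi$. Once continuity across every interior edge is established, $g$ is continuous on $\inte\bigcup\X'$ and conformal on the complement of the $1$-skeleton; a standard removability argument (or direct application of Schwarz reflection across each analytic arc, using precisely the tilings' reflection structure) then yields that $g$ is conformal throughout $\inte\bigcup\X'$, and it maps tiles, edges, and vertices as required by construction.
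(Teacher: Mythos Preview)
Your proposal is correct and follows essentially the same approach as the paper: define $g$ tile-by-tile via the vertex-normalized Riemann map, use the reflection structure of the conformal tilings to show the tile maps agree on shared edges, and then remove the interior vertices as isolated singularities. The paper compresses your edge-matching argument into the single observation that neighboring tiles in both $\X'$ and $\mathbf{Y}'$ are simultaneously the conformal image of a $1\times 2$ rectangle (so the two tile maps are restrictions of one conformal map on the pair), whereas you spell out the same fact via the anti-holomorphic reflections $\sigma_e$ and $\tau_{\Phi(e)}$ and a vertex-by-vertex check.
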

\begin{proof}
  Let $\Phi$ be the cell complex isomorphism 
  in (\ref{eq:combeq}). Without loss of generality assume that
  $\Phi(X'_i)=Y'_i$, for $i=1,\dots, n$. Let $g\colon\inte X'_i\to \inte
  Y'_i$ be the conformal map, normalized by mapping each vertex $v\in
  X'_i$ to the vertex $\Phi(v)\in Y'_i$. 
  Neighboring tiles (in $\X'$ and $\mathbf{Y}'$) are the conformal image
  of \BOX. Thus if $X'_i,X'_l$ are neighbors, $g$ extends conformally
  to $\inte (X'_i\cap X'_l)$. Interior vertices are removable singularities.   
\end{proof}

The next lemma shows how one can use the tiling to define holomorphic
maps of the form $z\mapsto z^n$. It will be applied to a covering of
our conformal tilings. Recall that a conformal tiling may be viewed 
as a cell complex, 
where the $1$-cells are the (analytic) boundary arcs of the tiles.
\begin{lemma}
  \label{cor:ztozn}
  Let $V=\bigcup \{X'\in \X'\}$ and $W=\bigcup\{Y'\in \mathbf{Y}'\}$
  be two conformal 
  tilings, where each tile is a conformal square. 
  Let $v\in V$ and $w\in W$ be vertices, such that
  the degree at $v$ (number of tiles intersecting in $v$) is a
  multiple of the degree at $w$,  
  $$\deg(v)=n\deg(w),$$ 
 for some $n\in \N$. 
 Let
 \begin{align*}
   U(v)&:=\bigcup \{X'\in\X':v\in X'\} \setminus
   \bigcup\{1\text{-cells of } V \text{ not containing } v\} 
   \text{ and}
   \\
   U(w)&:=\bigcup \{Y'\in\mathbf{Y}':w\in Y'\} \setminus
   \bigcup\{1\text{-cells of } W \text{ not containing } w\}   
 \end{align*}
 be neighborhoods of $v$ and $w$.  
Then there is an analytic map 
$$U(v)\to U(w)$$ 
mapping $j$-tiles to $k$-tiles, which is conformally conjugate to
$z\mapsto z^n$.
\end{lemma}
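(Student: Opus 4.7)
The plan is to build uniformising coordinates at $v$ and $w$ in which the tiles become (curvilinear) sectors meeting at the origin; the desired map is then the conjugate of $p(z)=z^n$. Write $d_v:=\deg(v)=nm$ and $d_w:=\deg(w)=m$.

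First I would construct a conformal chart $\phi_v\colon U(v)\to D_v\subset\C$ with $\phi_v(v)=0$ as follows. Label the tiles meeting at $v$ cyclically as $X'_1,\dots,X'_{d_v}$. Since each $X'_i$ is a conformal square, there is a conformal map $\alpha_i\colon \inte X'_i\to\inte[0,1]^2$ sending $v$ to $0$ and preserving the cyclic order of the remaining vertices. Postcomposing $\alpha_i$ with $z\mapsto \lambda\, e^{2\pi i(i-1)/d_v}\,z^{4/d_v}$, for a common scale $\lambda>0$ to be fixed later, puts $\inte X'_i$ onto a curvilinear sector
\begin{equation*}
S_i^v\subset\bigl\{r e^{i\theta}: r>0,\ 2\pi(i-1)/d_v\le\theta\le 2\pi i/d_v\bigr\}.
\end{equation*}
The defining reflection property of a conformal tiling (neighbouring tiles are conformal reflections across their shared analytic arc) guarantees that the charts on adjacent $X'_i$'s agree holomorphically across the shared edge meeting $v$; the point $v$ itself is a removable singularity, exactly as in the proof of Lemma~\ref{lem:conbconfeq}. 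The result is a single conformal $\phi_v$ with $\phi_v(X'_i)=S_i^v$. I would construct $\phi_w\colon U(w)\to D_w$ in exactly the same way, with each $Y'_j$ going to a sector $S_j^w$ of opening angle $2\pi/d_w=2\pi n/d_v$.

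Next, shrinking the common scale $\lambda$ in the definition of $\phi_v$ if necessary, I would arrange that $\{z^n:z\in D_v\}\subset D_w$, and set
\begin{equation*}
f:=\phi_w^{-1}\comp p\comp\phi_v,\qquad p(z)=z^n.
\end{equation*}
Then $f$ is analytic on $U(v)$, and on each sector $S_i^v$ the map $p$ is a biholomorphism onto the sector of opening $2\pi/d_w$ based at argument $2\pi(i-1)/m\pmod{2\pi}$, which is precisely $S_{j(i)}^w=\phi_w(Y'_{j(i)})$ for $j(i):=((i-1)\bmod m)+1$. Hence $f$ carries $X'_i$ onto $Y'_{j(i)}$, and by construction $\phi_w\comp f\comp \phi_v^{-1}=p$, so $f$ is conformally conjugate to $z\mapsto z^n$ on a neighbourhood of $v$.

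The main obstacle is verifying that the per-tile normalised Riemann maps, composed with the local power map, patch into a single holomorphic $\phi_v$ on the whole set $U(v)$; this is exactly where the full strength of the conformal-tiling axioms (Jordan tiles bounded by finitely many analytic arcs, and neighbours being conformal reflections across shared arcs) gets used, together with removable singularity at $v$. Once this patching is established, the matching of sector angles under $z\mapsto z^n$ is forced by the divisibility hypothesis $\deg(v)=n\deg(w)$, and everything else is routine.
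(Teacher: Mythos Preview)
Your argument is correct and is essentially the paper's approach in reverse order: you first build the vertex charts $\phi_v,\phi_w$ (these are exactly the charts described in Section~\ref{sec:conf-coord-appr}) and then set $f=\phi_w^{-1}\circ z^n\circ\phi_v$, whereas the paper defines $f$ directly as the Riemann map $X'_1\to Y'_1$ with $v\mapsto w$ and extends it around the vertex by successive Schwarz reflections, only afterwards remarking that a change of coordinates exhibits it as $z\mapsto z^n$. One small correction: the shrinking of $\lambda$ is unnecessary and in fact contradicts your subsequent claim that $p$ maps $S_i^v$ \emph{onto} $S_{j(i)}^w$; with $\lambda=1$ in both charts one has $(z^{4/d_v})^n=z^{4/d_w}$, so $p(D_v)=D_w$ on the nose and each tile maps onto a tile, matching the paper's construction.
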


\begin{proof}
  Label the tiles around $v$ by $X'_1,\dots, X'_{nm}$, and the tiles
  around $w$ by $Y'_1,\dots,$ $Y'_m$ positively around the vertices. 
  Map the first tile $X'_1$ conformally to $Y'_1$, such that $v$ is mapped to $w$. 
By reflection this extends conformally to map $X'_2$ to $Y'_2$. Continuing to extend the map in this fashion $X'_{nm}$ gets mapped to $Y'_{m}$. Again this extends by reflection to a conformal map from $X'_1$ to $Y'_1$, agreeing with the previous definition of the map on $X'_1$. By changing coordinates we can write the map in the form $z\mapsto z^n$.  
\end{proof}

\begin{proof}[Proof of Lemma \ref{diamX0}] 
  One way to prove the lemma would be to use the rational maps that
  can be constructed as in \cite{snowemb}.
  Since it is well known that the occurring \emph{postcritically
    finite} rational maps are \emph{sub-hyperbolic},  
  the statement is true in the \emph{orbifold metric} (see
  \cite{Carleson} and \cite{Milnor}). 

  We give a self-contained proof here. The following may in fact be
  viewed as an explicit construction of the orbifold metric. It was
  somewhat inspired by a conversation with W.\ Floyd and W.\ Parry. 
  
Consider first a uniformized generator as in Figure
\ref{fig:unifgen}. The conformal maps $g$ from the unit square to a tile are
contractions in the hyperbolic metric $d_h(x,y)$ of $\inte [0,1]^2$
by the Schwarz-Pick lemma; they are \emph{strict contractions} for
compact subsets of $\inte[0,1]^2$.  

We want to consider a neighborhood $U$ of the unit square $[0,1]^2$
so that we can extend the maps $g\colon [0,1]^2\to \text{tile}$ to
$U$. By Schwarz-Pick the map $g$ will then be strictly contracting on
the compact set $[0,1]^2\subset U$ in the hyperbolic metric of $U$.  

Let the number $M\in\N$ be the least common multiple of all occurring
degrees $\deg_j(v)$ (recall that this was the number of $j$-tiles
intersecting in a vertex $v$).  
It is well known that the hyperbolic plane can be tiled with
hyperbolic squares with angles $2\pi/M$ if $M\geq 5$ (see \cite{caratheodory2},
sections 398--400).  
Alternatively one may construct a cell complex
consisting of squares where at each vertex $M$ squares intersect, put
a conformal structure on the complex (as in Subsection
\ref{sec:conf-coord-appr}), and invoke the uniformization 
theorem (it is not hard to show that the \emph{type} will be
hyperbolic).   
Let $Q$ be one hyperbolic square of the tiling, and $U$ be the
neighborhood consisting of all hyperbolic squares with non-empty
intersection with $Q$. The hyperbolic squares in $U$ form a conformal
tiling. Each vertex of $Q$ belongs to $M$ tiles.  

\medskip
Now consider a uniformized generator, which is a conformal tiling of
the unit square $[0,1]^2$ as in Figure \ref{fig:unifgen}. Map this
tiling by conformal maps to each hyperbolic square in $U$. Images of
the tiles of $[0,1]^2$ under these maps will be denoted by $T$. The
tiles $T$ are again a conformal tiling of $U$. 

Let $g_T$ be a conformal map from the hyperbolic square $Q$ to such a
tile, 
\begin{equation}
  \label{eq:defgT}
  g_T\colon Q\to T\subsetneq Q.   
\end{equation}
By the previous lemma $g_T$ extends
to $U$ analytically, $g_T\colon U\to U$. Since $Q$ is compactly
contained in $U$, the map $g_T$ is strictly contracting on $Q$ in
the hyperbolic metric $d_U$ of $U$ (by Schwarz-Pick, see for example
\cite{AhlforsConfInv}): 
$$d_U(g(x_T),g(y_T))\leq \lambda_T d_U(x,y),\; \text{where } \lambda_T<1 \text{ for all } x,y\in Q .$$

Since there are only finitely many different generators (each with
finitely many squares/tiles), all these  maps are contracting with a
uniform constant $\lambda<1$.  

Consider a $0$-tile $X'_0\in\X_0'$. Let $V$ be the neighborhood of all
$0$-tiles having non-empty intersection with $X'_0$. As before we can
extend the conformal map $h\colon Q\to X'_0$ to an analytic map
$h\colon U\to V$. Since $X'_0$ is compactly contained in $V$, and by
Schwarz-Pick, 
\begin{equation*}
  \abs{h(x)-h(y)}\asymp d_V(h(x),h(y))\leq d_U(x,y),\text{ for all } x,y\in Q, 
\end{equation*}
where $d_V$ denotes the hyperbolic metric of $V$. 

Now consider a $j$-tile $X'_j\subset X'_{j-1}\subset \dots \subset
X'_0$, where $X'_k\in\X'_k$ for $0\leq k\leq j$. 
Let $Y'_k:=h^{-1}(X'_k)\subset Q$ be their preimages. 

Set $T_1:=Y'_1$. Since $Y'_2\subset T_1$, we can let
$T_2:=g^{-1}_{T_1}(Y'_2)$; the map $g_{T_1}$ is the one from
(\ref{eq:defgT}). Define inductively 
\begin{equation*}
  T_{k}:=g^{-1}_{T_{k-1}}\circ \dots \circ g^{-1}_{T_1}(Y'_{k}),
\end{equation*}
for $1\leq k\leq j$. 
Note that $Y'_k\subset T_1, g^{-1}_{T_1}(Y'_k)\subset T_2,
g^{-1}_{T_2}\circ g^{-1}_{T_1}(Y'_{k})\subset T_3$, and so on. Thus
$T_k$ is well defined. 

Note also that $T_k$ is one of the (finitely many) tiles as
above. This is seen as follows. Consider all $k$-tiles
$\widetilde{X}'_k\subset X'_{k-1}$ and the corresponding sets
$\widetilde{Y}'_k, \widetilde{T}_k$. Then the sets
$\widetilde{T}_k\subset Q$ are the conformal image of the tiling of
$[0,1]^2$ obtained as the uniformization of the $N_k$-generator.  
Then 
\begin{align*}
  g_{T_1}\circ\dots \circ g_{T_j}(Q)=g_{T_1}\circ\dots \circ
  g_{T_{j-1}}(T_j)= Y'_j.   
\end{align*}

For $x'=h(x),y'=h(y)\in X'_j$, where $x,y\in Y'_j\subset Q$, we have by
the above $$\abs{x'-y'}\lesssim d_U(x,y)\lesssim\lambda^j.$$ The
result follows.   
\end{proof}

\subsection{Combinatorial Distance on $\SB$}
\label{sec:comb-dist-sb}

Recall how $j(x,y)$ was defined in (\ref{eq:defjxy}) by the
combinatorics of cylinders (of the snowsphere). Since tiles (of the
sphere) have the same combinatorics, we
write 
$$j(x',y')=j(x,y),$$
where $x'=f(x),y'=f(y)$.

The proof of Theorem 1A follows essentially from the next two
lemmas. The first concerns \emph{intersecting} $j$-tiles, thus the case $j<
j(x',y')$; see (\ref{eq:defjxy}). In the second we consider \emph{disjoint}
$j$-tiles, thus the case $j\geq j(x',y')$. 
The proofs are essentially the same. In each case one has to
control only finitely many combinatorial types by
(\ref{eq:main1}). Since combinatorial equivalence implies conformal
equivalence by Lemma \ref{lem:conbconfeq}, sets of the same type
cannot ``look too different'' by the Koebe distortion theorem. To
paraphrase the main idea of the proof, why do constants not blow up?
Because there are only finitely many constants, one for each
combinatorial type of suitable neighborhoods.  
\begin{lemma}
  \label{lem:diamXY}
  Let $X'$, $Y'$ be $j$-tiles that are not disjoint. Then
  \begin{equation*}
    \diam X' \asymp \diam Y',
  \end{equation*}
  with a uniform constant $C(\asymp)$.
\end{lemma}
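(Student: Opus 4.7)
The plan is to exploit the fact that, by the degree bound $\deg_j(v)\le 6$ from \eqref{eq:main1}, only finitely many combinatorial types of small neighborhoods of a $j$-tile can occur, and to transfer this finiteness to a metric statement via the Koebe distortion theorem.

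First I would set up the combinatorial neighborhood. For a given $j$-tile $X'\in\X'_j$, let $V'=V'(X')$ be the union of all $j$-tiles $Z'\in\X'_j$ of combinatorial distance at most $2$ from $X'$ (that is, $X'$ together with its neighbors and their neighbors). Since every vertex lies in at most $6$ tiles, every tile has a uniformly bounded number of neighbors, so the number of tiles constituting $V'$ is bounded by a uniform constant. If $Y'\cap X'\neq\emptyset$, then $Y'$ is a neighbor of $X'$, hence both $X'$ and $Y'$, together with all their neighbors and boundary vertices, sit in $V'$; in particular $X'\cup Y'$ is compactly contained in $\inte V'$ relative to the cell complex.

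Next I would construct a planar model. Because the cell complex on $V'$ has uniformly bounded size and uniformly bounded vertex degrees, there are only finitely many combinatorial types of triples $(V',X',Y')$. For each such type $T$, one can build a conformal tiling $(V_0,X_0,Y_0)\subset \C$ of the same type: start with a conformal square for $X_0$, attach each neighboring tile by reflection across the shared edge, and extend around each interior vertex of degree $n$ through a chart of the form $z\mapsto z^n$ as provided by Lemma~\ref{cor:ztozn}. By construction $V_0$ is a conformal tiling combinatorially equivalent to $V'$, and $X_0\cup Y_0$ lies in a compact subset $K_T\subset \inte V_0$. Lemma~\ref{lem:conbconfeq} then furnishes a conformal (univalent) map
\begin{equation*}
  g=g_{V_0,V'}\colon \inte V_0 \longrightarrow \inte V'
\end{equation*}
carrying tiles to tiles and satisfying $g(X_0)=X'$, $g(Y_0)=Y'$. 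For the finitely many small $j$ for which $V'$ might exhaust $\CDach$, the lemma is trivial since there are only finitely many tiles in all; otherwise $\inte V'\subsetneq\CDach$, so after composing with a Möbius transformation of $\CDach$ placing $\infty$ outside $V'$ we may assume $g$ takes values in $\C$.

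Finally, I would apply the Koebe distortion theorem to the univalent holomorphic map $g\colon\inte V_0\to\C$ on the compact set $K_T\supset X_0\cup Y_0$. This yields a constant $C_T<\infty$ with
\begin{equation*}
  \sup_{K_T}\abs{g'}\leq C_T\inf_{K_T}\abs{g'},
\end{equation*}
and hence $\diam X'=\diam g(X_0)\asymp\abs{g'(z_0)}\diam X_0$ and similarly for $Y'$, with implicit constants depending only on $T$. Since $\diam X_0\asymp \diam Y_0$ for the fixed model, we conclude $\diam X'\asymp \diam Y'$ with constant $C_T$. Taking the maximum of $C_T$ over the finite list of combinatorial types yields the desired uniform constant $C(\asymp)=C(N_{\max})$.

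The main technical point is ensuring that $X_0\cup Y_0$ lies \emph{compactly} inside $\inte V_0$, so that Koebe really applies — this is precisely why the $2$-neighborhood rather than the $1$-neighborhood of $X'$ is used. The rest is bookkeeping: the degree bound forces finite combinatorics, Lemma~\ref{cor:ztozn} lets the model extend analytically through its interior vertices, and Lemma~\ref{lem:conbconfeq} upgrades combinatorial equivalence to a conformal map to which classical distortion estimates apply.
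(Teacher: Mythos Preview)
Your strategy is the paper's strategy: the degree bound \eqref{eq:main1} forces finitely many combinatorial types of neighborhoods, Lemma~\ref{lem:conbconfeq} turns combinatorial equivalence into conformal equivalence, and Koebe converts this into a diameter comparison. The only real difference is in how the ``model'' domain is produced, and here your argument has a gap.

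Your construction of $V_0\subset\C$ by successive reflections does not obviously yield an \emph{embedded} conformal tiling: once vertex degrees differ from $4$, reflected copies of the square can overlap in the plane. The appeal to Lemma~\ref{cor:ztozn} does not help --- that lemma builds analytic maps \emph{between} two already-existing conformal tilings, it does not manufacture a planar tiling of a prescribed combinatorial type. The natural fix, uniformizing the abstract Riemann surface $V_0$ to $\D$, needs $\inte V_0$ to be simply connected, and you do not verify this; the paper in fact warns that the analogous $1$-neighborhood $\bigcup\mathbf{Z}'$ of $X'\cup Y'$ ``is not simply connected'' in general, so this is not a formality.

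The paper avoids building an external model altogether. It takes $\mathbf{Z}'=\{Z'\in\X'_j:Z'\cap(X'\cup Y')\neq\emptyset\}$, then chooses a \emph{simply connected} open $U=U_{\mathbf{Z}'}\subset\bigcup\mathbf{Z}'$ containing $X'\cup Y'$ (possible since $X'\cup Y'$ is itself a topological disk), and lets the Riemann map $h\colon\D\to U$ play the role of your model. By transporting $U$ and $h$ along the conformal equivalences of Lemma~\ref{lem:conbconfeq}, the preimages $h^{-1}(X'),h^{-1}(Y')\subset\D$ depend only on the combinatorial type of $\mathbf{Z}'$, are compactly contained in $\D$, and Koebe applies directly to $h$. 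Replacing your $V_0$-construction with this step (or, equivalently, passing to a simply connected open subneighborhood of $X_0\cup Y_0$ inside your $V_0$ and uniformizing that) closes the gap.
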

\begin{proof}
  Let $X',Y'\in\X'_j,$ $X'\cap Y'\neq\emptyset$.
  Consider the set of tiles
  \begin{equation*}
    \mathbf{Z'}:=\{Z'\in \X'_j: Z'\cap (X'\cup Y')\neq \emptyset\}. 
%    \mathbf{Z'}:=\bigcup_{\substack{Z'\cap (X'\cup Y')\neq \emptyset\\Z'\in \X_j}} Z'.
  \end{equation*}

  There are only finitely many different combinatorial types of such
  $\mathbf{Z}'$ by inequality (\ref{eq:main1}). Thus there are only
  finitely many different conformal types of such $\mathbf{Z}'$ (by Lemma
  \ref{lem:conbconfeq}). 
  In general $\bigcup \mathbf{Z}'$ is not simply connected. 
  Fix simply connected open neighborhoods $U=U_{\mathbf{Z}'}\subset\bigcup
  \mathbf{Z}'$ of $X'\cup Y'$, 
  and Riemann maps $h=h_{\mathbf{Z}'} \colon \D \to U$ with $h(0)\in
  X'\cap Y'$. We can choose $h$ and $U$ \defn{compatible}  
  with the conformal
  equivalence. By this we mean that if $\mathbf{Z}'$ and
  $\widetilde{\mathbf{Z}}'$ are combinatorially equivalent and $g_{\mathbf{Z}',
      \widetilde{\mathbf{Z}}'}$ is the map from  Lemma
    \ref{lem:conbconfeq}, then
  \begin{align*}
    U_{\widetilde{\mathbf{Z}}'} & =g_{\mathbf{Z}',
      \widetilde{\mathbf{Z}}'} U_{\mathbf{Z}'}, 
    & h_{\widetilde{\mathbf{Z}}'} & 
    =g_{\mathbf{Z}',\widetilde{\mathbf{Z}}'} \circ h_{\mathbf{Z}'}.
  \end{align*}
  Consider preimages of $X'$ and $Y'$ by $g$ in the disk
  $\D$; they are compactly contained. There are only finitely many
  different such preimages, one for each 
  combinatorial type of $\mathbf{Z}'$. Thus
  \begin{align*}
    \diam g^{-1}(X')\asymp \diam g^{-1}(Y') \text{ and}
    \\ 
    \dist(g^{-1}(X'\cup Y'),\partial \D)\geq  \epsilon>0.
  \end{align*}
  Here $C(\asymp)$ and $\epsilon$ are uniform constants.
  The statement now follows from Koebe's distortion theorem (see for
  example \cite{AhlforsConfInv}). 
\end{proof}
Since the number of $(j+1)$-tiles that a $j$-tile contains is
uniformly bounded, one immediately concludes the following corollary. 
\begin{cor}\label{cor:XjXj+1}
  For any $(j+1)$-tile $X'_{j+1}\subset X'_{j}\in \X'_j$, we have 
  \begin{equation*}
    \diam X'_{j+1}\asymp \diam X'_j,
  \end{equation*}
  where $C(\asymp)$ is a uniform constant.
\end{cor}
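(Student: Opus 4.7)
The plan is to combine Lemma \ref{lem:diamXY} with a short chain argument and a trivial covering bound.

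First I would fix notation and record the finiteness input. By Lemma \ref{lem:proptilings} (\ref{item:lem_prop_tiles_comb}), the $(j{+}1)$-tiles contained in a fixed $j$-tile $X'_j$ are in combinatorial bijection with the $\delta_{j+1}/\delta_j$-squares of the $N_{j+1}$-generator. Since every generator is built from at most $N_{\max}^{\,2}$ base squares plus the side faces of at most $N_{\max}^{\,2}$ added cubes, their number is bounded by a uniform constant $M=M(N_{\max})$. In particular, at most $M$ such $(j{+}1)$-tiles sit inside $X'_j$.

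Next, I would argue that the intersection graph $\Gamma$ whose vertices are the $(j{+}1)$-subtiles of $X'_j$ and whose edges connect tiles with non-empty intersection is connected. Indeed, if $\Gamma$ split as $C_1\sqcup C_2$, then $A_k:=\bigcup_{X'\in C_k} X'$ would be two disjoint closed sets with $A_1\cup A_2=X'_j$, contradicting the connectedness of the closed Jordan region $X'_j$. Consequently, any two $(j{+}1)$-tiles inside $X'_j$ can be joined by a chain of at most $M$ consecutively intersecting $(j{+}1)$-tiles. Applying Lemma \ref{lem:diamXY} to each consecutive pair in such a chain gives a uniform constant $C'=C'(N_{\max})$ such that
\begin{equation*}
\diam X'_{j+1}\asymp \diam Y'_{j+1}
\end{equation*}
for any two $(j{+}1)$-subtiles $X'_{j+1},Y'_{j+1}$ of the same $j$-tile $X'_j$.

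Finally, for the comparison with $\diam X'_j$ itself, pick any two points $x,y\in X'_j$ and connect the $(j{+}1)$-subtiles containing them by a chain of length at most $M$; then
\begin{equation*}
|x-y|\;\le\;\sum_{k}\diam Z'_{k}\;\le\;M\cdot \max_{X'_{j+1}\subset X'_j}\diam X'_{j+1},
\end{equation*}
so that $\diam X'_j\lesssim \diam X'_{j+1}$ for any chosen $(j{+}1)$-subtile, with uniform constant depending only on $M$ and $C'$. The reverse inequality $\diam X'_{j+1}\le \diam X'_j$ is immediate from $X'_{j+1}\subset X'_j$. This yields the asserted comparison.

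There is no serious obstacle: the only mildly non-obvious ingredient is the connectedness of the intersection graph, which follows from the connectedness of $X'_j$; everything else is a direct application of Lemma \ref{lem:diamXY} and the uniform bound on the number of $(j{+}1)$-subtiles in a $j$-tile.
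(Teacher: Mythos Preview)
Your proof is correct and follows the same approach the paper intends: the paper simply says ``Since the number of $(j+1)$-tiles that a $j$-tile contains is uniformly bounded, one immediately concludes the following corollary,'' and your chain argument via Lemma \ref{lem:diamXY} is exactly what ``immediately'' is hiding. One small inaccuracy worth fixing: your explicit count $M\le N_{\max}^2+\text{(side faces)}$ relies on the informal ``put cubes on squares'' description from the introduction, whereas the formal definition in Section~\ref{sec:gen} allows any polyhedral surface in the double pyramid built from $1/N$-squares---but of course the number of such squares is still uniformly bounded in terms of $N_{\max}$, which is all you need.
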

\begin{lemma}
  \label{lem:distXY}
  Let $X',Y'$ be disjoint $j$-tiles. Then
  \begin{equation*}
    \dist(X',Y')\gtrsim \diam X',
  \end{equation*}
  with a uniform constant $C(\gtrsim)$. 
\end{lemma}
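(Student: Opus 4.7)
The plan is to reduce the disjoint case to the (already proven) intersecting case by climbing the subdivision tree to the last generation where the ancestors still meet, and then applying Koebe/finiteness at that level.

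First I would introduce, for the given disjoint $j$-tiles $X'$ and $Y'$, their chains of ancestors $X'=X'_j\subset X'_{j-1}\subset\dots\subset X'_{-1}=\SB$ and $Y'=Y'_j\subset Y'_{j-1}\subset\dots\subset Y'_{-1}=\SB$ (using Lemma \ref{lem:proptilings} (\ref{item:lem_prop_tiles_incl})). Because $X'_{-1}=Y'_{-1}=\SB$ intersects and $X'_j\cap Y'_j=\emptyset$, there is a largest index $k$ with $-1\leq k< j$ such that $X'_k\cap Y'_k\neq\emptyset$; at the next level the $(k{+}1)$-tiles $X'_{k+1}$ and $Y'_{k+1}$ are disjoint. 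This is the key decomposition.

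Next I would control the diameters. By Lemma \ref{lem:diamXY} applied at level $k$ we have $\diam X'_k\asymp\diam Y'_k$, and by Corollary \ref{cor:XjXj+1} also $\diam X'_{k+1}\asymp\diam X'_k\asymp\diam Y'_k\asymp\diam Y'_{k+1}$, all with uniform constants. Since $X'\subset X'_{k+1}$, we already have $\diam X'\leq\diam X'_{k+1}$. Therefore, once we prove
\begin{equation*}
  \dist(X'_{k+1},Y'_{k+1})\gtrsim \diam X'_{k+1}
\end{equation*}
with a uniform constant, the lemma follows from $\dist(X',Y')\geq\dist(X'_{k+1},Y'_{k+1})\gtrsim\diam X'_{k+1}\geq\diam X'$.

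The remaining inequality is proved by the same finiteness-plus-Koebe scheme used in Lemma \ref{lem:diamXY}. Consider the set
\begin{equation*}
  \mathbf{Z}':=\{Z'\in\X'_k:Z'\cap(X'_k\cup Y'_k)\neq\emptyset\}
\end{equation*}
together with its subdivision into $(k{+}1)$-tiles. By the vertex-degree bound \eqref{eq:main1} and the fact that only finitely many generators are used \eqref{eq:Nmax}, the cell complex $\Sigma(\mathbf{Z}')$ enriched by the $(k{+}1)$-subdivision has only finitely many combinatorial types. By Lemma \ref{lem:conbconfeq} combinatorial equivalence implies conformal equivalence, so only finitely many conformal types occur. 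Choose, compatibly with this equivalence, a simply connected neighborhood $U_{\mathbf{Z}'}\subset \inte\bigcup\mathbf{Z}'$ containing $X'_k\cup Y'_k$ and a Riemann map $h_{\mathbf{Z}'}\colon\D\to U_{\mathbf{Z}'}$ normalized so that $h_{\mathbf{Z}'}(0)\in X'_k\cap Y'_k$. The preimages $h^{-1}(X'_{k+1})$ and $h^{-1}(Y'_{k+1})$ are disjoint compact subsets of $\D$, and since only finitely many such configurations arise, their mutual distance and their diameters are comparable with uniform constants, and they are uniformly compactly contained in $\D$. Koebe's distortion theorem then gives
\begin{equation*}
  \dist(X'_{k+1},Y'_{k+1})\asymp \diam X'_{k+1}
\end{equation*}
with a uniform constant, which is more than we need.

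The main obstacle is the compatibility step: one must make sure the neighborhoods $U_{\mathbf{Z}'}$ and Riemann maps $h_{\mathbf{Z}'}$ can be chosen uniformly across the (finitely many) combinatorial types, so that the resulting Koebe estimate really yields a single uniform constant rather than a constant that might drift with $k$. Once this is arranged exactly as in the proof of Lemma \ref{lem:diamXY}, the conclusion is immediate.
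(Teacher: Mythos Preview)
Your argument is correct, but it takes a noticeably longer route than the paper's. The paper stays entirely at level $j$: it considers the one-tile neighborhood
\[
  \X':=\{Z'\in\X'_j:Z'\cap X'\neq\emptyset\},
\]
observes that $\bigcup\X'$ is simply connected and that $X'\subset\inte\bigcup\X'$, and notes that since $Y'$ is disjoint from $X'$ it lies outside $\inte\bigcup\X'$. Hence $\dist(X',Y')\geq\dist(X',\partial\bigcup\X')$. Finiteness of combinatorial types of $\X'$ (just from the degree bound \eqref{eq:main1}) plus Koebe on the Riemann map $g\colon\D\to\inte\bigcup\X'$ then gives $\dist(X',\partial\bigcup\X')\asymp|g'(0)|\gtrsim\diam X'$ directly.

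Your approach instead climbs the ancestor tree to the last level $k$ where the ancestors touch, and then runs a two-level finiteness argument (the $k$-neighborhood together with its $(k{+}1)$-subdivision). This works, and your handling of the compatibility/finiteness step is fine, but it introduces extra moving parts: you need finiteness of generators in addition to the degree bound, you need to track which $(k{+}1)$-children are the distinguished ones, and you need Corollary~\ref{cor:XjXj+1} to pass from $\diam X'_{k+1}$ back down to $\diam X'$. The paper's proof avoids all of this because the single observation that $Y'$ must lie outside the $j$-neighborhood of $X'$ already separates the two tiles by a definite buffer at the \emph{same} level. Your method would be the natural one in a setting where no such one-step buffer exists, but here the cell-complex structure makes the direct argument available.
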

\begin{proof}
  Consider the neighborhood of $j$-tiles of $X'\in \X_j$,
  \begin{equation*}
    \X':=\{Z'\in \X'_j: Z'\cap X'\neq \emptyset\}.
  \end{equation*}

  The set $\bigcup \mathbf{X}'$ is simply connected. 
  There are only finitely many different combinatorial types of such
  $\mathbf{X}'$ (by inequality (\ref{eq:main1})). 
  Assume that the tiling on $\D$
  induced by the Riemann maps $g=g_{\mathbf{X}'}\colon \D \to \inte
  \bigcup \mathbf{X}'$ (with $g(0)\in X'$) depends only on the type of
  $\mathbf{X}'$ (by Lemma \ref{lem:conbconfeq}). Then 
  \begin{align*}
    & \dist(g^{-1}(X'), \partial \D) \geq \epsilon,
%    \\
%    &\dist(g^{-1}(X'),g^{-1}(\partial \bigcup \X'))\asymp 1 \text { and}
%    \\
%    &\dist(g^{-1}(\partial \bigcup \X'),\partial \D)\geq \epsilon>0.
  \end{align*}
  where $\epsilon>0$ is a uniform constant. So by Koebe distortion,
  \begin{equation*}
    \dist(X',Y')\geq \dist(X',\partial \bigcup \X')\asymp
    \abs{g'_{\X'}(0)}\gtrsim \diam X'.
  \end{equation*}
\end{proof}

The last two lemmas enable us to describe distances in combinatorial
terms. 

\begin{lemma}
  \label{lem:combdistS}
  
  For all $x',y'\in \SB$
  \begin{equation*}
    \abs{x'-y'} \asymp \diam X'_j, 
  \end{equation*}
  where $j=j(x',y')$, $x'\in X'_j\in\X'_j$. The constant
  $C(\asymp)$ is uniform.
\end{lemma}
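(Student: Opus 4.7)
The plan is to establish the two halves of the asymptotic $\abs{x'-y'}\asymp\diam X'_j$ separately, in direct analogy with the cylinder argument of Lemma \ref{lem:pseudcomp}, but now driven by the conformal-comparability inputs from this subsection. The two workhorses are Lemma \ref{lem:diamXY} (intersecting tiles have comparable diameters) and Lemma \ref{lem:distXY} (disjoint $j$-tiles are separated by at least their diameter), with Corollary \ref{cor:XjXj+1} used to pass from the scale $j-1$ down to $j$.

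For the upper bound $\abs{x'-y'}\lesssim\diam X'_j$, I would choose $(j-1)$-tiles $X'_{j-1}\ni x'$ and $Y'_{j-1}\ni y'$. By the minimality in the definition of $j=j(x',y')$, these $(j-1)$-tiles cannot be disjoint; otherwise they would witness $j(x',y')\leq j-1$. Lemma \ref{lem:diamXY} then gives $\diam X'_{j-1}\asymp\diam Y'_{j-1}$, and Corollary \ref{cor:XjXj+1} (applied along the unique chain $X'_j\subset X'_{j-1}$ from Lemma \ref{lem:proptilings}) gives $\diam X'_{j-1}\asymp\diam X'_j$. Combining,
\[
\abs{x'-y'}\leq \diam X'_{j-1}+\diam Y'_{j-1} \lesssim \diam X'_j.
\]
The degenerate case $j=0$ is handled by the convention that $\SB$ is the sole $(-1)$-tile: $\abs{x'-y'}\leq \diam\SB=2$, while by the symmetric normalization of the uniformization of $\SC_0$ noted in Section \ref{sec:mapcyl}, all $0$-tiles have the same (fixed positive) diameter, so $\abs{x'-y'}\lesssim \diam X'_0$.

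For the lower bound $\abs{x'-y'}\gtrsim\diam X'_j$, take disjoint $j$-tiles $\widetilde{X}'_j\ni x'$ and $\widetilde{Y}'_j\ni y'$, which exist by the definition of $j(x',y')$. Lemma \ref{lem:distXY} yields $\dist(\widetilde{X}'_j,\widetilde{Y}'_j)\gtrsim\diam \widetilde{X}'_j$, so
\[
\abs{x'-y'}\geq \dist(\widetilde{X}'_j,\widetilde{Y}'_j)\gtrsim \diam \widetilde{X}'_j.
\]
If $x'$ lies in the interior of a unique $j$-tile, then $\widetilde{X}'_j=X'_j$ and we are done. Otherwise $x'$ lies on a shared $j$-edge or $j$-vertex, so both $X'_j$ and $\widetilde{X}'_j$ contain $x'$ and hence intersect; Lemma \ref{lem:diamXY} gives $\diam \widetilde{X}'_j\asymp \diam X'_j$, completing the estimate.

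There is no serious obstacle; the structure is precisely the cylinder argument transported to $\SB$ via the conformal distortion lemmas. The only bookkeeping issues are the base case $j=0$ (resolved by the $(-1)$-tile convention and the symmetric choice of $F_0$) and the non-uniqueness of the $j$-tile through a boundary point $x'$, which is absorbed by Lemma \ref{lem:diamXY} since any two such tiles meet at $x'$.
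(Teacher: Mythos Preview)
Your proof is correct and follows essentially the same approach as the paper: the upper bound via intersecting $(j-1)$-tiles combined with Lemma~\ref{lem:diamXY} and Corollary~\ref{cor:XjXj+1}, and the lower bound via disjoint $j$-tiles and Lemma~\ref{lem:distXY}. You are in fact slightly more careful than the paper, which silently identifies the disjoint $j$-tile through $x'$ with the one in the statement; your use of Lemma~\ref{lem:diamXY} to absorb the possible discrepancy, and your explicit treatment of the case $j=0$, are both welcome refinements.
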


\begin{proof}
  Let $x',y'\in \SB$ be arbitrary, $j=j(x',y')$. Then $(j-1)$-tiles
  $X'_{j-1}\ni x', Y'_{j-1}\ni y'$ are not disjoint. Thus,
  \begin{align*}
    \abs{x'-y'} & \leq\diam X'_{j-1} + \diam Y'_{j-1} &&
    \\
    & \lesssim \diam X'_{j-1} && \text{ by Lemma \ref{lem:diamXY}}
    \\
    & \lesssim \diam X'_j  && \text{ by Corollary \ref{cor:XjXj+1}}. 
  \end{align*}
  
  On the other hand there are disjoint $j$-tiles $X'_j\ni x', Y'_j\ni
  y'$. Therefore by Lemma \ref{lem:distXY},
  \begin{align*}
    \abs{x'-y'}\geq \dist(X'_j, Y'_j) \gtrsim \diam X'_j.
  \end{align*}
\end{proof}

The following is an immediate consequence of Lemma \ref{lem:pseudcomp},
Lemma \ref{diamX0}, and Lemma \ref{lem:combdistS}.
\begin{cor}\label{cor:S_S2}
  The map $f\colon \SC \to \SB$ is a homeomorphism. In particular
  $\SC$ is a topological sphere.
\end{cor}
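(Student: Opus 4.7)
The plan is to verify that $f$ is a continuous bijection between $\SC$ and $\SB$; since $\SC$ is compact (as the Hausdorff limit of a uniformly bounded family of compact polyhedral spheres in $\R^3$) and $\SB$ is Hausdorff, continuous bijectivity will automatically upgrade to homeomorphism. The three cited lemmas supply the quantitative dictionary $|x-y| \asymp \delta_{j(x,y)}$ on the source, $|x'-y'| \asymp \diam X'_{j(x',y')}$ on the target, and the decay $\diam X'_j \lesssim \lambda^j$; all three required properties of $f$ fall out by chaining these.

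First I would check continuity. If $x_n \to x$ in $\SC$, then Lemma \ref{lem:pseudcomp} forces $\delta_{j(x_n,x)} \asymp |x_n - x| \to 0$, so $j(x_n,x) \to \infty$. Because $f$ preserves combinatorics tile by tile (the cylinder-to-tile bijection of Lemma \ref{lem:proptilings}(\ref{item:lem_prop_tiles_incl})), we have $j(f(x_n),f(x)) = j(x_n,x)$; Lemma \ref{lem:combdistS} then gives $|f(x_n) - f(x)| \asymp \diam X'_{j(x_n,x)}$, which tends to $0$ by Lemma \ref{diamX0}. Injectivity is the same chain read in reverse: for distinct $x,y \in \SC$, the index $j(x,y)$ is finite because $\diam X_j \leq \sqrt{2}\delta_j \to 0$ eventually separates $x$ from $y$ into disjoint $j$-cylinders, and then $|f(x) - f(y)| \asymp \diam X'_{j(x,y)} > 0$.

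The step requiring the most bookkeeping is surjectivity. Given $y' \in \SB$, using the subdivision property in Lemma \ref{lem:proptilings}(4) I would select a nested chain $Y'_0 \supset Y'_1 \supset \cdots$ with $Y'_j \in \X'_j$ and $y' \in Y'_j$ for every $j$. Applying Lemma \ref{lem:proptilings}(\ref{item:lem_prop_tiles_incl}) in reverse, the cylinders $Y_j := X_j(F_j^{-1}(Y'_j)) \in \X_j$ form a nested sequence $Y_0 \supset Y_1 \supset \cdots$ in $\SC$. By compactness $\bigcap_j Y_j$ is non-empty, and $\diam Y_j \leq \sqrt{2}\delta_j$ collapses it to a single point $x \in \SC$. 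Unwinding the definition of $f$, the image $f(x)$ is the unique point of $\bigcap_j Y'_j = \{y'\}$, so $f(x) = y'$.

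Putting the three properties together yields a continuous bijection $f \colon \SC \to \SB$, hence a homeomorphism, and $\SC$ inherits the topological sphere structure from $\SB$. The only genuinely nontrivial step is the surjectivity argument, where one must lift a chosen nested sequence of tiles around $y'$ back to a nested sequence of cylinders via the tile/cylinder dictionary; the rest is a direct application of the quantitative estimates already proved.
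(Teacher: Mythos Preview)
Your proof is correct and follows the paper's approach: the paper simply declares the corollary an immediate consequence of Lemma~\ref{lem:pseudcomp}, Lemma~\ref{diamX0}, and Lemma~\ref{lem:combdistS}, and you have spelled out exactly the continuity, injectivity, and surjectivity arguments that those three lemmas encode. The surjectivity step via lifting a nested chain of tiles back to cylinders is the natural (and intended) way to unpack the paper's one-line claim.
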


\subsection{Proof of Theorem 1A}
\label{sec:proofsphere}
To show that spaces are quasisymmetrically equivalent can be
tedious. Therefore one often considers the following weaker
notion. 
An embedding $f\colon X\to Y$ of metric spaces is called \emph{weakly
  quasisymmetric} \index{weak quasisymmetry} if there is a number
$H\geq 1$ such that
\begin{equation*}
  |x-a|\leq |x-b| \Rightarrow |f(x)-f(a)|\leq H |f(x)-f(b)|,
\end{equation*}
for all $x,a,b\in X$. Quasisymmetric maps are ``more nicely'' behaved
than weakly quasisymmetric ones. Quasisymmetry is preserved under
compositions and inverses, which do not preserve weak quasisymmetry
in general. In many practically relevant cases however the two notions
coincide. 

A metric space is called \emph{doubling} \index{doubling (metrically)} if there is a number $M$ (the \emph{doubling constant}), such that every ball of diameter $d$ can be covered by $M$ sets of diameter at most $d/2$, for all $d>0$.
\begin{theorem*}[see \cite{JuhAn}, 10.19]
  A weakly quasisymmetric homeomorphism of a connected doubling space
  into a doubling space is quasisymmetric. 
\end{theorem*}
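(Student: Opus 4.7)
The task is to upgrade the single constant $H$ furnished by weak quasisymmetry into a full control function $\eta\colon[0,\infty)\to[0,\infty)$, using connectedness of $X$ and doubling of both $X$ and $Y$. Writing
\[\phi(t):=\sup\Bigl\{\frac{|f(x)-f(a)|}{|f(x)-f(b)|}:x,a,b\in X,\;x\neq b,\;|x-a|\leq t|x-b|\Bigr\},\]
it suffices to prove $\phi(t)<\infty$ for every $t>0$ and $\phi(t)\to 0$ as $t\to 0^+$, since then any homeomorphism of $[0,\infty)$ dominating $\phi$ serves as $\eta$. Weak quasisymmetry supplies only the bound $\phi(1)\leq H$; everything else must be extracted from connectedness and doubling.

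For finiteness of $\phi(t)$ when $t\geq 1$, my plan is a chain argument driven by connectedness of $X$. Given $x,a,b\in X$ with $r:=|x-b|\leq|x-a|\leq tr$, I construct a finite chain $b=b_0,b_1,\ldots,b_n=a$ in $X$ satisfying
\[|b_{i+1}-b_i|\leq|x-b_i|\quad\text{for every }i,\]
with $n=n(t)$ depending only on $t$ and the doubling constant of $X$. Such a chain exists because connected metric spaces are $\epsilon$-chainable for every $\epsilon>0$, and continuity of $y\mapsto|x-y|$ on the connected space $X$ lets one route the chain through points of prescribed distance from $x$, keeping $|x-b_i|\gtrsim r$; doubling of $X$ bounds how many hops are needed. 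At each step, weak quasisymmetry at basepoint $b_i$ with test pair $(b_{i+1},x)$ gives $|f(b_{i+1})-f(b_i)|\leq H|f(x)-f(b_i)|$, so the triangle inequality yields $|f(x)-f(b_{i+1})|\leq(1+H)|f(x)-f(b_i)|$. Iterating produces $\phi(t)\leq(1+H)^{n(t)}<\infty$.

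For the decay $\phi(t)\to 0$ as $t\to 0$, my plan is a contradiction argument in which doubling of $Y$ is indispensable. Suppose $\delta>0$ and triples $(x_k,a_k,b_k)$ witness $t_k:=|x_k-a_k|/|x_k-b_k|\to 0$ yet
\[|f(x_k)-f(a_k)|\geq\delta|f(x_k)-f(b_k)|.\]
For each large $N$, connectedness and doubling of $X$ provide points $a_k^{(1)},\ldots,a_k^{(N)}\in X$ with $|x_k-a_k^{(i)}|\asymp|x_k-a_k|$ and pairwise separated by a uniform multiple of $|x_k-a_k|$: the thin ``annulus'' of radius $|x_k-a_k|\ll|x_k-b_k|$ around $x_k$ is rich enough (by doubling and connectedness of $X$) to support arbitrarily many such points. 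The finite bound from the previous step controls $|f(x_k)-f(a_k^{(i)})|\asymp|f(x_k)-f(a_k)|\gtrsim\delta|f(x_k)-f(b_k)|$, and weak quasisymmetry with basepoint $a_k^{(i)}$ and the pair $(x_k,a_k^{(j)})$ (noting $|a_k^{(i)}-x_k|\leq|a_k^{(i)}-a_k^{(j)}|$ for a large enough separation constant) transfers this to $|f(a_k^{(i)})-f(a_k^{(j)})|\gtrsim H^{-1}\delta|f(x_k)-f(b_k)|$. Meanwhile weak quasisymmetry forces $f(a_k^{(i)})\in B(f(x_k),H|f(x_k)-f(b_k)|)$. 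For $N$ large this packs too many uniformly separated points into a ball of comparable radius, contradicting the doubling of $Y$.

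The main obstacle lies in the chain step: routing the chain through $X$ so that each hop $|b_{i+1}-b_i|$ stays below $|x-b_i|$ while keeping $n$ a function of $t$ alone is exactly where the doubling of $X$ is used nontrivially. The contradiction step then reduces, once the separated family in $X$ has been produced, to a standard packing argument against the doubling of $Y$, with the first-step bound serving as the bridge that turns metric separation in $X$ into metric separation in $Y$.
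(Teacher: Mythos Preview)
The paper does not prove this theorem; it is quoted from Heinonen's \emph{Lectures on Analysis on Metric Spaces} (Theorem~10.19) and invoked as a black box in the proof of Theorem~1A. There is therefore nothing in the paper to compare your argument against.

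That said, your sketch has two genuine gaps, both visible already when $X=\mathbb{R}$. For the $t\geq 1$ step you assert a chain $b=b_0,\ldots,b_n=a$ with $|b_{i+1}-b_i|\leq|x-b_i|$ and $n=n(t)$. Take $x=0$, $b=1$, $a=-t$: any point $b_1$ with $|b_1-1|\leq|0-1|=1$ lies in $[0,2]$, and inductively every $b_i$ stays in $[0,\infty)$; you can never reach $a<0$. So no such chain exists at all, let alone one of bounded length. For the $t\to 0$ step you claim that the annulus $\{y:|x-y|\asymp|x-a|\}$ supports arbitrarily many points pairwise separated by a fixed multiple of $|x-a|$. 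In $\mathbb{R}$ this set consists of two short intervals and carries at most two such points; your packing contradiction cannot be launched.

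The standard remedy (and the argument behind the cited reference) avoids both constructions. Connectedness is used only through the intermediate value property of $y\mapsto|x-y|$: for each scale $s$ between $|x-a|$ and $|x-b|$ one picks a \emph{single} point $c_s$ with $|x-c_s|=s$. Taking $s$ along a geometric sequence $2^{-i}r$ produces points whose $f$-images are trapped in a ball of radius $H|f(x)-f(b)|$ and, via weak quasisymmetry applied at the basepoints $c_s$, are pairwise separated by a definite fraction of $|f(x)-f(a)|$; doubling of $Y$ then bounds how many such scales can occur before $|f(x)-f(a)|$ is forced small, yielding $\phi(t)\to 0$. The $t\geq 1$ bound is then obtained from this decay together with doubling of $X$, rather than from a direct chain between $b$ and $a$.
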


Obviously $\SC$ is connected. The snowsphere $\SC$ (as well as $\SB$)
is doubling as a subspace of $\R^3$.

\begin{proof}[Proof of Theorem 1A]
  We want to show that the map 
  \begin{equation*}
    f\colon \SC\to \SB,
  \end{equation*}
   defined in Subsection \ref{sec:constr-map-f}, is quasisymmetric.
  By the above it is enough to show weak quasisymmetry. Let $x,y,z\in
  \SC$, $j:=j(x,y), k:=j(x,z)$ (see (\ref{eq:defjxy})). Assume 
  \begin{align}
    \notag
    \abs{x-y} & \leq \abs{x-z}.  
%    \\ \notag
    \intertext{Then by Lemma \ref{lem:pseudcomp}}
    \label{eq:pf1A1}
    \delta_j & \lesssim \delta_k.
  \end{align}
  Let $C=C(\lesssim)=C(N_{\max})$ and choose an integer $k_0=k_0(N_{\max})$
  such that $2^{k_0}\geq C$. Then (\ref{eq:pf1A1}) implies
  \begin{equation*}
    j \geq k - k_0,
  \end{equation*}
  since $N_i\geq 2$ for all $i$.
  Lemma \ref{lem:combdistS} yields
  \begin{align*}
    \abs{x'-y'} & \asymp \diam X'_j, && \text{where } x'\in
    X'_j\in\X'_j.
    \\
    \intertext{If $k-k_0 \geq 0$ let $X'_j\subset X'_{k-k_0}\in\X'_{k-k_0}$; 
      otherwise set $X'_{k-k_0}:= \SB$. Then}
    \abs{x'-y'} & \lesssim \diam X'_{k-k_0} \asymp \diam X'_k,     
    \intertext{by Corollary \ref{cor:XjXj+1}, where $X'_{k-k_0}\supset
      X'_k\in\X'_k$, and so} 
    \\
    \abs{x'-y'} & \lesssim \abs{x'-z'}.
  \end{align*}

\end{proof}

\begin{remarks}
  It is possible to define snowspheres abstractly, i.e., not as subsets
  of $\R^3$. They will still be quasisymmetrically equivalent to the
  standard sphere $\SB$ as long as
  \begin{itemize}
  \item each generator $G_j$ is symmetric,
  \item the number of $N_j$-squares in a generator $G_j$ is bounded,
  \item the number of $\delta_j$-squares intersecting in a vertex stays uniformly bounded throughout the construction.
  \end{itemize}
  Since ultimately our goal is to show that snowspheres are
  quasisymmetric images of the sphere $\SB$ by global quasisymmetric
  maps $f\colon \R^3\to \R^3$, we do not pursue this further.
  
  \medskip
  Yet other variants of snowspheres are obtained by starting with a
  tetrahedron, octahedron, or icosahedron. A generator would then be a
  polyhedral surface built from small equilateral triangles, whose
  boundary is an equilateral triangle. While it is not too hard to
  check in individual cases whether the resulting snowspheres have
  self-intersections (i.e., are topological spheres), we are not aware of a general condition analogous
  to the ``double pyramid'' condition. This is the main reason why we
  focus on the ``square'' case. 
\end{remarks}

% ==Extending the map
%\input{ext}
%
% ext.tex
%
% Chapter about extending the map snowsphere -> sphere
% to a map R^3 -> R^3
%

\section{Elementary bi-Lipschitz Maps and Extensions}\label{sec:simplices-extensions}

This section provides several maps that are needed in the extension of
the map $f$,
i.e., in the proof of Theorem 1B.
The reader may first want to skip it and return when a
particular construction is needed. 

We will decompose the interior of the snowball into several
standard pieces. These will be mapped into the unit cube $[0,1]^3$. We
provide these maps here together with estimates to ensure that
constants are controlled. 

\smallskip
For planar vectors $v,w$ write $[v,w]:=\det(v,w)$. 
Recall that $$\sin\angle(v,w)=[v,w]/(\abs{v}\abs{w}).$$
Consider a planar
quadrilateral $Q$ with vertices $P_0,P_1,P_2,P_3$
(counterclockwise). We assume that $Q$ is \emph{strictly
  convex}. This is the case if and only if
\begin{equation}
  \label{eq:Qconvex}
  J:=\min_j [\overrightarrow{P_jP}_{j+1},\overrightarrow{P_jP}_{j-1}]>0,
\end{equation}
where indices are taken $\bmod 4$.
Consider the vectors 
\begin{align}
  \label{eq:defvtwt}
  v(t) & := (1-t)\overrightarrow{P_0 P_1} + t\overrightarrow{P_3 P_2}
  \quad\text{and}
  \\ \notag
  w(s) & := (1-s)\overrightarrow{P_0 P_3} + s\overrightarrow{P_1 P_2},
\end{align}
for $s,t\in[0,1]$, ``which connect opposite sides'' of $Q$. 
Note that (\ref{eq:Qconvex}) is equivalent to 
\begin{equation}
  \label{eq:Qconvex2}
[v(t),w(s)]\geq J \text{ for all }
(s,t)\in [0,1]^2.   
\end{equation}

Map the unit
square to $Q$ by
\begin{align}
  \label{eq:defQst}
  Q(s,t)  &:=P_0 + sv(0) + t w(s)
  \\ \notag
  & \phantom{:}= P_0 + t w(0) + s v(t).
\end{align}

\begin{lemma}
  \label{lem:QbiLip}
  Let the quadrilateral $Q$ be
  strictly convex as in \eqref{eq:Qconvex}.
  Then the map from equation \eqref{eq:defQst} is bi-Lipschitz.
\end{lemma}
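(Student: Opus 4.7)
The plan is to compute the Jacobian of the map in \eqref{eq:defQst} and show both that its determinant is bounded below (by $J$) and that its operator norm is bounded above (by the geometry of $Q$), then deduce the bi-Lipschitz estimate by integrating along straight segments in the two convex domains $[0,1]^2$ and $Q$.

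First I would differentiate. Using the second form in \eqref{eq:defQst}, $\partial_s Q(s,t) = v(t)$; using the first form, $\partial_t Q(s,t) = w(s)$. (The fact that the two forms agree is just the parallelogram identity $\overrightarrow{P_0P_2}=\overrightarrow{P_0P_1}+\overrightarrow{P_1P_2}=\overrightarrow{P_0P_3}+\overrightarrow{P_3P_2}$, which is why either expression may be differentiated in the convenient variable.) Consequently the Jacobian determinant is
\begin{equation*}
\det DQ(s,t) \;=\; [v(t),w(s)] \;\geq\; J \;>\;0
\end{equation*}
by the reformulation \eqref{eq:Qconvex2} of strict convexity. Since $v(t)$ and $w(s)$ are convex combinations of edge vectors of $Q$, setting $D:=\max\bigl\{|\overrightarrow{P_0P_1}|,|\overrightarrow{P_1P_2}|,|\overrightarrow{P_2P_3}|,|\overrightarrow{P_3P_0}|\bigr\}$ gives $|\partial_s Q|,|\partial_t Q|\leq D$, and therefore $\|DQ(s,t)\|_{\mathrm{op}}\leq D\sqrt{2}$.

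Next I would upgrade these pointwise bounds into bi-Lipschitz bounds. Because $[0,1]^2$ is convex, for any $(s_1,t_1),(s_2,t_2)$ the fundamental theorem of calculus along the joining segment gives
\begin{equation*}
|Q(s_2,t_2)-Q(s_1,t_1)| \;\leq\; D\sqrt{2}\,|(s_2,t_2)-(s_1,t_1)|,
\end{equation*}
so $Q$ is Lipschitz with constant $D\sqrt{2}$. For the reverse direction I would first note that $Q\colon[0,1]^2\to Q$ is a bijection onto the (strictly convex) quadrilateral: $\det DQ>0$ shows it is a local diffeomorphism, $Q$ restricted to each side of $[0,1]^2$ is the affine parametrisation of the corresponding side of the quadrilateral, and a standard degree/boundary argument then forces $Q$ to be a global homeomorphism onto $Q$. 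The inverse satisfies
\begin{equation*}
\|DQ^{-1}\|_{\mathrm{op}} \;\leq\; \frac{\|DQ\|_{\mathrm{op}}}{|\det DQ|} \;\leq\; \frac{D\sqrt{2}}{J},
\end{equation*}
and since the image $Q$ is itself convex, integrating $DQ^{-1}$ along straight segments in $Q$ gives
\begin{equation*}
|Q^{-1}(x)-Q^{-1}(y)| \;\leq\; \frac{D\sqrt{2}}{J}\,|x-y|.
\end{equation*}

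Combined, the two estimates yield the bi-Lipschitz bound with constant $C=C(D,J)$ depending only on the edge lengths of $Q$ and on $J$. The only non-computational step is the injectivity of the bilinear map on a strictly convex quadrilateral, and this is where I would expect to spend a little care: it can be handled either by the degree argument above or, even more elementarily, by showing that the families $t\mapsto Q(s_0,t)$ and $s\mapsto Q(s,t_0)$ foliate $Q$, each member being a straight segment crossing the opposite family transversally (transversality being exactly the condition $[v(t),w(s)]>0$).
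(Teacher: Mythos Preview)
Your argument is correct, but the paper proceeds more directly and avoids the injectivity digression altogether. Instead of differentiating and integrating, the paper writes down an exact finite-difference identity
\[
Q(s',t')-Q(s,t)=(s'-s)\,v(t)+(t'-t)\,w(s'),
\]
obtained by splitting the increment as $[Q(s',t')-Q(s',t)]+[Q(s',t)-Q(s,t)]$ and using the two equivalent forms in \eqref{eq:defQst}. The upper Lipschitz bound is then immediate. For the lower bound the paper projects this vector onto the unit directions $v(t)/|v(t)|$ and $w(s')/|w(s')|$; since $[v(t),v(t)]=0$ and $[w(s'),w(s')]=0$, only one term survives in each projection, and \eqref{eq:Qconvex2} gives
\[
|Q(s',t')-Q(s,t)|\;\ge\;\frac{J}{\diam Q}\,\max\{|s'-s|,|t'-t|\}.
\]
This already establishes injectivity and the bi-Lipschitz estimate in one stroke, with no need for a degree argument, a foliation argument, or convexity of the image. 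Your approach trades this algebraic identity for the more general machinery of Jacobian bounds plus integration along convex segments; it works, and has the advantage of being the method one would reach for when no such identity is available, but here it costs you the separate injectivity verification that the paper gets for free.
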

\begin{proof}
  One computes 
  \begin{equation}\label{eq:Qstst}
    Q(s',t')-Q(s,t)=(s'-s)v(t) + (t'-t)w(s').
  \end{equation}
%   This immediately shows that $Q$ is quasiconformal. Indeed the Jacobien
%   is $J_Q=[v(t),w(s)]\geq J$, and we obtain for the derivative $\norm{D_Q}\leq
%   \max\{\norm{v(t)},\norm{w(s)}\} \leq \diam Q$. 
  We obtain from equation (\ref{eq:Qstst})
  \begin{equation*}
    \abs{ Q(s',t') - Q(s,t)}\leq \diam Q(\abs{s'-s}+\abs{t'-t}).
  \end{equation*}
  On the other hand note that $\abs{v}\geq\Abs{\,[v,u]\,}$ for any unit
  vector $u$. Choosing $v=Q(s',t')-Q(s,t)$,
  $u:=v(t)/\abs{v(t)}$, and
  $u':=w(s')/\abs{w(s')}$ one thus
  obtains from (\ref{eq:Qstst}) and (\ref{eq:Qconvex2})
  \begin{equation*}
    \abs{Q(s',t')-Q(s,t)}\geq
    \frac{J}{\diam Q} \max\{\abs{s'-s},\abs{t'-t}\}.
  \end{equation*}
\end{proof}
 
Now consider  
two quadrilaterals lying in parallel planes, $Q^0\subset
\{z=0\}, Q^1\subset\{z=1\}$.
The quadrilaterals $Q^u$ are given by vertices
$P_0^u,P_1^u,P_2^u,P_3^u\subset\{z=u\}$, $u=0,1$,
counterclockwise.

The points $P^u_j:= (1-u)P_j^0 + uP_j^1$, $u\in[0,1]$ define quadrilaterals
$Q^u\subset\{z=u\}$ as before. Again they are strictly convex if
\begin{equation}
  \label{eq:Quconvex}
  J :=   \min_{\substack{0\leq j\leq 3 \\  u\in[0,1]}}
  \left[\overrightarrow{P^u_jP^u}_{j+1},\overrightarrow{P^u_j,P^u}_{j-1}\right]
  > 0. 
\end{equation}
Using the points $P^u_j$ define maps
$v^u(t),w^u(s)$, and $Q^u(s,t)$
as above in equations
(\ref{eq:defvtwt}) and
(\ref{eq:defQst}). Let 
\begin{equation}
  \label{eq:defBQQ}
  B:=\bigcup_{u\in [0,1]} Q^u.   
\end{equation}
A map from the
unit cube $[0,1]^3$ to $B$ is given by
\begin{equation}
  \label{eq:defBstu}
  B(s,t,u):=Q^u(s,t)= (1-u)Q^0(s,t)+ u Q^1(s,t).
\end{equation}

\begin{lemma}
  \label{lem:BbiLip}
  Let the quadrilaterals $Q^u$ be strictly convex as in
  \eqref{eq:Quconvex}. Then the map defined in 
  \eqref{eq:defBstu} is bi-Lipschitz.
\end{lemma}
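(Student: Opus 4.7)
The plan is to mimic the proof of Lemma \ref{lem:QbiLip}, but now in three dimensions. First I would derive a difference formula. Splitting
\begin{equation*}
  B(s',t',u') - B(s,t,u) = \bigl[Q^{u'}(s',t') - Q^{u'}(s,t)\bigr] + (u'-u)\bigl[Q^1(s,t) - Q^0(s,t)\bigr]
\end{equation*}
and applying the 2D identity \eqref{eq:Qstst} inside the plane $\{z=u'\}$ to the first bracket, I obtain the analog of \eqref{eq:Qstst}:
\begin{equation*}
  B(s',t',u') - B(s,t,u) = (s'-s)\,v^{u'}(t) + (t'-t)\,w^{u'}(s') + (u'-u)\,\bigl[Q^1(s,t) - Q^0(s,t)\bigr].
\end{equation*}
Each of the three coefficient vectors has norm at most $\diam B$, so the triangle inequality gives the Lipschitz (upper) bound $|B(s',t',u')-B(s,t,u)| \leq \diam B \cdot (|s'-s|+|t'-t|+|u'-u|)$ for free.

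For the bi-Lipschitz lower bound I would argue that the three coefficient vectors span $\R^3$ quantitatively. Because $Q^{u'}\subset\{z=u'\}$, the vectors $v^{u'}(t)$ and $w^{u'}(s')$ have vanishing $z$-component, while $Q^1(s,t)-Q^0(s,t)$ has $z$-component equal to $1$. Expanding along the third row, the $3\times 3$ determinant of these vectors is exactly the planar determinant $[v^{u'}(t), w^{u'}(s')]$, which by \eqref{eq:Quconvex} (the same calculation as for \eqref{eq:Qconvex2}) is at least $J>0$.

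With $V_1:=v^{u'}(t)$, $V_2:=w^{u'}(s')$, $V_3:=Q^1(s,t)-Q^0(s,t)$, and $W:=B(s',t',u')-B(s,t,u)=(s'-s)V_1+(t'-t)V_2+(u'-u)V_3$, I would apply the cofactor/volume estimate
\begin{equation*}
  |W \cdot (V_i\times V_k)| = |\lambda_j|\,|\det[V_1,V_2,V_3]| \geq |\lambda_j|\, J,
\end{equation*}
where $\{i,j,k\}=\{1,2,3\}$ and $\lambda_1,\lambda_2,\lambda_3$ are the respective coefficients. Since the left side is bounded above by $|W|\,|V_i|\,|V_k|\leq |W|(\diam B)^2$, one concludes $|W|\geq (J/(\diam B)^2)\max\{|s'-s|,|t'-t|,|u'-u|\}$. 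This finishes the proof.

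The main obstacle is the lower bound: in passing from a strictly convex quadrilateral to a ``strictly convex prism'' one needs quantitative linear independence of three vectors rather than two. The crucial simplification is the observation above that the relevant $3\times 3$ determinant collapses to the 2D determinant already controlled by the hypothesis \eqref{eq:Quconvex}, so no new nondegeneracy assumption beyond strict convexity of the slices $Q^u$ is needed.
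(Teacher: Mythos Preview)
Your proof is correct and follows essentially the same route as the paper: both derive the three-term difference formula, observe that the $3\times 3$ determinant of the coefficient vectors collapses to the planar bracket $[v,w]\geq J$ (since two of the vectors lie in a horizontal plane and the third has $z$-component $1$), and then extract each coefficient via the volume/cofactor estimate $|W|\geq|\det(W,a,b)|/(|a||b|)$. The only cosmetic difference is that you split the increment at $(s,t,u')$ whereas the paper splits at $(s',t',u)$, which is an inessential symmetry.
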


\begin{proof}
  Compute
  \begin{align}\label{eq:Bstu}
    B(s',t',u')&-B(s,t,u) = B(s',t',u')-B(s',t',u) 
     + B(s',t',u)-B(s,t,u)
%    \\ \notag
%    & \phantom{-B(s,t,u)X}
    \\ \notag
    &= (u'-u)\left(Q^1(s',t')-Q^0(s',t')\right) 
    + Q^u(s',t')-Q^u(s,t)
    \\ \notag
    & = (u'-u)\left(Q^1(s',t')-Q^0(s',t')\right) 
    \\ \notag
    & \phantom{XX} + (s'-s) v^u(t) + (t'-t) w^u (s'),
  \end{align}
  as in equation (\ref{eq:Qstst}). Thus
  \begin{equation*}
    \abs{B(s',t',u')-B(s,t,u)}\leq \diam B
    \left(\abs{s'-s}+\abs{t'-t}+\abs{u'-u} \right).
  \end{equation*}

  To see the other inequality note first that
  $$\det\left(Q^1(s',t')-Q^0(s',t'),v^u(t),w^u(s')\right)=
  [v^u(t),w^u(s')]\geq J>0,$$
  where the constant $J$ is given by (\ref{eq:Quconvex}) (use also (\ref{eq:Qconvex2})). 

  Recall that
  $\abs{v}\geq \abs{\det(v,a,b)}/(\abs{a}\abs{b})$ for all non-zero vectors $v,a,b$. Choosing
  $v=B(s',t',u')-B(s,t,u)$ and $a,b$ two of the vectors
  $Q^1(s',t')-Q^0(s',t'),
  v^u(t)$, $w^u(s')$ 
we obtain
  from equation (\ref{eq:Bstu})
  \begin{equation*}
     \abs{B(s',t',u')-B(s,t,u)}\geq \frac{J}{(\diam B)^2} \max\{\abs{s'-s},\abs{t'-t},\abs{u'-u}\}.
  \end{equation*}
\end{proof}

The \defn{Alexander trick} consists in extending a homeomorphism from
the closed disk to an isotopy. More precisely let 
$\varphi\colon \overline{\D}=\{\abs{z}\leq 1\}\to \overline{\D}$ be a
homeomorphism satisfying $\varphi|_{\partial \D} = \id$. Then the
homeomorphism  
\begin{align}
  \label{eq:Alextrick}
  \overline{\varphi}&\colon \overline{\D}\times [0,1]\to
  \overline{\D}\times [0,1], \text{ defined by }
  \\ \notag
  \overline{\varphi}&(z,t):=
  \begin{cases}
    t\varphi(z/t), &0\leq\abs{z}\leq t;
    \\
    z,             &t\leq \abs{z}\leq 1,
  \end{cases}
\end{align}
satisfies $\overline{\varphi}|_{\overline{\D}\times \{1\}}= \varphi$,
and $\overline{\varphi}=\id$ on the rest of $\partial
(\overline{\D}\times [0,1])$. 
It is easy to check that if $\varphi$ is bi-Lipschitz the extension
$\overline{\varphi}$ is as well, using
\begin{align*}
  \overline{\varphi}^{-1}&(z,t):=
  \begin{cases}
    t\varphi^{-1}(z/t), &0\leq\abs{z}\leq t;
    \\
    z,             &t\leq \abs{z}\leq 1.
  \end{cases}
\end{align*}

\smallskip
Recall the radial extension, which is only presented in the
form we will need.
Let $\varphi\colon \partial \D \to \partial \D,
\varphi(e^{i\theta})=e^{i\varphi(\theta)}$ be a homeomorphism fixing
$1,i,-1,-i$. Let $\varphi_t(\theta):= (1-t)\theta +t
\varphi(\theta)$. Then the homeomorphism
\begin{align}
  \label{eq:defradialext}
  \overline{\varphi}&\colon \overline{\D}\times [0,1]\to
  \overline{\D}\times [0,1], \text{ defined by }
  \\ \notag
  \overline{\varphi}&(r e^{i\theta},t):= (re^{i \varphi_t(\theta)},t),
\end{align}
satisfies $\overline{\varphi}|_{\overline{\D}\times
  \{0\}}=\id$. 
\begin{lemma}
  \label{lem:radialext}
  Let $\varphi$ be bi-Lipschitz. Then the extension
  $\overline{\varphi}$ from \eqref{eq:defradialext} is bi-Lipschitz as
  well. 
\end{lemma}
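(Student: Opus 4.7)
The plan is to split the analysis of $\overline{\varphi}(z,t)=(\psi_t(z),t)$, where $\psi_t(re^{i\theta}):=re^{i\varphi_t(\theta)}$, into a ``spatial'' estimate (fix $t$ and vary $z$) and a ``temporal'' estimate (fix $z$ and vary $t$), and then combine them by the triangle inequality. Since $\overline{\varphi}^{-1}(z,t)=(\psi_t^{-1}(z),t)$ has exactly the same structure, the bi-Lipschitz lower bound will follow from the Lipschitz upper bound applied to the inverse.

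First I would show that $\varphi_t$ is a bi-Lipschitz self-homeomorphism of $\partial\D$ with constants uniform in $t\in[0,1]$. Lift $\varphi$ to a monotone map of $\R$ (this is legitimate since $\varphi$ is a circle homeomorphism fixing four points); its bi-Lipschitz constants are some $L\geq 1$, $L^{-1}$. The convex combination $\varphi_t=(1-t)\,\id+t\varphi$ then has derivatives (or difference quotients) trapped in $[\min(1,L^{-1}),\max(1,L)]$, hence is uniformly bi-Lipschitz in $t$.

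Next I would prove that $\psi_t$ is bi-Lipschitz on $\overline{\D}$ with constants uniform in $t$. Using the polar identity
\begin{equation*}
|r_1 e^{i\alpha_1}-r_2 e^{i\alpha_2}|^2=(r_1-r_2)^2+4r_1r_2\sin^2\!\bigl((\alpha_1-\alpha_2)/2\bigr),
\end{equation*}
one gets that $|\psi_t(z)-\psi_t(z')|^2$ and $|z-z'|^2$ have the same form with $\theta,\theta'$ replaced by $\varphi_t(\theta),\varphi_t(\theta')$. Step one and the estimate $\sin^2(x/2)\asymp d(x,2\pi\Z)^2$ (for the circle distance $d$) then give $|\psi_t(z)-\psi_t(z')|\asymp|z-z'|$ uniformly in $t$.

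For the $t$-dependence, I would compute directly:
\begin{equation*}
|\psi_t(re^{i\theta})-\psi_{t'}(re^{i\theta})|=r\,|e^{i\varphi_t(\theta)}-e^{i\varphi_{t'}(\theta)}|\leq r\,|\varphi_t(\theta)-\varphi_{t'}(\theta)|=r\,|t-t'|\,|\varphi(\theta)-\theta|,
\end{equation*}
which is bounded by $C|t-t'|$ since $|\varphi(\theta)-\theta|$ is uniformly bounded on the circle (after choosing the correct lift — $\varphi$ fixes $1,i,-1,-i$, so $|\varphi(\theta)-\theta|\leq \pi/2$). The triangle inequality combined with the previous step then yields $|\overline{\varphi}(z,t)-\overline{\varphi}(z',t')|\lesssim |z-z'|+|t-t'|$. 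Applying the same recipe to $\overline{\varphi}^{-1}(z,t)=(\psi_t^{-1}(z),t)$ gives the reverse inequality; here the temporal estimate uses
\begin{equation*}
|\varphi_t^{-1}(\alpha)-\varphi_{t'}^{-1}(\alpha)|\lesssim |\varphi_{t'}(\varphi_{t'}^{-1}(\alpha))-\varphi_t(\varphi_{t'}^{-1}(\alpha))|=|t-t'|\,|\varphi(\beta)-\beta|\lesssim |t-t'|
\end{equation*}
with $\beta=\varphi_{t'}^{-1}(\alpha)$, invoking the uniform bi-Lipschitz control on $\varphi_t$ from step one.

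The only bookkeeping obstacle is keeping angle identifications modulo $2\pi$ consistent; this is handled by using the $2\pi$-periodic $\sin$-formula throughout for the spatial estimates, and by choosing the distinguished lift in which $\varphi(\theta)-\theta$ is bounded (by $\pi/2$) for the temporal estimates.
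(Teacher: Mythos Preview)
Your proof is correct and follows essentially the same polar-coordinate approach as the paper: both arguments rest on the fact that $\varphi_t=(1-t)\id+t\varphi$ is bi-Lipschitz uniformly in $t$, together with the polar identity $|r_1e^{i\alpha_1}-r_2e^{i\alpha_2}|^2=(r_1-r_2)^2+4r_1r_2\sin^2\bigl((\alpha_1-\alpha_2)/2\bigr)$. The paper's proof is terser---it checks only the spatial slice $|\psi_t(z)-\psi_t(z')|\asymp|z-z'|$ at fixed $t$ and says ``the claim follows''---so your explicit temporal estimate and the symmetric treatment of $\overline{\varphi}^{-1}$ actually fill in a step the paper leaves to the reader.
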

\begin{proof}
  It is easy to verify that
  \begin{equation*}
    \abs{r_2e^{i\theta_2}-r_1e^{i\theta_1}}\asymp \abs{r_2 - r_1} +
    r_1\abs{\theta_2-\theta_1}, 
  \end{equation*}
  for $\abs{\theta_1 -\theta_2} \leq \pi$ and $r_1,r_2\geq 0$. Let $0 \leq \theta_1 \leq
  \theta_2\leq \pi$. Then 
  \begin{equation*}
    \varphi(\theta_2)-\varphi(\theta_1)\asymp \theta_2-\theta_1,
  \end{equation*}
  since $\varphi$ is bi-Lipschitz and orientation preserving. Thus
  \begin{align*}
    \abs{r_2e^{i\varphi_t(\theta_2)} - & r_1e^{i\varphi_t(\theta_1)}} 
    \\
    \asymp & \abs{r_2-r_1} + r_1\Abs{(1-t)(\theta_2 - \theta_1) + t
      (\varphi(\theta_2) - \varphi(\theta_1))}
    \\
    \asymp & \abs{r_2-r_1} + r_1\Abs{\theta_2 - \theta_1}
    \asymp \abs{r_2e^{i\theta_2} - r_1 e^{i\theta_1}}.
  \end{align*}
  The claim follows.
\end{proof}

\smallskip
Combine the two extensions, and map the disk to the square 
to get the following variant. 
\begin{lemma}
  \label{lem:AlexanderTrick}
  Let $\varphi
  \colon[0,1]^2\to [0,1]^2$ be bi-Lipschitz fixing the vertices. 
  Then there is a bi-Lipschitz map
  \begin{equation*}
    \overline{\varphi} \colon [0,1]^3\to [0,1]^3,  
  \end{equation*}
  such that $\overline{\varphi}|_{[0,1]^2\times \{0\}}=\id$ and
  $\overline{\varphi}|_{[0,1]^2\times \{1\}}= \varphi$. Furthermore the
  extensions are compatible on neighbors in the following sense. Let
  $\varphi'\colon [1,2]\times [0,1]\to [1,2]\times [0,1]$ be another
  bi-Lipschitz map fixing the vertices such that $\varphi=\varphi'$ on
  the intersecting edge $\{1\}\times [0,1]$. Then the extensions
  $\overline{\varphi}$ and $\overline{\varphi}'$ agree on the
  intersecting side $\{1\}\times [0,1]^2$.
\end{lemma}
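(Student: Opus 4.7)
The plan is to combine the radial extension \eqref{eq:defradialext} and the Alexander trick \eqref{eq:Alextrick} after transferring $\varphi$ to the closed disk. Fix a bi-Lipschitz identification $\Psi\colon[0,1]^2\to\overline{\D}$ sending the four vertices of the square to $\{1,i,-1,-i\}$ and restricting on each edge of $\partial[0,1]^2$ to an arclength-scaled affine parametrization of the corresponding quarter arc of $\partial\D$. Since $\varphi$ is a vertex-fixing bi-Lipschitz homeomorphism of $[0,1]^2$, it maps each edge to itself, so $\widetilde\varphi:=\Psi\circ\varphi\circ\Psi^{-1}$ is a bi-Lipschitz self-map of $\overline{\D}$ whose boundary restriction $\widetilde\varphi|_{\partial\D}$ is a bi-Lipschitz orientation-preserving homeomorphism fixing $\{1,i,-1,-i\}$, as in the hypothesis of Lemma \ref{lem:radialext}.

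I would first apply Lemma \ref{lem:radialext} to produce a bi-Lipschitz radial extension $\overline{\Phi}_{\mathrm r}\colon\overline{\D}\times[0,1]\to\overline{\D}\times[0,1]$ with $\overline{\Phi}_{\mathrm r}|_{\overline{\D}\times\{0\}}=\id$ and top slice $\Phi_{\mathrm r}:=\overline{\Phi}_{\mathrm r}|_{\overline{\D}\times\{1\}}$ agreeing with $\widetilde\varphi$ on $\partial\D$. The composition $\psi:=\Phi_{\mathrm r}^{-1}\circ\widetilde\varphi$ is then bi-Lipschitz with $\psi|_{\partial\D}=\id$, so the Alexander trick \eqref{eq:Alextrick} applied to $\psi$ yields a bi-Lipschitz extension $\overline{\Psi}_{\mathrm A}\colon\overline{\D}\times[0,1]\to\overline{\D}\times[0,1]$ that is the identity on $\overline{\D}\times\{0\}$ and on $\partial\D\times[0,1]$, and equals $\psi$ on $\overline{\D}\times\{1\}$. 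Setting $\overline{\widetilde\varphi}:=\overline{\Phi}_{\mathrm r}\circ\overline{\Psi}_{\mathrm A}$ gives a bi-Lipschitz self-map of the cylinder whose slice-by-slice check is $\overline{\widetilde\varphi}|_{\overline{\D}\times\{0\}}=\id$ and $\overline{\widetilde\varphi}|_{\overline{\D}\times\{1\}}=\Phi_{\mathrm r}\circ\psi=\widetilde\varphi$. Conjugating by $\Psi\times\id_{[0,1]}$ produces the required $\overline{\varphi}\colon[0,1]^3\to[0,1]^3$.

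The hard part is the compatibility statement. On the vertical boundary $\partial\D\times[0,1]$ of the cylinder the Alexander factor $\overline{\Psi}_{\mathrm A}$ is the identity, so $\overline{\widetilde\varphi}$ there reduces to $\overline{\Phi}_{\mathrm r}$, which in the angular parameter acts as $\theta\mapsto(1-t)\theta+t\widetilde\varphi(\theta)$ and therefore depends only on $\widetilde\varphi|_{\partial\D}$. Because $\Psi$ is arclength-scaled on each edge, the affine structure of the arc transfers to the affine structure of the edge in $\R^2$, and this formula pulls back on the vertical face of $[0,1]^3$ above an edge $e\subset\partial[0,1]^2$ to the ambient linear interpolation
\begin{equation*}
(p,z)\longmapsto\bigl((1-z)p+z\varphi(p),\,z\bigr),\qquad p\in e,\ z\in[0,1].
\end{equation*}
Since this only references $\varphi|_e$, two neighbors with $\varphi|_e=\varphi'|_e$ automatically produce extensions $\overline\varphi$, $\overline\varphi'$ that agree on the shared vertical face $\{1\}\times[0,1]^2$. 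The main obstacle in the argument is precisely to ensure this edge-local symmetric behavior, which is what forces the arclength-scaled choice of $\Psi$ together with the particular order (radial first, Alexander second) of the two building blocks.
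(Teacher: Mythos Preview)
Your proof is correct and uses exactly the same two ingredients as the paper: the radial extension of Lemma~\ref{lem:radialext} and the Alexander trick~\eqref{eq:Alextrick}, transferred to the square via a bi-Lipschitz identification with the disk. The only difference is in how the two pieces are assembled: the paper stacks them vertically, applying the radial extension on the lower slab $[0,1]^2\times[0,\tfrac12]$ to interpolate from $\id$ to a map agreeing with $\varphi$ on $\partial[0,1]^2$, and then using the Alexander trick on the upper slab $[0,1]^2\times[\tfrac12,1]$ to correct the interior; you instead compose the two full-cylinder extensions $\overline{\Phi}_{\mathrm r}\circ\overline{\Psi}_{\mathrm A}$. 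Your arrangement avoids the slice-matching at $z=\tfrac12$ and makes the lateral boundary formula $(p,z)\mapsto((1-z)p+z\varphi(p),z)$---and hence the compatibility---slightly more explicit, but the arguments are otherwise interchangeable.
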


\begin{proof}
  Use the radial extension (\ref{eq:defradialext}) to construct a
  bi-Lipschitz map 
  \begin{align*}
    & \psi\colon [0,1]^2\times \Bigl[0,\frac{1}{2}\Bigr] \to
    [0,1]^2\times \Bigl[0,\frac{1}{2}\Bigr], \text{ such that}
    \\
    & \psi = \varphi \text{ on }\partial
    [0,1]^2 \times \Bigl\{\frac{1}{2}\Bigr\} \text{ and }
    \psi|_{z=0}=\id.     
  \end{align*}
  Use the Alexander trick (\ref{eq:Alextrick}) to construct a
  bi-Lipschitz map
  \begin{align*}
    & \phi\colon [0,1]^2\times \Bigl[\frac{1}{2},1\Bigr] \to [0,1]^2\times
    \Bigl[\frac{1}{2},1\Bigr], \text{ such that}
    \\
    & \phi|_{\{z=\frac{1}{2}\}} =
  \psi|_{\{z=\frac{1}{2}\}} \text{ and } \phi|_{\{z=1\}} =
  \varphi.   
  \end{align*}
  
  Combining $\psi$ and $\phi$ gives the extension $\overline{\varphi}$.   
\end{proof}

Let $(\omega,\rho), \omega\in \SB, \rho\geq 0$, be spherical
coordinates in $\R^3$. The Euclidean distance of points thus given is
controlled by 
\begin{equation}
  \label{eq:distwr}
  \Abs{(\omega,\rho)-(\omega',\rho')}\asymp \abs{\rho-\rho'} + \rho\abs{\omega
    - \omega'}. 
\end{equation}
The same argument as in Lemma \ref{lem:radialext} gives an extension
from the sphere to the ball.
\begin{lemma}
  \label{lem:extSB}
  Let $\psi\colon \SB\to \SB$ be bi-Lipschitz. Then the radial
  extension
  \begin{equation*}
    \overline{\psi}\colon \B\to \B, \quad \overline{\psi}(\omega,\rho):= (\psi(\omega),\rho)
  \end{equation*}
  is bi-Lipschitz. Here $(\omega,\rho)$ are spherical coordinates. 
\end{lemma}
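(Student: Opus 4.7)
The proof plan is essentially a direct computation, exploiting the fact that the radial extension $\overline{\psi}$ preserves the radial coordinate $\rho$ and acts only on the angular coordinate $\omega$. Because of this, applying the distance estimate \eqref{eq:distwr} to both the image pair and the preimage pair yields expressions with identical radial terms; only the angular parts $|\omega - \omega'|$ and $|\psi(\omega) - \psi(\omega')|$ differ, and these are comparable by hypothesis.

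More concretely, given two points $(\omega,\rho)$ and $(\omega',\rho')$ in $\B$, I would first apply \eqref{eq:distwr} to their images under $\overline{\psi}$:
\begin{equation*}
  \bigl|\overline{\psi}(\omega,\rho) - \overline{\psi}(\omega',\rho')\bigr|
  = \bigl|(\psi(\omega),\rho) - (\psi(\omega'),\rho')\bigr|
  \asymp |\rho - \rho'| + \rho\,|\psi(\omega) - \psi(\omega')|.
\end{equation*}
Then I would invoke the bi-Lipschitz hypothesis on $\psi$, namely $|\psi(\omega) - \psi(\omega')| \asymp |\omega - \omega'|$, to replace the angular term, obtaining
\begin{equation*}
  \bigl|\overline{\psi}(\omega,\rho) - \overline{\psi}(\omega',\rho')\bigr|
  \asymp |\rho - \rho'| + \rho\,|\omega - \omega'|
  \asymp \bigl|(\omega,\rho) - (\omega',\rho')\bigr|,
\end{equation*}
where the final $\asymp$ is another application of \eqref{eq:distwr}. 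This chain of comparisons gives the bi-Lipschitz bound in both directions simultaneously, with the bi-Lipschitz constant of $\overline{\psi}$ controlled by the bi-Lipschitz constant of $\psi$ and the absolute constant $C(\asymp)$ appearing in \eqref{eq:distwr}.

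There is no genuine obstacle here; the only mild subtlety is that \eqref{eq:distwr} is quasi-symmetric in $(\rho, \rho')$ rather than symmetric, so one should verify that both applications use the same choice of distinguished radius (or, equivalently, that after reducing to the case $\rho \asymp \rho'$ via $|\rho - \rho'|$ dominating when the radii are far apart, the angular contribution is controlled uniformly). In either scenario the constants depend only on the bi-Lipschitz constant of $\psi$, so the conclusion follows without quantitative surprises.
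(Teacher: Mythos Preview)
Your proposal is correct and is exactly the argument the paper intends: it simply invokes \eqref{eq:distwr} on both sides and uses the bi-Lipschitz bound on $\psi$ for the angular term, which is precisely ``the same argument as in Lemma~\ref{lem:radialext}'' that the paper cites in lieu of a written-out proof.
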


\bigskip
The next extension lemma will be used to map the cube $[0,1]^3$. 

\begin{lemma}
  \label{lem:prism1}
  Let $X$ be a metric space (with metric denoted by $\abs{x-y}$).
  Let $\varphi \colon [0,1]^2\to X$ be bi-Lipschitz, and let
  $\rho_0\colon X\to \R$ and $\rho_1 \colon X\to \R$ be Lipschitz
  (the maps $\varphi,\rho_0,\rho_1$
  have a common (bi-)Lipschitz constant $L$), such that 
  \begin{equation*}
    \rho_0(x)+m\leq \rho_1(x)\leq \rho_0(x)+M,
  \end{equation*}
  for all $x\in X$ and constants $m,M>0$. Then the map
  $\overline{\varphi}\colon [0,1]^3 \to X\times \R$ defined by
  \begin{align*}
    \overline{\varphi}(x,t) &
    :=\big(\varphi(x),(1-t)\rho_0(\varphi(x))+t\rho_1(\varphi(x))\big),
    \\
    & \phantom{XXX}\text{ for all }x\in [0,1]^2, t\in [0,1] 
  \end{align*}
  is bi-Lipschitz with constant $\bar{L}=\bar{L}(L,M,m)$. Here we are
  using the maximum metric on $X\times \R$.
\end{lemma}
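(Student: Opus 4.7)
The plan is to derive a convenient algebraic decomposition of the height coordinate and then extract upper and lower bi-Lipschitz bounds separately. Write $H(x,t) := (1-t)\rho_0(\varphi(x)) + t\rho_1(\varphi(x))$ for the second component of $\overline{\varphi}(x,t)$. Adding and subtracting the quantity $(1-t)\rho_0(\varphi(x')) + t\rho_1(\varphi(x'))$ and simplifying the leftover terms gives the identity
\begin{align*}
H(x,t)-H(x',t') &= (1-t)\bigl[\rho_0(\varphi(x))-\rho_0(\varphi(x'))\bigr] + t\bigl[\rho_1(\varphi(x))-\rho_1(\varphi(x'))\bigr]
\\
& \phantom{=} + (t-t')\bigl[\rho_1(\varphi(x'))-\rho_0(\varphi(x'))\bigr].
\end{align*}
The compositions $\rho_i\circ\varphi$ are Lipschitz with constant $L^2$, and the height gap $\rho_1-\rho_0$ lies in $[m,M]$; these are the only facts about $\rho_0,\rho_1,\varphi$ that will be used in the estimates.

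For the upper bound, the identity together with $|1-t|,|t|\le 1$ immediately yields $|H(x,t)-H(x',t')|\le 2L^2|x-x'|+M|t-t'|$. Combined with the bi-Lipschitz estimate $|\varphi(x)-\varphi(x')|\le L|x-x'|$ from $\varphi$, this gives
\[
\bigl|\overline{\varphi}(x,t)-\overline{\varphi}(x',t')\bigr|_{\max} \;\le\; \bigl(2L^2+M\bigr)\,\bigl|(x,t)-(x',t')\bigr|_{\max},
\]
with a constant depending only on $L$ and $M$.

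For the lower bound, the first coordinate of $\overline{\varphi}$ already controls $|x-x'|$: since $|\varphi(x)-\varphi(x')|\ge L^{-1}|x-x'|$, we have $|x-x'|\le L\,|\overline{\varphi}(x,t)-\overline{\varphi}(x',t')|_{\max}$. The real content is extracting $|t-t'|$. Isolating the last term of the identity above and invoking $\rho_1(\varphi(x'))-\rho_0(\varphi(x'))\ge m$, the triangle inequality gives
\[
m\,|t-t'| \;\le\; |H(x,t)-H(x',t')| + 2L^2|x-x'| \;\le\; \bigl(1+2L^3\bigr)\,\bigl|\overline{\varphi}(x,t)-\overline{\varphi}(x',t')\bigr|_{\max}.
\]
Combining the two bounds yields $|(x,t)-(x',t')|_{\max}\le \bar L\,|\overline{\varphi}(x,t)-\overline{\varphi}(x',t')|_{\max}$ for $\bar L=\bar L(L,M,m)$, as required.

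The only genuinely subtle point is the lower bound on $|t-t'|$: if one had only the upper gap $\rho_1-\rho_0\le M$ and not the lower gap $\rho_1-\rho_0\ge m>0$, the height coordinate could collapse and $\overline{\varphi}$ would fail to be injective, let alone bi-Lipschitz. The hypothesis $m>0$ is used precisely once, to convert the coefficient $\rho_1(\varphi(x'))-\rho_0(\varphi(x'))$ of $(t-t')$ in the decomposition into a usable lower bound. Everything else is routine bookkeeping.
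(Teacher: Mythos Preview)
Your proof is correct and uses essentially the same algebraic decomposition of the height difference as the paper's proof; the upper bound arguments are virtually identical. The one noteworthy difference is in how the lower bound on $|t-t'|$ is extracted: the paper factors $\overline{\varphi}$ through $X\times[0,1]$, writes down the inverse of the second factor explicitly as $t=\frac{u-\rho_0(x)}{\rho_1(x)-\rho_0(x)}$, and estimates $|t-s|$ by manipulating the resulting fractions (picking up a denominator $m^2$). You instead stay with the forward map, rearrange the same identity to isolate the $(t-t')$ term, and invoke $\rho_1-\rho_0\ge m$ once. Your route is a bit slicker and avoids the fraction algebra, at the cost of not making the inverse map explicit; the paper's route has the mild advantage of exhibiting $\overline{\varphi}^{-1}$ concretely. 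Both are entirely routine once the decomposition is in hand.
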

\begin{proof}
  Extension of the map $\varphi$ to $\widetilde{\varphi} \colon [0,1]^3 \to
  X\times [0,1]$ by $\widetilde{\varphi}(x,t):=(\varphi(x),t)$ is
  trivially bi-Lipschitz. It remains to show
  that the map $\phi\colon X\times [0,1]\to X\times \R$ defined by 
  \begin{align*}
    \phi(x,t)&:=(x,(1-t)\rho_0(x)+t\rho_1(x)),
    \\
    \text{with }     &\rho_0(x)+m\leq \rho_1(x)\leq \rho_0(x)+M,
  \end{align*}
  is bi-Lipschitz. 
  % Note that these $\rho_j$'s differ from the ones in the statement of the theorem by a composition with $\varphi$. 
  For any $x,y\in X$, $s,t\in [0,1]$ we have 
  \begin{align*}
    \abs{(1-t)\rho_0(x)&+t\rho_1(x)-(1-s)\rho_0(y)-s\rho_1(y)}
    \\
    &\leq (1-t)\abs{\rho_0(x)-\rho_0(y)}+t\abs{\rho_1(x)-\rho_1(y)} 
    \\
    & \phantom{XXXXXX} +|t-s|\abs{\rho_1(y)-\rho_0(y)}
    \\
    &\leq L\abs{x-y} + \abs{t-s}M.
  \end{align*}
  For the reverse inequality let $\phi(x,t)=(x,u)$ and
  $\phi(y,s)=(y,v)$. We have $t=\frac{u-\rho_0(x)}{\rho_1(x)-\rho_0(x)},
  s=\frac{v-\rho_0(y)}{\rho_1(y)-\rho_0(y)}$, where $u-\rho_0(x)\leq
  M$. Then
  \begin{align*}
    \abs{t-s}& 
    \leq
    \Abs{\frac{\left(u-\rho_0(x)\right)\left(\rho_1(y)-\rho_0(y)\right) 
        - \left(v-\rho_0(y)\right)\left(\rho_1(x)-\rho_0(x)\right)}
      {\left(\rho_1(x)-\rho_0(x)\right) 
        \left(\rho_1(y)-\rho_0(x)\right)}
    }
    \\
    & \leq 
    \frac{1}{m^2}\Big[(u-\rho_0(x))\Abs{\rho_1(y)-\rho_0(y)-\rho_1(x)+\rho_0(x)}
    \\
    &\;\;\; +\Abs{u-\rho_0(x)-v+\rho_0(x)}\left(\rho_1(x)-\rho_0(x)\right)
    \Big]
    \\
    & \leq \frac{M2L}{m^2}\abs{x-y} + \frac{M}{m^2}\abs{u-v}.
    \\
    \intertext{Hence}
    \abs{\phi^{-1}(x,u)&-\phi^{-1}(y,v)} \leq \abs{x-y} + \abs{t-s}
    \leq \left(\frac{M2L}{m^2}+1\right)\abs{x-y} +
    \frac{M}{m^2}\abs{u-v}. 
  \end{align*}

\end{proof}
We will map the sets $\overline{\varphi}([0,1]^3) \subset X\times\R$
in the unit ball, using spherical coordinates. The next lemma follows
immediately from (\ref{eq:distwr}). 
\begin{lemma}
  \label{lem:prismS}
  Let $0<r< R<\infty$ and $\psi\colon X\to \SB$ be
  $L$-bi-Lipschitz. Then the map 
  \begin{align*}
    \overline{\psi}\colon X\times [r,R] \to& \{(\omega,\rho) :
    \omega\in \SB, \rho\geq 0\} =\R^3,
    \intertext{given by}
    \overline{\psi}(x&,t)= \left(\psi(x),t\right),
  \end{align*}
  is $\bar{L}$-bi-Lipschitz, where $\bar{L}=\bar{L}(L,r,R)$. The right
  hand side is denoted in spherical coordinates.
\end{lemma}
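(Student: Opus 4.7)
The plan is to read off the claim directly from equation (\ref{eq:distwr}), which encodes the bi-Lipschitz geometry of spherical coordinates when the radius stays bounded away from $0$ and $\infty$. The only ingredient beyond (\ref{eq:distwr}) is the bi-Lipschitz property of $\psi$ on the angular factor, together with the uniform control $r \leq t \leq R$ on the radial factor.

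Concretely, given two points $(x,t), (x',t') \in X\times [r,R]$, I would apply (\ref{eq:distwr}) to their images $(\psi(x),t)$ and $(\psi(x'),t')$ in $\R^3$ to obtain
\[
  \abs{\overline{\psi}(x,t)-\overline{\psi}(x',t')} \asymp \abs{t-t'} + t\,\abs{\psi(x)-\psi(x')},
\]
where $C(\asymp)$ is an absolute constant coming from (\ref{eq:distwr}). Then I would use $t \in [r,R]$ and the $L$-bi-Lipschitz estimate $L^{-1}\abs{x-x'} \leq \abs{\psi(x)-\psi(x')} \leq L\abs{x-x'}$ to sandwich the right-hand side between
\[
  \abs{t-t'} + rL^{-1}\abs{x-x'}
  \quad\text{and}\quad
  \abs{t-t'} + RL\,\abs{x-x'}.
\]
Since on $X\times [r,R]$ we are using the maximum metric, a sum of two non-negative quantities is comparable to their maximum (within a factor of $2$); so both of the above expressions are comparable to $\max\{\abs{t-t'},\abs{x-x'}\}$, with constants depending only on $L$, $r$, and $R$. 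Taking the product of the constants yields the desired $\bar L = \bar L(L,r,R)$.

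There is no genuine obstacle here; the statement is essentially a packaging of (\ref{eq:distwr}) for later reference, and the only thing to be careful about is that the radial variable $t$ is bounded away from $0$, which is exactly the role of the hypothesis $r>0$. (Without a lower bound on $t$, the factor $t\abs{\psi(x)-\psi(x')}$ would degenerate and the map would fail to be bi-Lipschitz near the origin.) The computation mirrors, and is in fact strictly simpler than, the one in the proof of Lemma \ref{lem:radialext}, since here the angular map $\psi$ is assumed bi-Lipschitz from the outset rather than being built up from an interpolation.
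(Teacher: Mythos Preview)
Your proposal is correct and matches the paper's approach exactly: the paper simply states that the lemma ``follows immediately from (\ref{eq:distwr}),'' and you have spelled out precisely that immediate argument. Your additional remarks about the role of $r>0$ and the comparison with Lemma~\ref{lem:radialext} are accurate and helpful.
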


\bigskip
A $2$\emph{-simplex} \index{simplex} 
is given by
\begin{equation}\label{not:simplex} 
  \Delta:=\{x=x_0 e_0 + x_1 e_1 + x_2 e_2 :0\leq x_k\leq 1,\;x_0+x_1+x_2=1\},
\end{equation}
where the $e_k\in\R^m$ ($m\geq 2$) do not lie on a line.
It is often convenient to consider the following metric on $\Delta$:\label{not:simpmetr}\index{metric!on simplex} 
\begin{equation}\label{eq:defDmetric}
  \norm{x-y}_\Delta := \max_{0\leq k \leq 2} \abs{x_k-y_k}.
\end{equation}
An easy computation shows that the map
$(\Delta,\norm{x-y}_\Delta)\to(\Delta,\|x-y\|_\infty)$ is bi-Lipschitz with
constant $\max\{\diam \Delta,\frac{\sqrt{3}}{h}\}$. Here $h$ denotes the
smallest distance of a vertex $e_k$ from the line through the other
two points.

\section{Decomposing the Snowball}
\label{sec:decomposing-snowball}

\subsection{Introduction}\label{sec:ext-intro}
In this and the next section we extend the map $f\colon \SC\to\SB$ to
$f\colon\R^3\to\R^3$.
The snowball will be decomposed in a Whitney-type fashion. Each piece
is mapped into the unit ball by a
\emph{quasisimilarity}\index{quasisimilarity}. This means that it is
bi-Lipschitz up to scaling; more precisely there are constants $L\geq 1$
and $l>0$ such that 
\begin{equation}\label{eq:defquasisym}
  \frac{1}{L}\abs{x-y}\leq \frac{1}{l} \abs{f(x)-f(y)}\leq L\abs{x-y}.
\end{equation}
The \emph{Lipschitz constant} $L$ will be the same for every piece,
while the \emph{scaling factor} $l$ will depend on the given piece. It
then follows directly from the definition (\ref{eq:defqc}) that $f$ is
quasiconformal. 

Let $f,g$ be quasisimilarities with Lipschitz constants
$L,L'$ and scaling factors $l,l'$. It follows immediately that the
composition $f\circ g$ is a quasisimilarity with 
Lipschitz constant $LL'$ and scaling factor $ll'$.

In this section the snowball $\BC$ is
decomposed. We break up $\BC$ into shells bounded by
polyhedral surfaces $\mathcal{R}_j$, that ``look like'' the $j$-th
approximations $\SC_j$. 
The crucial estimate from this section is Lemma
\ref{lem:HdistSjRj}; it shows that the shells do not degenerate. 
We then decompose the shells into pieces. Up to scaling there are only
finitely many different ones. Each such piece is quasisimilar to the
unit cube $[0,1]^3$ with a common constant $L$.  
 
In Section \ref{sec:DecBall} the pieces are mapped to the unit
ball and reassembled. 
Apart from controlling constants, one has to make sure that maps on
different pieces are \emph{compatible}\index{compatible}, i.e., agree
on intersecting faces.  

The construction of the map $f$ is schematically indicated in Figure
\ref{fig:fconstr}. 
This picture, as well as all others in this and the next section,
corresponds to our standard example $\widehat{\SC}$ (see Subsection
\ref{sec:example}). 

\begin{figure}
  \centering
  \scalebox{0.255}{
    \includegraphics{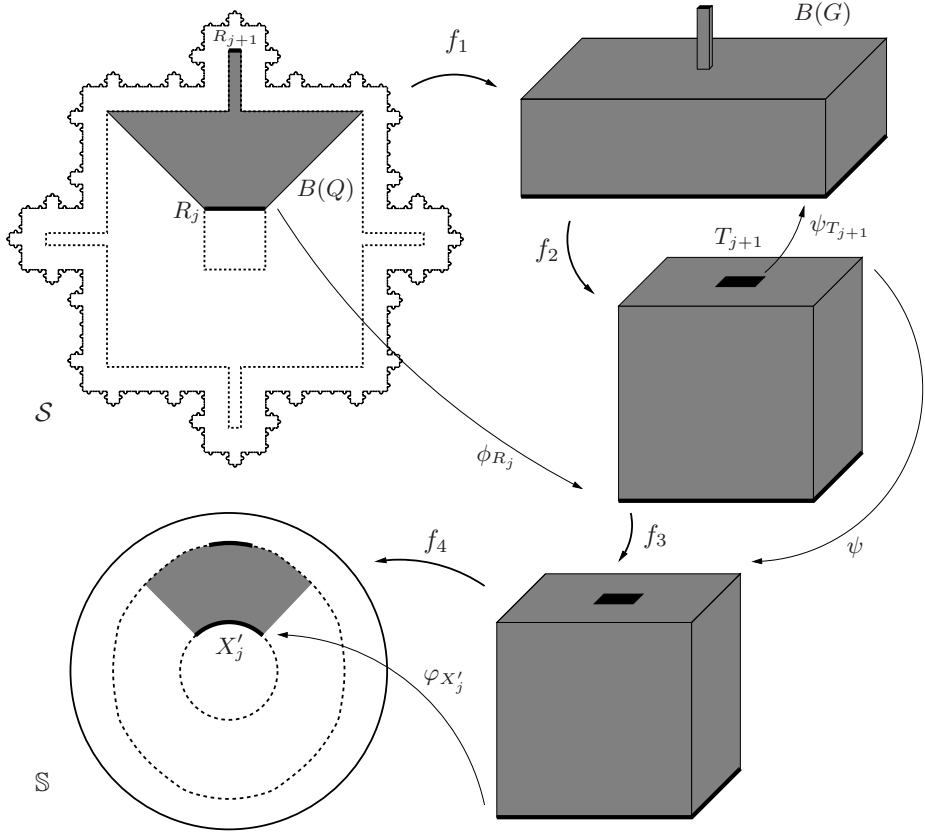}
  }
  \put(-337,155){$\SC$}
  \put(-238,240){\small $B(Q)$} 
  \put(-285,232){\small $R_j$}  
  \put(-270,299){\tiny $R_{j+1}$} 
  \put(-182,296){$f_1$}  
  \put(-50,307){\small $B(G)$}  
  \put(-148,217){$f_2$}  
  \put(-170,140){\small $\phi_{R_j}$} 
  \put(-80,222){\small $T_{j+1}$}
  \put(-30,105){\small $\psi$}  
  \put(-107,109){$f_3$}
  \put(-190,107){$f_4$}
  \put(-269,67){\small $X'_j$}
  \put(-337,15){$\SB$}
  \put(-190,57){\small $\varphi_{X'_j}$}
  \put(-44,227){\small $\psi_{T_{j+1}}$}
  \caption{Construction of the extension $f$.}
  \label{fig:fconstr}
\end{figure}

\subsection{The Surfaces $\RC_j$.}
\label{sec:whitn-decomp-snowb}

It will be convenient to consider distances with respect to the maximum norm in $\R^3$. These will be denoted by an $\infty$-subscript, i.e., we write
\begin{equation*}\label{not:distinfty}
  \dist_{\infty}(A,B):=\inf\{\|a-b\|_{\infty}:a\in A,b\in B\}.
\end{equation*}
In the same way we denote by $\Hdist_{\infty}$ the Hausdorff distance
with respect to the maximum norm.

For a polyhedral surface $\SC_j\subset \R^3$ homeomorphic to the
sphere $\SB$, 
let
\begin{equation}
  \label{eq:defInt}
  \Inter(\SC_j):= \text{ bounded component of } \R^3\setminus \SC_j.
\end{equation}

Recall from Subsection \ref{sec:S_snowsphere} that the height of one face $\mathcal{T}$ of the snowball is at most $\frac{1}{2}-\frac{1}{N_{\max}}$. We approximate the snowsphere from the interior by the surfaces
\begin{equation}\label{not:Rj}\index{R j@$\mathcal{R}_j$}
  \mathcal{R}_j:=\left\{x\in \Inter(\SC_j) : \dist_{\infty}(x,\mathcal{S}_j)= c\delta_j\right\},
\end{equation}
where 
\begin{equation*}\label{not:c}\index{c@$c$}
  c:=\left(\frac{1}{2}-\frac{1}{2N_{\max}}\right).
\end{equation*}

We chose the maximum norm in the definition of $\mathcal{R}_j$ to
again get a polyhedral surface. Had we used the Euclidean distance
instead, $\mathcal{R}_j$ would have some spherical pieces. Note that
$c=\frac{1}{2}-\frac{1}{2N_{\max}}=(N_{\max}-1)\frac{1}{2N_{\max}}$.
Consider one $\delta_j$-square $Q\subset \SC_j$. Then the set
$\{x\in \R^3 : \dist_\infty(x,Q) \geq c\delta_j\}$ lives in the grid $\delta_j\frac{1}{2N_{\max}}\Z^3$. We conclude that
$$
\text{the surface } \mathcal{R}_j \text{ lives in the grid }\delta_j\frac{1}{2N_{\max}}\Z^3. $$
%This grid is indicated in Figure \ref{pic:PdefQ}. 
In particular $\RC_j$ is again a polyhedral surface.

% pic:ext012
\begin{figure}
  \centering
  \scalebox{0.19}{\includegraphics{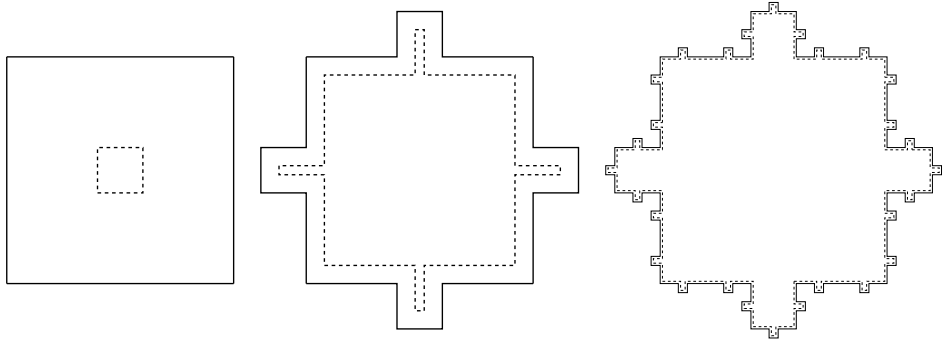}}
  \label{pic:ext012}
  \caption{$\mathcal{R}_0,\mathcal{R}_1,\mathcal{R}_2$ and $\mathcal{S}_0,\mathcal{S}_1,\mathcal{S}_2$.}
\end{figure}

Figure \ref{pic:ext012} shows a $2$-dimensional picture (the
intersection with the plane $y=\frac{1}{2}$) of
$\mathcal{R}_0,\mathcal{R}_1,\mathcal{R}_2$ (dashed line) and
$\mathcal{S}_0,\mathcal{S}_1,\mathcal{S}_2$ (solid line) for the
standard example $\widehat{\SC}$ of Subsection \ref{sec:example}.

\smallskip
We give a more detailed outline of the following subsections:
\begin{itemize}
\item In the next subsection we will see that the surfaces $\RC_j$
  ``look combinatorially'' like 
  $\SC_j$. More precisely, we will define a bijective projection
  $\pi_j\colon \SC_j\to \RC_j$, so the decomposition of $\SC_j$ into
  $\delta_j$-squares is carried to $\RC_j$. This shows that the
  surfaces $\RC_j$ are topological spheres.
\item In Subsection \ref{sec:shells-between-rc_j} we show that $\RC_j$
  and $\RC_{j+1}$ are \defn{roughly parallel}. This enables us to
  decompose the snowball $\BC$ into \defn{shells}, which are bounded
  by these surfaces. 
\item Such a shell is then (Subsection \ref{sec:decomposing-shells})
  decomposed into \defn{pieces}. Up to scaling there are only finitely many
  different such pieces that occur. 
\end{itemize}

We orient the approximations $\SC_j$ by the normal pointing to the
unbounded component of $\R^3\setminus \SC_j$. Thus each
$\delta_j$-square $Q$ from which $\SC_j$ is built obtains an
orientation. The two parts of the double pyramid of $Q$ are called
\defn{outer} and \defn{inner} pyramids of $Q$ accordingly. To
facilitate the discussion we will often map a $\delta_j$-square to the unit
square $[0,1]^2\subset \R^3$ by an (orientation preserving)
similarity, where the inner pyramid is mapped to $\PC^+$, the one with tip
$(\frac{1}{2},\frac{1}{2},\frac{1}{2})$ 
(and the tip of the outer one to $(\frac{1}{2},\frac{1}{2},-\frac{1}{2})$).
It amounts to setting $\delta_j=1$. 
This normalizing map (defined on all of $\R^3$) is denoted by
$\Phi=\Phi_Q$.  It maps other
$\delta_j$-squares to unit squares in $\Z^3$. Let $\Phi(\RC_j):=\RC$.
We will often say that we work in the
\defn{normalized picture}, meaning that the local geometry around $Q$
($\SC_j,\RC_j$, and so on) was mapped by $\Phi$. 

\subsection{The $\RC_j$ are topological Spheres}
\label{sec:rc_j-are-topological}

Here we define a bijective projection
\begin{equation}
  \label{eq:1}
  \pi_j\colon \SC_j\to \RC_j.
\end{equation}
We will define $\pi_j$ as a map later (see the Remark on
page \pageref{rem:Qsquare}). For now we only have need for
the following. We will define $\pi_j$ on the \emph{$1$-skeleton}
of $\SC_j\,$, as well as define $\pi_j(Q)$ as a \emph{set}, for any
$\delta_j$-square $Q\subset \SC_j$.  
The construction will be done locally, meaning we consider one such
$\delta_j$-square 
$Q$ at a time.

Assume first that $\SC_j$ is \defn{flat} at $Q$, meaning all
$\delta_j$-squares $Q'\subset\SC_j$ intersecting $Q$ are parallel. In
the normalized picture let 
\begin{equation}
  \label{eq:piflat}
  \pi(x_1,x_2,0):=(x_1,x_2,c)
\end{equation}
be the projection of $[0,1]^2$ to $\RC$. Then $\pi_j|_Q=\Phi_Q^{-1}\circ \pi
\circ \Phi_Q$.

\smallskip
To define $\pi_j$ in general first 
consider a $\delta_j$-\defn{vertex} $v$ of $\SC_j$ ($v\in \SC_j\cap
\delta_j\Z^3$). At $v$ several $\delta_j$-squares from which $\SC_j$
is built intersect. The projection of $v$ onto $\RC_j$ is indicated in
Figure \ref{fig:localRj}. Here all possibilities (up to
rotations/reflections) of how $\delta_j$-squares (drawn in white) can
intersect in $v$ are shown. The shaded surfaces are the corresponding
surfaces $\RC_j$. The large dot shows the projection of $v$ onto
$\RC_j$. The formal (somewhat cumbersome) definition is as follows.  

\begin{figure}
  \centering
  \scalebox{0.7}{
  \includegraphics{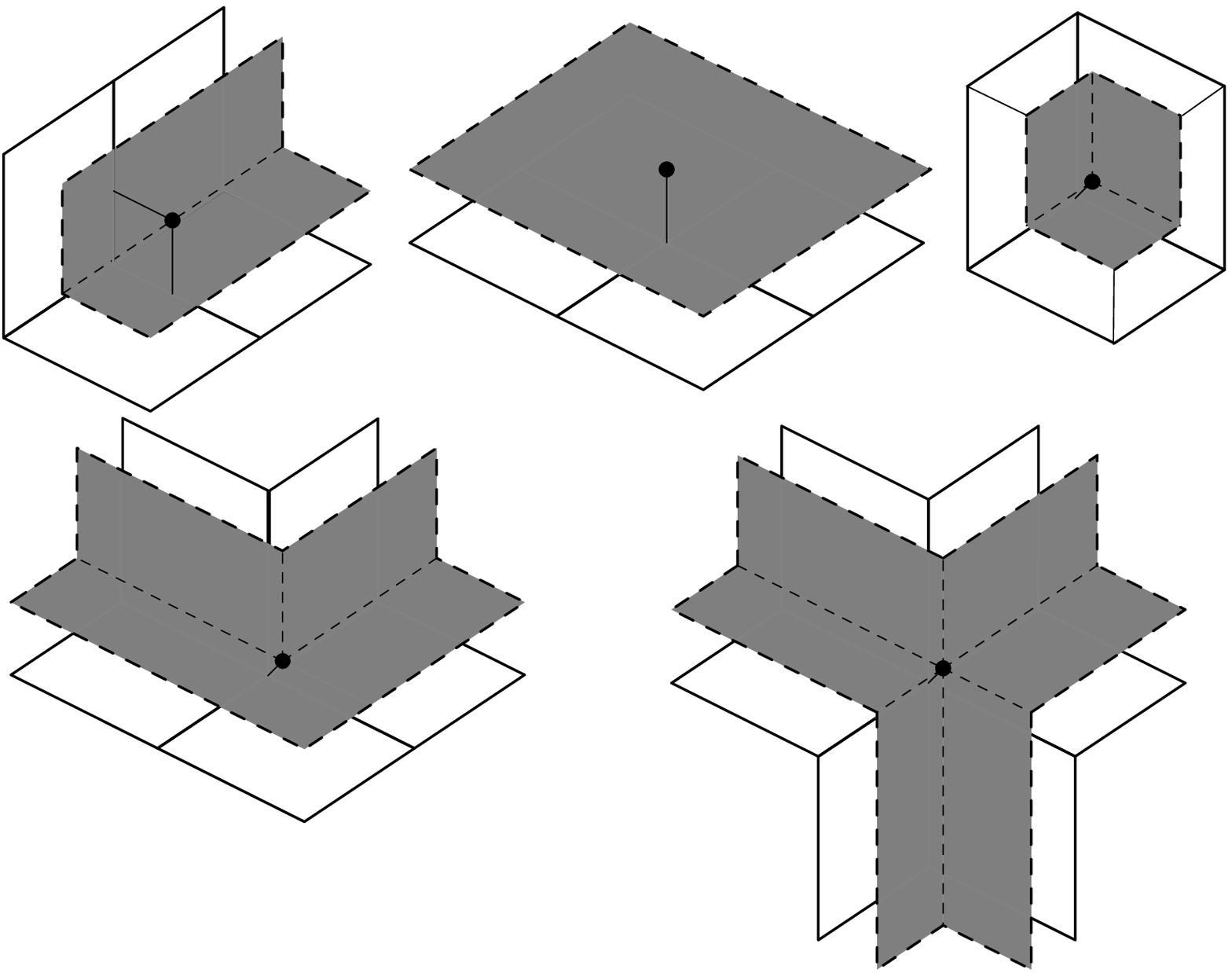}
  }
  \caption{Projections of $v$ onto $\RC_j$.}
  \label{fig:localRj}
\end{figure}

Let $\deg_j(v)$ be the number of $\delta_j$-squares of $\SC_j$ intersecting in $v$. Two
such $\delta_j$-squares are \defn{neighbors} if they share an edge (of
size $\delta_j$). 
We have to consider the case when $\deg_j(v)=5$ separately. So assume
now that $\deg_j(v)=3,4,$ or $6$. Consider the planes through the
intersecting edges bisecting the angle between neighbors. The
intersection of all these planes and $\RC_j$ is exactly one point $p=:
\pi_j(v)$ such that
$\norm{p-v}_\infty= c\delta_j$.

Consider now the case $\deg_j(v)=5$. Note that the planes as above do
not intersect $\RC_j$ in a single point.  
Neighbors are either parallel or perpendicular. Consider only the
planes through edges of perpendicular neighbors, bisecting their
angle. The intersection of all these planes and $\RC_j$ is exactly one
point $p=: \pi_j(v)$ such that $\norm{p-v}_\infty= c\delta_j$.

\smallskip
This defines $\pi_j$ for all vertices $v$ of $\SC_j$. 
Let us record the properties:
\begin{itemize}
\item $\norm{v-\pi_j(v)}_\infty = c \delta_j$.
\item Let $v$ be a vertex of a $\delta_j$-square $Q\subset\SC_j$, and
  let 
  $\pi_j(v)$ be the projection onto $\RC_j$. In the normalized picture
  (where $v$ mapped to the origin) the possible $x$- and
  $y$-coordinates of the projection are $c,0,-c$ (the $z$-coordinate
  is always $c$). 
\end{itemize}

% fig:pvposs
\begin{figure}
  \centering
  \scalebox{0.7}{\includegraphics{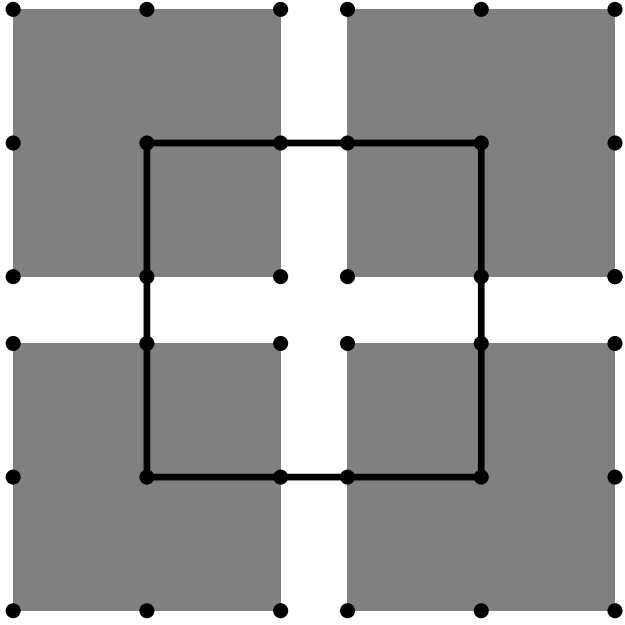}}
  \caption{Possibilities of $\pi_j(v)$.}
  \label{fig:pvposs}
\end{figure}

There are
nine different possibilities for $\pi_j(v)$.  
Figure \ref{fig:pvposs} shows these possibilities for the $4$ vertices of a
square. Note that projections of different points lie in disjoint
squares. The distance of the squares is given by the following. 
Consider two different $\delta_j$-vertices $v,v'\in\SC_j$. Then 
\begin{align*}
  \norm{\pi_j(v)-\pi_j(v')}_\infty
  \geq 
  & \norm{v-v'}_\infty - \norm{v-\pi_j(v)}_\infty - \norm{v'-\pi_j(v')}_\infty
  \\
  & \geq \delta_j -2c\delta_j= \frac{1}{N_{\max}}\delta_j.
\end{align*}

\begin{remark}\label{rem:forbiddenconf}
  If at vertex $v\in \SC_j$ the $\delta_j$-squares intersect as in the
  forbidden configuration (see Figure \ref{fig:annoy}), the surface
  $\RC_j$ has \emph{two} corners corresponding to $v$. Exclusion of
  this case thus simplifies the decomposition considerably.
\end{remark}

Let $E$ be an edge of a $\delta_j$-square $Q\subset\SC_j$ with
vertices $v,v'$. Map $E$ affinely to the line segment with endpoints
$\pi_j(v)$ and $\pi_j(v')$. This defines $\pi_j$ on $E$, thus on the
$1$-skeleton of $\SC_j$.  

\smallskip
Given a $\delta_j$-square $Q\subset\SC_j$ with vertices $v_1,v_2,v_3,v_4$,
the projection $\pi_j(Q)\subset\RC_j$ will be the quadrilateral 
with vertices $\pi_j(v_k)$. It will in general not
be a rectangle, in fact not even convex. Note also that we did not yet
specify how 
\emph{individual points} of $Q$ get mapped by $\pi_j$.

\begin{lemma}
  \label{lem:RjcombSj}
  The projections $\pi_j$ satisfy the following:
  \begin{enumerate}
  \item \label{item:distQpQ}
    For every $\delta_j$-square $Q\subset\SC_j$, we have
    \begin{equation*}
      \dist_\infty(Q,\pi_j(Q))=\Hdist_\infty(Q,\pi_j(Q))=c\delta_j.
    \end{equation*}
  \item \label{item:RjSjcombeq}
    Consider the sets 
    \begin{equation*}
      R_j:=\pi_j(Q),
    \end{equation*}
    where $Q$ is a $\delta_j$-square in the approximation
    $\SC_j$. These sets form a decomposition of the surface $\RC_j$
    into quadrilaterals, $\RC_j=\bigcup R_j$. View $\RC_j$ as a
    \defn{cell complex}, where images of
    $\delta_j$-squares/edges/vertices by $\pi_j$ are the $2$-,$1$-,
    and $0$-cells. Then $\RC_j$ and $\SC_j$ are \emph{isomorphic as
      cell complexes}.
  \item \label{item:Rjspheres}
    The set $\RC_j$ is a polyhedral surface homeomorphic to the unit
    sphere $\SB$.
  \item \label{item:RjbddistSj} 
    $\Inter(\RC_j)= \{x\in\Inter(\SC_j) : \dist_\infty(x,\SC_j) > c\delta_j\}$. 
  \end{enumerate}
\end{lemma}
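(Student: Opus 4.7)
The plan is to work locally using the normalizing similarity $\Phi_Q$ from Subsection \ref{sec:whitn-decomp-snowb}, reducing to the case where the $\delta_j$-square $Q$ becomes $[0,1]^2 \times \{0\}$, with inner pyramid $\PC^+$, and $\delta_j = 1$. By the construction of $\pi_j$ on vertices (Subsection \ref{sec:rc_j-are-topological}; Figure \ref{fig:localRj} and Figure \ref{fig:pvposs}), each image $\pi_j(v)$ of a vertex $v$ of $Q$ has $z$-coordinate $c$, and its $(x,y)$-coordinates lie in $v + \{-c,0,c\}^2$. The quadrilateral $R := \pi_j(Q)$ therefore lies in the plane $\{z = c\}$, and its Hausdorff distance from $Q$ in the maximum norm can be computed directly: the $z$-contribution is always $c$, while the $xy$-projection of $R$ lies in the $c$-neighborhood of $[0,1]^2$ and, conversely, every point of $[0,1]^2$ is within max-distance $c$ of the $xy$-projection of $R$ (by convexity of the quadrilateral and the explicit vertex locations). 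This gives Part (1) after undoing $\Phi_Q$.

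For Part (2), I would verify using the six allowed vertex configurations of Figure \ref{fig:localRj} that for distinct $\delta_j$-squares $Q, Q' \subset \SC_j$, the sets $R = \pi_j(Q)$ and $R' = \pi_j(Q')$ intersect exactly in $\pi_j(Q \cap Q')$: an edge when $Q, Q'$ share an edge, a single point $\pi_j(v)$ when they share only a vertex $v$, and empty otherwise. The crucial point is that the definition of $\pi_j(v)$ (non-symmetric at degree-$5$ vertices), together with the exclusion of the forbidden configuration from Figure \ref{fig:annoy} in Subsection \ref{sec:gen}, ensures $\pi_j(v)$ is a single point common to all $R_k = \pi_j(Q_k)$ with $v \in Q_k$. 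To show $\RC_j = \bigcup_Q R$, I would argue that any $x \in \RC_j$ with $\norm{x - y}_\infty = c\delta_j$ for some $y \in \SC_j$ must lie in $\pi_j(Q^*)$, where $Q^*$ is a $\delta_j$-square containing $y$, by reading off the $xy$-coordinates of $x$ in the normalized picture around $Q^*$. The cell-complex isomorphism is then $Q \mapsto R$, $E \mapsto \pi_j(E)$, $v \mapsto \pi_j(v)$.

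Parts (3) and (4) follow quickly from Part (2). Polyhedrality was already noted in Subsection \ref{sec:whitn-decomp-snowb} since $\RC_j$ lives in the grid $\delta_j \frac{1}{2N_{\max}}\Z^3$. The cell-complex isomorphism shows that the realization of the complex carried by $\RC_j$ is homeomorphic to $\SC_j$, which is a topological sphere by Lemma \ref{lem:Sspheres}; hence $\RC_j \cong \SB$ topologically. For Part (4), the PL-Sch\"onflies theorem applied to $\RC_j$ splits $\R^3$ into two components; since $\RC_j \subset \Inter(\SC_j)$, the bounded component $\Inter(\RC_j)$ coincides with $\{x \in \Inter(\SC_j) : \dist_\infty(x, \SC_j) > c\delta_j\}$ by the defining equation \eqref{not:Rj}, because points in this set cannot be separated from $\inte [0,1]^3$ under the PL-Sch\"onflies homeomorphism by crossing $\RC_j$.

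The main obstacle is the vertex case analysis in Part (2). Unlike the flat and degree-$3,4,6$ cases, at degree-$5$ vertices the naive bisector-plane intersection fails to produce a single point; the construction in Subsection \ref{sec:rc_j-are-topological} works around this by bisecting only at perpendicular neighbors. Checking that all the neighboring quadrilaterals $R_k$ still match up cleanly at $\pi_j(v)$ in this asymmetric case is the delicate point. The forbidden configuration of Figure \ref{fig:annoy} was excluded precisely because there the analogous construction would produce two distinct projection points near $v$, simultaneously destroying the cell-complex isomorphism, the topological-sphere conclusion, and the clean shell decomposition that Section \ref{sec:decomposing-snowball} relies on.
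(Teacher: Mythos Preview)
Your approach tracks the paper's closely, but there is one genuine slip in Part~(1): you invoke ``convexity of the quadrilateral'' $R=\pi_j(Q)$, and the paper explicitly warns (just before the lemma) that $\pi_j(Q)$ ``will in general not be a rectangle, in fact not even convex.'' So the step ``every point of $[0,1]^2$ is within max-distance $c$ of the $xy$-projection of $R$'' cannot be justified this way. The paper's fix is a sandwich: in the normalized picture one checks directly from the nine possible vertex positions (Figure~\ref{fig:pvposs}) that
\[
Q_*:=[c,1-c]^2\times\{c\}\ \subset\ \pi([0,1]^2)\ \subset\ [-c,1+c]^2\times\{c\}=:Q^*,
\]
and then observes that $\dist_\infty$ and $\Hdist_\infty$ from $[0,1]^2$ to both $Q_*$ and $Q^*$ equal $c$. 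No convexity is needed; the containment $Q_*\subset R$ does the work your convexity argument was meant to do.

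For the remaining parts your outline matches the paper (which in fact dispatches Part~(2) with ``clear from the construction'' and Part~(3) by extending $\pi_j|_{\partial Q}$ to a homeomorphism on each square). Your Part~(4) is a bit soft as written; the paper makes it precise by noting that $\Inter(\SC_j)\setminus\RC_j$ has exactly two components by PL--Sch\"onflies, and then exhibits points of both $\{\dist_\infty(\cdot,\SC_j)<c\delta_j\}$ and $\{\dist_\infty(\cdot,\SC_j)>c\delta_j\}$ inside $\Inter(\SC_j)$ (in the pyramid $\PC^+$, above and below $Q_*$), so these two open sets must be exactly the two components. That identifies $\Inter(\RC_j)$ without any appeal to ``not being separated.''
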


\begin{proof}
  To see (\ref{item:distQpQ}) work in the normalized picture. Let
  $\pi\colon [0,1]^2\to \{z=c\}$ be the map conjugate to $\pi_j$ under
  the normalizing map $\Phi$.  Then
  \begin{equation*}
    Q_*:=[c,1-c]^2\times \{c\} \subset \pi([0,1]^2) \subset
    [-c,1+c]^2\times \{c\}=: Q^*;
  \end{equation*}
  see Figure \ref{fig:pvposs}, and Figure \ref{fig:projpyr}. 
  Note that
  \begin{align*}
    c & = \dist_\infty([0,1]^2,Q_*) = \Hdist_\infty([0,1]^2,Q_*)
    \\
    & =\dist_\infty([0,1]^2,Q^*)=\Hdist_\infty([0,1]^2,Q^*).
  \end{align*}
  The statement follows.

  % fig:projpyr
  \begin{figure}
    \centering 
    \scalebox{0.8}{\includegraphics{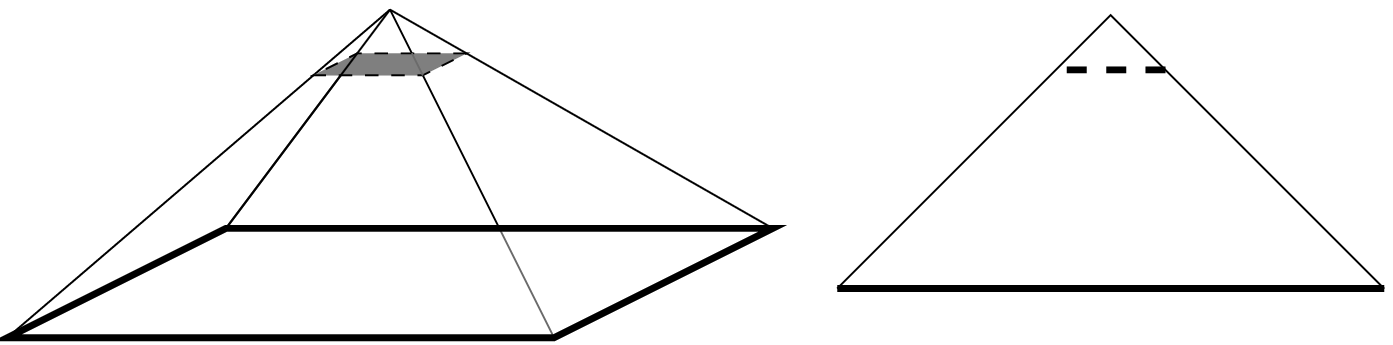}}
    % 
    % labels
    \begin{picture}(,)
      \put(-268,63){$Q_*$}
      \put(-98,63){$Q_*$}
      \put(-225,10){$Q$}
      \put(-53,17){$Q$}
    \end{picture}
    \caption{Part of $\RC_j$.}
    \label{fig:projpyr}
  \end{figure}
  
  \smallskip
  Property (\ref{item:RjSjcombeq}) is clear from the construction. 

  \smallskip
  Any homeomorphism $\pi_j\colon Q \to \pi_j(Q)$, that extends
  $\pi_j|_{\partial Q}$ yields Property (\ref{item:Rjspheres}). 

  \smallskip
  (\ref{item:RjbddistSj}) The set $\Inter(\SC_j) \setminus \RC_j$ has
  two components by the PL-Sch\"onflies theorem. The sets $\{x\in
  \Inter(\SC_j) : \dist_\infty(x,\SC_j) < c\delta_j\}, \{x\in
  \Inter(\SC_j) : \dist_\infty(x,\SC_j) > c\delta_j\}$ are both
  non-empty 
  (see Figure \ref{fig:projpyr}); pick points in the pyramid $\PC^+$
  above and below $Q_*$. Thus these sets are the two components. The
  statement follows from using PL-Sch\"onflies again.
\end{proof}

Applying the same reasoning to the unbounded component of
$\R^3\setminus \SC_j$ yields the following.  
\begin{cor}\label{cor:twocomp}
  The set $\{\dist_\infty(x,\SC_j)>c\delta_j\}$ has two components,
  one bounded (by $\RC_j$) and one
  unbounded. 
\end{cor}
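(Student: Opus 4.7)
The plan is to reduce to Lemma \ref{lem:RjcombSj}(\ref{item:RjbddistSj}) by running the construction of $\pi_j$ and $\RC_j$ symmetrically in the unbounded component of $\R^3\setminus\SC_j$. First I would split the set in question: by the PL-Sch\"onflies theorem, $\R^3\setminus\SC_j$ has exactly two components, namely $\Inter(\SC_j)$ and an unbounded component which I denote $E_j$. Since $\{\dist_\infty(x,\SC_j)>c\delta_j\}$ is disjoint from $\SC_j$, it decomposes as a disjoint union of its intersections with $\Inter(\SC_j)$ and with $E_j$. Lemma \ref{lem:RjcombSj}(\ref{item:RjbddistSj}) identifies the first piece with $\Inter(\RC_j)$, which is connected, bounded, and bordered by $\RC_j$.

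For the second piece, I would construct an outward analog $\RC_j^{\mathrm{out}}\subset E_j$ of $\RC_j$. The construction of $\pi_j$ in Subsection \ref{sec:rc_j-are-topological} is entirely symmetric with respect to the two sides of $\SC_j$: at each $\delta_j$-vertex $v$, the relevant system of bisecting planes meets $\{\norm{\,\cdot-v}_\infty=c\delta_j\}$ in exactly two points, one inside $\SC_j$ and one outside, and $\RC_j$ was built using the inner one. Using the outer point instead, and extending affinely on edges and topologically on faces, yields a polyhedral surface $\RC_j^{\mathrm{out}}$ that is combinatorially isomorphic to $\SC_j$. Properties (\ref{item:distQpQ})--(\ref{item:Rjspheres}) of Lemma \ref{lem:RjcombSj} carry over verbatim, since in the normalized picture the outer construction is simply the reflection of the inner one in the plane of $Q$; in particular $\RC_j^{\mathrm{out}}$ is a polyhedral topological sphere.

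Now applying PL-Sch\"onflies to $\RC_j^{\mathrm{out}}$, the set $\R^3\setminus\RC_j^{\mathrm{out}}$ has two components. Intersecting with $E_j$, and repeating the sample-point argument of (\ref{item:RjbddistSj})---picking witnesses in the two pyramids on either side of each face of $\RC_j^{\mathrm{out}}$---shows that the two open sets
\[
\{x\in E_j:\dist_\infty(x,\SC_j)<c\delta_j\}\quad\text{and}\quad\{x\in E_j:\dist_\infty(x,\SC_j)>c\delta_j\}
\]
are exactly the two components of $E_j\setminus\RC_j^{\mathrm{out}}$. The second of these contains all points of sufficiently large norm and is therefore the unbounded component. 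Combining with $\Inter(\RC_j)$ from the interior exhibits the set $\{\dist_\infty(x,\SC_j)>c\delta_j\}$ as a disjoint union of one bounded component (bordered by $\RC_j$) and one unbounded component, as claimed.

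The main point requiring verification is that the inside/outside duality really is fully symmetric; in particular, that the ``forbidden configuration'' exclusion from the definition of generators rules out pathologies on both sides of $\SC_j$ simultaneously, so that $\RC_j^{\mathrm{out}}$ inherits the same one-corner-per-vertex behaviour as $\RC_j$. This is immediate because the forbidden configuration is defined purely in terms of the cyclic arrangement of dihedral angles at a vertex, which is invariant under interchanging the inner and outer pyramids of each adjacent $\delta_j$-square.
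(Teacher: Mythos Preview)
Your proposal is correct and follows essentially the same route as the paper: the paper simply states that the corollary follows by ``applying the same reasoning to the unbounded component of $\R^3\setminus\SC_j$,'' and what you have written is precisely that reasoning spelled out---construct the outward analogue $\RC_j^{\mathrm{out}}$ of $\RC_j$, observe that Lemma~\ref{lem:RjcombSj} goes through verbatim on the other side, and combine with the already-established interior piece. Your check that the forbidden configuration is symmetric under swapping inside and outside (since the dihedral angles $\alpha\mapsto 2\pi-\alpha$ send the forbidden cyclic pattern to a reflection of itself) is a nice point the paper leaves implicit.
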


\subsection{The shells between $\RC_j$ and $\RC_{j+1}$}
\label{sec:shells-between-rc_j}

We will show that the surfaces $\RC_j$ and $\RC_{j+1}$
 are \defn{roughly parallel}. 
This will enable us to decompose the snowball into \defn{shells}
bounded by two such surfaces.

Lower bounds
on the distance will be controlled by $\dist_{\infty}$, while upper
bounds of their distance will be controlled by the Hausdorff distance
$\Hdist_{\infty}$. Note that $\dist_{\infty}$ is not suited to control
upper bounds and that $\Hdist_{\infty}$ is not suited to control lower
bounds on the distance. 

\medskip 
Two sets $A$ and $B$ are called \emph{roughly $\delta$-parallel}
\index{roughly delta-parallel@roughly $\delta$-parallel} ($\delta>0$) with constant $C>0$ if
\begin{equation}
  \dist_{\infty}(A,B)\geq \frac{1}{C}\delta\; \mbox{ and } 
  \Hdist_{\infty}(A,B)\leq C\delta.
\end{equation}

\begin{lemma}
  \label{lem:HdistSjRj}
  The surfaces $\SC,\SC_j,$ and $\RC_j$ satisfy
  \begin{enumerate}
  \item\label{item:Hdist1}
    $\Hdist_\infty(\RC_j,\SC_j)=c\delta_j$.
    \newline\noindent
    So $\RC_j$ and $\SC_j$ are (roughly) $c\delta_j$-parallel with constant $C=1$.
  \item\label{item:HdistRjS}   
    $\mathcal{R}_j$ and $\mathcal{S}$ are roughly $\delta_j$-parallel
    with constant $C=C(N_{\max})$ (independent of $j$).
  \item \label{item:IntSjSk}
    $\Inter(\RC_j)$ is compactly contained in $\Inter(\RC_{j+1})$,
    i.e.,
    \begin{equation*}
      \Inter(\RC_0)\Subset \Inter(\RC_1)\Subset
      \Inter(\RC_2)\Subset  \dots \;.
    \end{equation*}
  \item\label{item:Rjpara}   
    $\mathcal{R}_j$ and $\mathcal{R}_{j+1}$ are roughly
    $\delta_j$-parallel with constant $C=C(N_{\max})$.
  \item \label{item:SjkRj}
    There is a positive integer $k_0$ such that
    \begin{equation*}
      \{\dist_\infty(x,\SC) > \delta_{j-k_0}\} \subset
      \{\dist_\infty(x,\RC_j)> c\delta_j\} \subset
      \{\dist_\infty(x,\SC) > \delta_{j+k_0}\}, 
    \end{equation*}
    for all $j\geq k_0$.
  \end{enumerate}
\end{lemma}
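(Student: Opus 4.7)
The plan is to prove the five items in order. Items (\ref{item:Hdist1}) and (\ref{item:HdistRjS}) are short bookkeeping that feeds off Lemma~\ref{lem:RjcombSj} and~(\ref{eq:dSjS}); items (\ref{item:IntSjSk}) and (\ref{item:Rjpara}) carry the geometric content and will be proved together by a local normalized-picture argument; (\ref{item:SjkRj}) will drop out of (\ref{item:HdistRjS}).

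For (\ref{item:Hdist1}), every point of $\RC_j$ lies at $\dist_\infty$ exactly $c\delta_j$ from $\SC_j$ by definition, giving $d_{\SC_j}(\RC_j)=c\delta_j$, and Lemma~\ref{lem:RjcombSj}(\ref{item:distQpQ}) gives $\Hdist_\infty(Q,\pi_j(Q))=c\delta_j$ for each $\delta_j$-square $Q\subset\SC_j$, so also $d_{\RC_j}(\SC_j)\leq c\delta_j$. For (\ref{item:HdistRjS}), two applications of Lemma~\ref{lem:dist_triangle} combined with~(\ref{eq:dSjS}) yield
\begin{equation*}
  \Hdist_\infty(\RC_j,\SC)\leq c\delta_j+\left(\frac{1}{2}-\frac{1}{N_{\max}}\right)\delta_j=\left(1-\frac{3}{2N_{\max}}\right)\delta_j
\end{equation*}
and
\begin{equation*}
  \dist_\infty(\RC_j,\SC)\geq c\delta_j-\left(\frac{1}{2}-\frac{1}{N_{\max}}\right)\delta_j=\frac{\delta_j}{2N_{\max}}.
\end{equation*}

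For (\ref{item:IntSjSk}) and (\ref{item:Rjpara}) I would work in the normalized picture around a fixed $\delta_j$-square $Q\subset\SC_j$. There $Q=[0,1]^2$, the inner pyramid is $\PC^+$, the piece of $\SC_{j+1}$ lying in $\PC(Q)$ is a scaled $N_{j+1}$-generator of ``height'' at most $\frac{1}{2}-\frac{3}{2N_{j+1}}$ by Subsection~\ref{sec:S_snowsphere}, and $\RC_j$ sits at normalized $\infty$-distance $c=\frac{1}{2}-\frac{1}{2N_{\max}}$ from $Q$ on the inner side. The key computation is that in the worst case (inward-pointing generator) the normalized clearance between $\RC_j$ and $\SC_{j+1}$ is
\begin{equation*}
  c-\left(\frac{1}{2}-\frac{3}{2N_{j+1}}\right)=\frac{3}{2N_{j+1}}-\frac{1}{2N_{\max}}\geq\frac{1}{N_{\max}},
\end{equation*}
using $N_{j+1}\leq N_{\max}$. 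Unscaling, $\dist_\infty(\RC_j,\SC_{j+1})\geq\delta_j/N_{\max}$, and a short arithmetic check shows this exceeds $c\delta_{j+1}$ by at least $\delta_j/(2N_{\max})$. Since the same height estimate also shows $\RC_j\subset\Inter(\SC_{j+1})$, I conclude $\RC_j\subset\Inter(\RC_{j+1})$ with uniform clearance, which simultaneously gives (\ref{item:IntSjSk}) and the lower bound $\dist_\infty(\RC_j,\RC_{j+1})\geq\delta_j/(2N_{\max})$ in (\ref{item:Rjpara}). The matching upper bound $\Hdist_\infty(\RC_j,\RC_{j+1})\lesssim\delta_j$ follows from two applications of (\ref{item:HdistRjS}).

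The main obstacle is that the local argument just sketched is clean only around an interior point of $Q$; near a $\delta_j$-edge or $\delta_j$-vertex of $\SC_j$ one of the finitely many configurations of $\RC_j$ pictured in Figure~\ref{fig:localRj} occurs, and the clearance bound must be verified in each, which is why the forbidden configuration of Figure~\ref{fig:annoy} was excluded at the outset. Finally for (\ref{item:SjkRj}),~(\ref{eq:triag_dist}) and (\ref{item:HdistRjS}) give
\begin{equation*}
  \bigl|\dist_\infty(x,\SC)-\dist_\infty(x,\RC_j)\bigr|\leq\Hdist_\infty(\SC,\RC_j)\leq\delta_j,
\end{equation*}
and choosing $k_0$ so large that $2^{k_0}$ dominates the ratios between $\delta_{j-k_0}$ and $c\delta_j$ on the left, and between $c\delta_j$ and $\delta_{j+k_0}$ on the right, forces the three threshold sets to nest for all $j\geq k_0$.
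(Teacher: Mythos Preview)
Your treatment of items~(\ref{item:Hdist1}) and~(\ref{item:HdistRjS}) matches the paper's. There are, however, two concrete gaps.

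\textbf{Item~(\ref{item:IntSjSk})--(\ref{item:Rjpara}), arithmetic.} Your clearance bound $\dist_\infty(\RC_j,\SC_{j+1})\geq\delta_j/N_{\max}$ is correct, but the claim that it ``exceeds $c\delta_{j+1}$ by at least $\delta_j/(2N_{\max})$'' is false in general: with $N_{j+1}=2$ and $N_{\max}$ large one has $\delta_j/N_{\max}-c\delta_{j+1}<0$. The fix is not to pass to the crude lower bound $1/N_{\max}$ before subtracting; keep the $N_{j+1}$-dependent clearance $\bigl(\tfrac{3}{2N_{j+1}}-\tfrac{1}{2N_{\max}}\bigr)\delta_j$ and then subtract $c\delta_{j+1}=c\delta_j/N_{j+1}$, which yields $\geq\delta_j/(2N_{\max})$ as desired. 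Once you do this, note that the whole argument is just
\[
\dist_\infty(p,\SC_{j+1})\geq\dist_\infty(p,\SC_j)-\Hdist_\infty(\SC_j,\SC_{j+1})\geq c\delta_j-\Bigl(\tfrac12-\tfrac{3}{2N_{j+1}}\Bigr)\delta_j
\]
for any $p\in\RC_j$, which is \emph{global}: the edge/vertex case-checking you flag as the ``main obstacle'' is unnecessary. This is exactly how the paper handles both~(\ref{item:IntSjSk}) and the lower bound in~(\ref{item:Rjpara}).

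\textbf{Item~(\ref{item:SjkRj}), right inclusion.} From $\bigl|\dist_\infty(x,\SC)-\dist_\infty(x,\RC_j)\bigr|\leq\delta_j$ and $\dist_\infty(x,\RC_j)>c\delta_j$ you only get $\dist_\infty(x,\SC)>c\delta_j-\delta_j$, which is negative since $c<1$; no choice of $k_0$ rescues this. The paper's remedy is to first prove the sandwich with threshold $\delta_j$ on the $\RC_j$-side (where the subtraction $\delta_j-(1-\tfrac{1}{N_{\max}})\delta_j=\delta_j/N_{\max}$ stays positive), obtaining
\[
\{\dist_\infty(x,\SC)>\delta_{j-j_0}\}\subset\{\dist_\infty(x,\RC_j)>\delta_j\}\subset\{\dist_\infty(x,\SC)>\delta_{j+j_0}\},
\]
and only then use $\delta_{j+2}\leq c\delta_j<\delta_j$ to trade the $\delta_j$-threshold for the $c\delta_j$-threshold at the cost of two extra indices.
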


\begin{proof}
  % Property 1
  (\ref{item:Hdist1})
  Obviously 
  \begin{equation*}
    d_{\SC_j}(\RC_j)=c\delta_j;
  \end{equation*}
  this distance (see (\ref{eq:defdA})) is again taken with respect to $\norm{\cdot}_\infty$.

  \smallskip 
  It remains to show that $d_{\RC_j}(\SC_j)\leq c \delta_j$. 
  Work again in the normalized picture. As
  before  $Q_*=\{z=c\}\cap \PC^+= [c,1-c]^2\times
  \{c\}\subset\RC$; see Figure \ref{fig:projpyr}. Since $d_{Q_*} ([0,1]^2)= c$ it follows that
  $d_{\RC_j}(\SC_j)\leq c\delta_j$.

  % Property 2
  \medskip
  (\ref{item:HdistRjS}) For every $x\in \mathcal{R}_j$ we have by
  (\ref{eq:triag_dist})
  \begin{align*}
    \dist_{\infty}(x,\mathcal{S})
    &\geq \dist_{\infty}(x,\mathcal{S}_j)
    -\Hdist_{\infty}(\mathcal{S}_j,\mathcal{S})
    \\
    &\geq  \left(\frac{1}{2}-\frac{1}{2N_{\max}}
    \right)\delta_j-\left(\frac{1}{2}-\frac{1}{N_{\max}}\right)\delta_j 
    \mbox{, \quad by \eqref{eq:dSjS}}
    \\
    &=\frac{1}{2N_{\max}}\delta_j.
  \end{align*}
  So $\dist_{\infty}(\mathcal{R}_j,\mathcal{S})\geq
  \frac{1}{2N_{\max}}\delta_j$.
  Here we see that $c>\left(\frac{1}{2}-\frac{1}{N_{\max}} \right)$
  ensures that $\mathcal{R}_j$ does not intersect the snowsphere
  $\mathcal{S}$.   

  On the other hand,
  \begin{align}
    \Hdist_{\infty}(\mathcal{R}_j,\mathcal{S})&\leq \Hdist_{\infty}(\mathcal{R}_j,\mathcal{S}_j)+\Hdist_{\infty}(\mathcal{S}_j,\mathcal{S})\notag
    \\
    & \leq c\delta_j+\left(\frac{1}{2}-\frac{1}{N_{\max}}\right)\delta_j\leq
    \left(1-\frac{1}{N_{\max}}\right) \delta_j \label{eq:dRjSvar}
    \\
    & \leq\delta_j,\label{eq:dRjS}
  \end{align}  
  by property (\ref{item:Hdist1}) and (\ref{eq:dSjS}).

  % Property 3
  \medskip
  (\ref{item:IntSjSk})
  Consider an $x\in \R^3$ such that $\dist_\infty(x,\SC_j)\geq
  c\delta_j$. Then  
  \begin{align*}
    \dist_\infty(x,\SC_{j+1}) & - c\delta_{j+1}
    \\
    \geq & \dist_\infty(x,\SC_j) - \Hdist_\infty(\SC_j,\SC_{j+1}) -
    c\delta_{j+1}
    \\
    \geq & c\delta_j - \left( \frac{1}{2}
    -\frac{3}{2}\frac{1}{N_{j+1}} \right) \delta_j - c\delta_{j+1},
  \quad \text{ by (\ref{eq:HdistSj})} 
    \\
    = & \left[ \frac{1}{2} - \frac{1}{2N_{\max} } - \frac{1}{2} +
      \frac{3}{2}\frac{1}{N_{j+1}} - 
      \left(\frac{1}{2} - \frac{1}{2N_{\max}}\right) \frac{1}{N_{j+1}}
    \right] \delta_j
    \\
    \geq & \frac{1}{2 N_{\max}} \delta_j.
  \end{align*}
  Thus $\dist_\infty(x,\SC_{j+1})> c\delta_{j+1}$, and hence
  \begin{equation*}
    \{\dist_\infty(x,\SC_0) > c \delta_0 \} \Subset 
    \{\dist_\infty(x,\SC_1)> c \delta_1\} \Subset \dots \; . 
  \end{equation*}
  The statement follows from Corollary \ref{cor:twocomp}
  and Lemma \ref{lem:RjcombSj} (\ref{item:RjbddistSj}).

  % Property 4
  \medskip
  (\ref{item:Rjpara})
  One inequality follows immediately from inequality \eqref{eq:dRjS}:
  \begin{align*}
    \Hdist_{\infty}(\mathcal{R}_j,\mathcal{R}_{j+1}) & \leq\Hdist_{\infty}(\mathcal{R}_j,\mathcal{S})+\Hdist_{\infty}(\mathcal{S},\mathcal{R}_{j+1})   
    \\
    & \leq \delta_j+\delta_{j+1}\leq2\delta_j.
  \end{align*}
  To see the second inequality recall inequality (\ref{eq:HdistSj}).   
  Together with property (\ref{item:Hdist1}) this yields
  \begin{align*}
    \begin{split}
      \dist_{\infty}(\mathcal{R}_j,\mathcal{R}_{j+1})
      & \geq 
      \dist_{\infty}(\mathcal{R}_j, \mathcal{S}_j) 
      -\Hdist_{\infty}(\mathcal{S}_j,\mathcal{S}_{j+1})   
      %\\
      %&\phantom{XXXXXXXX}
      -\Hdist_{\infty}(\mathcal{S}_{j+1},\mathcal{R}_{j+1})
    \end{split}
    \\
    \begin{split}
      &\geq  \left(\frac{1}{2}-\frac{1}{2N_{\max}}
      \right)\delta_j-\left(\frac{1}{2}-\frac{3}{2N_{j+1}}\right)\delta_j
        \\
        &\phantom{XXXXXXXX} 
        - \left(\frac{1}{2} -\frac{1}{2N_{\max}}
        \right)\delta_j\frac{1}{N_{j+1}} 
    \end{split}
    \\
    & \geq  \left(\frac{1}{N_{j+1}}-\frac{1}{2N_{\max}}\right)\delta_j\geq\frac{1}{2N_{\max}}\delta_j.
  \end{align*}

  % Property 5
  (\ref{item:SjkRj})
  Pick an $x\in \R^3$ such that $\dist_\infty(x,\RC_j)>
  \delta_j$. Then
  \begin{align*}
    \dist_\infty(x,\SC) & \geq \dist_\infty(x,\RC_j) -
    \Hdist_\infty(\RC_j,\SC)
    \\
    & > \delta_j - \left(1-\frac{1}{N_{\max}}\right) \delta_j \text{
      by (\ref{eq:dRjSvar})}
    \\
    & = \frac{1}{N_{\max}} \delta_j.
  \end{align*}
  Now pick $y\in \R^3$ with $\dist_\infty(y,\SC)> \delta_j$. Then
  \begin{align*}
    \dist_\infty(y,\RC_j) & \geq
    \dist_\infty(y,\SC)-\Hdist_\infty(\SC,\RC_j)
    \\
    & > \delta_j - \left(1-\frac{1}{N_{\max}}\right) \delta_j \text{
      by (\ref{eq:dRjSvar})}
    \\
    & = \frac{1}{N_{\max}} \delta_j.
  \end{align*}
  Choose $j_0$ such that $2^{j_0}\geq N_{\max}$. Thus
    \begin{equation}\label{eq:lemRjprop5_1}
      \{\dist_\infty(x,\SC) > \delta_{j-j_0}\} \subset
      \{\dist_\infty(x,\RC_j)> \delta_j\} \subset
      \{\dist_\infty(x,\SC) > \delta_{j+j_0}\}, 
    \end{equation}
    for all $j\geq j_0$. Note that $N_{\max}\geq 2$ implies
    \begin{equation*}
      \frac{1}{2} > c = \frac{1}{2} - \frac{1}{2N_{\max}} \geq \frac{1}{4}.
    \end{equation*}
    Thus $\delta_{j+2} \leq c \delta_j < \delta_j$ and
    \begin{equation}
      \label{eq:lemRjprop5_2}
      \{\dist_{\infty}(x,\RC_j) > \delta_j\}
      \subset \{\dist_{\infty}(x,\RC_j) > c\delta_j\} 
      \subset \{\dist_{\infty}(x,\RC_j) > \delta_{j+2}\}.
    \end{equation}
    The statement follows by combining (\ref{eq:lemRjprop5_1}) and
    (\ref{eq:lemRjprop5_2}) with $k_0=j_0+2$.

\end{proof}

By Property (\ref{item:IntSjSk}) of the last lemma we can define for
$j\geq 0$ the \defn{shells} 
\begin{align*}
  \mathcal{B}_j:= \clos\Inter(\RC_{j+1})\setminus \Inter(\RC_j),
\end{align*}
bounded by $\mathcal{R}_j$ and $\mathcal{R}_{j+1}$. Property
(\ref{item:Rjpara}) of the previous lemma controls the ``thickness''
of these shells. By Property (\ref{item:SjkRj}) and Corollary
\ref{cor:twocomp} we obtain the following.
\begin{cor}\label{cor:BintS}
  The bounded component of  $\R^3\setminus \SC$ is
  \begin{equation*}
    \bigcup_j \Inter(\RC_j) = \bigcup_j \mathcal{B}_j\cup\Inter(\RC_0)= \inte\BC.
  \end{equation*}
\end{cor}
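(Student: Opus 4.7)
The set-theoretic equality $\bigcup_j \Inter(\RC_j) = \bigcup_j \BC_j \cup \Inter(\RC_0)$ I would dispose of first: from the definition $\BC_j = \clos\Inter(\RC_{j+1})\setminus\Inter(\RC_j)$ and the strict nesting in Lemma~\ref{lem:HdistSjRj}(\ref{item:IntSjSk}), one sees inductively that both sides equal $\bigcup_j \clos\Inter(\RC_j)$. The substantive identity is $\bigcup_j \Inter(\RC_j) = \inte\BC$. Since Corollary~\ref{cor:S_S2} identifies $\SC$ as a topological sphere, Jordan--Brouwer gives $\R^3\setminus\SC = U \sqcup V$ with $U$ the bounded component, and $\inte\BC = \inte\clos U = U$. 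A convenient base point is the cube center $p := (\tfrac12,\tfrac12,\tfrac12)$: the open cube $(\tfrac12 - 1/N_{\max},\,\tfrac12 + 1/N_{\max})^3$ is bounded, connected, disjoint from $\SC$ by \eqref{eq:dSjS} at $j=0$, and contains both $p$ and $\Inter(\RC_0) = (c,1-c)^3$. Hence $p \in U$, and by the nesting in Lemma~\ref{lem:HdistSjRj}(\ref{item:IntSjSk}), $p \in \Inter(\RC_j)$ for every $j$.

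For the inclusion $\Inter(\RC_j) \subset U$ the key observation is that by Lemma~\ref{lem:RjcombSj}(\ref{item:RjbddistSj}), $\Inter(\RC_j) \subset \{\dist_\infty(\cdot,\SC_j) > c\delta_j\}$, whereas \eqref{eq:dSjS} forces every point of $\SC$ to lie within $(\tfrac12 - 1/N_{\max})\delta_j$ of $\SC_j$. Since $\tfrac12 - 1/N_{\max} < c$, we get $\Inter(\RC_j) \cap \SC = \emptyset$; the connected set $\Inter(\RC_j)$, containing $p \in U$, is then confined to $U$.

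For the reverse inclusion I would use arc-connectedness of $U$ together with Property~(\ref{item:SjkRj}) of Lemma~\ref{lem:HdistSjRj}. Given $x \in U$, pick a path $\gamma \subset U$ from $p$ to $x$; compactness yields $d := \dist_\infty(\gamma,\SC) > 0$. Choosing $j \geq k_0$ with $\delta_{j-k_0} < d$ puts
\[
\gamma \subset \{\dist_\infty(\cdot,\SC) > \delta_{j-k_0}\} \subset \{\dist_\infty(\cdot,\RC_j) > c\delta_j\}
\]
by Property~(\ref{item:SjkRj}), so in particular $\gamma$ avoids $\RC_j$ entirely. Being connected, $\gamma$ lies in a single component of $\R^3 \setminus \RC_j$, and since $p \in \gamma$ already belongs to $\Inter(\RC_j)$, so does $\gamma$, and in particular $x$.

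The main obstacle, as is typical for such Whitney-type setups, lies in matching scales: the thickening by $c\delta_j$ around $\RC_j$ must simultaneously be large enough that a Hausdorff-$\delta_j$ neighborhood of $\SC$ cannot intrude into $\Inter(\RC_j)$ (used in paragraph two) and small enough that the bounded base point $p$ stays safely inside it (used in paragraph three). Both are built into the specific choice $c = \tfrac12 - 1/(2N_{\max})$ and the constant $k_0$ appearing in Lemma~\ref{lem:HdistSjRj}(\ref{item:SjkRj}), so once the base point argument is in place the proof is essentially the two paragraphs above.
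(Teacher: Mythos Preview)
Your argument is essentially correct and follows the same route as the paper (which merely cites Property~(\ref{item:SjkRj}) and Corollary~\ref{cor:twocomp}); you supply the details the paper omits. There is one small logical slip worth fixing: the assertion ``Hence $p\in U$'' at the end of your first paragraph does not follow from what precedes it. A bounded connected open set disjoint from $\SC$ may perfectly well lie in the unbounded component $V$; nothing you have said up to that point rules this out.

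The repair is to postpone the identification. Your paragraph~2 establishes $\Inter(\RC_j)\cap\SC=\emptyset$ without using $p\in U$; together with connectedness and $p\in\Inter(\RC_0)\subset\Inter(\RC_j)$ this shows that $W:=\bigcup_j\Inter(\RC_j)$ lies in the single component $C$ of $\R^3\setminus\SC$ containing $p$. Your paragraph~3, read with $C$ in place of $U$, shows $C\subset W$, hence $W=C$. Now observe that $W$ is bounded: by Lemma~\ref{lem:RjcombSj}(\ref{item:RjbddistSj}) each $\Inter(\RC_j)\subset\Inter(\SC_j)$, and the $\SC_j$ stay within a fixed neighborhood of $\partial[0,1]^3$ by \eqref{eq:HdistSj}. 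Thus $C$ is bounded, so $C=U$, and $p\in U$ follows \emph{a posteriori}.
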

It is simply
connected, since each set $\Inter(\RC_j)$ is (using
Lemma \ref{lem:HdistSjRj} (\ref{item:IntSjSk})). Furthermore $\partial \BC=\SC$.

\subsection{Decomposing the Shells}
\label{sec:decomposing-shells}

We decompose the shells $\BC_j$ into pieces. 
This is the trickiest part of this section.

Fix a $\delta_j$-square $Q\subset \SC_j$. 
We want to define a set $B(Q)\subset\BC_j$ ``above'' $Q$.
Work
in the normalized picture. Let $\RC,\RC'$ be the images of $\RC_j,\RC_{j+1}$
under the normalization. The piece of $\SC_{j+1}$ bounded by $\partial
Q$ maps (under the normalization) to $G$, which is the (correctly
oriented) $N_{j+1}$-generator. It is built from squares of side-length
$\delta:=1/N_{j+1}$. Call $\pi\colon [0,1]^2\to \RC$ the map 
which is conjugate to $\pi_j\colon \SC_j\to \RC_j$ (under the
normalization), and $\pi'\colon G\to \RC'$ the one that is conjugate
to $\pi_{j+1}\colon \SC_{j+1}\to \RC_{j+1}$. Note that we will only
use $\pi,\pi'$ as \emph{maps} on $\partial [0,1]^2$ and
$\pi([0,1]^2),\pi'([0,1]^2)$ as \emph{sets}.

Assume first that all $\delta_j$-squares $Q'\subset \SC_j$
intersecting $Q$ are parallel to $Q$. Then
$\pi'(G)$ is a polyhedral surface bounded by $\partial [0,1]^2\times
\{\delta c\}$. Also $\pi([0,1]^2)=[0,1]^2 \times \{c\}$. Note that by
Lemma \ref{lem:HdistSjRj} (\ref{item:Rjpara})
$\pi([0,1]^2)\cap\pi'(G)=\emptyset$. Consider a 
$\delta$-vertex $v$ in the \defn{interior} of $G$, i.e., $v\in \delta\Z^3\cap G\setminus \partial [0,1]^2$. Then $\dist_\infty(v,\partial \PC)\geq
\frac{1}{2}\delta$, here $\PC$ denotes the double pyramid (see Section
\ref{sec:gen}, and Figure \ref{pic:gen_e_p}). Thus
\begin{align} \label{eq:distQipP}
  \dist_\infty(\pi'(v), \partial \PC) & \geq \dist_\infty(v,\partial
  \PC) - \norm{v-\pi'(v)}_\infty &&
  \\ \notag
  & \geq \frac{1}{2}\delta - c \delta = \frac{1}{2N_{\max}} \delta,
  && \text{ by Subsection \ref{sec:rc_j-are-topological}.}
\end{align}
Thus $\pi'(G)\cup \left([0,1]^2\times \{c\}\right) \cup \left(\partial
  [0,1]^2\times [c\delta, c]\right)$ is a polyhedral surface
homeomorphic to the sphere $\SB$.  

Using the PL-Sch\"onflies theorem in $\R^3$ once more, we define the
\defn{standard piece} corresponding to the generator $G$ (with given
orientation) as the set 

\begin{align}\label{eq:defBG}
  B_G & =B_G([0,1]^2) 
  \\ \notag
  & := \clos\Inter\pi'(G)\cup \left([0,1]^2\times \{c\}\right) \cup
  \left(\partial [0,1]^2\times [c\delta, c]\right). 
\end{align}
See Figure \ref{fig:B_G} for a two-dimensional picture. 
The piece $B(Q)$ will be the image of
$B_G([0,1]^2)$ under (the inverse of) the normalizing map, where $G=G_j$
is the (correctly oriented) generator by which $Q$ was replaced to
construct $\SC_{j+1}$. 

\begin{figure}
  % \centering
  \raggedright
  \scalebox{0.46}{\includegraphics{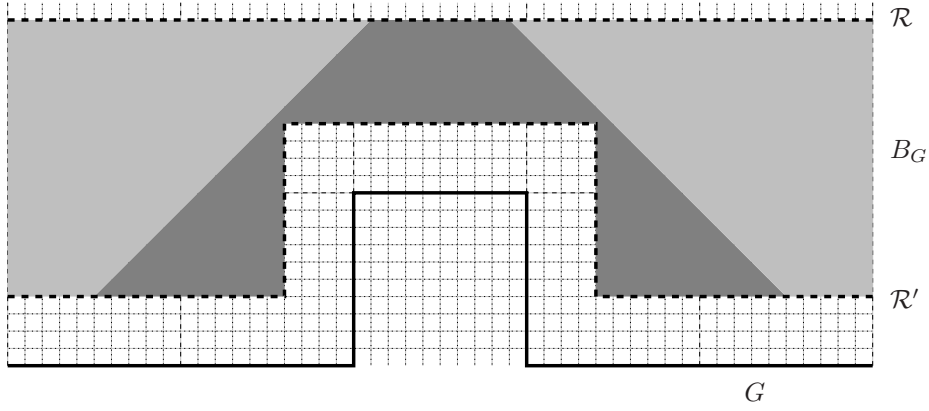}}
  \put(-50,-12){$G$}
  \put(5,23){$\RC'$}
  \put(5,130){$\RC$}
  \put(5,80){$B_G$}
  \caption{The standard piece $B_G$.}
  \label{fig:B_G}
\end{figure}

\medskip
Let the $\delta_j$-square $Q\subset\SC_j$ be arbitrary. To define the
piece $B(Q)\subset \BC_j$ we again work first in the normalized picture.

\begin{definition}
  The set $B$ is the one 
  bounded by $\pi([0,1]^2),\pi'(G)$ and the line segments with
  endpoints $\pi(v),\pi'(v)$ for all $v\in \partial [0,1]^2$. 
\end{definition}
Call $\pi([0,1]^2)$ the \defn{inner} side and $\pi'(G)$ the \defn{outer}
side of $B$; the outer side is closer to $\SC$ than the inner side. 
We will
show that $B$ is bi-Lipschitz to the standard piece $B_G$
(\ref{eq:defBG}). 

The following discussion can be paraphrased in the following way:
The piece $B$ has a ``core'' which is identical
to the one of $B_G$. The ``rest'' of $B$ has ``trivial geometry'' (not
depending on the generator $G$), which can be used to deform $B$ into
$B_G$.   

Consider a $\delta$-square $Q'\subset G$. It will be called an
\defn{interior square} if $Q'\cap\partial [0,1]^2=\emptyset$ and a
\defn{boundary square} otherwise. 
From (\ref{eq:distQipP}) we obtain $\dist_\infty(Q',\partial \PC) \geq
\frac{1}{2N_{\max}}$ for such an interior $\delta$-square $Q'\subset
G$.
Note that each boundary $\delta$-square $Q'\subset G$ lies in the
$xy$-plane.   
Define
\begin{equation*}
  \core(B):=\left\{x\in B: \dist_\infty(x,\partial \PC) \geq
  \frac {1} {4 N_{\max} }
  \delta \right\}.
\end{equation*}

See Figure \ref{fig:B_G}; here $\core(B_G)$ is the darker shaded region. 
We map $\core(B)$ to $\core(B_G)$ by the identity. 
The ``remaining set'' $B\setminus\operatorname{core}(B)$ can be broken
up into pieces and mapped to the corresponding piece in $B_G$ using
Lemma \ref{lem:BbiLip}. 

\begin{figure}
  \centering
  \scalebox{0.4}{\includegraphics{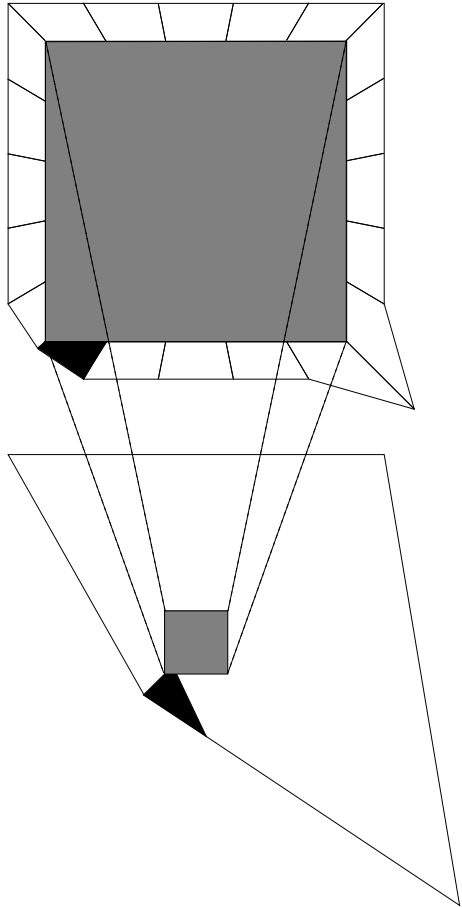}}
  \caption{Decomposing $B\setminus \core(B)$.}
  \label{fig:decompB}
\end{figure}

For the reader who is a stickler we give a precise construction.
It is illustrated in Figure \ref{fig:decompB}. The outer side is shown
on top, the inner side on the bottom. Thus the picture is ``turned
around'' compared to Figure \ref{fig:B_G}. The set $\core(B)$ is indicated
as the shaded region. Note that this is not a situation occurring for
our standard example $\widehat{\SC}$. The picture is not to scale as well.

First consider the \defn{outer side} of the remaining piece, i.e., the set
$\RC'\cap (B\setminus\core(B))$. 
The set
 $\partial \core(B)\cap \RC'$ is a square, each side of which we
decompose into $N_{j+1}$ line segments (of the same size). The other
boundary component is $\pi'(\partial G)=\pi'(\partial [0,1]^2)$. The images of the
$\delta$-edges decompose it into $4N_{j+1}$ line segments. Connect
corresponding line segments (by line segments) to obtain the
decomposition of the outer side of $B\setminus \core(B)$ into quadrilaterals.

Now consider the \defn{inner side} of the remaining piece, i.e., the
set $\RC\cap (B\setminus\core(B))$. It is bounded by a square ($\partial \core 
(B)\cap \RC$) and the quadrilateral $\pi(\partial [0,1]^2)$. Each side
of the two quadrilaterals gets decomposed into $N_{j+1}$ pieces of the same
length. Connecting corresponding edges in the two boundary components
decomposes $\RC\cap(B\setminus\core(B))$ into quadrilaterals. This is
shown only for one quadrilateral in Figure \ref{fig:decompB}. 

The set $B\setminus\core(B)$ gets decomposed into pieces between
corresponding quadrilaterals in the outer and inner face as in
equation (\ref{eq:defBQQ}). Use the map from (\ref{eq:defBstu}) to map
corresponding pieces of $B\setminus \core(B)$ to
$B_G\setminus\core(B_G)$. Note that this piecewise defined map agrees on
intersections. A tedious, but elementary computation shows that the
maps do not degenerate, i.e., that (\ref{eq:Quconvex}) is
satisfied. 

As an example, we do the computation for the piece bounded
by the black quadrilaterals indicated in Figure \ref{fig:decompB}.
The $xy$-coordinates of the vertices of the outer (black)
quadrilateral (shown on top) are
\begin{align*}
  P_0^1 & =\delta c\left<1,1\right>, 
  &&
  P_1^1 = \delta\left<1,0\right>,
  \\
  P_2^1 & = \frac{1}{2}\delta \left<1,1\right> +
  \delta(1-\delta)\left<1,0\right>, 
  &&
  P_3^1 = \frac{1}{2}\delta\left<1,1\right>.
\end{align*}
The ones for the inner (black) quadrilateral (shown at the bottom) are
\begin{align*}
  P_0^0 & =c\left<1,1\right>,
  \phantom{XXXXXXXX}
  P_1^0 = c\left<1,1\right> +\delta\left<1,-2c\right>,
  \\
  P_2^0 & = \left(c+\frac{1}{2 N_{\max}}\delta\right)\left<1,1\right> 
  + \delta\frac{1}{N_{\max}}(1-\delta)\left<1,0\right>,
  \\
  P_3^0 & = \left(c+\frac{1}{2 N_{\max}}\delta\right)\left<1,1\right>.
\end{align*}
Define $P_k^u:=(1-u)P_k^0 + uP_k^1, \; u\in [0,1]$, as in Section
\ref{sec:simplices-extensions}. For $J$ as in
(\ref{eq:Quconvex}) one computes 
$$J\geq \frac{\delta^2}{4 N_{\max}^2}.$$  
One checks the non-degeneracy (positivity of $J$) of other pieces and
types of vertices by the same type of computation. In this fashion
$B\setminus \core(B)$ is decomposed into sets bi-Lipschitz equivalent
to the cube $[0,1]^3$. Map those to corresponding pieces in the
standard piece. Note that the maps agree on intersecting faces by the
construction of the maps from (\ref{eq:defBstu}). 

We have proved the following.
\begin{lemma}
  \label{lem:BGbiLip}
  There is a bi-Lipschitz map
  \begin{equation*}
    f_1=f_{1,B}\colon B\to B_G.
  \end{equation*}
\end{lemma}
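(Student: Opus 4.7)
The plan is to build $f_1$ piecewise, decomposing $B$ and $B_G$ into matching ``core'' and ``shell'' regions and handling each separately. First I would define $f_1$ on $\core(B)$ as the identity map. This is legitimate because $\core(B)$ and $\core(B_G)$ coincide as sets: interior $\delta$-squares of $G$ lie at distance at least $\tfrac{1}{2N_{\max}}\delta$ from $\partial \PC$ by inequality (\ref{eq:distQipP}), and both $\pi'$ and $\pi$ move points by at most $c\delta$, so the set of points with $\dist_\infty(\cdot,\partial \PC)\geq \tfrac{1}{4N_{\max}}\delta$ is unaffected by which projection (if any) has already been applied. Consequently $f_1|_{\core(B)}=\id$ is trivially bi-Lipschitz with constant $1$.

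Next I would treat the shell $B\setminus\core(B)$. The plan is to cut both the outer face $\pi'(G)\cap(B\setminus\core(B))$ and the inner face $\pi([0,1]^2)\cap(B\setminus\core(B))$ into quadrilaterals by subdividing each of the four sides of $\partial\core(B)\cap \RC'$, $\pi'(\partial[0,1]^2)$, $\partial\core(B)\cap\RC$, and $\pi(\partial[0,1]^2)$ into $N_{j+1}$ equal segments and then joining corresponding endpoints by straight segments (exactly as in Figure \ref{fig:decompB}). The same combinatorial subdivision is performed on $B_G\setminus\core(B_G)$. Each resulting ``prism-like'' piece is then of the form (\ref{eq:defBQQ}): it lies between two quadrilaterals in roughly parallel planes. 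I would send each such piece in $B$ to its twin in $B_G$ by the map (\ref{eq:defBstu}) and invoke Lemma \ref{lem:BbiLip} to conclude it is bi-Lipschitz on each piece.

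The main obstacle is verifying that the non-degeneracy constant $J$ of (\ref{eq:Quconvex}) is bounded below uniformly in $N_{\max}$, so that the bi-Lipschitz constants of the individual pieces (and hence of $f_1$) depend only on $N_{\max}$. This reduces to a finite case analysis: by the bound $\deg_j(v)\leq 6$ and the exclusion of the forbidden configuration, the vertex patterns that occur in $\pi([0,1]^2)$ (and analogously in $\pi'(G)$) fall into the finitely many types pictured in Figure \ref{fig:localRj}, and the coordinates of the relevant $P_k^u$ are explicit rational functions of $\delta$ and $1/N_{\max}$. The representative computation in the excerpt yields $J\geq \delta^2/(4N_{\max}^2)$; the same determinant computation applied to the remaining vertex types gives comparable bounds. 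Since each piece has diameter $\asymp \delta$, Lemma \ref{lem:BbiLip} produces a bi-Lipschitz constant depending only on $N_{\max}$.

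It remains to check that the pieces fit together. Two adjacent prism pieces in the decomposition of $B\setminus\core(B)$ share a bounding quadrilateral, and the formula (\ref{eq:defBstu}) restricted to such a quadrilateral is the affine bi-Lipschitz map of Lemma \ref{lem:QbiLip} determined by that quadrilateral alone; thus the two definitions agree on the interface. Likewise, on the interface $\partial\core(B)=\partial\core(B_G)$ between the core and the shell, the prism map agrees with the identity by construction (the four vertices of each prism piece on this interface are fixed). Gluing these pieces yields a well-defined homeomorphism $f_1\colon B\to B_G$ whose local bi-Lipschitz constants are bounded by a function of $N_{\max}$, proving the lemma.
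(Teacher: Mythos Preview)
Your proposal is correct and follows essentially the same route as the paper: identity on $\core(B)=\core(B_G)$, then a decomposition of the remaining shell into prism-like pieces between matched quadrilaterals on the inner and outer faces, each mapped via (\ref{eq:defBstu}) with the non-degeneracy constant $J$ controlled by the finite vertex-type analysis. The only minor deviation is that the paper subdivides $\pi'(\partial[0,1]^2)$ using the images of the $\delta$-edges rather than equal-length segments, but this does not affect the argument.
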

There are only finitely many different sets $B$ (and $B_G$). So we can
assume that the maps $f_{1,B}$ have a common bi-Lipschitz constant $L$.

For a $\delta_j$-square $Q_j\subset\SC_j$, now define the set
$B(Q_j)\subset\BC_j$ as the inverse of the set $B$ (defined above) under
the normalization. 

Note that $[c,1-c]^3$ is bounded by $\RC_0$.
\begin{lemma}
  \label{lem:BQWhitney}
  The sets $B(Q_j)$ together with the set $[c,1-c]^3$ form a
  Whitney-type decomposition of the snowball; this means 
  \begin{enumerate}
  \item $$\bigcup_{\substack{j\geq 0 \\ Q_j \subset \SC_j}} B(Q_j)
    \cup [c,1-c]^3=\inte\BC.$$ 
  \item The interiors of the sets $B(Q_j)$ are pairwise disjoint.
  \item\label{item:Whitney3} 
    $$\diam B(Q_j) \asymp \dist(B(Q_j),\SC)\asymp \delta_j,$$
    where $C(\asymp)=C(N_{\max})$.
  \end{enumerate}
\end{lemma}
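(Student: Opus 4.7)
The plan is to verify the three items in turn, leveraging the construction of $B(Q_j)$ and the properties of the surfaces $\RC_j$ established in Lemma \ref{lem:HdistSjRj}. Throughout, I will use that $B(Q_j)\subset\BC_j$ sits in the shell between $\RC_j$ (inner face $\pi_j(Q_j)$) and $\RC_{j+1}$ (outer face $\pi_{j+1}(G_j)$, where $G_j$ is the copy of the generator replacing $Q_j$).

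First I would handle the covering claim. Since $\SC_0=\partial[0,1]^3$, one checks directly that $\clos\Inter(\RC_0)=[c,1-c]^3$, so by Corollary \ref{cor:BintS} it suffices to show $\BC_j=\bigcup_{Q_j}B(Q_j)$ for each $j$. Lemma \ref{lem:RjcombSj}(\ref{item:RjSjcombeq}) partitions $\RC_j$ and $\RC_{j+1}$ into cells in bijection with the $\delta_j$- and $\delta_{j+1}$-squares respectively; since every $\delta_{j+1}$-square of $\SC_{j+1}$ has its base inside a unique $\delta_j$-square of $\SC_j$, the decomposition of $\RC_{j+1}$ refines that of $\RC_j$, and the pieces $B(Q_j)$ fill in the shell between corresponding inner and outer cells. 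Disjointness of interiors across different shells is immediate from Lemma \ref{lem:HdistSjRj}(\ref{item:IntSjSk}). Within a single shell, if $Q_j\cap Q_j'=\emptyset$ then $\PC(Q_j)\cap\PC(Q_j')$ has empty interior (as for cylinders in Subsection \ref{sec:S_snowsphere}) and the same follows for $B(Q_j),B(Q_j')$; if instead $Q_j$ and $Q_j'$ share an edge $e$, the side wall $\bigcup_{v\in e}[\pi_j(v),\pi_{j+1}(v)]$ depends only on $e$ and on the globally defined maps $\pi_j,\pi_{j+1}$, hence is a common face of both pieces.

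For item (\ref{item:Whitney3}), the diameter upper bound comes from the normalized picture: $B$ lies in a bounded neighborhood of the double pyramid $\PC$, so $\diam B\leq C(N_{\max})$, which rescales to $\diam B(Q_j)\lesssim\delta_j$. The lower bound is immediate from the inner face $\pi_j(Q_j)$, which contains a square of side $\delta_j/N_{\max}$. For $\dist(B(Q_j),\SC)$, the upper bound follows from Lemma \ref{lem:HdistSjRj}(\ref{item:HdistRjS}): the outer face lies in $\RC_{j+1}$, within $\delta_{j+1}\leq\delta_j$ of $\SC$. For the lower bound, $B(Q_j)\subset\clos\Inter(\RC_{j+1})$; for any $x$ in this set and any $z\in\SC$, the segment from $x$ to $z$ crosses $\RC_{j+1}$ at some $y=(1-t)x+tz$ with $|y-z|\leq|x-z|$, so that $|x-z|\geq\dist_\infty(\RC_{j+1},\SC)\gtrsim\delta_{j+1}\gtrsim\delta_j/N_{\max}$.

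The main obstacle will be the verification that adjacent side walls genuinely coincide, which is what guarantees this is a real cell decomposition rather than one with small overlaps or gaps. The argument ultimately rests on the fact that $\deg_{j+1}(v)=\deg_j(v)$ at every vertex $v$ of $\SC_j$ (each $\delta_j$-square at $v$ contributes exactly one in-plane $\delta_{j+1}$-square at $v$, coming from the corner of the generator), so that $\pi_j$ and $\pi_{j+1}$ are consistent at $v$ and the interpolating ruled walls $\bigcup_{v\in e}[\pi_j(v),\pi_{j+1}(v)]$ are unambiguously determined by the edge $e$ alone.
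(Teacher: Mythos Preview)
Your overall plan matches the paper's (whose proof is three lines, citing Corollary~\ref{cor:BintS} for (1) and Lemma~\ref{lem:HdistSjRj}(\ref{item:HdistRjS}),(\ref{item:Rjpara}) for (3)), and your arguments for (1) and (3) are fine. The gap is in (2). Your claim that $Q_j\cap Q_j'=\emptyset$ forces $\inte B(Q_j)\cap\inte B(Q_j')=\emptyset$ via the double pyramids does not go through, because $B(Q_j)\not\subset\PC(Q_j)$. In the normalized picture the inner face $\pi([0,1]^2)$ can reach out to $[-c,1+c]^2\times\{c\}$ (this is the set $Q^*$ in the proof of Lemma~\ref{lem:RjcombSj}(\ref{item:distQpQ})), while the cross-section of $\PC$ at height $c$ is only $[c,1-c]^2$; so the piece has ``flanges'' sticking well outside its double pyramid, and nothing you have written rules out two such flanges overlapping. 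Your case split also misses squares sharing only a vertex.

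The right mechanism is the one you already isolate for neighbors: the side wall over an edge $e\subset\partial Q_j$ is $\bigcup_{v\in e}[\pi_j(v),\pi_{j+1}(v)]$, and this depends only on $e$ because $\pi_j$ and $\pi_{j+1}$ are defined globally on the respective $1$-skeleta. Hence the union of all side walls together with $\RC_j$ and $\RC_{j+1}$ is a $2$-complex partitioning the shell $\BC_j$ into topological balls (each one a $B(Q_j)$, via Lemma~\ref{lem:BGbiLip}); disjointness of interiors for \emph{all} pairs follows at once, with no metric input from double pyramids. Your observation that $\deg_{j+1}(v)=\deg_j(v)$ is correct but not what the argument rests on; the well-definedness of $\pi_{j+1}$ on $\partial Q_j$ is already guaranteed by the fact (noted in Subsection~\ref{sec:decomposing-shells}) that the boundary $\delta_{j+1}$-squares of each generator lie in the plane of the $\delta_j$-square they replace.
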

\begin{proof}
  The first statement follows from Corollary \ref{cor:BintS}. The
  second is clear from the construction. The third follows from
  Lemma \ref{lem:HdistSjRj} (\ref{item:HdistRjS}) and (\ref{item:Rjpara}).
\end{proof}

The composition of the normalizing map and the one from Lemma
\ref{lem:BGbiLip} is still called 
\begin{equation}
  \label{eq:deff1Q}
  f_1=f_{1,Q}\colon B(Q)\to B_G.   
\end{equation}

This map is quasisimilar (see (\ref{eq:defquasisym})), where the
scaling factor is $l=1/\delta_j$ and the constant $L$ is uniform. In Figure
\ref{fig:fconstr} this map, as well as the following ones, is illustrated. 

\begin{remark}
  \label{rem:Qsquare}
  The map $f_1\colon B(Q)\to B_G$ can be used to define
  \begin{equation}
    \label{eq:defpij}
    \pi_j \colon \SC_j \to \RC_j. 
  \end{equation}
  Namely, map $Q$ isometrically to 
  $[0,1]^2\times \{c\}$, which in turn is mapped to $\pi_j(Q)\subset
  \RC_j$ by $f_1^{-1}$. Formally $\pi_j|_Q:=f_1^{-1} \circ \pi \circ
  \Phi_Q$ ($\Phi_Q$ is the normalizing map, $\pi$ from equation
  (\ref{eq:piflat})). The map $\Phi_{Q}$ has to be the same as the one
  used in the definition of $f_1$, so vertices are mapped correctly.
  Note that this definition agrees with the previous 
 definition of $\pi_j$ on the $1$-skeleton of $\SC_j$ (edges are mapped
   affinely). The maps $\pi_j$ are bi-Lipschitz with a common
   bi-Lipschitz constant $L$.

\end{remark}

Consider two distinct $\delta_j$-squares $Q,Q^*\subset\SC_j$. We
think of $B_G(Q)=f_{1,Q}(B(Q))$ and $B_{G^*}(Q^*)=f_{1,Q^*}(B(Q^*))$ as
being distinct, since they are to be mapped to different sets.
Note that $G,G^*$ are the same generators, but may have different
orientation. There are only finitely many different sets $B_G(Q)$
throughout the construction, up to isometries.
  
\begin{lemma}
  \label{lem:f1comp}
  The map $f_1$ is \emph{compatible}\index{compatible} on
  neighbors (i.e., $Q,Q^*$ intersecting in a $\delta_j$-edge). This 
  means the following.  
  Identify appropriate sides of $B_G(Q)$ and $B_{G^*}(Q^*)$ (one of the
  four sides $\partial[0,1]^2\times [\frac{c}{N_{j+1}},c]$).
  Then
  $f_1=f_1^*$ on $B(Q)\cap B(Q^*)$. 
\end{lemma}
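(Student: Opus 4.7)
The plan is to analyze the intersection $F := B(Q)\cap B(Q^*)$ explicitly and verify that $f_{1,Q}|_F = f_{1,Q^*}|_F$ after the identification of sides of $B_G(Q)$ and $B_{G^*}(Q^*)$ prescribed in the statement. First I would locate $F$ geometrically. Since $B(Q)\subset\PC(Q)$ and $B(Q^*)\subset\PC(Q^*)$ and the two double pyramids have disjoint interiors and intersect in (a subset of) the common face above the shared edge $E=Q\cap Q^*$, the set $F$ lies in the plane spanned by $E$ and the bisector of the dihedral angle between $Q$ and $Q^*$. From the defining description of $B(Q)$ (bounded by $\pi_j([0,1]^2)$, $\pi_{j+1}(G)$, and the segments joining $\pi_j(v)$ to $\pi_{j+1}(v)$ for $v\in\partial[0,1]^2$), the face $F$ is a planar quadrilateral whose boundary consists of: the portion of $\pi_j(\partial Q)$ above $E$, the portion of $\pi_{j+1}(\partial G)$ above $E$ (which by condition \eqref{def:gen2} on generators equals a piece of $\partial[0,1]^2$ lifted by $\pi_{j+1}$), and the two vertical segments at the endpoints of $E$.

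Second, I would trace the construction of $f_{1,Q}$ restricted to $F$. By (\ref{eq:distQipP}) and the definition of $\core(B)$, the face $F$ lies entirely outside the core, so $f_{1,Q}|_F$ is the interpolation map of Lemma \ref{lem:BbiLip} applied to the decomposition of $F$ into $N_{j+1}$ sub-quadrilaterals, obtained by subdividing $\pi_j(E)$ and $\pi_{j+1}(E)$ each into $N_{j+1}$ equal line segments and joining corresponding endpoints. The target in $B_G(Q)$ is the corresponding piece of $\partial[0,1]^2\times[c\delta,c]$ along the side of $\partial[0,1]^2$ that $E$ (under $\Phi_Q$) is mapped to; each sub-quadrilateral of $F$ is sent by the bi-affine map from (\ref{eq:defBstu}) to the corresponding sub-quadrilateral of that side.

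Third, I would observe that this whole construction on $F$ depends only on the data intrinsic to $E$: the endpoints of $E$, their images $\pi_j(\cdot)$ and $\pi_{j+1}(\cdot)$, and the $N_{j+1}$-fold equal subdivision of the two arcs. None of these data depend on whether one looks at $E$ from the $Q$-side or the $Q^*$-side. In particular, the subdivision nodes on $\pi_j(E)$ and on $\pi_{j+1}(E)$ are the same whether computed inside $B(Q)$ or $B(Q^*)$, so both $f_{1,Q}$ and $f_{1,Q^*}$ decompose $F$ into the \emph{same} sub-quadrilaterals and use the \emph{same} vertex data in the formula (\ref{eq:defBstu}). Under the identification of the two sides $\partial[0,1]^2\times[c\delta,c]\subset B_G(Q)$ and $B_{G^*}(Q^*)$ corresponding to $E$ (matching vertices to vertices and subdividing each side into $N_{j+1}$ equal pieces), the targets of corresponding sub-quadrilaterals coincide, and since the bi-affine formula (\ref{eq:defBstu}) is uniquely determined by its values on the vertices, the two maps agree on each sub-quadrilateral. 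Hence they agree on $F$.

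The main obstacle is purely bookkeeping: one must verify that the vertex labelings used by the normalizing maps $\Phi_Q$ and $\Phi_{Q^*}$, together with the conventions for ``correct orientation'' of the generator and for which side of $\partial[0,1]^2\times[c\delta,c]$ corresponds to $E$, are compatible, so that the identification of the two sides of the standard pieces really does match corresponding sub-quadrilaterals. Once this identification is fixed consistently (which one can do by declaring that the $k$-th subdivision node along $E$, counted from a fixed endpoint, corresponds to the $k$-th node on both sides), the equality $f_{1,Q}|_F = f_{1,Q^*}|_F$ is automatic from the preceding paragraph.
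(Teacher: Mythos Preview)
Your argument is correct and follows the same idea as the paper: the restriction of $f_1$ to the side face $F$ depends only on data intrinsic to the shared edge $E$ (the projections $\pi_j,\pi_{j+1}$ of points on $E$ and the bi-affine formula~(\ref{eq:defBstu})), hence the two maps agree after the prescribed identification. The paper compresses this into a single observation: for each $v$ on the edge, the segment with endpoints $\pi(v),\pi'(v)$ is mapped affinely to $\{v\}\times[c/N_{j+1},c]$; your sub-quadrilateral decomposition is just this ruling grouped into $N_{j+1}$ blocks, so the two arguments are essentially the same. One small correction: the subdivision of $\pi_{j+1}(E)$ used in the construction is at the images $\pi'(v_k)$ of the $\delta$-vertices $v_k\in E$ (``images of the $\delta$-edges''), and these need not be \emph{equally} spaced, since the end segments depend on the corner configurations of $Q$. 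Your compatibility argument is unaffected by this, because the points $\pi'(v_k)$ are still intrinsic to $E$.
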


\begin{proof}
  Work again in the normalized picture. Consider a $v\in \partial
  [0,1]^2$. The boundary of $B$ contains the line segment with
  endpoints $\pi(v),\pi'(v)$. The map $f_1$ maps this line segment
  affinely to
  $\{v\}\times [\frac{c}{N_{j+1}}, c]$. 
  The same is true for
  the map $f^*_1$ on the neighboring piece $B^*$.
\end{proof}

\smallskip
Consider (for a given generator) our standard piece $B_G$. 
Recall from Subsection \ref{sec:whitn-decomp-snowb} that $\RC_j$
lives in the 
grid $\delta_j\frac{1}{2N_{\max} }\Z^3$. Thus $B_G$ lives in the grid
$\frac{1}{2N_{j+1}N_{\max}}\Z^3$. This is indicated (for our standard
example) in Figure \ref{fig:B_G}. The boundary of $B_G$ consists of
$[0,1]^2\times \{c\}$, $\pi'(G)$, and four sides perpendicular to the
$xy$-plane \mbox{($\partial[0,1]^2\times [\frac{c}{N_{j+1}},c]$)}.

Using Corollary \ref{cor:PLSchoenbiLip}
we can map $B_G$ orientation preserving to the unit cube by a
bi-Lipschitz map 
\begin{equation}
  \label{eq:deff2}
  f_2=f_{2,B_G}\colon B_G\to [0,1]^3.
\end{equation}
We further require
that $f_2$ maps 
\begin{itemize}
\item 
  $[0,1]^2\times \{c\}$ (the inner side) isometrically to
  $[0,1]^2\times \{0\}$;
\item 
  $\pi'(G)$ (the outer side) to $[0,1]^2\times \{1\}$;
\item the sides $\partial [0,1]^2\times [\frac{c}{N_{j+1}},c]$
  affinely to $\partial [0,1]^2\times [0,1]$. 
\end{itemize}
To see that we can make these further assumptions, either go
through the proof of the PL-Sch\"onflies theorem or 
post-compose with a map from Lemma \ref{lem:extSB}.

As before we think of images of $f_2$ as distinct, i.e.,
$f_2(B_G(Q))=[0,1]^3=[0,1]^3(Q)$. Since there are only finitely many
different sets $B_G$ (up to isometries), we can assume that all maps
$f_2$ have a common bi-Lipschitz constant $L$.

It will be convenient to restrict our attention to the surfaces $\RC_j$
(and their images). Recall the sets   $R_j:=\pi_j(Q_j)$ from the
decomposition of the surfaces $\RC_j$ 
(Lemma \ref{lem:RjcombSj} (\ref{item:RjSjcombeq})), where $Q_j\subset
\SC_j$ is a $\delta_j$-square.
Define
\begin{align}
  \label{eq:defphiRjT}
  & \phi_{R_j}\colon R_j\to [0,1]^2=[0,1]^2\times \{0\} \quad\text{ by }
  \\ \notag
   & \phi_{R_j}:= f_2\circ f_1|_{R_j},
\end{align}
where $f_1=f_{1,Q_j}, f_2=f_{2,B_{G_j}}$; the \emph{inner side}
of the piece $B(Q_j)$ is mapped here.  The maps $\phi_{R_j}$ are
quasisimilarities with scaling factor $l=1/\delta_j$ 
and uniform constant $L$. Again we think of the squares
$[0,1]^2(R_j):=\phi_{R_j}(R_j)$ as being distinct.

We now turn our attention to
how the \emph{outer side} of the piece $B(Q_j)$ is mapped. 
Let $R_{j+1}$ be a set from the decomposition of $\RC_{j+1}$
contained in (the outer side of) $B(Q_j)$.
Let
\begin{equation}
  \label{eq:defTj}
  T_{j+1}:=f_2\circ f_1(R_{j+1})\subset [0,1]^2\times\{1\},
\end{equation}
where $f_1=f_{1,Q_j}, f_2=f_{2,B_{G_j}}$ as before.
All such sets 
decompose $[0,1]^2\times
\{1\}$, the ``top face'' of the cube. 
%We will sometimes write $T_{j+1}=T_{j+1}(R_{j+1})$ to emphasize
%the preimage in $\RC_{j+1}$.
To later be able to ``put
adjacent shells together'' in a compatible way, we introduce the
following maps:
\begin{align}
  \label{eq:defpsiTj}
  \psi_{T_{j+1}} & \colon T_{j+1} \to [0,1]^2=[0,1]^2\times\{0\},
  \text{ defined by} 
  \\ 
  \notag
  \psi_{T_{j+1}}& :=\phi_{R_{j+1}} \circ f_1^{-1}\circ f_2^{-1}
\end{align}
on $T_{j+1}$. Note that in this expression $f_1=f_{1,Q_j},
f_2=f_{2,B_{G_j}}$, and $\phi_{R_{j+1}}=f_{2,Q_{j+1}} \circ
f_{1,B_{G_{j+1}}}$ ($R_{j+1}=\pi_{j+1}(Q_{j+1})$). 
This means
we are comparing how $R_{j+1}$ is mapped
as a set in the outer side of the piece $B(Q_j)$ versus how it is mapped
as the inner side of the piece $B(Q_{j+1})$. There are only
finitely many different sets $T_{j+1}$, thus the maps $\psi_{T_{j+1}}$
have a common bi-Lipschitz constant $L$. Figure \ref{fig:fconstr}
again illustrates the map. Note however that the picture is incorrect
insofar as $\psi_{T_{j+1}}$ maps between cubes $[0,1]^3(Q_j),
[0,1]^3(Q_{j+1})$ coming from pieces in \emph{different} shells
$\BC_j,\BC_{j+1}$. 

\begin{remark}\label{rem:nosymm}
  In the construction of the maps $f_1$ and $f_2$ the
  symmetry of the generators was not used. We merely used the facts
  that there 
  are only finitely many different ones and that they fit inside the
  double pyramid. 
\end{remark}

\begin{guide}
  We mapped pieces $B(Q)$ and quadrilaterals $R_j$ from the decomposition
  of the snowball $\BC$ to
  ``normalized'' ones (cubes, squares). In the next section these
  cubes will be mapped into the unit ball $\B$. Maps 
  $\BC \to [0,1]^3$ are denoted by $\phi$. Maps $[0,1]^3\to \B$
  will be denoted by $\varphi$. Intermediate maps $[0,1]^3\to [0,1]^3$
  are denoted by $\psi$. Note that $\phi,\psi,\varphi$ are maps on
  surfaces, namely
  on $\RC_j$ and images of them. Again the reader is advised to
  consult Figure \ref{fig:fconstr}.  
\end{guide}

\section{Reassembling the Unit Ball}
\label{sec:DecBall}
\subsection{Conformal Triangles}
\label{sec:confTri}
Recall how in Subsection \ref{sec:mapcyl} uniformization of the \mbox{$j$-th}
approximation $\mathcal{S}_j$ was used to decompose the sphere
$\mathbb{S}=\{\abs{x}=1\}$ conformally into $j$-tiles $X'$ 
\begin{equation*}
  \mathbb{S}=\bigcup_{X'\in\mathbf{X}'_j} X'.
\end{equation*}
Since it is easier to deal with simplices, we will decompose each
conformal square $X'$ into $4$ triangles. Divide the unit square
$[0,1]^2$ along the diagonals into $4$ triangles and map them to
$X'\in\mathbf{X}'_j$ by the conformal map $[0,1]^2\to X'$ (normalized
by mapping vertices to vertices). 

Alternatively we could divide each
$\delta_j$-square in the $j$-th approximation $\mathcal{S}_j$ along
the diagonals into $4$ $\delta_j$\defn{-triangles} and use uniformization on this
polyhedral surface to get the decomposition of the sphere $\mathbb{S}$
into \emph{conformal $j$-triangles}\index{j
  -@$j$-!triangle}\index{conformal triangle}. 
Denote the set of these conformal $j$-triangles by
$\widetilde{\mathbf{X}}_j$\label{not:conftrinagles}. 
Again $\widetilde{\X}_j$  forms a
conformal tiling, i.e., every $\widetilde{X}\in\widetilde{\X}_j$ is a
conformal reflection 
of its neighbors along shared sides. Figure \ref{fig:confTriag} shows
the conformal $1$-triangles of our main example $\widehat{\SC}$. It is
again conformally correct up to numerical errors. Compare this
picture with Figure \ref{fig:unifgen}.

Each conformal $j$-triangle has edges and vertices via the conformal
map. Again we speak of edges and vertices of \emph{order} $j$ (or
$j$-edges and $j$-vertices).  

\begin{figure}
  \centering
  \scalebox{0.65}{\includegraphics{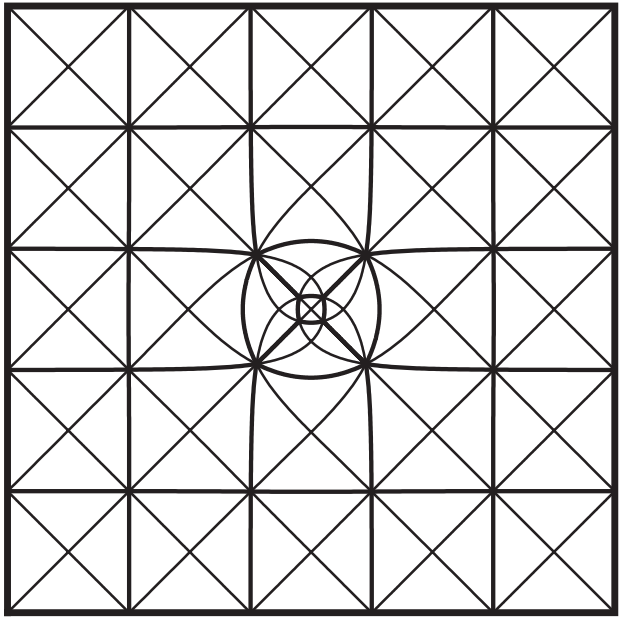}}
  \caption{Conformal $1$-triangles of $\widehat{\SC}$.}
  \label{fig:confTriag}
\end{figure}

It is true that each conformal $(j+1)$-triangle is contained in exactly
one conformal $j$-triangles. So the conformal $(j+1)$-triangles
\emph{subdivide} the conformal $j$-triangles. We do not need to prove
this here. 

Let $\widetilde{X} $ be a conformal $j$-triangle,
$\widetilde{Y} \in \widetilde{\X}_j$ have non-empty intersection
with $\widetilde{X}$, and $\widetilde{X} \subset X' \in \X'_j$ be
the $j$-tile containing it. Then using the same argument as in
Lemma \ref{lem:diamXY}
\begin{equation}
  \label{eq:conftricomp}
  \diam \widetilde{Y}  \asymp \diam \widetilde{X}  \asymp \diam X'.
\end{equation}
Here $C(\asymp)=C(N_{\max})$.

\smallskip
Map the triangulation of $\SC_j$ by $\pi_j$ (\ref{eq:defpij}) to the
surface $\RC_j$; images of $\delta_j$-triangles are called $\widetilde{R}_j$. 
We have obtained a triangulation of $\RC_j=\bigcup \widetilde{R}_j$. 
Each
quadrilateral $R_j$ thus gets divided into $4$ sets $\widetilde{R}_j$. 

Identify a quarter of the square $[0,1]^2$ with the standard
$2$-simplex $\Delta$ (\ref{not:simplex}); then
$\phi_{R_j}(\widetilde{R}_j)=\Delta=\Delta(\widetilde{R}_j)$ (see
(\ref{eq:defphiRjT}) as well as the definition of $\pi_j$
(\ref{eq:defpij})). 
We equip each such $2$-simplex with the metric $\norm{\cdot}_\Delta$
from (\ref{eq:defDmetric}) (so they are all isometric).

Every set $\widetilde{R}_j$ gets mapped by $\pi_j^{-1}$ to a $\delta_j$-triangle in $\SC_j$, which
the uniformization maps to a conformal $j$-triangle $\widetilde{X}_j\subset\SB$.
We call $\widetilde{X}_j$ the conformal triangle 
\emph{corresponding}\index{corresponding!conformal triangle} 
to $\widetilde{R}_j$
and write
$\widetilde{X}_{j}=\widetilde{X}_{j}(\widetilde{R}_j)$\label{not:XjRj}.
By the same procedure vertices and edges of $\widetilde{R}_j$ are
mapped to the \emph{corresponding}\index{corresponding!edges and
  vertices} edges and vertices of $\widetilde{X}_j$.  

Similarly every $R_j$ (from the decomposition of $\RC_j$ in
Lemma \ref{lem:RjcombSj} (\ref{item:RjSjcombeq})) is mapped by
$\pi^{-1}_j$ to a $\delta_j$-square $Q_j\subset \SC_j$, which in turn
is mapped by the uniformization to the \emph{corresponding} $j$-tile
$X'_j=X'_j(R_j)\in \X'_j$.

\subsection{Overview of the Decomposition of the unit Ball}
\label{sec:overview}

Before getting into details let us give a brief overview of this section. 
We will decompose the open unit ball $\inte \mathbb{B}=\{\abs{x} < 1\}$ into shells $\{\rho_j\leq\dist(x,\mathbb{S})\leq\rho_{j+1}\}$, which get decomposed into sets of the form 
$$\{(\omega,\rho)\in \mathbb{S}\times [0,1]=\mathbb{B}:\omega\in X_j', \; \rho_j\leq\rho\leq\rho_{j+1}\},$$  
where $X_j'\in \mathbf{X}'_j$ (using spherical coordinates). 
We will map cubes (being images of the pieces $B(Q_j)$) to these sets.

To assure quasiconformality we need $\diam X_j'\!\asymp
\rho_{j+1}-\rho_{j}$. Since $\diam X_j'/\!\diam Y_j'$ (where
$X_j',Y_j'\in\mathbf{X}'_j$) is neither bounded above nor below, radii
will not be constant on $\mathbb{S}$, but rather we will have
$\rho_j=\rho_j(\omega)$. 

\smallskip
In the next subsection our main concern is that neighboring pieces $B(Q_j)$ and $B(P_j)$ (where the $\delta_j$-squares $Q_j$ and $P_j$ are neighbors) are mapped in a \emph{compatible}\index{compatible} way, i.e., the maps agree on the intersecting face.

\smallskip
In Subsection \ref{sec:mapp-prisms-ttim} we make sure that pieces
``on top of each other'' are mapped in a compatible way. More
precisely, given a $\delta_j$-square $Q_j\subset\SC_j$ and a
$\delta_{j+1}$-square $Q_{j+1}\subset G(Q_j)\subset\SC_{j+1}$, we
require that the maps on  
$B(Q_j)$ and $B(Q_{j+1})$ agree on their intersection. Here $G(Q_j)$
is the scaled generator replacing $Q_j$ in the construction of $\SC_{j+1}$.

\subsection{Constructing the Maps $\varphi_{\widetilde{X}}\colon \Delta\to \widetilde{X}$}
\label{sec:constr-maps-tto}
First we will construct maps  $\varphi_{\widetilde{X}}$ from the
$2$-simplex $\Delta$ to a conformal $j$-triangle  $\widetilde{X}$.   

We could of
course use the Riemann map for this. The downside is that this
map will in general have singularities at the vertices, which would make the
extension to the cube $[0,1]^3$ somewhat difficult (though most
likely doable). We choose a different approach here; 
$\varphi_{\widetilde{X}}$ will be a quasisimilarity (see
(\ref{eq:defquasisym})) with 
scaling factor $l\asymp \diam \widetilde{X}$ and uniform constant $L$.
This makes extension of the map easier. 
We have to make sure that the maps 
are \defn{compatible} on
neighbors $\widetilde{X},\widetilde{Y}\in\widetilde{\X}_j$. More
precisely, if $\Delta'$ 
is a reflection of $\Delta$ along one of its edges $E=\Delta\cap \Delta'$
which is mapped to the common edge of $\widetilde{X}$ and $\widetilde{Y}$ by the maps
$\varphi_{\widetilde{X}}\colon \Delta\to \widetilde{X}$ and $\varphi_{\widetilde{Y}}\colon \Delta'\to
\widetilde{Y}$  
\begin{align}
  &\varphi_{\widetilde{X}}(E)=\widetilde{X}\cap
  \widetilde{Y}=\varphi_{\widetilde{Y}}(E),\notag 
  \intertext{then } 
  \;&\varphi_{\widetilde{X}}|_E=\varphi_{\widetilde{Y}}|_E.\label{eq:phicomp}
\end{align}
If we used the Riemann maps for $\varphi_{\widetilde{X}}$ and $\varphi_{\widetilde{Y}}$  instead, this would follow immediately by the reflection principle.

\medskip
Note that by construction the number of conformal $j$-triangles
intersecting in a $j$-vertex is always even.  Consider one such
$j$-triangle $\widetilde{X}$. If at its vertices $2n$, $2m$, and $2l$
$j$-triangles intersect (in counterclockwise order), the angles are
$\frac{\pi}{n}$, $\frac{\pi}{m}$, and $\frac{\pi}{l}$. We say
$\widetilde{X}$ is of \emph{type} $(n,m,l)$. \index{type!of conformal
  triangle}\label{not:typetrian}Consider a neighborhood of
$\widetilde{X}$ 
\begin{equation*}
  U(\widetilde{X}):=\inte \:\bigcup\{\widetilde{Z}\in\widetilde{\X}_j:\widetilde{X}\cap \widetilde{Z}\ne \emptyset\}.
\end{equation*}
One can get $U(\widetilde{X})$ by repeated reflection. Therefore the Riemann map $\psi\colon \widetilde{X}\to \widetilde{Y}$ between two conformal triangles $\widetilde{X}$ and $\widetilde{Y}$ of the same type (normalized by mapping vertices to corresponding vertices) extends to these neighborhoods $\overline{\psi}\colon U(\widetilde{X})\to U(\widetilde{Y})$. Since $\widetilde{X}$ is compactly contained in $U(\widetilde{X})$, 
$\psi$ is quasisimilar by Koebe distortion. For each occurring type $(n,m,l)$ we fix one conformal triangle $X(n,m,l)$\label{not:Xnml} of this type. There are only finitely many $X(n,m,l)$. We will now construct bi-Lipschitz maps
\begin{equation*}
  \varphi\colon \Delta\to X(n,m,l).
\end{equation*}
By composing with a Riemann map $\psi=\psi_{\widetilde{Y}}\colon X(n,m,l)\to \widetilde{Y}$
as above ($\widetilde{Y}$ is of type $(n,m,l)$), we get a quasisimilarity 
\begin{equation}\label{eq:phiY}
  \varphi_{\widetilde{Y}}:=\psi\circ\varphi \colon \Delta\to \widetilde{Y} 
\end{equation}
for any conformal triangle $\widetilde{Y}$. The scaling factor of $\varphi_{\widetilde{Y}}$
is $l=|\psi'(x)|\asymp \diam \widetilde{Y}$ for any $x\in X(n,m,l)$,
and the
bi-Lipschitz constant $L$ of $\varphi_{\widetilde{Y}}$ is uniform (by Koebe).   

\medskip
Initially the maps $\varphi$ will only be defined on the boundary
$\partial \Delta$ of $\Delta$. In fact, let us first define $\varphi$
just on one edge of $\Delta$. For simplicity we assume this edge to be
$[0,1]\subset\R^2$ and $\Delta\subset \R^2$. Now consider an edge
$E'\subset \partial \widetilde{X}$ of a conformal triangle
$\widetilde{X}\in \widetilde{\X}_j$. We say $E'$ is of
\emph{type}\index{type!of edge}\label{not:typeedge} $(n,m)$ if
$\widetilde{X}$ has angles $\frac{\pi}{n}$ and $\frac{\pi}{m}$ (in
counterclockwise order as a boundary of $\widetilde{X}$) at the vertices
of $E'$. 
% (equivalently if $2n$ and $2m$ $j$-triangles intersect as the vertices of $F$). 
For an edge $E'$ of order $j$ consider a neighborhood 
\begin{equation*}
  U(E'):=\inte
  \:\bigcup\{\widetilde{Z}\in\widetilde{\X}_j:\widetilde{Z}\cap E'\ne
  \emptyset\}. 
\end{equation*}
Let $\widetilde{X}$ be a conformal triangle of type $(n,m,l)$ and $\widetilde{Y}$ one of type $(n,m,\tilde{l})$. Then the conformal map $\varphi\colon \widetilde{X}\to \widetilde{Y}$ (normalized by mapping $1$st, $2$nd, and $3$rd vertex onto each other) extends to a map $\overline{\varphi}\colon U(E')\to U(F')$, where $E'\subset\partial \widetilde{X}$ and $F'\subset\partial \widetilde{Y}$ are the edges of type $(n,m)$. So $\varphi$ is a quasisimilarity on $E'$ by Koebe.

For each occurring type $(n,m)$ of an edge, we define $T(n,m)\subset\R^2$\label{not:Tnm} to be a (fixed)
\begin{itemize}
\item \emph{circular arc triangle}, meaning all its edges are circular arcs.
\item One edge of $T(n,m)$ is $[0,1]\subset\R^2$, which is of type $(n,m)$. We think of $[0,1]$ as the image of the edge $[0,1]\subset\partial \Delta$ under the identity. 
\item $T(m,n)$ is the reflection of $T(n,m)$ along the line
  $x=\frac{1}{2}$. This means we can put $T(n,m)$ in the upper and
  $T(m,n)$ in the lower half plane, such that
  $T(m,n)=\overline{T(n,m)}$ ($\bar{z}$ denotes complex
  conjugation). In particular $T(n,n)$ is symmetric with respect to
  $x=\frac{1}{2}$. 
\end{itemize}
The third angle of $T(n,m)$ is arbitrary. The third condition will
ensure compatibility in the sense of equation (\ref{eq:phicomp}), as
will be seen in the next lemma. For the edge $E\subset X(n,m,l)$ of
type $(n,m)$ we define the map $\varphi_E\colon [0,1]\to E$ by
$\varphi_E:=\zeta|_{[0,1]}$, where $\zeta\colon T(n,m)\to X(n,m,l)$ is
the Riemann map (normalized by mapping vertices to vertices, in
particular vertices with angles $\frac{\pi}{n}$ and $\frac{\pi}{m}$
onto each other). By the above consideration $\varphi_E$ is
bi-Lipschitz. Using the same procedure on the other edges we get a
bi-Lipschitz map $\varphi\colon \partial \Delta\to \partial X(n,m,l)$
(here we are using the fact that $X(n,m,l)$ has no zero angles). It is
well 
known that we can extend this to a bi-Lipschitz map $\varphi\colon
\Delta\to X(n,m,l)$ (Theorem A in \cite{Lipext}). 
\begin{lemma}\label{lem:phicomp}
  The maps $\varphi_{\widetilde{X}}\colon \Delta\to \widetilde{X}$,
  defined by equation \eqref{eq:phiY}, are compatible in the sense of
  equation \eqref{eq:phicomp}, meaning the maps on intersecting edges
  ``agree''. 
\end{lemma}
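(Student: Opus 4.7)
The plan is to realize both $\varphi_{\widetilde{X}}|_E$ and $\varphi_{\widetilde{Y}}|_E$ as restrictions of a single conformal map on a bow-tie region, obtained via Schwarz reflection. First I set up coordinates so that $\Delta$ sits in the upper half plane with $E=[0,1]$, and $\Delta'$ is its mirror image in the lower half plane. Let the shared edge $E'=\widetilde{X}\cap\widetilde{Y}$ have type $(n,m)$ with respect to the counterclockwise orientation of $\widetilde{X}$; by the orientation reversal it has type $(m,n)$ with respect to $\widetilde{Y}$. Then I place the circular arc triangles $T(n,m)$ in the upper half plane and $T(m,n)=\overline{T(n,m)}$ in the lower half plane, both sharing the edge $[0,1]$. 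This is precisely the symmetry property imposed when the $T(\cdot,\cdot)$ were fixed, and it is the key feature of the construction that makes the argument work.

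Next, consider the normalized Riemann map $\Psi_0:=\psi_{\widetilde{X}}\circ\zeta_{n,m}\colon T(n,m)\to\widetilde{X}$, which sends $[0,1]$ homeomorphically onto the analytic arc $E'\subset\partial\widetilde{X}$. Because $\widetilde{Y}$ is the conformal reflection of $\widetilde{X}$ across $E'$ (a direct consequence of the conformal tiling structure recorded in Lemma \ref{lem:proptilings}), the Schwarz reflection principle extends $\Psi_0$ conformally across $E=[0,1]$ to a conformal map $\widetilde{\Psi}\colon T(n,m)\cup T(m,n)\to\widetilde{X}\cup\widetilde{Y}$ which sends $T(m,n)$ onto $\widetilde{Y}$.

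Now I track how $\widetilde{\Psi}$ acts on the three vertices of $T(m,n)$: the two endpoints of $E$ are fixed by the reflection and are sent to the corresponding angle-matched endpoints of $E'$, while the third vertex $\bar{z}_0$ of $T(m,n)$ is sent to the reflection across $E'$ of the third vertex of $\widetilde{X}$, which is exactly the third vertex of $\widetilde{Y}$. The map $\psi_{\widetilde{Y}}\circ\zeta_{m,n}\colon T(m,n)\to\widetilde{Y}$ is also a conformal map sending the three vertices of $T(m,n)$ to the three vertices of $\widetilde{Y}$ in the same correspondence. By uniqueness of the Riemann map with three prescribed boundary points, $\widetilde{\Psi}|_{T(m,n)}=\psi_{\widetilde{Y}}\circ\zeta_{m,n}$. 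Restricting both equalities to $[0,1]$ yields $\varphi_{\widetilde{X}}|_E=\widetilde{\Psi}|_{[0,1]}=\varphi_{\widetilde{Y}}|_E$, which is the desired compatibility.

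The main technical point I expect to spell out carefully is the vertex correspondence under reflection. This is where the symmetry requirement $T(m,n)=\overline{T(n,m)}$ is essential: without it the two candidate maps on $E$ would traverse $E'$ in opposite directions at the endpoints and could not be glued into a single conformal extension. Granting this symmetry, the argument reduces cleanly to Schwarz reflection across an analytic boundary arc together with uniqueness of the normalized Riemann map.
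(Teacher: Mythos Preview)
Your argument is correct and is essentially the paper's own proof: both identify $\varphi_{\widetilde{X}}|_{[0,1]}$ with the restriction of the Riemann map $T(n,m)\to\widetilde{X}$, extend by Schwarz reflection across $[0,1]$ using the built-in symmetry $T(m,n)=\overline{T(n,m)}$, and observe that the extension is the normalized Riemann map $T(m,n)\to\widetilde{Y}$, hence agrees on $[0,1]$ with $\varphi_{\widetilde{Y}}$. Your version is slightly more explicit about the vertex tracking and the appeal to uniqueness of the Riemann map, but the route is the same.
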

\begin{proof}
  The proof is illustrated in Figure \ref{fig:phicompatible}.
  Let $\widetilde{X}$ and $\widetilde{Y}$ be two neighboring
  $j$-triangles. Let $\widetilde{X}$ be of type $(n,m,l)$ and
  $\widetilde{Y}$ be of type $(m,n,\tilde{l})$. Let $E'=F'=\partial
  \widetilde{X}\cap \partial \widetilde{Y}$, where $E'\subset \partial
  \widetilde{X}$ is an edge of type $(n,m)$ and $F'\subset \partial
  \widetilde{Y}$ is an edge of type $(m,n)$. As before, assume that
  $\varphi_{\widetilde{X}}$ maps $[0,1]\subset \partial \Delta$ to
  $E'$. By construction we have  
  \begin{equation*}
    \varphi_{\widetilde{X}}|_{[0,1]}=\phi|_{[0,1]},
  \end{equation*}
where $\phi$ is the Riemann map from $T(n,m)$ to $\widetilde{X}$
(normalized by mapping vertices to vertices, in particular vertices
with angles $\frac{\pi}{n}$ and $\frac{\pi}{m}$ onto each other). By
the reflection principle $\phi$ extends to $T(m,n)$, which is mapped conformally to $\widetilde{Y}$ (and maps vertices to vertices). By definition we get
\begin{equation*}
  \varphi_{\widetilde{X}}|_{[0,1]}=\varphi_{\widetilde{Y}}|_{[0,1]}.
\end{equation*}
\end{proof}

\begin{figure}
  \centering
  % 
  % begin file generated by xfig
  \begin{picture}(0,0)%
    \includegraphics{compatible.pstex}%
  \end{picture}%
  \setlength{\unitlength}{3947sp}%
  \begingroup\makeatletter\ifx\SetFigFontNFSS\undefined%
  \gdef\SetFigFontNFSS#1#2#3#4#5{%
    \reset@font\fontsize{#1}{#2pt}%
    \fontfamily{#3}\fontseries{#4}\fontshape{#5}%
    \selectfont}%
  \fi\endgroup%
  \begin{picture}(5202,2146)(53,-1420)
    \put(327,134){$\Delta$}
    \put(227,-513){$0$}          
    \put(982,-501){$1$}
    \put(444,-833){$\Delta'$}
    \put(1380,117){$\id|_{[0,1]}$}
    \put(1715,-467){$0$}
    \put(2539,-456){$1$}
    \put(2100,426){$T(n,m)$}
    \put(2100,-1028){$T(m,n)$}
    \put(3614,  2){$X(n,m,l)$}
    \put(3695,-1371){$X(m,n,\tilde{l})$}
    \put(4931,158){$\widetilde{X}$}
    \put(4931,-746){$\widetilde{Y}$}
    \put(5240,-169){$E'$}
    \put(5240,-456){$F'$}  
  \end{picture}%

  \caption{Defining $\varphi$.}
  \label{fig:phicompatible}
\end{figure}

Recall that we identified the $2$-simplex $\Delta$ with a quarter of
the square $[0,1]^2$. Thus from the maps $\varphi_{\widetilde{X}}$ we get
maps
\begin{equation}
  \label{eq:defvarphiXj}
  \varphi_{X_j'}\colon [0,1]^2 \to X_j',
\end{equation}
for every $j$-tile $X_j'$. They are quasisimilarities
(\ref{eq:defquasisym}) with scaling factor $l=\diam X'_j$ and uniform
constant $L$, since the maps $\varphi_{\widetilde{X}}$ are (see
(\ref{eq:phiY})). The lemma above means that these maps are well
defined and
compatible in the sense of (\ref{eq:phicomp}) (with simplices replaced
by squares, and conformal triangles replaced by tiles). This means
that when identifying a unit square adjacent to $[0,1]^2$ with the square
that $\varphi_{Y'_j}$ maps to a neighbor $Y'_j$ of $X'_j$, the maps
$\varphi_{X'_j}, \varphi_{Y'_j}$ agree on the intersecting edge. In
this case the simplex $\Delta'$ from (\ref{eq:phicomp}) is a reflection
of $\Delta$ along this edge.

\subsection{Connecting adjacent Layers}\label{sec:mapp-prisms-ttim}

The map $f$ will be defined on the surfaces $\RC_j$ first.
In this subsection we define their $\omega$-coordinates (of the spherical
coordinates $(\omega,\rho)\in\SB\times[0,1]$). In the
next subsection the radial-coordinate will be defined. 

Consider one quadrilateral $R_j\subset\RC_j$ (see Lemma \ref{lem:RjcombSj}
(\ref{item:RjSjcombeq}) and (\ref{eq:defphiRjT})). The
$\omega$-coordinate of $f|_{{R}_j}$ is given as the composition of the
maps
\begin{equation}
  \label{eq:deffRj}
   \phi_{R_j}\colon {R}_j\to [0,1]^2 \text{ and } \varphi_{X'_j}\colon [0,1]^2\to X'_j.
\end{equation}
Here of course ${X}'_j={X}'_j({R}_j)\in{\X}'_j$, and vertices were
mapped to corresponding ones. This means that the maps
$\varphi_{X'_j}$ (\ref{eq:defvarphiXj}) are normalized to map vertices
correctly in the above composition.   

The following construction is done to ensure that points in
$\RC_{j+1}=\BC_j\cap \BC_{j+1}$ are mapped to the same points when
the two shells $\BC_j$ and $\BC_{j+1}$ are mapped. The reader may
first want to skip the remainder of this section, and return here
before reading through (\ref{eq:layercomp}).  
 
Recall how in the last section the snowball was decomposed into
pieces $B(Q_j)$, each of which was mapped to the unit cube. Recall the
decomposition of the top face of the cube 
into sets
$T_{j+1,k}$ (\ref{eq:defTj}).

Construct a map $\psi\colon [0,1]^2=[0,1]^2\times \{1\} \to
[0,1]^2=[0,1]^2\times \{1\}$ in the following
way. 
%The reader should think of $[0,1]^2$ as the top face of the unit
%cube. 
Let $T_{j+1}\subset [0,1]^2\times \{1\}\subset [0,1]^3=
f_2\circ f_1 (B(Q_j))$ be a set from the decomposition of the top
face of the unit cube. Let $R_{j+1}$ be the set
from the decomposition of $\RC_{j+1}$ corresponding to $T_{j+1}$
($f_2\circ f_1(R_{j+1})=T_{j+1}$). On each set $T_{j+1}$ the map $\psi$ is defined as the
composition of the maps $\psi_{T_{j+1}}\colon T_{j+1}\to [0,1]^2$
(\ref{eq:defpsiTj}), $\varphi_{X'_{j+1}}\colon [0,1]^2\to X'_{j+1}$
(\ref{eq:defvarphiXj}), and $\varphi^{-1}_{X'_j}$. Here
$\varphi_{X'_j}\colon [0,1]^2\to X'_j\supset X'_{j+1}$, and
$X'_{j+1}=X'_{j+1}(R_{j+1})$,  
\begin{equation}
  \label{eq:defvarphiT}
  \psi=\psi_{Q_j}:=\varphi_{X'_j}^{-1} \circ \varphi_{X'_{j+1}} \circ \psi_{T_{j+1}}. 
\end{equation}
The map $\psi$ is well defined by Lemma \ref{lem:phicomp}, meaning
on intersections of neighbors $T_{j+1,k}\cap T_{j+1,l}$ the two maps
agree. 

\begin{lemma} \label{lem:TtoT}
  The above defined map 
  \begin{equation*}
    \psi\colon [0,1]^2\to [0,1]^2
  \end{equation*}
is bi-Lipschitz with uniform bi-Lipschitz constant.
\end{lemma}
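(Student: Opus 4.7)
The plan is to show that $\psi$ is uniformly bi-Lipschitz on each piece $T_{j+1}$ of the decomposition of $[0,1]^2$ and then glue these local bounds into a global one. On $T_{j+1}$ the map $\psi$ is the three-fold composition $\varphi_{X'_j}^{-1}\circ\varphi_{X'_{j+1}}\circ\psi_{T_{j+1}}$, in which each factor is a quasisimilarity with a uniform Lipschitz constant. Their scaling factors are $1/\diam X'_j$, $\diam X'_{j+1}$, and $N_{j+1}$ respectively (the last because $\phi_{R_{j+1}}$ scales by $1/\delta_{j+1}$ while $f_1^{-1}\circ f_2^{-1}$ scales by $\delta_j$, yielding $\delta_j/\delta_{j+1}=N_{j+1}$). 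The product of the three scaling factors is $N_{j+1}\,\diam X'_{j+1}/\diam X'_j$, which is uniformly bounded above and below by Corollary \ref{cor:XjXj+1} together with $N_{j+1}\le N_{\max}$. Hence $\psi|_{T_{j+1}}$ is bi-Lipschitz with a constant depending only on $N_{\max}$.

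Next I would check that the piecewise definitions glue into a well-defined continuous map $\psi\colon [0,1]^2\to [0,1]^2$. Compatibility of the outer factor $\varphi_{X'_{j+1}}$ on intersecting $(j+1)$-tiles is Lemma \ref{lem:phicomp}, and post-composing with $\varphi_{X'_j}^{-1}$ preserves it; compatibility of the inner factor $\psi_{T_{j+1}}$ across adjacent pieces follows from Lemma \ref{lem:f1comp}, since $f_1$ and $f_2$ agree on the common face of neighboring pieces $B(Q_j)$, while $\phi_{R_{j+1}}$ depends only on the quadrilateral $R_{j+1}\subset\RC_{j+1}$ and not on the containing shell. Together these show that $\psi$ is a single continuous map on $[0,1]^2$.

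To pass from piecewise to global bi-Lipschitz control, I would use that both $\{T_{j+1}\}$ and $\{\psi(T_{j+1})\}=\{\varphi_{X'_j}^{-1}(X'_{j+1})\}$ are tilings of $[0,1]^2$ with pairwise disjoint interiors and with cardinality uniformly bounded in terms of $N_{\max}$ (since the pieces come from an $N_{j+1}$-generator). Given $x,y\in[0,1]^2$, I would slice the segment $\overline{xy}$ at its crossings of piece boundaries into a bounded number of sub-segments, each contained in one piece, and sum the local Lipschitz estimates to obtain $|\psi(x)-\psi(y)|\le L'|x-y|$ with $L'=L'(N_{\max})$. Applying the same argument to $\psi^{-1}$ on the image tiling gives the reverse inequality. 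The main point to be careful about is the global injectivity of $\psi$ across piece boundaries, so that $\psi^{-1}$ is well defined and the chain argument for the lower bound applies; this rests on the disjointness of the interiors of the image tiles $\varphi_{X'_j}^{-1}(X'_{j+1})$, which follows from the fact that the $(j+1)$-tiles contained in $X'_j$ tile $X'_j$ and $\varphi_{X'_j}$ is a homeomorphism, together with the compatibility provided by Lemma \ref{lem:phicomp}.
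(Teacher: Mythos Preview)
Your argument is correct and follows essentially the same route as the paper: establish a uniform bi-Lipschitz bound on each piece $T_{j+1}$ via the quasisimilarity factors, then upgrade to a global bound by cutting the segment $\overline{xy}$ at the piece boundaries and summing. One small remark: the uniform bound on the \emph{number} of pieces is not actually needed for the chain estimate, since the subsegments lie on a straight line and hence $\sum_k|x_k-x_{k+1}|=|x-y|$ exactly; this gives $|\psi(x)-\psi(y)|\le L|x-y|$ with the single-piece constant $L$ rather than a multiple of it, and the paper exploits this. Your extra care about well-definedness and injectivity is fine but is already recorded in the text just before the lemma.
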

\begin{proof}

The maps $\psi_{T_{j+1}}$ are uniformly bi-Lipschitz, and the maps
$\varphi_{X'_{j+1}}$ and $\varphi_{X'_j}$ are both quasisimilar with
scaling factor $\diam X'_j$ and uniform bi-Lipschitz constant ($\diam
X'_{j+1}\asymp \diam X'_j$ by Corollary \ref{cor:XjXj+1}).  
To show that $\psi$ is bi-Lipschitz consider $x,y\in [0,1]^2$. Break
up the line segment between $x$ and $y$ into segments that lie in one
set $T_{j+1,k}$: 
  \begin{equation*}
    |x-y|=\sum_{k=0}^M|x_k-x_{k+1}|,
  \end{equation*}
  where $x_0=x$, $x_M=y$, and $x_k,x_{k+1}\in T_{j+1,k}$. Then
  \begin{align*}
    |\psi(x)-\psi(y)|&\leq \sum_k |\psi(x_k)-\psi(x_{k+1})|
    \\
    &\leq \sum_k L|x_k-x_{k+1}|\leq L|x-y|.
  \end{align*}
  The other inequality follows by reversing the above argument. %Since there are only finitely many different generators and therefore finitely many different triangulations $\bigcup T_{j+1}=\bigcup T'_{j+1}=T$ the statement follows.
\end{proof}

Now we use the Alexander trick from Lemma \ref{lem:AlexanderTrick} to
construct a 
bi-Lipschitz map 
\begin{equation}
  f_3=f_{3,Q_j}\colon [0,1]^3 \to [0,1]^3\label{not:f3}\index{f 3@$f_3$}
\end{equation}
such that $f_3=\id$ on $[0,1]^2\times \{0\}$ and $\psi=\psi_{Q_j}$ on
the top face $[0,1]^2\times\{1\}$. The map $f_3$ is uniformly
bi-Lipschitz, since $\psi$ is. 
%The preimages come from the decomposition of the snowball $\mathcal{B}$, the images will be put together to form the unit ball $\mathbb{B}$ in the remainder of this subsection. 

\subsection{Reassembling the unit Ball}
\label{sec:recball}
In this subsection $\widetilde{X}_j$ will always denote a conformal
$j$-triangle (and $\widetilde{X}_{j+1}$ a conformal $(j+1)$-triangle
etc.) and $X'_j$ always denotes 
a $j$-tile. To ensure that constants do
not explode we will require that 
appearing constants are \emph{uniform}, meaning they depend only on
$N_{\max}$ (and not on the particular $j$-triangle at hand). We call a
Lipschitz map with uniform Lipschitz constant \emph{uniformly}
Lipschitz\index{uniformly Lipschitz}; similarly for bi-Lipschitz maps. 
 
Let $v$ be a vertex of  $\widetilde{X}_j$. We define 
\begin{equation*}\label{not:djv}
  d_j(v):=\max_{v\in \widetilde{Y}\in \widetilde{\X}'_j}\diam \widetilde{Y}.
\end{equation*}
Neighboring $j$-triangles have comparable sizes
by (\ref{eq:conftricomp}), so   
\begin{equation}\label{eq:dasymp}
  d_j(v)\asymp \diam \widetilde{X}_j,   
\end{equation}
where $C(\asymp)$ is a uniform constant.
Consider a conformal $(j+n)$-triangle $\widetilde{X}_{j+n}$, such that
$\widetilde{X}_{j+n}\cap \widetilde{X}_{j}\neq \emptyset$. 
Using Lemma \ref{diamX0} and (\ref{eq:conftricomp}) we have $\diam
\widetilde{X}_{j+n}\lesssim\lambda^n \diam \widetilde{X}_j$ for a
fixed $\lambda <1$. Thus there 
is an $n\geq 1$ such that  
\begin{equation*}
  \frac{d_{j+n}(v)}{d_j(w)}\leq c_1<1,
\end{equation*}
for every vertex $v$ of $\widetilde{X}_{j+n}$ and vertex $w$ of
$\widetilde{X}_j$ ($c_1$ is a uniform constant). Assume $n=1$;
otherwise we would redo the construction of the snowball by ``doing
$n$ steps at once.'' More precisely, consider the $n$-th approximation
of one face of the snowball $\mathcal{T}_n$ as an
$\widetilde{N}_1$-generator ($\widetilde{N}_1=N_1\cdot \ldots \cdot
N_n$), replace each $\delta_n$-face by a scaled copy of an
$\widetilde{N}_2$-generator ($\widetilde{N}_2=N_{n+1}\cdot \ldots
\cdot N_{2n}$) and so on. Note that the $\widetilde{N}_j$-generators
need not be symmetric with respect to the diagonals, since we did
allow the replacement of $\delta_j$-squares with scaled copies of
$N_{j+1}$-generators with arbitrary orientation. There will be not
only one 
$\widetilde{N}_j$-generator, but several (though finitely many). Still
the embedding and the decomposition work exactly as before. See the
Remark on page \pageref{rem:nosymm}. 
  
\medskip
So we have
\begin{equation}\label{eq:c0djc1}
  0<c_0\leq \frac{d_{j+1}(v)}{d_j(w)}\leq c_1<1
\end{equation}
for vertices $v\in \widetilde{X}_{j+1}$, and $w\in \widetilde{X}_{j}$
where $\widetilde{X}_{j+1}\cap \widetilde{X}_j\neq\emptyset$ ($c_0$ and
$c_1$ are uniform constants). The left inequality follows from
Corollary \ref{cor:XjXj+1} and (\ref{eq:conftricomp}).  
%By lemma \ref{lem:diamXY} we have for vertices $v_0,v_1\in \widetilde{X}_j$
%\begin{equation}
%  d_j(v_0)\asymp d_j(v_1),
%\end{equation}
%where $C=C(\asymp)$ is a uniform constant. 
For a vertex $v\in \widetilde{X}_j$ define
\begin{equation*}
  \rho_j(v):=1-\frac{1}{2}d_j(v),
\end{equation*}
which will be the radius at $v$ of the $j$-th sphere which is
decomposed into $j$-triangles. The factor $\frac{1}{2}$ ensures that
$\rho_0>0$. Let $v_0,v_1,v_2$ be the vertices of $\widetilde{X}_j$,
and let $\varphi=\varphi_{\widetilde{X}_j}\colon \Delta\to
\widetilde{X}_j$ be the map defined in Subsection
\ref{sec:constr-maps-tto}, normalized by $\varphi(e_k)=v_k$ (see (\ref{not:simplex})). For $\omega=\varphi(x_0e_0+x_1e_1+x_2e_2)\in \widetilde{X}_j$ define
\begin{equation*}\label{not:rhoj}\index{r hoj@$\rho_j$}
  \rho_j(\omega):=x_0\rho_j(v_0)+x_1\rho_j(v_1)+x_2\rho_j(v_2).
\end{equation*}
Note that compatibility of the maps $\varphi$ (Lemma
\ref{lem:phicomp}) ensures that $\rho_j$ is well defined on the sphere
$\SB$. 

\smallskip
Consider the decomposition of the unit sphere into conformal
$0$-triangles $\widetilde{X}_0$. Since all conformal $0$-triangles
$\widetilde{X}_0$ have the same size, we have
\begin{equation}
  \label{eq:r0const}
  \rho_0(\omega)=\const=:\rho_0,   
\end{equation}
for all $\omega \in \mathbb{S}$. 
Each conformal $0$-triangle $\widetilde{X}_0$ is contained in
one $0$-tile $X'_0$, which is compactly contained in one
hemisphere. Thus $\diam \widetilde{X}_0 < 2$ and $0<\rho_0<1$. 

\smallskip
Now consider the map 
\begin{align}%\label{not:f4}\index{f 4@$f_4$}
  \notag
  & f_4=f_{4,X'_j}\colon [0,1]^3 \to \{(\omega,\rho):\omega\in X'_j,
  \rho_j(\omega)\leq \rho \leq \rho_{j+1}(\omega)\} 
  \intertext{defined by}
  &
  \label{eq:deff4}
  f_4(x,t):=\big(\varphi(x),(1-t)\rho_j(\varphi(x))+t\rho_{j+1}(\varphi(x))\big),  
\end{align}
where $\varphi=\varphi_{X'_j}$ from equation
(\ref{eq:defvarphiXj}). The right hand side is expressed in spherical
coordinates. 
\begin{lemma}
  \label{lem:prism2ball}
  The map $f_4$ is a quasisimilarity
  \begin{equation*} 
    \frac{1}{L}\abs{(x,t)-(y,s)}\leq
    \frac{1}{l}\abs{f_4(x,t)-f_4(y,s)}\leq L\abs{(x,t)-(y,s)}, 
  \end{equation*}
with uniform bi-Lipschitz constant $L$ and scaling factor $l=\diam X'_j$. 
\end{lemma}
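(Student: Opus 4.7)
The plan is to realize $f_4$ as a composition of the two extension results from Section \ref{sec:simplices-extensions}: first Lemma \ref{lem:prism1}, producing a bi-Lipschitz map $[0,1]^3 \to X'_j \times \R$ in the product maximum metric, and then Lemma \ref{lem:prismS} to pass to spherical coordinates in $\R^3$. To extract the scaling factor $d := \diam X'_j$ and keep constants uniform in $j$, I would rescale the metric on the tile $X'_j$ by $1/d$, apply Lemma \ref{lem:prism1} to the rescaled data, and then multiply both output coordinates back by $d$.

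Three estimates must be verified for the rescaled data. First, $\hat\varphi(x) := \varphi_{X'_j}(x)/d$ is uniformly bi-Lipschitz as a map $[0,1]^2 \to X'_j/d$, since $\varphi_{X'_j}$ is quasisimilar with scaling factor $d$ and uniform constant. Second, $\hat\rho_k := \rho_k/d$ (for $k=j,j+1$) is uniformly Lipschitz on $X'_j/d$: on each conformal triangle $\widetilde{X}_j \subset X'_j$, $\rho_j$ is the barycentric interpolation of the vertex values $1 - \frac{1}{2}d_j(v_k)$, and $|d_j(v_k) - d_j(v_l)| \lesssim d$ by \eqref{eq:dasymp}, while $\varphi_{\widetilde{X}_j}$ distorts distances by the factor $d$; pulling back through the chart gives a uniform Lipschitz constant for $\hat\rho_j$ on each triangle, and continuity along the shared edges of the four conformal triangles inside $X'_j$ propagates the bound to all of $X'_j/d$. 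Third, $\hat\rho_{j+1} - \hat\rho_j = (d_j - d_{j+1})/(2d) \asymp 1$ uniformly by combining \eqref{eq:c0djc1} with \eqref{eq:dasymp}, so the parameters $m, M$ of Lemma \ref{lem:prism1} are uniform.

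Once Lemma \ref{lem:prism1} is applied and the output rescaled by $d$, the resulting map $[0,1]^3 \to X'_j \times \R$ is a quasisimilarity with scaling factor $d$ and uniform bi-Lipschitz constant. Since $d_j$ is dominated by $d_0$ (the maximum diameter of a conformal $0$-triangle, which is strictly less than $2$ as noted after \eqref{eq:r0const}), one has $\rho_j(\omega) \geq 1 - \tfrac{1}{2}d_0 =: \rho_0 > 0$ for all $j$ and all $\omega$, so Lemma \ref{lem:prismS} applies with $r = \rho_0$, $R = 1$, and the identity inclusion $X'_j \hookrightarrow \SB$, providing a uniformly bi-Lipschitz identification of $X'_j \times [\rho_0, 1]$ with the corresponding region of $\R^3$ in spherical coordinates. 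Composing yields $f_4$ with the claimed quasisimilar structure. The only delicate point is the uniform Lipschitz control on the piecewise-defined $\rho_j$, which reduces to the barycentric-interpolation argument above, with \eqref{eq:dasymp} as the essential input.
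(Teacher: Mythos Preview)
Your proof is correct and follows essentially the same route as the paper: rescale by $d=\diam X'_j$, verify the hypotheses of Lemma~\ref{lem:prism1} for the rescaled data (bi-Lipschitz $\hat\varphi$, Lipschitz $\hat\rho_j,\hat\rho_{j+1}$ via the barycentric formula and \eqref{eq:dasymp}, uniform gap via \eqref{eq:c0djc1}), then compose with Lemma~\ref{lem:prismS} using $r=\rho_0$, $R=1$, $\psi=\id$. One small point worth sharpening: $\rho_{j+1}$ is piecewise affine on the $(j+1)$-triangles (not just the four $j$-triangles) inside $X'_j$, so the Lipschitz bound there needs Corollary~\ref{cor:XjXj+1} and the segment-breaking argument of Lemma~\ref{lem:TtoT}, which the paper invokes explicitly.
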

\begin{proof}
  We will show that the maps $\phi:= \frac{1}{\diam X'_j}\varphi$,
  $\widetilde{\rho}_0:=\frac{1}{\diam X'_j}\rho_j$ and
  $\widetilde{\rho}_1:=\frac{1}{\diam X'_j}\rho_{j+1}$ satisfy the
  conditions of Lemma \ref{lem:prism1}.  

\medskip 

\subsubsection*{$\bullet\;\phi$ is uniformly bi-Lipschitz.}
This is obvious from the fact that $\varphi=\varphi_{X'_j}$ is
quasisimilar with scaling factor $l=\diam X'_j$ and uniform constant
$L$. 

\subsubsection*{$\bullet\;\widetilde{\rho}_0$ is uniformly Lipschitz.}
For $a_0,a_1,a_2\in \R$ consider the map
 $$h(x_0e_0+x_1e_1+x_2e_2):=x_0a_0+x_1a_1+x_2a_2$$ 
on $\Delta$. One
 checks directly that $h$ is Lipschitz with constant
 $2\max_{n,m}|a_n-a_m|$ (in the $\norm{\cdot}_{\Delta}$-metric on
 $\Delta$). By (\ref{eq:dasymp}) and (\ref{eq:conftricomp}) we obtain
 \begin{equation*}%\label{eq:rhovnvm}
   \Abs{\rho_j(v_n)-\rho_j(v_m)}\leq C \diam X'_j
 \end{equation*}
for vertices $v_n,v_m\in X'_j$ and a uniform constant $C$. So
$\max_{n,m}|\widetilde{\rho}_0(v_n)-\widetilde{\rho}_0(v_m)|\leq
C$. Since $\phi$ is uniformly bi-Lipschitz by the above, we obtain that
$\widetilde{\rho}_0$ is uniformly Lipschitz. 

\subsubsection*{$\bullet\;\widetilde{\rho}_1$ is uniformly Lipschitz}
As before and using Corollary \ref{cor:XjXj+1}, it follows that
$\widetilde{\rho}_1$ is uniformly Lipschitz on any $X'_{j+1}\subset
X'_j$. Since $\varphi$ is quasisimilar with $l=\diam X'_j$ and uniform
$L$, one obtains exactly as in the proof of Lemma \ref{lem:TtoT}
that $\widetilde{\rho}_1\circ\varphi$ is Lipschitz on $[0,1]^2$, with
Lipschitz constant $\widetilde{L}\lesssim \diam X'_j$. The claim follows.

\subsubsection*{$\bullet\;\widetilde{\rho}_0+m\leq \widetilde{\rho}_1\leq \widetilde{\rho}_0+M$ (with uniform constants $m,M>0$)}
Fix a conformal $j$-triangle $\widetilde{X}_j$. 
Let
\begin{align*}
  &d_{j,\max} := \max d_j(w), 
  && d_{j,\min} := \min d_j(w),
  \\
  &d_{j+1,\max} := \max d_{j+1}(v), 
  && d_{j+1,\min}  := \min d_{j+1}(v),
\end{align*}
where the maxima/minima are taken over all $j$-vertices $w\in
\widetilde{X}_j$, and $(j+1)$-vertices $v\in \widetilde{X}_{j+1}$, where
$\widetilde{X}_{j+1}\cap \widetilde{X}_j\neq \emptyset$. 

By equation (\ref{eq:c0djc1}) we have for all $\omega \in \widetilde{X}_j$
\begin{align}
  \notag
  \rho_{j+1}(\omega) & \geq 1- \frac{1}{2}d_{j+1,\max}\geq
  1-\frac{1}{2}c_1 d_{j,\min}
  \\
  \notag
  & \geq \rho_j(\omega) + \frac{1}{2}(1-c_1)d_{j,\min},
  \intertext{as well as}
  \label{eq:rhojj}
  \rho_{j+1}(\omega) & \leq 1 - \frac{1}{2} d_{j+1,\min}\leq 1-\frac{1}{2} c_0
  d_{j,\max}
  \\
  \notag
  & \leq \rho_j(\omega) + \frac{1}{2} (1-c_0) d_{j,\max}.
\end{align}
Note that $d_{j,\min}\asymp d_{j,\max} \asymp \diam X'_j$, where
$C(\asymp)$ is uniform. The claim follows.

\smallskip
Let $\overline{\phi}$ be the extension of $\phi$ from Lemma
\ref{lem:prism1}. It is uniformly bi-Lipschitz.

\medskip
The map $f_4$ is a composition of the extension $\overline{\phi}$, a
scaling by the factor $\diam X'_j$, and the map $\overline{\psi}$ from
Lemma \ref{lem:prismS}. Here $r=\rho_0, R=1$ and $\psi=\id\colon
X'_j\to X'_j$; thus $\overline{\psi}$ is uniformly bi-Lipschitz.   
This finishes the proof of the lemma.
 
\end{proof}

Let 
\begin{equation*}
  B'(X'_j):=f_{4,X'_j}([0,1]^3)=f_4 \circ f_3 \circ f_2 \circ f_1 (B(Q_j)),   
\end{equation*}
where $f_1=f_{1,Q_j}, f_2=f_{2,B_{G_j}}, f_3=f_{3,Q_j}$,
and $X'_j$ is the $j$-tile corresponding to the cylinder $X_j(Q_j)$. 
The following follows directly from the definition of $f_4$.

\begin{lemma}
  \label{lem:WhitneyBall}
  The sets $B'(X'_j)$ together with the set $\{\abs{x} \leq \rho_0\}$
  form a Whitney decomposition of the unit ball $\B$.
  \begin{enumerate}
  \item $$\bigcup_{j,X'_j\in\X'_j} B'(X'_j) \cup \rho_0\B= \inte \B.$$
  \item The interiors of the sets $B'(X'_j)$ are pairwise disjoint.
  \item \label{item:WhitneyBall3}
    \begin{align*}
      \diam X'_j       
      & \asymp
      \diam B'(X'_j) 
      \\
      & \asymp
      \dist(B'(X'_j),\SB) = \dist(B'(X'_j),X'_j)
      \\
      & \asymp \Hdist(B'(X'_j),X'_j),
    \end{align*}
    where $C(\asymp)=C(N_{\max})$.
  \end{enumerate}
\end{lemma}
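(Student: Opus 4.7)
The plan is to read off the structure of $B'(X'_j)$ directly from the definition of $f_4$ in \eqref{eq:deff4}: in spherical coordinates
\begin{equation*}
  B'(X'_j) = \{(\omega,\rho)\in \SB\times[0,1] : \omega\in X'_j,\ \rho_j(\omega)\leq \rho\leq \rho_{j+1}(\omega)\}.
\end{equation*}
Thus each piece is a ``curvilinear prism'' over the tile $X'_j\subset\SB$, with inner and outer radial faces given by the graphs of $\rho_j|_{X'_j}$ and $\rho_{j+1}|_{X'_j}$. With this description the three assertions are essentially bookkeeping.

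For (1) and (2), I would first observe that on each tile the radial interval is non-degenerate and that consecutive intervals fit together: by \eqref{eq:rhojj} and \eqref{eq:c0djc1} one has $\rho_j(\omega)<\rho_{j+1}(\omega)$, and on any $(j{+}1)$-tile $X'_{j+1}\subset X'_j$ the value $\rho_{j+1}(\omega)$ is the common value of the outer face of $B'(X'_j)$ and the inner face of $B'(X'_{j+1})$. Fix $\omega\in\SB$ and the unique nested sequence $X'_0\supset X'_1\supset \dots$ of tiles containing $\omega$ (this is unique up to choices on the measure-zero set of vertices/edges, which is enough for a decomposition). By Lemma \ref{diamX0} the diameters $\diam X'_j$ shrink to $0$, hence $\rho_j(\omega)\to 1$, and together with \eqref{eq:r0const} we get a partition $[\rho_0,1)=\bigcup_j[\rho_j(\omega),\rho_{j+1}(\omega))$. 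This gives the union in (1), and (2) follows because two distinct pieces $B'(X'_j)$ and $B'(Y'_k)$ are separated either in the angular direction (when $\inte X'_j\cap \inte Y'_k=\emptyset$, using that tiles at a common level form a tiling) or in the radial direction (when one tile strictly contains the other, forcing $j\neq k$ and disjoint radial intervals by the nesting above).

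For (3), the comparison $\diam B'(X'_j)\asymp \diam X'_j$ is immediate from Lemma \ref{lem:prism2ball}: $f_4$ is a quasisimilarity from $[0,1]^3$ onto $B'(X'_j)$ with scaling factor $l=\diam X'_j$ and uniform Lipschitz constant $L$. For the distance to $\SB$, note that every point of $B'(X'_j)$ has radius at most $\max_{\omega\in X'_j}\rho_{j+1}(\omega)\leq 1-\tfrac12 d_{j+1,\min}$ and at least $\min_{\omega\in X'_j}\rho_j(\omega)\geq 1-\tfrac12 d_{j,\max}$, so
\begin{equation*}
   \tfrac{1}{2}d_{j+1,\min}\leq \dist(B'(X'_j),\SB)\leq \tfrac{1}{2}d_{j,\max}.
\end{equation*}
Since $d_{j+1,\min}\asymp d_{j,\max}\asymp \diam X'_j$ by \eqref{eq:c0djc1} and \eqref{eq:dasymp}, we get $\dist(B'(X'_j),\SB)\asymp \diam X'_j$. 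Because $X'_j\subset\SB$, clearly $\dist(B'(X'_j),X'_j)\geq \dist(B'(X'_j),\SB)$; the reverse inequality holds because the nearest point on $\SB$ to any $x\in B'(X'_j)$ is $x/|x|$, which lies in $X'_j$ (this is precisely the angular component of $f_4$). Finally $\Hdist(B'(X'_j),X'_j)\leq \dist(B'(X'_j),X'_j)+\diam B'(X'_j)+\diam X'_j\lesssim \diam X'_j$, and the lower Hausdorff bound is the already-established $\dist$ bound.

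I expect no serious obstacle; the only mildly technical point is verifying the monotonicity/matching of the radial intervals across different levels of subdivision, which I would handle by invoking \eqref{eq:rhojj} and the nesting property of tiles from Lemma \ref{lem:proptilings}(\ref{item:lem_prop_tiles_incl}). Everything else is a direct consequence of the explicit form of $f_4$ and the quasisimilarity statement of Lemma \ref{lem:prism2ball}.
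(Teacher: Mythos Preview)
Your proof is correct and follows essentially the same approach as the paper. The paper dismisses (1) and (2) as ``clear'' and handles (3) with the same ingredients you use --- \eqref{eq:dasymp}, \eqref{eq:rhojj}, Corollary~\ref{cor:XjXj+1}, and the observation that the radial projection of a point in $B'(X'_j)$ lands in $X'_j$. The only minor variation is that for $\diam B'(X'_j)\asymp\diam X'_j$ you invoke the quasisimilarity statement of Lemma~\ref{lem:prism2ball}, whereas the paper reads it off from~\eqref{eq:rhojj} directly; these are equivalent, since the proof of Lemma~\ref{lem:prism2ball} is where~\eqref{eq:rhojj} is established.
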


\begin{proof}
  The first two assertions are clear.
  From expressions (\ref{eq:dasymp}) and Corollary \ref{cor:XjXj+1} we
  obtain ($X'_{j+1}\subset X'_j$)
  \begin{equation*}
    \dist(B'(X'_j),X'_j) \asymp \diam X'_{j+1} \asymp \diam X'_j
  \end{equation*}
  immediately. From expression (\ref{eq:rhojj}) we obtain
  \begin{equation*}
    \diam B'(X'_j) \asymp \diam X'_j.
  \end{equation*}
  It is obvious that $\dist(B'(X'_j), X'_j)=\dist(B'(X'_j), \SB)$. The
  two expressions above imply $\Hdist(B'(X'_j),X'_j)\asymp \diam X'_j$.
\end{proof}
\subsection{Defining the Map $f$}
\label{sec:map-f}
On each set $B(Q_j)$ the map $f$ is defined as 
\begin{equation*}
  f:=f_4\circ f_3 \circ f_2 \circ f_1.
\end{equation*}
Here $f_1=f_{1,Q_j}, f_2=f_{2,B_{G_j}}, f_3=f_{3,Q_j}, f_{4,X'_j}$,
where $X'_j$ is the $j$-tile corresponding to the cylinder $X_j(Q_j)$. 
We need to check that $f$ is well defined.
 
\begin{lemma}
  \label{lem:fextwelldef}
  The map $f$ is well defined on $\bigcup_{j\geq 0} \BC_j$.
\end{lemma}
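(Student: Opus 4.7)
The plan is to check that whenever two pieces $B(Q)$ and $B(Q^*)$ intersect, the two recipes for $f$ agree on the common face. There are exactly two sorts of overlap to worry about: (i) $Q$ and $Q^*$ are \emph{neighbors} in the same approximation $\SC_j$, so $B(Q)\cap B(Q^*)$ lies in the ``vertical'' side $\partial Q\cap\partial Q^*$ connecting $\RC_j$ to $\RC_{j+1}$; and (ii) $Q=Q_j\subset\SC_j$ and $Q^*=Q_{j+1}\subset G(Q_j)\subset\SC_{j+1}$ sit in adjacent shells, so $B(Q_j)\cap B(Q_{j+1})$ is the quadrilateral $R_{j+1}=\pi_{j+1}(Q_{j+1})\subset\RC_{j+1}$. (The degenerate situation of a single $\delta_j$-edge or vertex shared by more than two pieces is then automatic from the face-wise agreement.)

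For case (i) the compatibility is achieved one factor at a time. Lemma~\ref{lem:f1comp} says the normalized pieces are identified along the face $\partial[0,1]^2\times[c\delta_{j+1},c]$ and that $f_{1,Q}=f_{1,Q^*}$ there. The maps $f_{2,B_G}$ were required in \eqref{eq:deff2} to send this face \emph{affinely} onto $\partial[0,1]^2\times[0,1]$, and identifying the two cubes along the appropriate vertical face makes $f_{2,Q}$ agree with $f_{2,Q^*}$ on the common face. The compatibility clause in Lemma~\ref{lem:AlexanderTrick} is precisely what ensures that the Alexander--radial extension $f_{3,Q}$ glues with $f_{3,Q^*}$ along that face. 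Finally, for $f_4$ on the shared vertical face $\{v\}\times[0,1]$ (with $v\in\partial[0,1]^2$), formula \eqref{eq:deff4} reads $f_4(v,t)=(\varphi_{X'_j}(v),(1{-}t)\rho_j(\varphi_{X'_j}(v))+t\rho_{j+1}(\varphi_{X'_j}(v)))$; since $\rho_j,\rho_{j+1}$ are well-defined \emph{functions} on $\SB$ and since $\varphi_{X'_j}=\varphi_{Y'_j}$ on the shared edge by Lemma~\ref{lem:phicomp}, the two definitions match.

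Case (ii) is the point where the construction was rigged. Restricted to $R_{j+1}$, viewed as the inner face of $B(Q_{j+1})$, the map $f_{3,Q_{j+1}}$ is the identity, so unwinding the definitions and using \eqref{eq:defphiRjT},
\begin{equation*}
f\bigl|_{R_{j+1}}^{\text{inner}}
=f_{4,X'_{j+1}}\bigl(\phi_{R_{j+1}}(\cdot),0\bigr)
=\Bigl(\varphi_{X'_{j+1}}\!\circ\phi_{R_{j+1}}(\cdot),\;\rho_{j+1}\bigl(\varphi_{X'_{j+1}}\!\circ\phi_{R_{j+1}}(\cdot)\bigr)\Bigr).
\end{equation*}
On the other hand, viewed as a subset of the outer face of $B(Q_j)$, the image of $R_{j+1}$ under $f_{2,B_{G_j}}\!\circ f_{1,Q_j}$ is $T_{j+1}$ from \eqref{eq:defTj}, and on $T_{j+1}$ the map $f_{3,Q_j}$ equals $\psi_{Q_j}=\varphi_{X'_j}^{-1}\circ\varphi_{X'_{j+1}}\circ\psi_{T_{j+1}}$ by \eqref{eq:defvarphiT}, while $\psi_{T_{j+1}}=\phi_{R_{j+1}}\circ f_1^{-1}\circ f_2^{-1}$ by \eqref{eq:defpsiTj}. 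Composing with $f_{4,X'_j}(\cdot,1)=(\varphi_{X'_j}(\cdot),\rho_{j+1}(\varphi_{X'_j}(\cdot)))$ cancels the $\varphi_{X'_j}^{-1}$ and yields exactly the same expression. Hence the two recipes agree on $R_{j+1}$.

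The main obstacle is purely bookkeeping in case (ii): one must verify that the interpolating map $\psi_{T_{j+1}}$ was set up so that post-composing with $\varphi_{X'_j}\circ\psi_{Q_j}=\varphi_{X'_{j+1}}\circ\psi_{T_{j+1}}$ kills the discrepancy between the two different normalizations $\varphi_{X'_j}$ and $\varphi_{X'_{j+1}}$. Apart from this identity, all other checks are immediate from the compatibility lemmas established earlier (\ref{lem:f1comp}, \ref{lem:AlexanderTrick}, \ref{lem:phicomp}).
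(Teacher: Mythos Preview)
Your approach matches the paper's, and your case~(ii) computation is exactly right. But there is a gap in case~(i): the compatibility clause of Lemma~\ref{lem:AlexanderTrick} has a \emph{hypothesis}, namely that the two top-face maps agree on the shared edge, i.e.\ $\psi_{Q_j}=\psi_{Q_j^*}$ on $\{1\}\times[0,1]\times\{1\}$ after the obvious identification. You invoke the clause without verifying this, and it is not automatic from the earlier $f_1,f_2$ compatibilities.

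The paper handles this by proving the layer compatibility (your case~(ii)) \emph{before} the neighbor compatibility, precisely because the former is needed for the latter. The identity you yourself derive in case~(ii),
\[
\varphi_{X'_j}\circ\psi_{Q_j}\bigl|_{T_{j+1}}=\varphi_{X'_{j+1}}\circ\phi_{R_{j+1}}\circ(f_2\circ f_1)^{-1},
\]
applied on both sides of the shared edge of the outer faces, together with Lemma~\ref{lem:phicomp} at level $j{+}1$ (so that $\varphi_{X'_{j+1}}\circ\phi_{R_{j+1}}$ agrees with its neighbor on $R_{j+1}\cap R_{j+1}^*$) and at level $j$ (so that $\varphi_{X'_j}^{-1}=\varphi_{Y'_j}^{-1}$ on the shared $j$-edge), yields $\psi_{Q_j}=\psi_{Q_j^*}$ on the top edge. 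Only then does Lemma~\ref{lem:AlexanderTrick} give $f_{3,Q_j}=f_{3,Q_j^*}$ on the full shared face. The fix is immediate: reorder your cases so that~(ii) comes first, and insert this one-line check before citing Lemma~\ref{lem:AlexanderTrick}.
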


\begin{proof} 
  \begin{mylist}
    \item\label{item:compRj}
      Consider first neighboring $\delta_j$-squares $Q_j,Q^*_j\subset
      \SC_j$. Map the sets $R_j:=\pi_j(Q_j),
      R^*_j:=\pi_j(Q^*_j)\subset \RC_j$ as 
      inner sides of the pieces $B(Q_j), B(Q^*_j) \subset \BC_j$. 
      Let $f$ be as above, and let $f^*$ be the
      corresponding map for $B(Q^*_j)$. Note that the $\omega$-coordinate of
      $f$ is $\varphi_{X'_j}\circ
      \phi_{R_j}$ on $R_j$ by construction, where
      $X'_j=X'_j(R_j)\in\X'_j$ (see (\ref{eq:defphiRjT}),
      (\ref{eq:defvarphiXj}), (\ref{not:f3}), and
      (\ref{eq:deff4})). The maps  
      $\phi_{R_j}, 
      \phi_{R^*_j}$ are affine on (the line segment) $R_j\cap R^*_j$. Let
      $Y'_j$ be the $j$-tile 
      corresponding to $R^*_j$. The maps $\varphi_{R_j}, \varphi_{R^*_j}$ are
      compatible by Lemma \ref{lem:phicomp}. The $\omega$-coordinates of $f$
      and $f^*$ thus agree on $R_j\cap R^*_j$.    
      
      Since the radii $\rho_j$ were well defined on $\SB$, it follows that
      $f=f^*$ on $R_j\cap R^*_j$.
    \item\label{item:complayers}
      We next check compatibility of different layers.
      Let $R_{j+1}$ be a set from the decomposition of $\RC_{j+1}$
      (Lemma \ref{lem:RjcombSj} (\ref{item:RjSjcombeq})). Let $B(Q_j)\subset
      \BC_j, B(Q_{j+1})\subset\BC_{j+1}$ be the pieces containing $R_{j+1}$,
      so $B(Q_j)\cap B(Q_{j+1})=R_{j+1}$. Here $Q_j,Q_{j+1}$ denote
      $\delta_j$-, $\delta_{j+1}$-squares in $\SC_j,\SC_{j+1}$. Let
      $f,f_1,f_2$ be as above, and let $f'$ be the map
      corresponding to $B(Q_{j+1})$. 
      
      Let $R_j=\pi_j(Q_j)$ be the inner side of $B(Q_j)$, and let $X'_j=X'_j(R_j)\in
      \X'_j, X'_{j+1}=X'_{j+1}(R_{j+1})\in \X'_{j+1}$ be the tiles corresponding to
      $R_j,R_{j+1}$.
      Finally let $T_{j+1}=f_2\circ f_1(R_{j+1})$ (see (\ref{eq:defTj})).
      The $\omega$-coordinate of $f$ on $R_{j+1}$ is the
      $\omega$-coordinate of $f_{4,X'_j} \circ f_{3,Q_j} \circ
        f_{2,B_{G_j}}\circ f_{1,Q_j}|_{R_{j+1}}$, which is given by 
      \begin{align}
        \label{eq:layercomp}
        \varphi_{X'_j} & \circ \psi_{Q_j} \circ f_{2,B_{G_j}}\circ
        f_{1,Q_j} 
        && \text{ by (\ref{eq:deff4}) and (\ref{not:f3}),}
        \\ \notag
        & = \varphi_{X'_{j+1}} \circ \psi_{T_{j+1}} \circ f_{2,B_{G_j}} \circ
        f_{1, Q_j} && \text{ by (\ref{eq:defvarphiT}),}
        \\ \notag
        & = \varphi_{X'_{j+1}} \circ \phi_{R_{j+1}} &&\text { by
          (\ref{eq:defpsiTj})}.  
      \end{align}
      This is equal to the $\omega$-coordinate of $f'$ on $R_{j+1}$ as
      above. It is clear that the radii of $f$ and
      $f'$ agree on $R_{j+1}$ from the construction. 
      Thus $f=f'$ on $R_{j+1}$.
    \item \label{item:neighpiecescomp}
      It remains to show that $f$ is well defined on neighboring
      pieces $B(Q_j)$, $B(Q^*_j)\subset \BC_j$. Here the notation from
      (\ref{item:compRj}) is used again. Maps $f,f_1,f_2,f_3,f_4$ are the
      ones corresponding to $B(Q_j)$, $f^*,f^*_1,f^*_2,f^*_3,f^*_4$
      the ones corresponding to $B(Q^*_j)$.

      By Lemma \ref{lem:f1comp} it holds that $f_1=f^*_1$ on
      $B(Q_j)\cap B(Q^*_j)$ 
      (where appropriate sides of $B_G(Q_j)$ and $B_{G^*}(Q^*_j)$ are
      identified).   
      
      The fact that $f_2=f^*_2$ on $B_G(Q_j)\cap B_{G^*}(Q^*_j)$ is clear
      (again with proper identification of sides). The map is an
      affine map from a rectangle to a square.

      Now consider the maps $\psi_{Q_j}, \psi_{Q^*_j}$ from Subsection
      \ref{sec:mapp-prisms-ttim}. The intersection of their domains is
      (after proper identification) one edge of 
      $[0,1]^2\times \{1\}$. From (\ref{item:complayers}) and
      Lemma \ref{lem:phicomp} we obtain that
      $f_3=f^*_3=\psi_{Q_j}=\psi_{Q^*_j}$ on this edge. This implies
      that $f_3=f^*_3$ on the intersecting square (in which
      the properly identified unit cubes intersect); see Lemma
      \ref{lem:AlexanderTrick}.   

      Finally $f_4=f^*_4$ on the intersecting square (with proper
      identification). This follows again by Lemma \ref{lem:phicomp}
      and the construction. Thus
      $f=f^*$ on $B(Q_j)\cap B(Q^*_j)$. 
  \end{mylist} 
 \end{proof}

It remains to define $f$ on the cube $[c,1-c]^3\subset [0,1]^3$, which
is the cube bounded by (see Subsection \ref{sec:whitn-decomp-snowb})
$\mathcal{R}_0=\{x\in \BC:\dist_{\infty}(x,\partial[0,1]^3)=c\}$ (recall that
$c=\frac{1}{2}-\frac{1}{2N_{\max}}$).

The map $f$ maps $\RC_0$ bi-Lipschitz to $\rho_0 \SB$.
Extend this map radially to $[c,1-c]^3$ using (a variant of) Lemma 
\ref{lem:extSB}. The extension is bi-Lipschitz on $[c,1-c]^3$.

\smallskip
On the complement of the snowball $\mathcal{B}$ the map $f$ is defined analogously. The snowsphere is approximated from the outside by the surfaces
\begin{equation*}
  \mathcal{R}^+_j:=\{x\notin\mathcal{B}:\dist_{\infty}(x,\mathcal{S}_j)=c\delta_j\}.
\end{equation*}
The shells
\begin{equation*}
  \mathcal{B}^+_j:=\{x\notin
  \mathcal{B}:\dist_{\infty}(x,\mathcal{S}_j)\leq c\delta_j \text{ and
  } \dist_{\infty}(x,\mathcal{S}_{j+1})\geq c\delta_{j+1}\}
\end{equation*}
are decomposed as before and mapped to
\begin{equation*}
  \{(\omega,\rho):\rho^+_{j+1}(\omega)\leq\rho(\omega)\leq\rho^+_{j}(\omega)\},
\end{equation*}
where $\rho^+_j(v):=1+\frac{1}{2}d_j(v)$ for vertices $v\in X'_j$ and extended to the sphere $\mathbb{S}$ as before. Again the maps are piecewise quasisimilarities with uniform bi-Lipschitz constant.

\smallskip
One gets a map from the the cube $[-c,1+c]^3$ to the ball
$\rho^+_0\mathbb{B}$ as before. Extending this map to $\R^3$ is easy. 
For example, given $x_0\in \partial [-c,1+c]^3$ map the ray
$\{tx_0 : t\geq 1\}$ linearly to the ray $\{tf(x_0):t\geq 1\}$. It is
straightforward to check that this extension is bi-Lipschitz on
$\R^3\setminus (-c,1+c)^3$ (use (\ref{eq:distwr})).

\section{Proof of the main Theorem}
\label{sec:proof-main-theorem}
%It remains to show that $f$ is quasiconformal on the snowsphere
%$\SC$. 

\subsection{Combinatorial Distance}
\label{sec:comb-dist}
We want to express the distance between a point in the interior of the
 snowball $x\in \inte\BC\setminus (c,1-c)^3$ and a point on the snowsphere $y\in
\SC$ in purely combinatorial terms.

Let $Q_k\subset\SC_k$ be a $\delta_k$-square such that $x\in B(Q_k)$
(see Subsection \ref{sec:decomposing-shells}). Let $X_k=X_k(Q_k)\subset\SC$ be
the $k$-cylinder having base $Q_k$. Recall the definition of $j(x,y)$
(\ref{eq:defjxy}) 
to set 
\begin{equation*}
  j:=\sup_{z\in X_k} j(z,y).
\end{equation*}
Note that $j=\infty$ if $y\in X_k$. 
Finally let $n:=\min\{k,j\}\in \N$. 

\begin{lemma}
  \label{lem:dnxy}
  With notation as above
  \begin{equation*}
    \abs{x-y}\asymp \delta_n,
  \end{equation*}
  where $C(\asymp)=C(N_{\max})$.
\end{lemma}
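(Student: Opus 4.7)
\emph{Plan.} The proof splits on whether $n=k$ or $n=j<k$; in each case I bound $|x-y|$ via triangle inequalities anchored at carefully chosen points of $X_k$.

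First I would record the Whitney-type estimates from Lemma \ref{lem:BQWhitney}(\ref{item:Whitney3}), namely $\diam B(Q_k)\asymp\dist(B(Q_k),\SC)\asymp\delta_k$. Since $B(Q_k)$ is bounded by $\pi_k(Q_k)\subset\RC_k$ and $\pi_{k+1}(G)\subset\RC_{k+1}$, both lying within Hausdorff distance $\lesssim\delta_k$ of $X_k=\PC(Q_k)\cap\SC$ by Lemma \ref{lem:HdistSjRj}, one obtains the uniform bound $|x-z|\le C_1\delta_k$ for every $z\in X_k$. I would also note that, when $y\notin X_k$, the map $z\mapsto j(z,y)$ takes only finitely many integer values on the compact set $X_k$ (it is bounded above by a function of $\dist(y,X_k)$), so the supremum $j$ is attained; if $y\in X_k$ then $j=\infty$ and $n=k$, and the lemma is trivial.

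The routine parts are as follows. For the upper bound in case $n=k$, the hypothesis $j\ge k$ provides $z^*\in X_k$ with $j(z^*,y)\ge k$; Lemma \ref{lem:pseudcomp} and $\delta_{j(z^*,y)}\le\delta_k$ give $|z^*-y|\lesssim\delta_k$, hence $|x-y|\le|x-z^*|+|z^*-y|\lesssim\delta_k$. The matching lower bound is immediate: $|x-y|\ge\dist(x,\SC)\gtrsim\delta_k$. For the upper bound in case $n=j<k$, picking $z^*\in X_k$ attaining $j(z^*,y)=j$ gives $|z^*-y|\asymp\delta_j$ and $|x-z^*|\le C_1\delta_k\le C_1\delta_j$, so $|x-y|\lesssim\delta_j$.

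The hard part is the lower bound in the case $n=j<k$: the naive estimate $|x-y|\ge|z-y|-|x-z|\gtrsim\delta_j-C_1\delta_k$ fails when $k-j$ is small, since $\delta_k$ can be as large as $\delta_j/2$ and the implied constants may swamp the difference. I will resolve this by splitting on the gap $k-j$. Fix an integer $k_0=k_0(N_{\max})$ large enough that $C_1\cdot 2^{-k_0}<1/N_{\max}$. If $k-j\le k_0$, then $\delta_k\ge N_{\max}^{-k_0}\delta_j$, and the crude bound $|x-y|\ge\dist(x,\SC)\gtrsim\delta_k\gtrsim\delta_j$ suffices. If $k-j>k_0$, I pick a point $z_0\in X_k$ lying in the interior of the unique $j$-cylinder $X_j^{\mathrm{anc}}$ containing $X_k$; such $z_0$ exists because $X_k$ is a topological $2$-disk while the $1$-skeleton of the $j$-cell complex is only $1$-dimensional. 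Since $z_0$ sits in the interior of $X_j^{\mathrm{anc}}$, any disjoint $j$-cylinders witnessing $j(z_0,y)\le j$ must be $X_j^{\mathrm{anc}}$ and some $Y_j'\ni y$; the separation estimate $\dist(X_j^{\mathrm{anc}},Y_j')\ge 2\delta_j/N_{\max}$ proved inside Lemma \ref{lem:pseudcomp} then yields
\[
|x-y|\ge|z_0-y|-|x-z_0|\ge\frac{2\delta_j}{N_{\max}}-C_1\delta_k\ge\frac{\delta_j}{N_{\max}}
\]
by the choice of $k_0$. Combining the two sub-cases gives the desired lower bound $|x-y|\gtrsim\delta_j$.
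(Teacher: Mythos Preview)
Your proof is correct and follows essentially the same strategy as the paper: the upper bound via a chain through $X_k$, and the lower bound by splitting on whether the gap $k-n$ is bounded (where the crude bound $|x-y|\ge\dist(B(Q_k),\SC)\asymp\delta_k$ suffices) or large (where one absorbs the error $C_1\delta_k$ into $\delta_j$).

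The one place where you work harder than necessary is the large-gap lower bound. You select a specific $z_0\in X_k$ lying in the \emph{interior} of the ancestral $j$-cylinder, invoking a dimensional argument to guarantee its existence, so that the $j$-cylinder containing $z_0$ is uniquely $X_j^{\mathrm{anc}}$. None of this is needed: by the very definition of $j=\sup_{z\in X_k}j(z,y)$ one has $j(z,y)\le j$ for \emph{every} $z\in X_k$, so Lemma~\ref{lem:pseudcomp} already gives $|y-z|\ge(1/C_0)\,\delta_{j(z,y)}\ge(1/C_0)\,\delta_j$ uniformly over $z\in X_k$. That is, $\dist(y,X_k)\ge(1/C_0)\,\delta_j$, and the paper simply combines this with the Hausdorff-distance triangle inequality
\[
|x-y|\ge\dist(y,B(Q_k))\ge\dist(y,X_k)-\Hdist(X_k,B(Q_k))\ge\frac{\delta_j}{C_0}-C_1\delta_k,
\]
which is exactly your final display without the detour through $z_0$. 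Your version is not wrong, just longer.
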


\begin{proof}
  By definition of $n$ there exists $z\in X_k$ and $(n-1)$-cylinders
  $Z_{n-1}\ni z, 
  Y_{n-1}\ni y$ that are not disjoint. Thus
  \begin{align*}
    \abs{x-y}  \leq & \diam B(Q_k) + \dist(B(Q_k), X_k) 
    \\
    & + \diam X_k + \diam Z_{n-1} + \diam Y_{n-1}
    \\
    \lesssim & \; \delta_k + \delta_k + \delta_k + \delta_{n-1} +
    \delta_{n-1}
    \\
    \lesssim & \; \delta_n,
  \end{align*}
  by Lemma \ref{lem:BQWhitney} (\ref{item:Whitney3}).

  \smallskip
  To see the other inequality we first need to fix the relevant
  constants. Let $C_0=C(\asymp)$ be the constant from Lemma
  \ref{lem:pseudcomp}. In particular 
  \begin{equation*}
    \abs{y-z}\geq \frac{1}{C_0}\delta_j,
  \end{equation*}
  for all $z\in X_k$. Let $C_1=C(\asymp)$ be the constant from Lemma
  \ref{lem:HdistSjRj} (\ref{item:HdistRjS}); in particular
  \begin{equation*}
    \Hdist(X_k,B(Q_k)) \leq C_1 \delta_k.
  \end{equation*}
  Let the integer $k_0=k_0(C_0,C_1)=k_0(N_{\max})\geq 0$ be such that
  \begin{equation*}
    C_2:=\frac{1}{C_0}-C_1 2^{-k_0} >  0.
  \end{equation*}

  \noindent\emph{Case 1:} $n\leq k \leq n + k_0$. Then
  \begin{equation*}
    \abs{x-y} \geq \dist(B(Q_k), \SC) \asymp \delta_k \asymp \delta_n. 
  \end{equation*}
  
  \noindent\emph{Case 2:} $k > n + k_0$. Then $n=j$ and
  \begin{align*}
    \delta_k & =\delta_j \frac{1}{N_{j+1}}\times \dots \times
    \frac{1}{N_{k}}\leq \delta_j 2^{-k_0}, \text{ yielding }
    \\
    \abs{x-y} & \geq \dist(y,B(Q_k))
    \\
    & \geq \dist(y, X_k) - \Hdist(X_k,B(Q_k)) \text{ by
      (\ref{eq:triag_dist})}
    \\
    & \geq \frac{1}{C_0}\delta_j - C_1\delta_k \geq
    \left(\frac{1}{C_0}-C_1 2^{-k_0}\right) \delta_j = C_2 \delta_n.
  \end{align*}

\end{proof}

Next we express the distance of images by $f$ in combinatorial
terms. Images of $x,y,B=B(Q_k), X_k$ are denoted by
$x',y',B'=B'(X'_k),X'_k$. So $x'\in B', y'\in \SB$. The numbers $k,j,$ and $n$ are the same as before. 

\begin{lemma}
  \label{lem:diamXnxy}
  With notation as before,
  \begin{equation*}
    \abs{x'-y'} \asymp \diam X'_n, 
  \end{equation*}
  where $X'_k\subset X'_n\in \X'_n$, and $C(\asymp)=C(N_{\max})$.
\end{lemma}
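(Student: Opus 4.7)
The plan is to work in spherical coordinates $(\omega,\rho) \in \SB \times [0,1]$ and exploit the explicit form of $f$ on each Whitney piece $B(Q_k)$. By the construction of $f_4$ in (\ref{eq:deff4}) and the definition of $\rho_k$ in Subsection \ref{sec:recball}, the image $x'=f(x)$ has spherical coordinates $(\omega_x,r_x)$ with $\omega_x\in X'_k$ and $|r_x-1|\asymp \diam X'_k$. Since $r_x\geq \rho_0>0$, formula (\ref{eq:distwr}) yields
\[
  \abs{x'-y'} \asymp |r_x-1| + r_x|\omega_x-y'| \asymp \diam X'_k + |\omega_x-y'|,
\]
so the task reduces to establishing $\diam X'_k + |\omega_x-y'| \asymp \diam X'_n$.

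For the upper bound, if $y\in X_k$ then $n=k$ and $y',\omega_x\in X'_k$ give $|\omega_x-y'|\leq \diam X'_k$. Otherwise $j<\infty$ is an integer sup, hence attained by some $z\in X_k$ with $j(z,y)=j$; then $z'\in X'_k$, and Lemma \ref{lem:combdistS} together with Lemma \ref{lem:diamXY} give $|z'-y'|\lesssim \diam X'_j$. Triangulating, $|\omega_x-y'|\leq |\omega_x-z'|+|z'-y'| \lesssim \diam X'_k + \diam X'_j$. In either case the right-hand side is $\lesssim \diam X'_n$, since $n\in\{j,k\}$ and $X'_k\subset X'_j$.

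For the lower bound, when $n=k$ the term $\diam X'_k$ already suffices. When $n=j<k$, the essential combinatorial step is to prove that $y\notin X_j$, where $X_j\supset X_k$ is the unique $j$-cylinder containing $X_k$; I expect this to be the main point of the argument. Since $\delta_j\Z^3\subset \delta_k\Z^3$, any $z$ in the interior of $X_k$ lies in the interior of $X_j$, so its unique $j$-cylinder is $X_j$. If $y\in X_j$, then every $j$-cylinder containing $y$ meets $X_j$, forcing $j(z,y)>j$ and contradicting $j=\sup$. Hence some $j$-cylinder $Y_j\ni y$ is disjoint from $X_j$, and Lemma \ref{lem:distXY} applied to the disjoint tiles $X'_j,Y'_j$ yields
\[
  |\omega_x-y'| \geq \dist(X'_k,Y'_j) \geq \dist(X'_j,Y'_j) \gtrsim \diam X'_j = \diam X'_n.
\]

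Combining the upper and lower bounds gives the desired $|x'-y'|\asymp \diam X'_n$. The spherical-coordinate reduction is what allows the argument to close cleanly: it absorbs the radial thickness $\diam X'_k$ of the Whitney piece, so the lower bound $\gtrsim \diam X'_j$ from Lemma \ref{lem:distXY} dominates automatically and no additional shrinkage comparison between $\diam X'_k$ and $\diam X'_j$ (analogous to the delicate $k_0$-step in Lemma \ref{lem:dnxy}) is required.
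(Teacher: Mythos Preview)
Your proof is correct and takes a genuinely different route from the paper's. The paper does not use spherical coordinates at all; instead it mirrors the proof of Lemma~\ref{lem:dnxy} line by line, replacing $\delta_k$-estimates by $\diam X'_k$-estimates via Lemma~\ref{lem:WhitneyBall}~(\ref{item:WhitneyBall3}). For the lower bound the paper again splits into the cases $n\leq k\leq n+k_0$ and $k>n+k_0$: in the first, $\dist(B'(X'_k),\SB)\asymp\diam X'_k\asymp\diam X'_n$ suffices; in the second it uses $\abs{x'-y'}\geq\dist(y',X'_k)-\Hdist(X'_k,B'(X'_k))$ together with the geometric shrinkage $\diam X'_k\leq c_1^{k_0}\diam X'_n$ from~(\ref{eq:c0djc1}) to make the Hausdorff term negligible. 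Your spherical-coordinate reduction eliminates this $k_0$-step entirely, exactly as you say, because the radial contribution already absorbs $\diam X'_k$ and the angular contribution can be bounded below by Lemma~\ref{lem:distXY} directly. The trade-off is that your argument leans on the explicit product form of $f_4$, whereas the paper's works from the Whitney properties of the image decomposition alone.

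One logical slip in your lower bound: from $y\notin X_j$ you cannot conclude that \emph{some} $j$-cylinder $Y_j\ni y$ is disjoint from $X_j$; the $j$-cylinders containing $y$ could all be adjacent to $X_j$. The correct argument (whose pieces you already have) is direct: for your chosen $z\in X_k$ with unique $j$-cylinder $X_j$, the inequality $j(z,y)\leq j$ gives disjoint $j$-cylinders containing $z$ and $y$ respectively; the one for $z$ is forced to be $X_j$, so the one for $y$ is the desired $Y_j$. Also, the justification ``$\delta_j\Z^3\subset\delta_k\Z^3$'' for $\inte X_k\subset\inte X_j$ is not really the reason; the clean argument is that $\partial Q_j$ lies in the level-$k$ $1$-skeleton of $\SC$, and that $1$-skeleton meets $X_k$ only in $\partial Q_k$.
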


\begin{proof}
  The argument is almost the same as in the previous proof. 
  Throughout the whole proof $X'_l$ will denote an $l$-tile satisfying
  $X'_l\cap X'_k\neq \emptyset$.

  There is a point $z'\in X'_k$ and 
  $(n-1)$-tiles $Z'_{n-1}\ni z'$, $Y'_{n-1}\ni y'$ that are not
  disjoint. Hence by Lemma
  \ref{lem:WhitneyBall} (\ref{item:WhitneyBall3}) (as well as Lemma \ref{lem:diamXY} and
  Corollary \ref{cor:XjXj+1})
  \begin{align*}
    \abs{x'-y'}  \leq & \diam B' + \dist(B', X'_k) 
    \\
    & + \diam X'_k + \diam Z'_{n-1} + \diam Y'_{n-1}
    \\
    \lesssim & \diam X'_k + \diam X'_k + \diam X'_k + \diam X'_{n-1} +
    \diam X'_{n-1} 
    \\
    \lesssim & \diam X'_n.
  \end{align*}

  For the other inequality let $C_0=C(N_{\max})$ be the constant from
  Lemma \ref{lem:combdistS}. In particular
  \begin{equation*}
    \dist(y',X'_k) \geq \frac{1}{C_0} \diam X'_j.
  \end{equation*}
  We set the right hand side to $0$ if $y'\in X'_k$ ($\Leftrightarrow
  j=\infty$). 
  The constant $C_1=C(N_{\max})$ is obtained from Lemma
  \ref{lem:WhitneyBall} (\ref{item:WhitneyBall3}) such that
  \begin{equation*}
    \Hdist(X'_k,B'(X'_k))\leq C_1 \diam X'_k.
  \end{equation*}
  Let $0<c_1<1$ be the constant from expression (\ref{eq:c0djc1});
  in particular 
  \begin{equation*}
    \diam X'_k\leq c_1^{k_0} \diam X'_{k-k_0}.
  \end{equation*}
  Choose the integer $k_0=(C_0,C_1)=C(N_{\max}) \geq 0$ such that
  \begin{equation*}
    C_2 := \frac{1}{C_0}-C_1 c_1^{k_0} > 0.
  \end{equation*}

  \noindent\emph{Case 1:} $n\leq k \leq n+k_0$. Then (by Corollary \ref{cor:XjXj+1})
  \begin{equation*}
    \abs{x'-y'} \geq \dist(B'(X'_k),\SB) \asymp \diam X'_k\asymp \diam
    X'_n.
  \end{equation*}

  \noindent\emph{Case 2:} $k> n+k_0$. Then $n=j$ and
  \begin{align*}
    \abs{x'-y'} & \geq \dist(y',B'(X'_k))
    \\
    & \geq \dist(y',X'_k) - \Hdist(X'_k,B'(X'_k))
    \\
    &  \geq \frac{1}{C_0}\diam X'_j - C_1 \diam X'_k
    \\
    & \geq \frac{1}{C_0}\diam X'_n - C_1 \diam X'_{n+k_0}
    \\
    & \geq \left(\frac{1}{C_0} - C_1 c_1^{k_0}\right) \diam X'_n = C_2
    \diam X'_n.
  \end{align*}
\end{proof}

\begin{remark}[1]
  If $x\in \SC$ (equivalently $x'\in \SB$) set $k=\infty$. The
  statements of the last two lemmas remain valid with $j=n$ (by Lemma
  \ref{lem:pseudcomp} and Lemma \ref{lem:combdistS}).  
\end{remark}

\begin{remark}[2]
  Analogous statements of the last two lemmas hold if $x$ ($x'$) is
  outside the snowball (the unit ball). 
\end{remark}

\begin{remark}[3]
  Recall from the proof of the last lemma that there is $z'\in X'_k$
  and non-disjoint 
  $(n-1)$-tiles $Y'_{n-1}\ni y', Z'_{n-1}\ni z'$. 
Thus
  (using Lemma \ref{lem:diamXY} and Corollary \ref{cor:XjXj+1})
  \begin{equation}
    \label{eq:XnYnpf1B}
    \diam Y'_n\asymp \diam X'_n
  \end{equation}
  for any $n$-tile $Y'_n\ni y'$.
\end{remark}

We note the following (using Lemma \ref{diamX0} as well).
\begin{cor}
  The map $f\colon \R^3\to \R^3$ is a homeomorphism. 
\end{cor}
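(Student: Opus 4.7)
The plan is to verify separately that $f$ is continuous, bijective, and has a continuous inverse, the key ingredient being the two combinatorial comparison lemmas above together with Lemma \ref{diamX0}. Away from the snowsphere things are essentially routine: on each Whitney piece $B(Q_k)$ (or its exterior analog), on the inner cube $[c,1-c]^3$, and on $\R^3 \setminus [-c,1+c]^3$, the map $f$ is a finite composition of bi-Lipschitz maps and hence a homeomorphism onto the corresponding Whitney piece of $\B$ provided by Lemma \ref{lem:WhitneyBall}. Lemma \ref{lem:fextwelldef}, supplemented by the analogous exterior compatibility and the radial extensions of Lemmas \ref{lem:AlexanderTrick} and \ref{lem:extSB}, makes these piecewise definitions agree on every shared face, so $f$ restricted to $\R^3 \setminus \SC$ is a continuous bijection onto $\R^3 \setminus \SB$ whose inverse is also continuous. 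On $\SC$ itself, $f$ is already known to be a homeomorphism onto $\SB$ by Corollary \ref{cor:S_S2}.

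The nontrivial point is continuity at points of $\SC$, since infinitely many pieces accumulate there and no local modulus of continuity is visible from the piecewise construction alone. To handle this I would fix $y \in \SC$ and a sequence $x_m \to y$, reduce to the case $x_m \in B(Q_{k_m}) \subset \inte\BC \setminus (c,1-c)^3$, and let $n_m$ be the integer associated with $(x_m,y)$ by Lemma \ref{lem:dnxy}. That lemma gives $|x_m - y| \asymp \delta_{n_m}$, and since $\delta_n \leq 2^{-n}$, the hypothesis $x_m \to y$ forces $n_m \to \infty$. Lemma \ref{lem:diamXnxy} then yields
\begin{equation*}
  |f(x_m) - f(y)| \asymp \diam X'_{n_m},
\end{equation*}
and Lemma \ref{diamX0} bounds $\diam X'_{n_m} \lesssim \lambda^{n_m} \to 0$, so $f(x_m) \to f(y)$. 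Continuity of $f^{-1}$ at $y' \in \SB$ is symmetric: for $x'_m \to y'$ with $x'_m = f(x_m)$, Lemma \ref{lem:diamXnxy} gives $\diam X'_{n_m} \to 0$, which forces $n_m \to \infty$ (at each fixed level there are only finitely many tiles, all of positive diameter), and then Lemma \ref{lem:dnxy} returns $|x_m - y| \asymp \delta_{n_m} \to 0$. The exterior of $\BC$ is handled verbatim with $\mathcal{R}^+_j$, $\mathcal{B}^+_j$, $\rho^+_j$ in place of their interior counterparts.

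For bijectivity I would use that each Whitney piece $B(Q_k)$ is mapped injectively onto $B'(X'_k)$, and the interior pieces $B'(X'_k)$ are pairwise interior-disjoint and exhaust $\inte\B \setminus \rho_0 \B$ by Lemma \ref{lem:WhitneyBall}; combined with the bijective radial extensions on $[c,1-c]^3$ and on the unbounded exterior piece, and with $f|_\SC$ from Corollary \ref{cor:S_S2}, this assembles into a global bijection $\R^3 \to \R^3$. The hard part of the whole argument is the boundary analysis in the preceding paragraph: without the dictionary between combinatorial order $n$, Euclidean distance $\delta_n$ in $\SC$-coordinates, and tile diameter $\diam X'_n$ in $\SB$-coordinates provided by Lemmas \ref{lem:dnxy} and \ref{lem:diamXnxy}, one could not even show that the piecewise definition of $f$ extends continuously to $\SC$, let alone that the extension is a homeomorphism.
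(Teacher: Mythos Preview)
Your proof is correct and follows exactly the approach the paper has in mind: the corollary is stated in the paper with no proof beyond the parenthetical ``using Lemma \ref{diamX0} as well,'' meaning that the authors regard it as an immediate consequence of Lemmas \ref{lem:dnxy} and \ref{lem:diamXnxy} (with their Remarks) together with Lemma \ref{diamX0}, and you have simply written out that deduction in full.
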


\subsection{Proof of Theorem 1B}
\label{sec:proof-theorem-1b}
The map $f$ is quasisimilar (\ref{eq:defquasisym}) with uniform
constant $L$ on $\R^3\setminus \SC$ by construction. Thus it is
quasiconformal on $\R^3\setminus \SC$ by definition (\ref{eq:defqc}).  

It remains to show quasiconformality on $\SC$. Let $y\in \SC$ and
$x,z\in [-c,1+c]^3\setminus (c,1-c)^3$. 
The number $n$ is defined as
in the last section, the number $m$ analogously for the points
$z,y$. 
Let $x',y',z'$ be the images
of $x,y,z$ under $f$. Throughout the proof $Y'_l$ will
always denote an $l$-tile containing $y'$. The $n$-tile $X'_n$ is the
one from Lemma \ref{lem:diamXnxy}, the $m$-tile $Z'_m$ the
corresponding one for the points $z',y'$. 
Assume
\begin{align*}
  \abs{y-x} & =\abs{y-z}.
  \intertext{This implies by Lemma \ref{lem:dnxy} and Remarks (1) and (2)}
  \delta_n & \asymp \delta_m, \text{ hence}
  \\
  n-k_0 & \leq m \leq n+k_0,
\end{align*}
for a constant integer $k_0=k_0(N_{\max})$. Thus
\begin{align*}
  \diam Y'_n & \asymp \diam Y'_m,
\end{align*}
by Corollary \ref{cor:XjXj+1}. By Remark (3) from the last section
\begin{align*}
  \diam X'_n & \asymp \diam Y'_n \asymp \diam Y'_m \asymp \diam Z'_m,
  \text{ and so}
  \\
  \abs{y'-x'} & \asymp\abs{y'-z'}, 
\end{align*}
by Lemma \ref{lem:diamXnxy}.
This finishes the proof.

\qed

\section{Open Problems}
\label{sec:open-problems}

The main open problem remains to geometrically characterize
quasiballs/quasi\-spheres. This seems to be a very hard problem in
$\R^3$ and out of reach at the moment in $\R^n$, $n\geq 4$.

\smallskip
The snowspheres constructed here have (many) rectifiable curves. This
contrasts with the surfaces constructed in \cite{MR2001c:49067} (see
also \cite{MR99j:30023}). They admit parametrizations $f\colon \R^2
\to \R^3$ satisfying $\abs{f(x)-f(y)}\asymp \abs{x-y}^\alpha$. Here
$\alpha=1-\epsilon$ with a (tiny) $\epsilon>0$. One may think of such
a parametrization as being \defn{uniformly expanding}. Are there
uniformly expanding maps $f\colon 
\R^2\to \R^3$ such that the Hausdorff-dimension of 
the image is arbitrarily close to $3$? This means that $\alpha$ is
arbitrarily close to $2/3$.

The same question can be asked
in more generality: are there maps $f\colon \R^n\to \R^m$, $n<m$,
satisfying $\abs{f(x)-f(y)}\asymp \abs{x-y}^\alpha$, where $\alpha$ is
arbitrarily close to $n/m$? It is relatively easy to construct such a
map for $n=1,m=2$ (see \cite{MR2003b:30022}). This implies that the
answer is yes for $m=2n$. The general case however seems to be quite
difficult. 

\section*{Acknowledgments}
\label{sec:acknowledgments}
  The author wishes to thank his former advisor Steffen Rohde for his
  patience and guidance. Some discussions with Jang-Mei Wu initiated
  the ideas that led to the extension. Mario Bonk deserves much credit
  for carefully reading the manuscript, exposing various flaws, and
  making many helpful suggestions.

% ==========   Bibliography
\bibliographystyle{alpha}
\bibliography{litlist}

%%% Local Variables: 
%%% mode: latex
%%% End: 

\end{document}